\titleformat{\subsection}{\it}{\thesubsection.\enspace}{1pt}{}
\newtheorem{theo}{Theorem}[section]
\newtheorem{lemm}[theo]{Lemma}
\newtheorem{defi}[theo]{Definition}
\newtheorem{coro}[theo]{Corollary}
\newtheorem{prop}[theo]{Proposition}
\newtheorem{rema}[theo]{Remark}
\numberwithin{equation}{section}
\newcommand\ep{{\varepsilon}}
\newcommand{\R}{\mathbb{R}}
\newcommand{\Z}{\mathbb{Z}}
\renewcommand{\div}{\mathop{\rm div}}
\newcommand{\pa}{\partial}
\newcommand{\na}{\nabla}
\newcommand{\La}{\Lambda}
\renewcommand{\bar}[1]{\overline{#1}}
\newcommand{\De}{\Delta}
\newcommand{\T}{\mathbb{T}}
\newcommand{\LP}{\mathbb{P}}
\begin{document}

\title{Global well-posedness and uniform-in-time vanishing damping limit for the inviscid Oldroyd-B model}

\author{Xinyu $\mbox{Cheng}^1$\footnote{E-mail: xycheng@fudan.edu.cn}, \quad 
Zhaonan $\mbox{Luo}^2$\footnote{E-mail: luozhn@fudan.edu.cn}, \quad Zhaojie $\mbox{Yang}^2$\footnote{E-mail: yangzj20@fudan.edu.cn}, \quad and \quad Cheng $\mbox{Yuan}^2$\footnote{E-mail:\ cyuan22@m.fudan.edu.cn}\\
$^1\mbox{Research}$ Institute of Intelligent Complex Systems, 
Fudan University, \\ Shanghai 200433, China \\
$^2\mbox{School}$ of Mathematical Sciences,
Fudan University, Shanghai 200433, China}

\date{}
\maketitle
\hrule

\begin{abstract}
In this paper, we consider global strong solutions and uniform-in-time vanishing damping limit for the inviscid Oldroyd-B model in $\R^d$, where $d=2$ and $3$. 
The well-recognized problem of the global existence of smooth solutions for
the 2D inviscid Oldroyd-B model without smallness assumptions is open due to the complex structure of $Q(\na u,\tau)\neq 0$. Therefore improving the smallness assumptions, especially in lower regularity class, is the core question in the area of fluid models. On the other hand, long-time behaviors of solutions including temporal decay and uniform-in-time damping stability are also of deep significance. These problems have been widely studied, however, the existing results are not regularity critical and the (uniform) vanishing damping limit has not been discussed. The goal of this work is to dig deeper in this direction.

In this work we first establish the local well-posedness in the sense of Hadamard with critical regularity. Then, by virtue of the sharp commutator estimate for Calderon-Zygmund operator, we establish the global existence of solutions for $d=2$ with damping in the low regularity class $(L^2\cap B^1_{\infty,1})\times (L^2\cap B^0_{\infty,1})$, which to our best knowledge, is novel in the literature. Furthermore, in both 2D and 3D cases, we prove the global existence of the solutions to the inviscid Oldroyd-B model independent of the damping parameters. In addition, we obtain the optimal temporal decay rates and time integrability by improving the existing Fourier splitting method and developing a novel decomposition strategy. One of the major contributions of the presenting paper is to prove the uniform-in-time vanishing damping limit for the inviscid Oldroyd-B model and discover the correlation between sharp vanishing damping rate and the temporal decay rate.  Finally, we will support our findings by providing numerical evidence regarding the vanishing damping limit in the periodic domain $\T^d$.\\
\vspace*{5pt}
\noindent {\it 2020 Mathematics Subject Classification}: 35Q31, 76A05, 74B20, 35B40.

\vspace*{5pt}
\noindent{\it Keywords}: The inviscid Oldroyd-B models; Global well-posedness; Uniform-in-time vanishing damping limit.
\end{abstract}

\vspace*{10pt}

\tableofcontents
\newpage

\section{Introduction}
In this monograph we focus on the general Oldroyd-B models. The Oldroyd-B model serves as a constitutive system for describing the flow behavior of viscoelastic fluids, combining both viscous and elastic properties. It extends the upper-convected Maxwell model and accounts for time-dependent stress responses. The model characterizes stress as a combination of a Newtonian viscous component and a viscoelastic component, effectively capturing phenomena including shear-thinning, stress relaxation, and creep. This makes it particularly useful for studying polymers, biological fluids, and other complex materials. 

Oldroyd-B model was first systematically introduced by Oldroyd in 1950s \cite{O50,O58} to describe constitutive models for viscoelastic fluids respecting the material frame indifference. To be more specific, in this monograph we consider the general Oldroyd-B models as follow:
\begin{align}\label{eq}
\left\{\begin{array}{l}
\partial_tu+u\cdot\nabla u+\nabla P ={\rm div}~\tau+\nu\Delta u,~~~~{\rm div}~u=0,\\[1ex]
\partial_t\tau+u\cdot\nabla\tau+a\tau+Q(\nabla u,\tau)=\alpha D(u)+\mu\Delta\tau,\\[1ex]
u|_{t=0}=u_0,~~\tau|_{t=0}=\tau_0. \\[1ex]
\end{array}\right.
\end{align}
We shall assume that the fluid domain is the entire space $\mathbb{R}^{d}$ for $d=2,3$. In \eqref{eq}, $u(t,x)$ denotes the velocity of the polymeric liquid, $\tau(t,x)$ represents the symmetric tensor of constrains and $P$ is the pressure. Moreover, the bilinear term is given as follows:
\begin{equation}\label{eqQ}
    Q(\nabla u, \tau)=\tau \Omega-\Omega\tau-b(D(u)\tau+\tau D(u)),
\end{equation}
where $b\in[-1, 1]$, the vorticity tensor is given by $\Omega=\frac {\nabla u-(\nabla u)^T} {2}$ and the deformation tensor is defined by $D(u)=\frac {\nabla u+(\nabla u)^T} {2}$. The viscosity coefficient $\nu$ and diffusion coefficient $\mu$ are both non-negative. From the derivation of polymer fluids, parameter $\alpha\geq 0$ and parameter $b$ are related, see \cite{DLY21}. The case $\alpha=b=0$ is known as the co-rotational Oldroyd-B model. In this paper, we focus on the inviscid Oldroyd-B model with $\nu=0$ and $\alpha=\mu=1$. Here, $a$ represents the damping coefficient and is inversely proportional to the the Weissenberg number $W_e$, which denotes the relaxation time. The larger the $W_e$ number, the more significant the elastic effect of the fluid has. The large Weissenberg number problem \cite{CWZZ20} is challenging both numerically and analytically.

\subsection{Introduction to the Oldroyd-B models and historical review}
To start with, we introduce some results from the existing literature of the Oldroyd-B models.

Taking $\nu>0$ and $\mu=0$ in \eqref{eq}, we obtain the classical Oldroyd-B model. In \cite{GS90}, C. Guillop\'e,  and J. C. Saut first proved that the Oldroyd-B model admits a unique global strong solution in Sobolev spaces. For the case $b=0$, the weak solutions of the co-rotational Oldroyd-B model were obtained by P. L. Lions and N. Masmoudi \cite{LM00}. We also note that the problem for the case $b\neq0$ remains open, as discussed in \cite{M13}. J. Y. Chemin and N. Masmoudi \cite{CM01} established the existence and uniqueness of strong solutions in homogenous Besov spaces with critical regularity. Optimal decay rates for solutions to the 3-D Oldroyd-B model were proved by M. Hieber, H. Wen and R. Zi \cite{HWZ19}.

Taking $\nu=0$ and $\mu>0$ in \eqref{eq}, we obtain the inviscid Oldroyd-B model. We will present the relevant mathematical results in two cases. \\
\textbf{(1) The case with damping ($a>0$).} \\
 For $d=2$, considering the inviscid Oldroyd-B model with initial data $(u_0,\tau_0)\in H^s$ for $s>2$, the system \eqref{eq0} was proved to admit a unique global strong solution $(u,\tau)\in C([0,\infty); H^s)$ in \cite{ER15} if
\begin{align*}
\|(u_0,\tau_0)\|_{H^1}+\|(\omega_0,\tau_0)\|_{B^0_{\infty,1}}\leq \ep,~~~~\omega_0={\rm curl}~u_0
\end{align*}
for some sufficiently small $\varepsilon$. The large data global solutions under the additional condition $Q=0$ can be obtained as well. However, the problem of the global existence of smooth solutions for the general 2D inviscid Oldroyd-B model \eqref{eq0} ($Q\neq 0$) without any smallness assumption is still open due to the complex structure of $Q(\na u ,\tau)$.

Recently, W. Deng, Z. Luo and Z. Yin \cite{DLY21,DLY23} proved global existence of strong solutions and weak solutions to the co-rotational inviscid Oldroyd-B model.\\
\textbf{(2) The case without damping ($a=0$).}\\
 P. Constantin, J. Wu, J. Zhao and Y. Zhu \cite{CWZZ20} established the global well-posedness of the inviscid Oldroyd-B models with small data. 

Sharp decay estimates for Oldroyd-B model with
only fractional stress tensor diffusion were proved in \cite{WWXZ22}. We now describe the results of the equation \eqref{eq0} with integral stress tensor diffusion. Let $d=3$. Considering the inviscid Oldroyd-B model with initial data $(u_0,\tau_0)\in H^r\cap L^1$ with $r=\frac{25}{2}$, P. Wang, J. Wu, X. Xu and Y. Zhong \cite{WWXZ22} proved that that there exists some sufficiently small constant $\ep$ such that if
\begin{align*}
\|(u_0,\tau_0)\|_{H^r\cap L^1}\leq \ep,
\end{align*}
then the system \eqref{eq0} admits a unique global solution $(u,\tau)$ that obeys the following decay properties
\begin{align*}
    &\|u\|_{L^2}\lesssim \ep(1+t)^{-\frac{3}{4}},~~~~\|u\|_{L^\infty}\lesssim \ep(1+t)^{-\frac{3}{2}},~~~~\|\nabla u\|_{L^2}\lesssim \ep(1+t)^{-\frac{5}{4}}, \\
&\|\nabla u\|_{L^\infty}\lesssim \ep(1+t)^{-2},~~~~\|\mathbb{P}\rm{div}~\tau\|_{L^2}\lesssim \ep(1+t)^{-\frac{5}{4}},
\end{align*}
where $\mathbb{P}$ denotes the Leray projection onto divergence-free vector fields.

W. Deng, Z. Luo and Z. Yin \cite{DLY23} proved optimal decay rate of global weak solutions for $d=2$ by the improved Fourier splitting method.
\subsection{Relevant Models}
When considering inviscid fluids, the standard model is the Euler equation. The incompressible Euler equation \eqref{Euler} is useful in modeling scenarios where the fluid behaves more like an ideal fluid.  
\begin{align}\label{Euler}
\left\{\begin{array}{l}
\partial_tu+u\cdot\nabla u+\nabla P =0,~~~~{\rm div}~u=0,\\[1ex]
u|_{t=0}=u_0. \\[1ex]
\end{array}\right.
\end{align}
We now provide relevant references about the Euler equation in the following, which greatly help us understand the behaviour of inviscid fluids. V. I. Yudovich \cite{Y63} established the existence and uniqueness of global weak solutions for the Euler equations with $d=2$. J. T. Beale, T. Kato and A. Majda \cite{BKM84} proved the celebrate BKM criterion for determining the global existence of strong solutions to the Euler equations. A. Kiselev and V. Sverak \cite{KS14} obtained a lower bound estimate for the double exponential growth of solutions to Euler equations with $d=2$. The local well-posedness of the Euler equation with critical regularity was proved in \cite{GLY19}. T. Hmidi and S. Keraani \cite{HK08} proved the rate of vanishing viscosity in borderline Besov spaces. J. Bourgain and D. Li \cite{BL15} established strong ill-posedness of the Euler equations in borderline sobolev spaces. T. Buckmaster, C. De Lellis, L. Sz\'ekelyhidi, Jr., V. Vicol \cite{BLJV19} and P. Isett \cite{Is18} independently proved Onsager’s conjecture by constructing non-trivial weak solutions via the convex integration scheme. Recently V.Giri, H. Kwon and M. Novack have proved a $L^3$-based Onsager's conjecture in \cite{GKN23,GKN24}.

For other relevant models,  Z. Lei, C. Liu and Y. Zhou \cite{LLZ08} proved the existence of global solutions for incompressible viscoelastic fluids, which is a simplified model of the Oldroyd-B model in a certain case. In \cite{L16}, Z. Lei proved global well-posedness of incompressible elastodynamics in two dimensions; a similar result was obtained independently by X. Wang in \cite{W17} using a normal form method. The two dimensional Euler-Poisson system was proved to admit global solutions by Li and Wu in \cite{LW14}.

Moreover, it is also worth mentioning that the non-uniqueness of low regular solutions to other related fluid equations have been studied widely.
For example non-uniqueness of forced Naiver-stokes equations have been studied by D. Albritton, E. Bru\'e and M. Colombo \cite{ABC22}, non-uniqueness of time dependent SQG equations have been studied by T. Buckmaster, S. Shkoller and V. Vicol in \cite{BSV19}. X. Cheng, H. Kwon and D. Li have also shown the non-uniquness of stationary solutions to the SQG equations in \cite{CKL21}.

\subsection{The main results}
Taking $\alpha=\mu=1$, $\nu=0$ in \eqref{eq}, then we have
\begin{align}\label{eq0}
\left\{\begin{array}{l}
\partial_tu+u\cdot\nabla u+\nabla P ={\rm div}~\tau,~~~~{\rm div}~u=0,\\[1ex]
\partial_t\tau+u\cdot\nabla\tau+a\tau+Q(\nabla u,\tau)=D(u)+\Delta\tau,\\[1ex]
u|_{t=0}=u_0,~~\tau|_{t=0}=\tau_0. \\[1ex]
\end{array}\right.
\end{align}

\subsubsection{Main results in $\R^2$.}
Our main results for $d=2$ can be stated as follows.
\begin{theo}\label{6theo}
		Let $d=2$ and $a\in (0,1]$. Assume a divergence-free field $u_0\in L^2\cap B^1_{\infty,1}$ and a symmetric matrix $\tau_0\in L^2\cap B^0_{\infty,1}$. There exists some positive constant $c=c(a)$ small enough such that if
	    \begin{align}\label{6con}
		\|(u_0,\tau_0)\|_{L^2}+\|(\nabla u_0,\tau_0)\|_{B^0_{\infty,1}} \leq c,
		\end{align}
		then \eqref{eq0} admits a global solution $(u,\tau)$ with
		$$
		(u,\tau) \in L^{\infty}(0,\infty;L^2\cap B^1_{\infty,1})\times L^{\infty}(0,\infty;L^2\cap B^0_{\infty,1}).
		$$
  \end{theo}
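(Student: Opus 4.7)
The plan is to combine the Hadamard-type local well-posedness already established in the paper with an a~priori smallness bound propagated on the maximal interval of existence $[0,T^*)$. Introduce the bootstrap functional
$$
E(t):=\|(u,\tau)\|_{L^\infty_t L^2}+\|\omega\|_{L^\infty_t B^0_{\infty,1}}+\|\tau\|_{L^\infty_t B^0_{\infty,1}}+\|\na^2\tau\|_{L^1_t B^0_{\infty,1}}+a\|\tau\|_{L^1_t B^0_{\infty,1}},\qquad \omega:=\mathrm{curl}\,u,
$$
and aim to establish $E(t)\le C_0 c+C(a)E(t)^2$ for $t<T^*$; combined with \eqref{6con} this forces $E(t)\ls c$ uniformly, precluding blow-up via the standard continuation criterion from the local theory.

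The $L^2$ identity is routine: testing the system against $(u,\tau)$, the cross-terms $\int \div\tau\cdot u\,dx$ and $\int D(u)\!:\!\tau\,dx$ cancel by integration by parts using the symmetry of $\tau$ and $\div u=0$; the $\Omega$-part of $Q$ vanishes by cyclicity since $\mathrm{tr}(\tau\Omega\tau)=\mathrm{tr}(\Omega\tau^2)$; and the surviving cubic term $-b\int D(u)\!:\!\tau^2\,dx$ is absorbed by the smallness of $\|\na u\|_{L^\infty}\ls\|u\|_{B^1_{\infty,1}}$.

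The principal obstacle, which I expect to be the hardest step, is the critical Besov estimate for $\omega$: the vorticity equation $\pa_t\omega+u\cdot\na\omega=\mathrm{curl}\,\div\,\tau$ carries a forcing at the same differential order as the leading dissipative term $\De\tau$ in the stress equation, so a naive transport closure loses a derivative. The remedy is to pass to the effective unknown
$$
\Ga:=\omega-K\tau,\qquad K:=-\mathrm{curl}\,\div(-\De)^{-1},
$$
a zeroth-order Calder\'on--Zygmund operator acting on $\tau$. Using $K(\De\tau)=\mathrm{curl}\,\div\,\tau$ and, since $\div D(u)=\tfrac12\De u$, the key identity $K(D(u))=\tfrac12\omega$, one checks by direct computation that
$$
\pa_t\Ga+u\cdot\na\Ga+\tfrac12\Ga=-[u\cdot\na,K]\tau+(a-\tfrac12)K\tau+KQ(\na u,\tau),
$$
a damped transport equation in which the singular source $\mathrm{curl}\,\div\,\tau$ has disappeared, leaving only a single commutator and genuinely lower-order products on the right.

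Given this reformulation, the estimates close as follows. Apply the damped transport estimate in $B^0_{\infty,1}$ to $\Ga$ --- this is precisely where the sharp Calder\'on--Zygmund commutator estimate emphasized in the abstract enters, bounding $[u\cdot\na,K]\tau$ by $\|\na u\|_{B^0_{\infty,1}}\|\tau\|_{B^0_{\infty,1}}$ without logarithmic loss --- and combine with maximal regularity for the heat--damping semigroup $\pa_t-\De+a$ applied to $\tau$, which yields the $\|\na^2\tau\|_{L^1_t B^0_{\infty,1}}$ and $a\|\tau\|_{L^1_t B^0_{\infty,1}}$ contributions to $E(t)$. The Biot--Savart relation then gives $\|u\|_{B^1_{\infty,1}}\ls\|u\|_{L^2}+\|\omega\|_{B^0_{\infty,1}}\ls\|u\|_{L^2}+\|\Ga\|_{B^0_{\infty,1}}+\|\tau\|_{B^0_{\infty,1}}$ since $K$ is CZ. The $a$-dependence of the smallness constant $c(a)$ is unavoidable: it enters through the $1/a$ prefactor produced by the $\tau$-estimate, which is needed to control $\|\na u\|_{L^1_t B^0_{\infty,1}}$ (appearing in the forcing of the $\tau$-equation through $D(u)$ and in the commutator bound for $\Ga$) via the $L^1_t$-damping of $\Ga$, thereby closing the bootstrap.
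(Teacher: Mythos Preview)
Your overall architecture---bootstrap, $L^2$ energy with the $\langle\div\tau,u\rangle+\langle Du,\tau\rangle=0$ cancellation, the structural variable $\Gamma=\omega-\mathcal R\tau$ satisfying a damped transport equation, and Biot--Savart to recover $\|u\|_{B^1_{\infty,1}}$---matches the paper. But the way you propose to control $\tau$ does not close, and one of the paper's two key new estimates is missing.

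The maximal-regularity step for $\tau$ fails because the forcing contains the \emph{linear} term $D(u)$. Maximal regularity gives
\[
\|\nabla^2\tau\|_{L^1_tB^0_{\infty,1}}+a\|\tau\|_{L^1_tB^0_{\infty,1}}\lesssim\|\tau_0\|_{B^0_{\infty,1}}+\|Du\|_{L^1_tB^0_{\infty,1}}+(\text{quadratic}),
\]
so you need $\|\nabla u\|_{L^1_tB^0_{\infty,1}}$. Decomposing $\omega=\Gamma+\mathcal R\tau$ and integrating the damped $\Gamma$-equation in time produces $\|\mathcal R\tau\|_{L^1_tB^0_{\infty,1}}$ on the right (from the $(a-\tfrac12)\mathcal R\tau$ source), and since $\mathcal R$ is only Calder\'on--Zygmund this contains a low-frequency piece $\|\tau\|_{L^1_tL^2}$; likewise $\|\Delta_{-1}\nabla u\|_{L^1_tL^\infty}\lesssim\|u\|_{L^1_tL^2}$. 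Neither $\|u\|_{L^2}$ nor $\|\tau\|_{L^2}$ is time-integrable (the energy only gives $a\|\tau\|_{L^2_tL^2}^2\le c^2$, and $u$ has no damping at all), so these linear contributions are unbounded in $t$ and cannot be written as $C(a)E(t)^2$. The paper sidesteps this completely: it never seeks $L^1_t$ control. For each $j\ge0$ it bounds $\|\Delta_j\tau\|_{L^\infty_tL^\infty}$ via Duhamel, uses Bernstein to pass to $L^2$ (gaining $2^j$ in $d=2$), and then $\int_0^te^{-2^{2j}(t-s)}ds\lesssim 2^{-2j}$ leaves a summable surplus $2^{-j}$. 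Only $L^\infty_t$ bounds on $u,\nabla u,\tau$ are used, so the linear $D(u)$ causes no trouble. The $a$-dependence of $c$ in the paper enters \emph{only} through the $L^2$ step (absorbing $-\langle Q,\tau\rangle\lesssim\|\nabla u\|_{L^\infty}\|\tau\|_{L^2}^2\le\tfrac a2\|\tau\|_{L^2}^2$ forces $c^{1/2}\lesssim a$), not via any $1/a$ from $\tau$-integrability.

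Second, you dismiss $\mathcal RQ(\nabla u,\tau)$ as ``genuinely lower order,'' but $B^0_{\infty,1}$ is not an algebra and this product sits exactly at the critical threshold; handling it is one of the paper's main new points. The paper's estimate reads
\[
\|Q(\nabla u,\tau)\|_{B^0_{\infty,1}}\lesssim\|\nabla u\|_{B^0_{\infty,1}}\|\tau\|_{B^0_{\infty,1}}+\|\nabla u\|_{B^0_{\infty,1}}\|\tau\|_{H^1},
\]
obtained by placing the Bony remainder $R(\nabla u,\tau)$ in $B^1_{2,1}\hookrightarrow B^0_{\infty,1}$ (this uses $d=2$). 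The extra $\|\nabla\tau\|_{L^2}$ is supplied by the energy dissipation $\int_0^\infty\|\nabla\tau\|_{L^2}^2\,ds\le c^2$ and integrated against the $e^{-\frac14(t-s)}$ weight in the $\Gamma$-estimate. Without this ingredient the $\Gamma$-bound does not close. Relatedly, the sharp commutator bound is $\|[\mathcal R,u\cdot\nabla]\tau\|_{B^0_{\infty,1}}\lesssim(\|\nabla u\|_{L^\infty}+\|u\|_{L^2})(\|\tau\|_{B^0_{\infty,1}}+\|\tau\|_{L^2})$; the $L^2$ pieces come from the low-frequency block and are not optional.
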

	\begin{theo}\label{theo1}
		Let $d=2$ and $a\in[0,1]$. Assume a divergence-free field $u_0\in H^1\cap B^1_{\infty,1}$ and a symmetric matrix $\tau_0\in H^1\cap B^0_{\infty,1}$. There exists some positive constant $\ep$ small enough such that if
	    \begin{align}
		\|(u_0,\tau_0)\|_{H^1}+\|(\nabla u_0,\tau_0)\|_{B^0_{\infty,1}} \leq \ep,
		\end{align}
		then \eqref{eq0} admits a global solution $(u^a,\tau^a)$ with
		$$
		(u^a,\tau^a) \in L^{\infty}(0,\infty;H^1\cap B^1_{\infty,1})\times L^{\infty}(0,\infty;H^1\cap B^0_{\infty,1}).
		$$
		Moreover, for any $T>0$, there holds
  $$\|(u^a,\tau^a)-(u^0,\tau^0)\|_{L^{\infty}(0,T;L^2)}\leq C_T a,
  $$ 
  and 
  $$\lim_{a\rightarrow0}\|(u^a,\tau^a)-(u^0,\tau^0)\|_{L^{\infty}(0,T;\dot{H}^1\cap B^1_{\infty,1})\times L^{\infty}(0,T;\dot{H}^1\cap B^0_{\infty,1})}=0.
  $$
  \end{theo}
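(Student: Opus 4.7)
The plan is to handle the three assertions of Theorem \ref{theo1} in order: uniform-in-$a$ global existence in $H^1\cap B^1_{\infty,1}$, quantitative $L^2$ convergence at rate $a$, and qualitative strong convergence in the critical higher-order norm.

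\emph{Uniform global existence.} I would follow the strategy of Theorem \ref{6theo}, closing a priori estimates whose constants are independent of $a\in[0,1]$ (so that $a=0$ is admissible). The three layers are: (i) the $L^2$ energy estimate for $(u,\tau)$, in which the $\div\tau\,/\,D(u)$ coupling is skew-adjoint, the damping $a\|\tau\|_{L^2}^2\ge 0$ only helps, and the heat dissipation $\|\na\tau\|_{L^2}^2$ is available; (ii) an $\dot H^1$ estimate propagated using the same skew-adjoint coupling, with transport commutators handled by $\|\na u\|_{L^\infty}\ls\|u\|_{B^1_{\infty,1}}$ and $Q(\na u,\tau)$ absorbed by smallness plus $\|\De\tau\|_{L^2}^2$; (iii) the critical $B^1_{\infty,1}\times B^0_{\infty,1}$ estimate obtained via the 2D vorticity formulation (making $\curl\div\tau$ a Calderon--Zygmund operator of $\tau$), closed by the sharp CZ commutator estimate underlying Theorem \ref{6theo} together with heat smoothing on $\tau$. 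Since damping is never used as a dissipation in these closures, the bounds are uniform in $a\in[0,1]$ and give $(u^a,\tau^a)$ with norm $\ls\ep$ for all $a$.

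\emph{$L^2$ convergence rate.} Setting $(\delta u,\delta\tau)=(u^a-u^0,\tau^a-\tau^0)$, the difference system reads
\begin{align*}
\pa_t\delta u+u^a\cdot\na\delta u+\delta u\cdot\na u^0+\na\delta P&=\div\delta\tau,\\
\pa_t\delta\tau+u^a\cdot\na\delta\tau+\delta u\cdot\na\tau^0+a\tau^a+Q(\na u^a,\tau^a)-Q(\na u^0,\tau^0)&=D(\delta u)+\De\delta\tau,
\end{align*}
with zero initial data. A straight $L^2$ energy estimate exploits the skew-adjoint coupling, integrates $\delta u\cdot\na\tau^0$ by parts (using $\div\delta u=0$) into the dissipation $\|\na\delta\tau\|_{L^2}^2$, treats the bilinear $Q$ differences by the uniform $B^1_{\infty,1}\times B^0_{\infty,1}$ bounds from step one, and splits $a\tau^a=a\delta\tau+a\tau^0$: the first piece is absorbed as damping on $\delta\tau$, while the second is an $O(a)$ source since $\tau^0\in L^\infty_T L^2$. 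Gronwall over $[0,T]$ gives $\|(\delta u,\delta\tau)\|_{L^\infty_T L^2}\le C_T a$.

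\emph{Strong convergence in $\dot H^1\cap B^1_{\infty,1}$, and main obstacle.} The target norm coincides with the regularity at which only uniform bounds are available, so bare interpolation between the $L^2$ rate and the global a priori bound cannot produce it. I would run a mollification/$3\ep$ argument: approximate $(u_0,\tau_0)$ by smooth data $(u_0^\eta,\tau_0^\eta)$ lying in a slightly higher Besov scale (with smallness preserved), propagate this higher regularity uniformly in $a$ by the same scheme, and, for each fixed $\eta$, interpolate the $L^2$ rate from the previous step against the uniform higher-order bound to deduce $\dot H^1\cap B^1_{\infty,1}$ convergence for the mollified solutions at some fractional rate. A Hadamard-type stability estimate at the base regularity $H^1\cap B^1_{\infty,1}$, uniform in $a$, then absorbs the $\eta$-errors via triangle inequality; letting $a\to 0$ first and then $\eta\to 0$ concludes. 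The central difficulty throughout, and in particular for this last stability estimate, is to keep every closure genuinely $a$-uniform: no step may use the damping $a\tau$ as a dissipative mechanism, so all control must come from the heat smoothing on $\tau$ and the Calderon--Zygmund control of $\na u$ by $\|u\|_{B^1_{\infty,1}}$.
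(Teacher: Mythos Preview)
Your overall architecture for the two convergence statements is essentially the paper's: the $L^2$ rate comes from a direct energy estimate on the difference system with $a\tau^a$ as an $O(a)$ source, and the strong convergence in the critical norm is obtained by a Bona--Smith/mollification argument (the paper uses the frequency cut-off $S_N$, propagates one extra derivative via an analogue of your ``higher regularity uniformly in $a$'', and then runs the $3\ep$ splitting through the $\Gamma$ variable). So that part is fine.

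The genuine gap is in your global-existence step. You propose to rerun the strategy of Theorem~\ref{6theo} with constants independent of $a$, arguing that ``damping is never used as a dissipation in these closures''. But in Theorem~\ref{6theo} the damping \emph{is} used essentially: at the $L^2$ level the term $|\langle Q(\nabla u,\tau),\tau\rangle|\lesssim\|\nabla u\|_{L^\infty}\|\tau\|_{L^2}^2$ is absorbed into $a\|\tau\|_{L^2}^2$ (this is exactly why the smallness there is $c=c(a)$). If you drop the damping, the basic $H^1$ energy inequality only yields
\[
\frac{d}{dt}\|(u,\tau)\|_{H^1}^2+\|\nabla\tau\|_{H^1}^2\ \lesssim\ \ep^2\|\nabla u\|_{L^2}^2,
\]
and there is nothing on the left to absorb $\|\nabla u\|_{L^2}^2$; Gronwall then gives exponential growth, and you lose both the uniform $H^1$ bound and the time integrability $\int_0^\infty\|\nabla^2\tau\|_{L^2}^2\,dt$ that the $\Gamma$-equation needs in order to close $\|\omega\|_{B^0_{\infty,1}}$ uniformly in time. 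The paper supplies the missing dissipation by the cross inner product $-\eta\langle\tau,\nabla u\rangle$: differentiating it produces $+\tfrac{\eta}{2}\|\nabla u\|_{L^2}^2$ on the left at the cost of terms controlled by $\|\nabla\tau\|_{H^1}^2$, and this is what makes the $H^1$ estimate close uniformly in $a\in[0,1]$ (Proposition~\ref{4prop1}). Your layers (i)--(ii) do not contain any mechanism playing this role.

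A secondary point: your phrase ``making $\curl\div\tau$ a Calderon--Zygmund operator of $\tau$'' is not accurate, since $\curl\div$ is second order. What actually happens is $\curl\div\tau=-\Delta\mathcal{R}\tau$ with $\mathcal{R}=-(-\Delta)^{-1}\curl\div$ zeroth order, and forming $\Gamma=\omega-\mathcal{R}\tau$ converts the vorticity equation into a \emph{damped} transport equation $\partial_t\Gamma+u\cdot\nabla\Gamma+\tfrac12\Gamma=(a-\tfrac12)\mathcal{R}\tau+\mathcal{R}Q+[\mathcal{R},u\cdot\nabla]\tau$; the $\tfrac12\Gamma$ damping comes from $\mathcal{R}D(u)=\tfrac12\omega$, not from $a$. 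If this is what you meant by ``the sharp CZ commutator estimate underlying Theorem~\ref{6theo}'', you should say so explicitly, because the commutator lemma alone (without the $\Gamma$ structure and without the cross-term dissipation above) does not close the estimate.
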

  \begin{rema}
   The result of the global existence in Theorem \ref{theo1} extends the research of critical regularity to the case $a\in [0,1]$. In addition, for any $T>0$, we prove vanishing damping limit under the same topology of the initial data.   \end{rema}
   \begin{rema}
   The local well-posedness of the Euler equation in critical Besov space $B^1_{\infty,1}$ was established in \cite{GLY19}. One can see that the regularity of velocity $u^a$ in Theorem \ref{theo1} is critical, since $u^a$ satisfies the Euler equation with force. However, for $a=0$, we need the $H^1$ estimate of $\tau^a$ to close this critical estimate. For the case $a>0$, we point out that damping effect is crucial to reducing the regularity of $\tau^a$, see Theorem \ref{6theo}.   
  \end{rema}
\begin{rema}
    By the standard theory of the transport equation (cf. \cite{BCD11}), the velocity $u\in  C^{0,1}$ is the key to persisting regularity. Under the conditions in Theorem  \ref{theo1}, we deduce that the global existence of smooth solutions $(u^a,\tau^a)$ for the inviscid Oldroyd-B equation \eqref{eq0} with extra smooth initial data $(u_0,\tau_0)$.
\end{rema}
  \begin{theo}\label{theo2}
  Under the same conditions as in Theorem \ref{theo1}, if additionally $(u_0,\tau_0) \in \dot{B}^{-1}_{2,\infty},$
		then there exists a positive constant $C_0$ depending on the initial data such that
		$$
		\|(u^a,\tau^a)\|_{L^2} +(1+t)^{\frac{1}{2}}\|\nabla(u^a,\tau^a)\|_{L^2}\leq C_0(1+t)^{-\frac{1}{2}},
		$$
  and 
  $$\int_{0}^{\infty}\|\nabla u^a\|_{B^0_{\infty,1}}dt'\leq C_0.$$
  Furthermore, the following sharp uniform-in-time vanishing damping rate holds
  \begin{align}\label{sharptwod}
  \|(u^a,\tau^a)-(u^0,\tau^0)\|_{L^{\infty}(0,\infty;L^2)}\leq C_0a^{\frac 1 2},
  \end{align}
  and 
   $$\lim_{a\rightarrow0}\|(u^a,\tau^a)-(u^0,\tau^0)\|_{L^{\infty}(0,\infty;\dot{H}^1\cap B^1_{\infty,1})\times L^{\infty}(0,\infty;\dot{H}^1\cap B^0_{\infty,1})}=0.
  $$
	\end{theo}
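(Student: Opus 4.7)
The argument splits into four linked pieces: (i) temporal $L^2$ decay for $(u^a,\tau^a)$; (ii) the $L^1_t$ integrability of $\|\nabla u^a\|_{B^0_{\infty,1}}$; (iii) the sharp $a^{1/2}$ vanishing damping rate on $[0,\infty)$; and (iv) the limit in the higher norms. Throughout it is essential that every constant inherited from Theorem \ref{theo1} is uniform in $a\in[0,1]$.

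\textbf{Decay and integrability (steps (i)--(ii)).} The basic energy identity
\begin{align*}
\tfrac{d}{dt}\|(u^a,\tau^a)\|_{L^2}^2 + 2a\|\tau^a\|_{L^2}^2 + 2\|\nabla\tau^a\|_{L^2}^2 = 0,
\end{align*}
which uses that $\tau^a$ is symmetric so the $\operatorname{div}\tau^a$ and $D(u^a)$ terms cancel, gives no direct dissipation of $u^a$. My plan is to run a Schonbek-type Fourier splitting on the ball $|\xi|\leq g(t)$ with $g(t)^2=k/(1+t)$, controlling the low-frequency mass by propagating the $\dot{B}^{-1}_{2,\infty}$ hypothesis in time — this propagation follows from Duhamel applied to the heat-plus-damping semigroup of the $\tau$-equation and the transport structure of the $u$-equation, with nonlinearities handled by paraproduct. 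This yields $\|(u^a,\tau^a)\|_{L^2}\lesssim(1+t)^{-1/2}$. Iterating the same splitting at the $\dot{H}^1$ level, now feeding in the $L^2$ decay just obtained, upgrades to $\|\nabla(u^a,\tau^a)\|_{L^2}\lesssim(1+t)^{-1}$. For the Besov integrability I would Littlewood--Paley decompose $\|\nabla u^a\|_{B^0_{\infty,1}}=\sum_j 2^j\|\Delta_j u^a\|_{L^\infty}$: low frequencies are summed via Bernstein embedding into $L^2$ and the decay above, while high frequencies are handled by interpolation between the uniform $L^\infty_t B^1_{\infty,1}$ bound of Theorem \ref{theo1} and the decay, combined with the a priori $\int_0^\infty\|\nabla\tau^a\|_{L^2}^2\,dt<\infty$ coming from the energy identity.

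\textbf{Sharp vanishing damping (step (iii)).} Setting $U=u^a-u^0$, $T=\tau^a-\tau^0$, the difference solves a perturbed Oldroyd-B system whose only genuinely new inhomogeneity is $-a\tau^0$ in the $T$-equation; the $L^2$ energy estimate (again exploiting the $\operatorname{div}T$--$D(U)$ cancellation) gives
\begin{align*}
\tfrac{d}{dt}\|(U,T)\|_{L^2}^2 + \|\nabla T\|_{L^2}^2 \lesssim G(t)\,\|(U,T)\|_{L^2}^2 + a^2\|\tau^0\|_{L^2}^2,
\end{align*}
with $G$ collecting $\|\nabla u^a\|_{L^\infty}$, $\|\nabla u^0\|_{L^\infty}$, $\|\tau^0\|_{L^\infty}$, $\|\nabla\tau^0\|_{L^\infty}$, all of them in $L^1_t(0,\infty)$ thanks to step (ii) and interpolation with the decay of step (i). Gronwall plus $\|\tau^0\|_{L^2}^2\lesssim(1+t)^{-1}$ yield only the logarithmic bound $\|(U,T)(t)\|_{L^2}\lesssim a\sqrt{\log(2+t)}$. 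The mechanism that promotes this to the sharp $a^{1/2}$ uniformly in time is a short/long time split at $T^\star\sim a^{-1}$: on $[0,T^\star]$ the logarithmic bound reads $a\sqrt{\log(1/a)}=o(a^{1/2})$, while on $[T^\star,\infty)$ the triangle inequality with step (i) gives $\|(U,T)(t)\|_{L^2}\leq\|(u^a,\tau^a)(t)\|_{L^2}+\|(u^0,\tau^0)(t)\|_{L^2}\lesssim(1+t)^{-1/2}\leq a^{1/2}$. This makes precise the correlation between the $(1+t)^{-1/2}$ decay and the sharp $a^{1/2}$ rate advertised in the introduction, and it is the main obstacle of the proof.

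\textbf{Higher-norm limit (step (iv)).} Given $\epsilon>0$, I would pick $T_\epsilon$ so that $\|(u^a,\tau^a)(t)\|_{\dot{H}^1\cap B^1_{\infty,1}}+\|(u^0,\tau^0)(t)\|_{\dot{H}^1\cap B^1_{\infty,1}}<\epsilon/2$ for all $t\geq T_\epsilon$, uniformly in $a$: the $\dot{H}^1$ part is step (i), and the $B^1_{\infty,1}$ part follows from the $L^1_t$ integrability of step (ii) together with a time-equicontinuity estimate obtained from the $u^a$-equation, which is Euler with forcing $\operatorname{div}\tau^a$ whose $B^0_{\infty,1}$ norm decays. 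On the compact interval $[0,T_\epsilon]$ Theorem \ref{theo1} already supplies $(u^a,\tau^a)\to(u^0,\tau^0)$ in the desired topology, so one chooses $a$ small enough to absorb the first half and close the limit by an $\epsilon/2$ argument.
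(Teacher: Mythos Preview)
The decisive gap is in step (ii). Your low-frequency plan ``Bernstein into $L^2$ and the decay above'' only yields $\|\Delta_{-1}\nabla u^a\|_{L^\infty}\lesssim\|\nabla u^a\|_{L^2}\lesssim(1+t)^{-1}$, which is \emph{exactly} nonintegrable in dimension two; this is precisely the obstruction the paper isolates (and the reason why even $\|\tau^a\|_{B^0_{\infty,1}}$ is not in $L^1_t$ in 2D). The paper's repair is not interpolation but a new mechanism: one introduces the low-frequency cross term $\langle\Delta_{-1}\nabla\tau^a,-\Delta_{-1}\nabla^2 u^a\rangle$ to extract dissipation of $\|\Delta_{-1}\nabla^2 u^a\|_{L^2}^2$, obtains the weighted bound
\[
\int_0^t(1+t')^3\|\Delta_{-1}\nabla^2 u^a\|_{L^2}^2\,dt'\lesssim C_0(1+t)^{3/2}
\]
(with unavoidable growth), and then closes via a dyadic-in-time H\"older splitting
\[
\int_1^\infty\|\Delta_{-1}\nabla u^a\|_{L^\infty}\,dt\lesssim\sum_{j\ge0}2^{-j/2}\Bigl(\int_{2^j}^{2^{j+1}}(1+t)^3\|\Delta_{-1}\nabla^2 u^a\|_{L^2}^2\,dt\Bigr)^{1/4}\lesssim\sum_{j\ge0}2^{-j/8}<\infty.
\]
Your high-frequency sketch is also incomplete: the uniform $B^1_{\infty,1}$ bound alone does not give $L^1_t$ control. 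The paper passes through the structural unknown $\Gamma^a=\omega^a-\mathcal{R}\tau^a$, whose damped transport equation converts the integrability of $\int_0^t\|\nabla^2\tau^a\|_{L^2}\,dt'$ (which follows from the weighted $H^1$ energy estimate) into $\int_0^t\|(Id-\Delta_{-1})\Gamma^a\|_{B^0_{\infty,1}}\,dt'<\infty$. Since your steps (iii) and (iv) both rely on the $L^1_t$ bound from (ii), the entire downstream argument collapses until this is fixed.

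Two smaller points. First, your basic energy identity omits the term $-\langle Q(\nabla u^a,\tau^a),\tau^a\rangle$; it is harmless after absorption by smallness, but it is not zero. Second, in step (iv), $L^1_t$ integrability of $\|\nabla u^a\|_{B^0_{\infty,1}}$ does not by itself give uniform-in-$a$ pointwise smallness of the $B^1_{\infty,1}$ norm for large $t$; the paper instead applies the mean-value theorem on $[T,T+4M\delta^{-2}]$ to find a time $T_a$ where $\|\nabla u^a(T_a)\|_{B^0_{\infty,1}}+\|\tau^a(T_a)\|_{B^0_{\infty,1}}^2\le\delta^2/4$, and then \emph{reapplies} the global well-posedness (Proposition~\ref{4prop2}) from $T_a$ to propagate smallness forward. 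Your ``time-equicontinuity'' substitute would have to be made uniform in $a$, which is not obvious. Your step (iii), by contrast, is essentially the paper's argument: the short/long split at $t\sim a^{-1}$ matched against the $(1+t)^{-1/2}$ decay is exactly how the sharp $a^{1/2}$ emerges.
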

 \begin{rema}
     For the case $a=0$, we obtain optimal time decay rates of global solutions for the inviscid Oldroyd-B model \eqref{eq0}, see
     \cite{DLY23}. Since $\|\tau^a\|_{B^0_{\infty,1}}\leq C_0(1+t)^{-1}$ in the case of high regularity for $d=2$, the time integrability of $\|\tau^a\|_{B^0_{\infty,1}}$ cannot be obtained. We introduce a new method of high-low frequency decomposition to obtain the time integrability of $\|\nabla u^a\|_{B^0_{\infty,1}}$. The key time integrability of $\|\nabla u^a\|_{B^0_{\infty,1}}$ enables us to discover a completely new phenomenon that there is a relationship between uniform vanishing damping rate and time decay rate.
 \end{rema}
 \begin{rema}
The uniform-in-time $L^2$ vanishing damping rate \eqref{sharptwod} is sharp in the sense that we can find initial data satisfying our condition such that 
$$\|(u^a,\tau^a)-(u^0,\tau^0)\|_{L^{\infty}(0,\infty;L^2)}\gtrsim a^{\frac 1 2}.
  $$ 
We refer the readers to Lemma \ref{2dsharp} for more discussion, where we will present examples satisfying the exact vanishing damping rate.
 \end{rema}

 \subsubsection{Main results in $\R^3$.}

 Our main results for $d=3$ can be stated as follows.
 \begin{theo}\label{theo3}
		Let $d=3$ and $a\in[0,1]$. Assume a divergence-free field $u_0\in H^1\cap B^1_{\infty,1}$ and a symmetric matrix $\tau_0\in H^1\cap B^0_{\infty,1}$. There exists some positive constant $\ep$ sufficiently small such that if
	    \begin{align}
		\|(u_0,\tau_0)\|_{H^1}+\|(\nabla u_0,\tau_0)\|_{B^0_{\infty,1}} \leq \ep,
		\end{align}
		then \eqref{eq0} admits a global solution $(u^a,\tau^a)$ with
		$$
		(u^a,\tau^a) \in L^{\infty}(0,\infty;H^1\cap B^1_{\infty,1})\times L^{\infty}(0,\infty;H^1\cap B^0_{\infty,1}).
		$$
		Moreover, for any $T>0$, there holds 
  $$\|(u^a,\tau^a)-(u^0,\tau^0)\|_{L^{\infty}(0,T;L^2)}\leq C_T a,
  $$ 
  and 
  $$\lim_{a\rightarrow0}\|(u^a,\tau^a)-(u^0,\tau^0)\|_{L^{\infty}(0,T;\dot{H}^1\cap B^1_{\infty,1})\times L^{\infty}(0,T;\dot{H}^1\cap B^0_{\infty,1})}=0.
  $$
  \end{theo}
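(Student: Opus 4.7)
The plan is to mirror the 2D argument of Theorem~\ref{theo1}, adapting the a priori estimates to three dimensions where the lack of a vorticity cancellation forces one to rely on the smallness of the initial data throughout, and to base the damping limit on an energy estimate for the difference that carries an explicit $-a\tau^0$ source.

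\emph{Step 1: Uniform-in-$a$ global existence.} Viewing the momentum equation as a forced incompressible Euler system $\partial_t u^a + u^a\cdot\nabla u^a + \nabla P = \mathrm{div}\,\tau^a$, the critical transport estimate in $B^1_{\infty,1}$ (cf.\ \cite{BCD11,GLY19}) gives
\begin{equation*}
  \|u^a\|_{L^\infty_T B^1_{\infty,1}} \lesssim \|u_0\|_{B^1_{\infty,1}}\exp\!\bigl(C\|\nabla u^a\|_{L^1_T L^\infty}\bigr) + \int_0^T\|\mathrm{div}\,\tau^a\|_{B^1_{\infty,1}}\,dt,
\end{equation*}
while heat-semigroup maximal regularity applied to $\partial_t\tau^a - \Delta\tau^a = -u^a\cdot\nabla\tau^a - a\tau^a - Q(\nabla u^a,\tau^a) + D(u^a)$ produces a bound on $\|\tau^a\|_{L^\infty_T B^0_{\infty,1}} + \|\tau^a\|_{L^1_T B^2_{\infty,1}}$ that, together with an $L^2$-based $H^1$ estimate, furnishes the needed control on $\mathrm{div}\,\tau^a$. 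The dangerous top-order coupling $D(u^a)\leftrightarrow\mathrm{div}\,\tau^a$ is neutralized by the $L^2$ identity $\langle\mathrm{div}\,\tau,u\rangle + \langle D(u),\tau\rangle = 0$, which yields $\dot{H}^1$ dissipation on $\tau^a$. Since the damping term $+a\tau^a$ has favorable sign and can be discarded in upper bounds, closing the nonlinear loop via smallness of $\varepsilon$ gives global existence together with bounds uniform in $a \in [0,1]$, and in particular for $a=0$.

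\emph{Step 2: $L^2$-difference estimate.} Setting $\delta u=u^a-u^0$, $\delta\tau=\tau^a-\tau^0$, $\delta P = P^a - P^0$, the pair solves
\begin{align*}
  \partial_t\delta u + u^a\cdot\nabla\delta u + \delta u\cdot\nabla u^0 + \nabla\delta P &= \mathrm{div}\,\delta\tau,\\
  \partial_t\delta\tau + u^a\cdot\nabla\delta\tau + \delta u\cdot\nabla\tau^0 + a\,\delta\tau - \Delta\delta\tau &= D(\delta u) - a\tau^0 - \bigl(Q(\nabla u^a,\tau^a)-Q(\nabla u^0,\tau^0)\bigr),
\end{align*}
with zero initial data. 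An $L^2$ energy computation, using the cancellation $\langle\mathrm{div}\,\delta\tau,\delta u\rangle + \langle D(\delta u),\delta\tau\rangle = 0$ and the uniform Lipschitz and $L^\infty$ control on $u^{0,a},\tau^{0,a}$ from Step~1, gives
\begin{equation*}
  \frac{d}{dt}\|(\delta u,\delta\tau)\|_{L^2}^2 + 2\|\nabla\delta\tau\|_{L^2}^2 \lesssim \|(\delta u,\delta\tau)\|_{L^2}^2 + a\,\|\tau^0\|_{L^2}\|\delta\tau\|_{L^2},
\end{equation*}
and Gr\"onwall on $[0,T]$ yields $\|(\delta u,\delta\tau)\|_{L^\infty_T L^2} \le C_T a$, the claimed linear-in-$a$ bound.

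\emph{Step 3: Strong convergence in higher norms, and main obstacle.} For the qualitative limit in $\dot{H}^1\cap B^1_{\infty,1}$ (and $\dot{H}^1\cap B^0_{\infty,1}$ for $\tau$), I would perform the analogous energy/transport estimates on $(\delta u,\delta\tau)$ one derivative higher, treating $-a\tau^0$ as a forcing of size $a$ in $L^\infty_T(H^1\cap B^0_{\infty,1})$, and then close by interpolating the $L^2$ rate of Step~2 against the uniform higher-order bound of Step~1. The principal obstacle sits in the critical $B^1_{\infty,1}$ estimate: the Besov transport inequality produces a logarithmic loss and a commutator $[u^a\cdot\nabla,\Delta_j]\delta u$ that must be controlled with only $L^\infty_t$ regularity of $\nabla u^a$, since in 3D without decay we do not have the $L^1_t B^0_{\infty,1}$ time integrability exploited in Theorem~\ref{theo2}. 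This is exactly why the convergence here is only qualitative and is stated on a finite interval $[0,T]$ rather than uniformly in time.
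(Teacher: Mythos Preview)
Your outline has two substantive gaps, one in each of Steps 1 and 3.

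\textbf{Global existence (Step 1).} Your plan is to combine a critical transport estimate for $u$ in $B^1_{\infty,1}$ with heat maximal regularity for $\tau$, but this does not close globally in time. The transport bound carries the factor $\exp\bigl(C\int_0^t\|\nabla u^a\|_{B^0_{\infty,1}}\bigr)$, and nothing in your scheme supplies a uniform-in-$t$ bound on that integral; you only obtain $\|\tau^a\|_{L^1_TB^2_{\infty,1}}$, which controls the forcing but not the growth of the transport factor. The $L^2$ cancellation $\langle\mathrm{div}\,\tau,u\rangle+\langle D(u),\tau\rangle=0$ gives dissipation of $\nabla\tau$, not of $\nabla u$. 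The paper resolves this with two ingredients you omit: (i) the cross term $\langle -\eta\tau,\nabla u\rangle$, which produces the missing $\int_0^\infty\|\nabla u\|_{L^2}^2\,dt$ dissipation in the $H^1$ energy (and in 3D the term $\langle u\cdot\nabla u,\Delta u\rangle\neq 0$ is then absorbed using the bootstrap smallness of $\|\nabla u\|_{L^\infty}$); and (ii) the structural variable $\Gamma=\omega-\mathcal{R}\tau$, which satisfies a transport equation with built-in damping $\partial_t\Gamma+u\cdot\nabla\Gamma+\tfrac12\Gamma=F$. It is this artificial damping that yields the uniform-in-time bound $\|\Gamma\|_{B^0_{\infty,1}}\lesssim\varepsilon$, hence $\|\nabla u\|_{B^0_{\infty,1}}\lesssim\varepsilon$, closing the bootstrap. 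In 3D there is also the vortex-stretching term $\omega\cdot\nabla u$ in the $\Gamma$-equation; the paper handles it via its divergence structure, $\|\omega\cdot\nabla u\|_{B^0_{\infty,1}}\lesssim\|\omega\|_{B^0_{\infty,1}}\|u\|_{B^1_{\infty,1}}$, which is again small under the bootstrap.

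\textbf{Higher-norm convergence (Step 3).} Your proposed ``interpolate the $L^2$ rate against the uniform higher-order bound'' does not work here: the uniform bounds from Step~1 are exactly at the level $H^1\cap B^1_{\infty,1}$, with no regularity reserve above it, so interpolation yields nothing. The paper instead uses the Bona--Smith approximation: one compares $(u^a,\tau^a)$ with the solution $(u^a_N,\tau^a_N)$ launched from $(S_Nu_0,S_N\tau_0)$, for which Corollary~\ref{5cor1} supplies the extra derivative (uniform $H^2\cap B^2_{\infty,1}$ bounds growing like $2^N$). One then shows $\|\nabla(u^a-u^a_N)\|_{L^2}\lesssim_T c_N\to 0$ uniformly in $a$, and $\|\nabla(u^a_N-u^0_N)\|_{L^2}\lesssim_T a\,2^N$, and combines via the triangle inequality; the same two-parameter splitting is run through the $\Gamma$-equation for the $B^1_{\infty,1}$ limit. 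Your identification of the commutator difficulty is correct, but the resolution is this approximation device, not interpolation.
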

  \begin{rema}
   In the 3D case, the key is to control the core term $\omega^a\cdot \nabla u^a$ from the vorticity equation. For any $a\in [0,1]$, we extends the study of critical regularity to the case $d=3$. By virtue of the Bona-Smith method in \cite{BS75}, for any $T>0$, we also prove vanishing damping limit under the same topology with initial data. By providing additional initial values, we can improve the regularity of global strong solutions for the inviscid Oldroyd-B equation \eqref{eq0}.
\end{rema}
  \begin{theo}\label{theo4} 
  Under the same conditions as in Theorem \ref{theo3}, if additionally $(u_0,\tau_0) \in \dot{B}^{-\frac 3 2}_{2,\infty},$
		then there exist a positive constant $C_0$, which depends on initial value, such that
		$$
		\|(u^a,\tau^a)\|_{L^2} +(1+t)^{\frac{1}{2}}\|\nabla(u^a,\tau^a)\|_{L^2}\leq C_0(1+t)^{-\frac{3}{4}},
		$$
  and
   $$\int_{0}^{\infty}\|(\nabla u^a,\tau^a)\|_{B^0_{\infty,1}}dt'\leq C_0.$$
  Furthermore, the following sharp uniform-in-time vanishing damping rate holds  
  \begin{align}\label{sharpthreed}
  \|(u^a,\tau^a)-(u^0,\tau^0)\|_{L^{\infty}(0,\infty;L^2)}\leq C_0a^{\frac 3 4},
  \end{align}
  and 
   $$\lim_{a\rightarrow0}\|(u^a,\tau^a)-(u^0,\tau^0)\|_{L^{\infty}(0,\infty;\dot{H}^1\cap B^1_{\infty,1})\times L^{\infty}(0,\infty;\dot{H}^1\cap B^0_{\infty,1})}=0.
  $$
	\end{theo}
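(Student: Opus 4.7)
The plan is to derive the four assertions in sequence, combining an improved Fourier splitting method with a time-splitting trick that converts the polynomial decay of $(u^a,\tau^a)$ into a polynomial rate in the damping parameter. The basic energy identity
\begin{equation*}
\tfrac{1}{2}\tfrac{d}{dt}\|(u^a,\tau^a)\|_{L^2}^2 + \|\nabla \tau^a\|_{L^2}^2 + a\|\tau^a\|_{L^2}^2 \lesssim \|\nabla u^a\|_{L^\infty}\|\tau^a\|_{L^2}^2
\end{equation*}
dissipates only in the $\tau$ variable, so I would propagate the low-frequency assumption $(u_0,\tau_0)\in \dot B^{-3/2}_{2,\infty}$ through a frequency-localized analysis and then apply a Schonbek-type Fourier splitting at a time-dependent radius $R(t)$: low-frequency mass of $(u^a,\tau^a)$ contributes $R(t)^3\|(u^a,\tau^a)\|_{\dot B^{-3/2}_{2,\infty}}^2$, while high-frequency mass is absorbed by the dissipation of $\nabla \tau^a$ and by the forcing/transport structure of $u^a$. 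Closing the resulting differential inequality produces $\|(u^a,\tau^a)\|_{L^2}\lesssim (1+t)^{-3/4}$; the same argument at the $\dot H^1$ level yields the stated $(1+t)^{-5/4}$ gradient rate. All constants must be uniform in $a\in[0,1]$, which prevents using the damping term to produce decay.

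The time integrability $\int_0^\infty\|(\nabla u^a,\tau^a)\|_{B^0_{\infty,1}}dt<\infty$ would be handled by a high-low frequency decomposition at a time-dependent cutoff $N(t)$. Low frequencies are bounded via Bernstein by the $L^2$ decay rates established above, producing integrands decaying faster than $(1+t)^{-5/4}$ in dimension three. High frequencies are bounded using the uniform $B^1_{\infty,1}\times B^0_{\infty,1}$ estimate from Theorem \ref{theo3} together with the smoothing effect of $\Delta\tau^a$. Optimizing $N(t)$ yields an integrable envelope. This is the novel high-low decomposition strategy advertised in the introduction, needed because the naive bound $\|\tau^a\|_{B^0_{\infty,1}}\lesssim (1+t)^{-1}$ available in two dimensions is not integrable in time.

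With these facts in hand, the sharp vanishing damping rate \eqref{sharpthreed} is obtained as follows. Setting $(\delta u,\delta \tau):=(u^a-u^0,\tau^a-\tau^0)$, the difference system has the same quasilinear structure with an extra forcing $-a\tau^a$ on the $\tau$-equation. An $L^2$ energy estimate, the $\div\tau$/$D(u)$ cancellation, and Gronwall based on the integrability of $\|\nabla u^a\|_{L^\infty}$ give
\begin{equation*}
\|(\delta u,\delta \tau)(t)\|_{L^2} \lesssim a\int_0^t\|\tau^a(s)\|_{L^2}\,ds \lesssim a(1+t)^{1/4},
\end{equation*}
which grows unboundedly. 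I would therefore split at a time $T_a$: for $t\leq T_a$ use the above; for $t\geq T_a$ use the triangle inequality together with $\|(u^a,\tau^a)\|_{L^2}+\|(u^0,\tau^0)\|_{L^2}\lesssim (1+t)^{-3/4}$. Balancing $a(1+T_a)^{1/4}\sim (1+T_a)^{-3/4}$ yields $T_a\sim a^{-1}$ and the sharp exponent $3/4$. Strong convergence in the $\dot H^1\cap B^1_{\infty,1}$ topology then follows by a Bona-Smith density argument: mollify the initial data, use uniform higher-regularity bounds for the mollified solutions, interpolate against the $L^2$ convergence rate, and send $a\to 0$ before the mollification parameter.

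The main obstacle is aligning these three ingredients so that the exponents line up exactly at $3/4$: the decay rate of $(u^a,\tau^a)$, the time integrability of $\|\nabla u^a\|_{L^\infty}$, and the time-splitting balance at $T_a\sim a^{-1}$. Each must be uniform in $a$, which is delicate because the dissipation structure degenerates as $a\to 0$; the Fourier splitting in the first step cannot rely on $a\|\tau^a\|_{L^2}^2$ for the decay itself, and the high-frequency absorption must come purely from the Laplacian acting on $\tau^a$. The novel high-low decomposition needed for the $B^0_{\infty,1}$ time integrability is another key technical input, without which the Gronwall step in the vanishing damping argument would fail.
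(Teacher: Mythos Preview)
Your overall architecture (Fourier splitting for $L^2$ decay, time-splitting at $T_a\sim a^{-1}$ for the sharp $a^{3/4}$ rate, Bona--Smith for the high-norm limit) matches the paper, but two of your intermediate steps are misaligned with the three-dimensional structure.

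First, the ordering is inverted. You write that ``the same argument at the $\dot H^1$ level yields the stated $(1+t)^{-5/4}$ gradient rate'' before establishing integrability. In $\mathbb{R}^3$ this fails: since $\langle u\cdot\nabla u,\Delta u\rangle\neq 0$, the $\dot H^1$ energy inequality carries a term $\|\nabla u\|_{L^\infty}\|\nabla u\|_{L^2}^2$ that Fourier splitting cannot absorb. The paper's route is the reverse: from the $H^1$-level Fourier splitting one first obtains $\|(u,\tau)\|_{H^1}\lesssim(1+t)^{-3/4}$ (through an iteration bootstrapping the nonlinear low-frequency term $\int_{S(t)}|\hat F\cdot\bar{\hat u}|+|\hat G\cdot\bar{\hat\tau}|$ and a propagation of the $\dot B^{-3/2}_{2,\infty}$ bound), then derives the key integrability, and only afterwards runs a time-weighted Gronwall on $(1+t)^{7/2}\|\nabla(u,\tau)\|_{L^2}^2$ using $\int_0^\infty\|\nabla u\|_{L^\infty}\,dt<\infty$ to extract the sharp $(1+t)^{-5/4}$.

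Second, your proposed route to integrability imports the elaborate two-dimensional high--low decomposition with time-dependent cutoff $N(t)$; in $\mathbb{R}^3$ this machinery is unnecessary. The paper simply observes that $\|\tau\|_{B^0_{\infty,1}}\lesssim\|\Delta_{-1}\mathcal R\tau\|_{L^6}+\|(Id-\Delta_{-1})\tau\|_{B^0_{\infty,1}}\lesssim\|\nabla\tau\|_{H^1}$ via Sobolev embedding, and since $\int_0^t(1+t')^{5/4}\|\nabla\tau\|_{H^1}^2\,dt'\lesssim 1$ follows from the $H^1$ decay alone, Cauchy--Schwarz gives $\int_0^\infty\|\tau\|_{B^0_{\infty,1}}\,dt<\infty$ directly. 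The integrability of $\|\nabla u\|_{B^0_{\infty,1}}$ then comes from the damped transport equation for $\Gamma=\omega-\mathcal R\tau$. The two-dimensional obstruction you cite (the non-integrable $(1+t)^{-1}$ envelope) is a genuinely 2D phenomenon; in 3D the extra half-power of decay from the embedding removes it.
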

 \begin{rema}
     In the case with $d=3$ and high regularity, optimal time decay rates of global solutions for the inviscid Oldroyd-B models was proved in \cite{WWXZ22}. We generalize the results to the critical regularity case by proving key integrability of $\|(\nabla u^a,\tau^a)\|_{B^0_{\infty,1}}$, which also helps to discover that there is a relationship between uniform vanishing damping rate and time decay rate.
 \end{rema}
 \begin{rema}
In the 3D case the uniform-in-time $L^2$ damping rate \eqref{sharpthreed} is also sharp, similar to the 2D case.
 \end{rema}
\subsection{Motivations, difficulties and main ideas}
% {\color{red}T. M. Elgindi and F. Rousset \cite{ER15}} proved the global existence of strong solutions to the inviscid Oldroyd-B model \eqref{eq0} with $d=2$ and $a>0$, with initial data $(u_0,\tau_0)\in H^s$ where $s>2$. Specifically speaking, they proved that there exists some sufficiently small constant $\ep$ such that if
% \begin{align*}
% \|(u_0,\tau_0)\|_{H^1}+\|(\omega_0,\tau_0)\|_{B^0_{\infty,1}}\leq \ep,~~~~\omega_0={\rm curl}~u_0,
% \end{align*}
% then the system \eqref{eq0} admits a unique global strong solution $(u^a,\tau^a)\in C([0,\infty); H^s)$.

The well-recognized problem of the global existence of smooth solutions for the 2D inviscid Oldroyd-B model without smallness assumptions is open due to the complex structure of $Q(\na u,\tau)\neq 0$. Therefore improving the smallness assumptions, especially in lower regularity class, is the core question in the area of fluid models. On the other hand, long-time behaviors of solutions including temporal decay and uniform-in-time damping stability are also of deep significance. These problems have been widely studied, however, the existing results are not regularity critical and the vanishing damping limit has not been discussed. The goal of this paper is to dig deeper in this direction and eventually establish the global existence of solutions in the critical low regularity space $u\in B^{1}_{\infty,1}$. Moreover we aim to prove the uniform-in-time vanishing damping limit for
the inviscid Oldroyd-B model and discover the correlation between sharp vanishing damping rate
and the temporal decay rate.

 Assume that $d = 2, 3$ and $a\in[0,1]$, we first establish local well-posedness for \eqref{eq0} in the sense of Hadamard with initial data
$$(u_0,\tau_0)\in B^1_{\infty,1}\times B^0_{\infty,1}.$$ 

Then we divide the results of global theory into the following three cases.

\textbf{Case 1: motivation and main ideas in proving Theorem \ref{6theo} with damping.} \\
In this case, we consider the 2D inviscid
Oldroyd-B model \eqref{eq0} with $a>0$. The motivation is to obtain global solutions for \eqref{eq0} with low regularity. We point out that damping effect is the key to reducing the regularity of $\tau$. We now establish the global existence of solutions of low regularity, where initial data belongs to $(L^2\cap B^1_{\infty,1})\times (L^2\cap B^0_{\infty,1})$, which to our best knowledge, is novel in the literature.
\begin{comment}
{\color{red}We first} present Theorem \ref{6theo} here: 

Let $d=2$ and $a\in (0,1]$. Assume a divergence-free field $u_0\in L^2\cap B^1_{\infty,1}$ and a symmetric matrix $\tau_0\in L^2\cap B^0_{\infty,1}$. There exists some positive constant $c=c(a)$ small enough such that if
	    \begin{align*}
		\|(u_0,\tau_0)\|_{L^2}+\|(\nabla u_0,\tau_0)\|_{B^0_{\infty,1}} \leq c,
		\end{align*}
		then \eqref{eq0} admits a global solution $(u,\tau)$ with
		$$
		(u,\tau) \in L^{\infty}(0,\infty;L^2\cap B^1_{\infty,1})\times L^{\infty}(0,\infty;L^2\cap B^0_{\infty,1}).
		$$
\end{comment}

By virtue of the Littlewood-Paley decomposition theory, we prove a new commutator estimate between the Riesz operator $\mathcal{R}_i$ and the convection operator $u\cdot\nabla$:
\begin{align}\label{1in9}
	\|[\mathcal{R},u\cdot\nabla]\tau\|_{B^0_{\infty,1}} \leq C(\|\nabla u\|_{L^\infty}\|\tau\|_{B^{0}_{\infty,1}}+\|u\|_{L^2}\|\tau\|_{L^2}).
\end{align}
Note that the commutator estimate  of Calderon-Zygmund operator is sharp, which does not require additional regularity about $\|\tau\|_{B^\ep_{\infty,1}}$ with $\ep>0$ like Lemma \eqref{CR} does.

Since $B^0_{\infty,1}$ is a critical space and $Q$ does not have the transportation structure, product laws may require additional regularity. However, in the 2D case, we discover that $Q$ can be controlled by a new estimation.
		Using Bony's decomposition, we have 
  \begin{align}\label{1in10}
 \|Q(\nabla u,\tau)\|_{B^0_{\infty,1}}
			&\lesssim \|T_{\nabla u}\tau\|_{B^0_{\infty,1}}+ \|T_{\tau}\nabla u\|_{B^0_{\infty,1}}+\|R(\nabla u,\tau)\|_{B^0_{\infty,1}}\\ \notag
			&\lesssim \|\nabla u\|_{B^0_{\infty,1}} \|\tau\|_{B^0_{\infty,1}}+\|R(\nabla u,\tau)\|_{B^1_{2,1}} \\ \notag
   &\lesssim \|\nabla u\|_{B^0_{\infty,1}} \|\tau\|_{B^0_{\infty,1}}+\|\nabla u\|_{B^0_{\infty,1}}\|\tau\|_{H^1}.
  \end{align}
Combining \eqref{1in9} and \eqref{1in10}, we close the global estimates with low regularity for \eqref{eq0} with $a>0$.

From now on, we introduce main difficulties and ideas in the process of proving global existence of the strong solutions with $a\in[0,1]$ and uniform-in-time vanishing damping limit for the inviscid Oldroyd-B model \eqref{eq0}. 

\textbf{Case 2: main difficulties and ideas in the 2D case with and without damping.} 
     
\textbf{Difficulty (1):} Uniformly in time control of some critical norms of the solutions.

In order to derive the dissipation of $u^a$, $a>0$ is assumed in the proof of the global existence with the critical small condition. To obtain the global existence of the strong solutions for the inviscid Oldroyd-B model \eqref{eq0} with $a\in[0,1]$, we have to obtain the uniformly in time boundedness of some critical norms of the solution. Besides basic energy estimates, we used the following inner product estimate introduced in \cite{WWXZ22}  to overcome the lack of damping when $a \to 0$:
\begin{align*}
\frac{d}{dt} \langle -\eta\tau, \nabla u \ \rangle + \frac{\eta}{2}\|\nabla u\|_{L^2}^{2}
&= 
\eta \langle\mathbb{P}\left(\mathrm{div}\tau-u\cdot\nabla u\right), \mathrm{div}\tau \rangle\\ 
&\quad+\eta \langle u\cdot\nabla\tau+a\tau+Q(\nabla u,\tau)-\Delta \tau, \nabla u \rangle, 
\end{align*}
where the small constant $\eta$ is to be chosen carefully. Combining with the standard energy estimates, we can therefore obtain the following dissipation of velocity $u^a$: 
$$\int_0^T \|\nabla u^a\|^2_{L^2}ds\leq C,$$
where $C$ is an absolute constant that does not depend on $T$ and $a$.
Using the fact for $d=2$ that
$$\int_{\mathbb{R}^{2}}(u^a\cdot\nabla)u^a\cdot \Delta u^a dx=0,$$
we close the global estimate with small data in $H^1$ for the inviscid Oldroyd-B model \eqref{eq0}, which implies the existence of the global weak solutions for $a\in[0,1]$. 
%Since the core difficulty term $Q$ exists, we cannot elevate weak solutions to strong solutions solely by improving regularity. Specifically, we fail to obtain the global estimate of $\|\omega^a\|_{L^\infty}$ to prove uniqueness of the solutions. 

To elevate weak solutions to strong solutions, we find that the key lies in the control of the $L^\infty$ norm of the vorticity $\omega^{a}$ by virtue of BKM principle \cite{BKM84}. By direct calculation, the vorticity satisfies the following equation: 
\begin{align*}
	\frac{d}{dt}\omega^a + u^a\cdot\nabla\omega^a = \nabla \times {\rm div}~\tau^a.
\end{align*}
Since the right hand side
$\nabla \times {\rm div}~\tau^a$ breaks the conservation laws, while more importantly the equation can not reflect the dissipation properties of vorticity $\omega^a$, it is difficult to get global estimate of $\|\omega^a\|_{L^{\infty}}$ from the above equation directly. To overcome this difficulty, we use the following structural variable $\Gamma^a$ introduced in \cite{ER15}:
  \begin{equation*}
  \Gamma^a=\omega^a-\mathcal{R}\tau^a,
  \end{equation*}
  where $\mathcal{R}:=-(-\Delta)^{-1}{\rm curl}~{\rm div}$. Notice that once we aim to estimate the vorticity $\omega^{a}$ through the structural variable $\Gamma^{a}$ , the estimation of $\tau^{a}$ becomes necessary.

By direct calculation, $\Gamma^a$ satisfies the following transport equation with damping:
  \begin{align}\label{1eq1}
\partial_{t}\Gamma^a+u^a\cdot\nabla\Gamma^a+\frac{1}{2}\Gamma^a=(a-\frac{1}{2})\mathcal{R}\tau^a+\mathcal{R}Q(\nabla u^a,\tau^a)+[\mathcal{R},u^a\cdot\nabla]\tau^a.
  \end{align}
The structural trick $\Gamma^a$ allows us to transfer dissipation $D(u^a)$ from the equation $\tau^a$ to the equation of $\Gamma^a$, which helps to obtain a closed estimate of the global solution under smallness condition in the critical Besov space $B^1_{\infty,1}\times B^0_{\infty,1}$.  In addition, it also plays an important role in studying the solution of supercritical regularity and obtain the key time integrability. When $a>0$, the existence of global strong solutions can be obtained by proving that $\Gamma$ and $\tau$ belong to $C([0,\infty); H^s)$. However, one can not see the uniformly boundedness of the $H^s$ norm in their proof and there is an obstacle when $a\to 0$. Our strategy is to control $\|\tau^a\|_{B^{0}_{\infty,1}}$ directly.
Indeed, by Duhamel's formula, we get
  \begin{equation*}
  \tau^a(t)=e^{(\Delta-a)t}\tau_{0}+\int_{0}^{t}e^{(\Delta-a)(t-t')}\left[Du^a-u^a\cdot\nabla\tau^a-Q(\nabla u^a,\tau^a)\right]dt'.
  \end{equation*}
  Applying the Littlewood-Paley decomposition theory, utilizing the smoothing effect of the heat operator and combining the uniform $L^2$ estimates that we have obtained, we are able to prove that
  \begin{align*}
\|\tau^a\|_{B^{0}_{\infty,1}}&=\|\Delta_{-1}\tau^a\|_{L^{\infty}}+\sum_{j\geq 0}\|\Delta_{j}\tau^a\|_{L^{\infty}}\\
  &\lesssim \|\tau^a\|_{L^{2}}+\|\tau_{0}\|_{B^{0}_{\infty,1}}+\sum_{j\geq 0}\int_{0}^{t}\left\|e^{(\Delta-a)(t-t')}\Delta_{j}\left[Du^a-u^a\cdot\nabla\tau^a-Q(\nabla u^a,\tau^a)\right]\right\|_{L^{\infty}}dt'\\
  &\leq C.
  \end{align*} 
Hence, $\|\omega^{a}\|_{B^{0}_{\infty,1}}$ is uniformly bounded provided $\|\Gamma^{a}\|_{B^{0}_{\infty,1}}$ is uniformly. By virtue of the estimates on \eqref{1eq1}, we conclude that 
\begin{align}\label{1in1}
\|(u^a,\tau^a)\|_{L^{\infty}(0,\infty;H^1)}+\|(\nabla u^a,\tau^a)\|_{L^{\infty}(0,\infty;B^0_{\infty,1})}
\leq C(\|(u_0,\tau_0)\|_{H^1}+\|(\nabla u_0,\tau_0)\|_{B^0_{\infty,1}}). 
\end{align}

\textbf{Difficulty (2):} The obstacle of extra regularity due to the convection term when considering vanishing damping limit.

 When considering vanishing damping limit of the inviscid Oldroyd-B model \eqref{eq0} for any $T>0$, the transport term $u^a\cdot\nabla u^a$ requires extra regularity to control. To overcome it, notice that global solutions is uniformly bounded in time, see \eqref{1in1}, we can elevate regularity of global solutions solely by improving regularity of initial data. Moreover, through precise estimates and the structural trick for the unknown $\Gamma^a$, we deduce that
\begin{align}\label{1in2}
\|(u^a,\tau^a)\|_{L^{\infty}(0,\infty;H^2)}+\|(\nabla u^a,\tau^a)\|_{L^{\infty}(0,\infty;B^1_{\infty,1})}
\leq C(\|(u_0,\tau_0)\|_{H^2}+\|(\nabla u_0,\tau_0)\|_{B^1_{\infty,1}}), 
\end{align}
where the constant $C$ does not depend on the initial data. The solution of supercritical regularity is also uniformly bounded in time and the bound depends on the initial data linearly, which is useful for considering vanishing damping limit for any $T>0$ under the same topology. 

With the results of the solutions are uniformly bounded in time, we first prove vanishing damping rate for low frequency of the solutions. For any $T>0$, there holds 
  $$\|(u^a,\tau^a)-(u^0,\tau^0)\|_{L^{\infty}(0,T;L^2)}\leq C_Ta,
  $$ 
and 
$$\|\tau^{a}-\tau^{0}\|_{L^{\infty}(0,T;B^{0}_{\infty,1})}\leq C_T a^{\frac{3}{4}}.$$
In general analysis for stability and vanishing limit, there is a phenomenon of supercritical regularity in the estimate of transport terms $u^a\cdot \nabla u^a$. We need to introduce a low-frequency truncated smoothing operator $S_N$ to quantify the regularity, see \cite{BS75}. Let $(u_N^a,\tau_N^a)$ be a solution of the inviscid Oldroyd-B model $\eqref{eq0}$ with the initial data $(S_N u_0,S_N\tau_0)$ and $a\in[0,1]$. Using \eqref{1in1} and \eqref{1in2} we deduce that for any $a\in[0,1]$ and $T>0$, there holds
\begin{align*}
      \|\nabla(u^a-u_N^a,\tau^a-\tau_N^a)\|_{L^2}\leq C_T\|(Id-S_N)(u_0,\tau_0)\|_{H^1},
  \end{align*}
and
  \begin{align*}
      \|\nabla(u_N^a-u_N^0,\tau_N^a-\tau_N^0)\|_{L^2}\leq C_T a 2^{N}.
  \end{align*}
  Combining the above inequalities, we conclude that
   \begin{align*}
      \|\nabla(u^a-u^0,\tau^a-\tau^0)\|_{L^2}\leq C_T (a 2^{N}+\|(Id-S_N)(u_0,\tau_0)\|_{H^1}).
  \end{align*}
  This implies that
  $$\lim_{a\rightarrow0}\|(u^a,\tau^a)-(u^0,\tau^0)\|_{L^{\infty}(0,T;\dot{H}^1)}=0.
  $$
By virtue of the equation \eqref{1eq1} of $\Gamma^a$ and the time uniform bound of solutions with supercritical regularity, we also infer that
\begin{align*}
\|\Gamma^{a}-\Gamma^{0}\|_{L^{\infty}(0,T;B^{0}_{\infty,1})}\leq C_{T}(\|(Id-S_N)(u_0,\tau_0)\|_{(H^1\cap B^1_{\infty,1})\times (H^1\cap B^0_{\infty,1})}+a^{\frac{1}{2}}2^{N}).
\end{align*}
This implies that
 $$\lim_{a\rightarrow0}\|u^a-u^0\|_{L^{\infty}(0,T; B^1_{\infty,1})}=0.$$

\textbf{Difficulty (3):}  Establishing the time integrability of $\|\nabla u^a\|_{B^0_{\infty,1}}$ for initial data with critical regularity.

In this paper, we explore a new problem: the uniform in time vanishing damping limit of the solution. Indeed, we first analyze the following quantity:
$$\|(u^a,\tau^a)-(u^0,\tau^0)\|_{L^{\infty}(0,\infty;L^2)}
  $$ 
as $a$ approaches zero.
To tackle this problem, we find that the key to get a uniform in time control is to obtain the time integrability of $\|\nabla u^a\|_{B^0_{\infty,1}}$. 

To do this, it is necessary to derive the decay rates for the low-frequency quantities, specifically $\|(u,\tau)\|_{H^{1}}$. For $d=2$, recall that
      $$\langle u^a\cdot\nabla u^a, \Delta u^a \rangle=0,$$
      by virtue of the improved Fourier splitting method \cite{DLY23}, we obtain, for any $a\in[0,1]$, 
      \begin{align}\label{1in3}
		\|(u^a,\tau^a)\|_{L^2} +(1+t)^{\frac{1}{2}}\|\nabla(u^a,\tau^a)\|_{L^2}\leq C_0(1+t)^{-\frac{1}{2}}.
	\end{align}
 However, we fail to derive the decay rates for $\|(\nabla u^a,\tau^a)\|_{B^0_{\infty,1}}$ directly due to the lack of regularity of initial data. A natural approach is to prove the time intergrability of $\|\Gamma\|_{B^0_{\infty,1}}$ and $\|\tau^a\|_{B^0_{\infty,1}}$.  Unfortunately, this approach fails for $d=2$, even under high regularity, as we can only have
$$\|\tau^a\|_{B^0_{\infty,1}}\leq C_0(1+t)^{-1},$$ 
 which is critically nonintegrable. The main challenge lies in the low-frequency decay rate of $\tau^a$. To address this, we introduce a novel method of high-low frequency decomposition to get the following key integrability:
\begin{align}\label{1in4}
    \int_{0}^{\infty}\|\nabla u^a\|_{B^0_{\infty,1}}dt'\leq C_0.
\end{align}

For the high-frequency component, we encounter no significant challenges. We have
 \begin{align*}
	\int_{0}^{t}\|(Id-\Delta_{-1})\tau^a\|_{B^0_{\infty,1}}dt' \lesssim\int_{0}^{t}\|\nabla^2 \tau^a\|_{L^2} dt'\lesssim\left(\int_{0}^{t}(1+t')^{\frac 5 4}\|\nabla \tau^a\|^2_{L^{2}} dt'\right)^{\frac 1 2}\lesssim C_0,
	\end{align*}
With the help of the equation \eqref{1eq1} of $\Gamma^a$ again, we derive that
\begin{align*}
\int_0^t\|(Id-\Delta_{-1})\Gamma^a\|_{B^0_{\infty,1}}dt' &\lesssim C_0+ \int_0^t \int_0^{t'} e^{-\frac 1 4(t'-s)} \|(Id-\Delta_{-1})\mathcal{R}\tau^a\| 
_{B^0_{\infty,1}} dsdt'  \\ \notag
&\lesssim C_0+ \int_0^t \|(Id-\Delta_{-1})\mathcal{R}\tau^a\|   
_{B^0_{\infty,1}} dt'  \\ \notag
&\lesssim C_0.
\end{align*}
Thus we get the intergrability of the high-frequency part: 
\begin{align*}
\int_0^t\|(Id-\Delta_{-1})\nabla u^a\|_{B^0_{\infty,1}}dt'\lesssim \int_0^t\left(\|(Id-\Delta_{-1})\Gamma^a\|_{B^0_{\infty,1}}+\|(Id-\Delta_{-1})\mathcal{R} \tau^a\|_{B^0_{\infty,1}}\right)dt'\lesssim C_0.
\end{align*}
Now it remains to show that
\begin{equation*}
\int_{1}^{\infty}\left\|\Delta_{-1}\nabla u^a\right\|_{L^{\infty}}dt\lesssim C_{0}.
\end{equation*}
 Due to the smoothing effect of $\Delta_{-1}$, we can control it using energy norms. Applying time decay rates \eqref{1in3} together with the basic energy estimates, we first derive an integral form characterization of the decay effects: 
 \begin{align*}
	\int_{0}^{t}(1+t')^{1-\delta}(\|\nabla u^a\|^2_{L^{2}}+\|\nabla \tau^a\|^2_{H^{1}}) dt'
 \lesssim C_0\frac {1}{\delta}.
\end{align*}
and 
  \begin{align*}
	\int_{0}^{t}(1+t')^{2-\delta}\|\nabla^2 \tau^a\|^2_{L^{2}}dt'
\lesssim C_0\frac {1}{\delta},
	\end{align*}
 where $\delta\in(0,1)$. 
Furthermore we can establish the time integrability of higher-order derivatives of the solutions to \eqref{eq0} as follows:
\begin{align*}
\int_{0}^{t}(1+t')^3\|\Delta_{-1}\nabla^2 u^a\|_{L^{2}}^2dt' \lesssim C_0(1+t)^{\frac 32}.
\end{align*}
Note that there is a time growth on the right side of this inequality. For more details, one can see the proof of \eqref{4in10}. We introduce a novel inequality to control this growth. By Lebesgue's monotone convergence theorem, we deduce that 
\begin{align*}
\int_{1}^{\infty}\left\|\Delta_{-1}\nabla u^a\right\|_{L^{\infty}}dt\lesssim&\int_{1}^{\infty}\left\|\nabla u^a\right\|_{L^{2}}^{\frac{1}{2}}\left\|\Delta_{-1}\nabla^{2}u^a\right\|_{L^{2}}^{\frac{1}{2}}dt \\ \notag
\lesssim&\sum_{j=0}^{\infty}\int_{2^{j}}^{2^{j+1}}\left\|\nabla u^a\right\|_{L^{2}}^{\frac{1}{2}}\left\|\Delta_{-1}\nabla^{2}u^a\right\|_{L^{2}}^{\frac{1}{2}}dt\\ \notag
 \lesssim&C_0\sum_{j=0}^{\infty}2^{-\frac{j}{2}}\int_{2^{j}}^{2^{j+1}}\left\|\Delta_{-1}\nabla^{2}u^a\right\|_{L^{2}}^{\frac{1}{2}}dt\\ \notag
 \leq&C_0\sum_{j=0}^{\infty}2^{-\frac{j}{2}}\left(\int_{2^{j}}^{2^{j+1}}(1+t)^{3}\left\|\Delta_{-1}\nabla^{2}u^a\right\|_{L^{2}}^{2}dt\right)^{\frac{1}{4}}\left(\int_{2^{j}}^{2^{j+1}}(1+t)^{-1}dt\right)^{\frac{3}{4}}\\ \notag
 \lesssim&C_0\sum_{j=0}^{\infty}2^{-\frac{j}{8}}\lesssim C_0.
 \end{align*}
Consequently, we conclude that \eqref{1in4} holds true.

\textbf{Difficulty (4):} Demonstrating the optimality of the uniform vanishing damping rate.

The discovery of the key integrability enable us to establish the uniform vanishing damping limit in $L^2$:
$$\|(u^a,\tau^a)-(u^0,\tau^0)\|_{L^{\infty}(0,\infty;L^2(\R^{2}))}\lesssim a^{\frac 1 2},
  $$ 
Our goal is to show that the approximation rate $a^{\frac 1 2}$ is indeed optimal. Heuristicly speaking, one can consider the uniform vanishing damping limit for the damped heat equation:
\begin{align}\label{1eq2}
    \partial_t f^a-\Delta f^a+af^a=0,~~~~f^a|_{t=0}=f_0,
\end{align}
where $a\geq 0$, $\hat{f_{0}}(\xi)\in C^{\infty}(\R^{2})$ with
\begin{align*}
\hat{f_{0}}(\xi)=
\begin{cases}
1,\quad |\xi|\leq 1,\\
0,\quad |\xi|\geq 2.
\end{cases}
\end{align*}

Then we infer that
\begin{align*}
    \|f^a-f^0\|_{L^2}
    &=(1-e^{-at})\|e^{\Delta t}f_0\|_{L^2} \\
    &\lesssim C_0(at)^{\frac{1}{2}}(1+t)^{-\frac{1}{2}} \\
    &\lesssim C_0 a^{\frac{1}{2}}.
\end{align*}
In fact, the inverse inequality also holds when $t\sim a^{-1}$, which implies that the approximation rate is optimal for this toy model. In addition,  we observe a novel phenomenon where the rate of the uniform vanishing damping limit in $L^2$ is linked to the time decay rate in $L^2$. Keeping this in mind, we find that the optimality of the uniform vanishing damping limit in $L^2$ space for \eqref{eq0} can be showed in a similar way, after making the following new observation:
let $\tilde{\tau}^a := \text{trace}~ \tau^a$, then $\tilde{\tau}^a$ satisfies the following heat equation:
\begin{align*}
    \partial_t \tilde{\tau}^a - \Delta \tilde{\tau}^a + a \tilde{\tau}^a = Q_1(u^a, \nabla \tau^a) + Q_2(\nabla u^a, \tau^a),
\end{align*}
where $Q_1$ and $Q_2$ are quadratic terms and we have used the fact that $\text{trace}~ Du = 0$. By employing some perturbation arguments, we can construct initial data $u_0$ and $\tau_0$ satisfying the conditions of our theorem and that the solution has the lower bound
$$\|(\tilde{\tau}^a-\tilde{\tau}^0)\|_{L^{\infty}(0,\infty;L^2)}\gtrsim a^{\frac 1 2},
  $$ 
which implies the optimality of the approximation rate. Indeed, the above analysis is still valid when $d=3$.

%By virtue of the optimal decay rates \eqref{1in3} and key integrability \eqref{1in4}, we obtain the uniform vanishing damping limit and discover the new phenomenon for \eqref{eq0} that the rate of uniform vanishing damping limit in $L^2$ is related to the time decay rate in $L^2$. Under the conditions in Theorem \ref{theo2}, there holds 

\textbf{Difficulty (5):} Obtaining the uniform vanishing damping limit in the same topology.

Furthermore, we also aim to obtain the uniform vanishing damping limit for the solutions in spaces of higher regularity. Specifically, we want to prove the vanishing of the quantity
   $$\|(u^a,\tau^a)-(u^0,\tau^0)\|_{L^{\infty}(0,\infty;\dot{H}^1\cap B^1_{\infty,1})\times L^{\infty}(0,\infty;\dot{H}^1\cap B^0_{\infty,1})}
  $$
as $a$ approaches zero. Equivalently, we need to show that for any $0<\delta<1$, there exists a positive number $a_{0}$ such that
\begin{align*}
\|(u^a,\tau^a)-(u^0,\tau^0)\|_{L^{\infty}(0,\infty;\dot{H}^1\cap B^1_{\infty,1})\times L^{\infty}(0,\infty;\dot{H}^1\cap B^0_{\infty,1})}\lesssim \delta,
\end{align*}
whenever $a\leq a_{0}$. 

By virtue of the mean-value theorem, along with the optimal decay rates and key
integrability, we deduce that for above $\delta>0$, there exists $T^{*}\in (0,\infty)$ independent with $a\in[0,1]$, such that
\begin{align}\label{1in6}
\|(u^a,\tau^a)(t)\|_{(H^1\cap B^1_{\infty,1})\times (H^1\cap B^0_{\infty,1})}\lesssim \delta, \quad\text{for all}\ a\in[0,1], ~t\geq T^{*}.
\end{align}
According to the result of vanishing damping limit in finite time interval, there exists $a_{0}>0$ such that
\begin{align*}
\|(u^a,\tau^a)-(u^0,\tau^0)\|_{L^{\infty}(0,T^{*};\dot{H}^1\cap B^1_{\infty,1})\times L^{\infty}(0,T^{*};\dot{H}^1\cap B^0_{\infty,1})}\leq \delta,
\end{align*}
whenever $a\leq a_{0}$. 
This together with \eqref{1in6} and triangle inequality ensures that
\begin{align*}
\|(u^a,\tau^a)-(u^0,\tau^0)\|_{L^{\infty}(0,\infty;\dot{H}^1\cap B^1_{\infty,1})\times L^{\infty}(0,\infty;\dot{H}^1\cap B^0_{\infty,1})}\lesssim \delta.
\end{align*}
Finally, we conclude that
   $$\lim_{a\rightarrow0}\|(u^a,\tau^a)-(u^0,\tau^0)\|_{L^{\infty}(0,\infty;\dot{H}^1\cap B^1_{\infty,1})\times L^{\infty}(0,\infty;\dot{H}^1\cap B^0_{\infty,1})}=0.
  $$

\textbf{Case 3: main difficulties and ideas in the 3D case with and without damping.} \\
For $d=3$, the convection term $u^a\cdot \nabla u^a$ becomes the main term, we encounter some additional difficulty:

\textbf{Difficulty (6):} Treating the transport term $u^a\cdot \nabla u^a$ in establishing the global estimates of solutions.

Since 
$$\int_{\mathbb{R}^{3}}(u^a\cdot\nabla)u^a\cdot \Delta u^a dx\neq 0,$$
we fail to close the global estimate for \eqref{eq0} in $H^1$ with small data. By direct estimate, we obtain 
\begin{align*}
    \left|\int_{\mathbb{R}^{3}}(u^a\cdot\nabla)u^a\cdot \Delta u^a dx\right|
    \lesssim \|\nabla u^a\|_{L^\infty}\|\nabla u^a\|^2_{L^2}.
\end{align*}
Hence we have to combine the estimate of $\|\nabla u^a\|_{L^\infty}$ to close the energy estimates, which leads to the estimates of $\|\Gamma^a\|_{B^0_{\infty,1}}$ and  $\|\tau^a\|_{B^0_{\infty,1}}$. 
%The key to closing estimate for the global solutions also lies in the estimate of vorticity. For $d=3$, we derive that vorticity satisfies the following equation: 
We have
  \begin{align}\label{1eq3}
  \partial_{t}\Gamma^a+u^a\cdot\nabla\Gamma^a+\frac{1}{2}\Gamma^a=(a-\frac{1}{2})\mathcal{R}\tau^a+\mathcal{R}Q(\nabla u^a,\tau^a)+[\mathcal{R},u^a\cdot\nabla]\tau^a+\omega^a\cdot\nabla u^a.
  \end{align}
  For $d=3$, one can see that there is an external high-order nonlinear term $\omega^a\cdot\nabla u^a$ in the equation of $\Gamma^a$. Note that though $B^0_{\infty,1}$ embeds into $L^\infty$, which satisfies the following algebraic property:
 $$\|fg\|_{L^\infty}\leq \|f\|_{L^\infty}\|g\|_{L^\infty},$$ the critical Besov space $B^0_{\infty,1}$ itself is not an algebra. However, by virtue of the divergence structure of $\omega^a\cdot\nabla u^a$, we obtain the estimate: 
\begin{align}\label{1in7}
    \|\omega^a\cdot\nabla u^a\|_{B^0_{\infty,1}}\lesssim\|\omega^a\|_{B^0_{\infty,1}}\|u^a\|_{B^1_{\infty,1}}.
\end{align}
  For any $a\in[0,1]$ and $d=3$, we fortunately close the global estimate for vorticity $\omega^a$ through smallness condition in the critical Besov space $B^0_{\infty,1}$ using this observation. This observation also play a crucial role in considering vanishing damping limit in the same topology for any $T>0$. 

\textbf{Difficulty (7):} Obtaining optimal decay rates and time integrability of $\|\nabla u^a\|_{B^0_{\infty,1}}$ with critical regularity in the case $d=3$.

From now on, we investigate optimal decay rate of global solutions and prove the time integrability of $\|\nabla u^a\|_{B^0_{\infty,1}}$ for the inviscid Oldroyd-B equation \eqref{eq0} with critical regularity, which will be the key to obtain the uniform vanishing damping limit. For $d=3$, optimal time decay rates of global strong solutions with high regularity for the inviscid Oldroyd-B models was proved in \cite{WWXZ22}. However, due to low regularity of the solutions, the case $a\in[0,1]$ is much more challenging. We will address the issues of consistent damping and low regularity by employing an improved Fourier splitting method alongside iteration techniques. We describe the iteration ideas in the following brief chart (The cubic term $\mathcal{N}(u,\tau)$ is defined in \eqref{5de3}).
\begin{equation*}
\begin{aligned}
&{\boxed {\text{Energy Inequality}\ \eqref{5de1}} }\longrightarrow {\boxed{\mathcal{N}(u,\tau)\lesssim (t+1)^{-\frac{1}{4}}}}\xrightarrow{\text{First Iteration}}{\boxed {\|(u,\tau)\|_{H^{1}} \lesssim (t+1)^{-\frac{1}{8}}}}\rightarrow\cdots\\
&{\boxed {\|(u,\tau)\|_{H^{1}} \lesssim (t+1)^{-\frac{1}{8}}}}\longrightarrow {\boxed{ \mathcal{N}(u,\tau)\lesssim (t+1)^{-\frac{3}{4}}}}\xrightarrow{\text{Second Iteration}}{\boxed {\|(u,\tau)\|_{H^{1}} \lesssim (t+1)^{-\frac{3}{8}}}}\rightarrow\cdots\\
&{\boxed {\|(u,\tau)\|_{H^{1}} \lesssim (t+1)^{-\frac{3}{8}}}}\longrightarrow {\boxed{ \|u\|^2_{\dot{B}^{-\frac 3 2}_{2,\infty}}+\|\tau\|^2_{\dot{B}^{-\frac 3 2}_{2,\infty}}\lesssim 1}}\xrightarrow{\text{Third Iteration}}{\boxed {\|(u,\tau)\|_{H^{1}} \lesssim (t+1)^{-\frac{3}{4}}}}\rightarrow\cdots\\
&{\boxed {\|(u,\tau)\|_{H^{1}} \lesssim (t+1)^{-\frac{3}{4}}}}\longrightarrow {\boxed{\text{Key Integrability}}}\xrightarrow{\text{Fourth Iteration}}{\boxed {\|\nabla(u,\tau)\|_{L^{2}} \lesssim (t+1)^{-\frac{5}{4}}}}.
\end{aligned}
\end{equation*}
\begin{rema}
Indeed, the following iteration techniques are essential in the mutual promotion between the decay rates and the low-frequency bounds in Fourier side. In the recent work of H.Jia, Z.Lei and the fourth author \cite{JLY24}, they have also discovered the mutual promotion between the decay rates and the spatial-weighted bounds on the physical side (one may understand the connection between localization condition and low-frequency condition via uncertainty principle).
\end{rema}
First, by the Fourier splitting method, we obtain the initial decay rate 
\begin{align*}
\|(u^a,\tau^a)\|_{L^2}\leq C(1+t)^{-\frac{1}{8}}.
\end{align*}
The main difficulty in proving optimal decay rate lies in the absence of a damping term and the low-frequency estimates of $\tau$. However, we have
\begin{align*}
\int_{S(t)}\int_{0}^{t}|\mathcal{F}Q(\nabla u^a, \tau^a)\cdot\bar{\hat{\tau}^a}|ds'd\xi\leq C(1+t)^{-\frac 3 4} \int_{0}^{t}\|\tau^a\|^2_{L^{2}}\|\nabla u^a\|_{L^{2}}ds',
\end{align*}
where $S(t)=\{\xi:|\xi|^2\leq C_1(1+t)^{-1}\}$ and the constant $C_1>1$. By virtue of the time weighted energy estimate and logarithmic decay rate, we improve the time decay rate to
\begin{align*}
\|(u^a,\tau^a)\|_{L^2}\leq C(1+t)^{-\frac{3}{8}}.
\end{align*}
Note that the time decay rate we obtained is not the optimal decay rate, which can be improved by the enhanced dissipation and the bootstrap argument. We deduce a slightly weaker conclusion $$(u^a,\tau^a)\in L^\infty(0,\infty; \dot{B}^{-\frac{3}{2}}_{2,\infty})$$ from \eqref{eq0} by using the time decay rate $(1+t)^{-\frac{3}{8}}$. Without the smallness restriction of low frequencies, we obtain the optimal time decay rate
\begin{align}\label{1in8}
\|(u^a,\tau^a)\|_{L^2}\leq C(1+t)^{-\frac 3 4}.
\end{align}

Considering the decay rate of $\dot{H}^1$ norm of the solution to \eqref{eq0}, the main difficulty is the unclosed energy estimate. Since 
$$\int_{\mathbb{R}^{3}}(u\cdot\nabla)u\cdot \Delta u dx\neq 0,$$
we cannot directly obtain the time decay rate of the $\dot{H}^{1}$ norm due to the lack of additional higher order regularity for the solutions. However, we find that the time integrability of global solutions can be obtained, which helps to overcome this difficulty. There holds  
 \begin{align*}
	\int_{0}^{t}\|\tau^a\|_{B^0_{\infty,1}}dt' \lesssim\int_{0}^{t}\|\nabla\tau^a\|_{H^1} dt'\lesssim\left(\int_{0}^{t}(1+t')^{\frac 5 4}\|\nabla \tau^a\|^2_{H^{1}} dt'\right)^{\frac 1 2}\lesssim C_0.
	\end{align*}
With the help of the equation \eqref{1eq1} again, we derive that
\begin{align*}
\int_0^t\|\Gamma^a\|_{B^0_{\infty,1}}dt' &\lesssim C_0+ \int_0^t \int_0^{t'} e^{-\frac 1 4(t'-s)} \|\mathcal{R}\tau^a\|   
_{B^0_{\infty,1}} dsdt'  \\ \notag
&\lesssim C_0+ \int_0^t \|\mathcal{R}\tau^a\|   
_{B^0_{\infty,1}} dt'  \\ \notag
&\lesssim C_0.
\end{align*}
These mean that the key integrability of $\|\nabla u^a\|_{B^0_{\infty,1}}$ is also valid in the 3D case.  By virtue of the improved Fourier splitting method and time weighted estimates, for any $a\in[0,1]$, we obtain optimal time decay rates in $\dot{H}^1$ norm of global solutions for the inviscid Oldroyd-B model \eqref{eq0}.

To conclude the proof in the 3D case, we utilize the established optimal decay rates and time integrability to obtain the uniform vanishing damping limit for $d=3$. Furthermore, we discover that the rate of the uniform vanishing damping limit in $L^2$ is closely related to the time decay rate in  $L^2$, similar to the observations made in the 2D case.

\subsection{Organization of the presenting paper }
%{\rm\textbf{Structure of the paper:}}~~
In Section 2, we list the notation and preliminaries which will be used in the sequel. In Section 3, we establish the local well-posedness in the sense of Hadamard with critical regularity. In Section 4, by virtue of the sharp commutator estimate for the Calderon-Zygmund operator, we establish global existence
for low regularity with damping.
In Section 5, we prove the global existence and uniform vanishing damping limit for the 2D
inviscid Oldroyd-B model. In Section 6, we study the global existence and uniform vanishing damping limit for the inviscid Oldroyd-B model with $d=3$. In Section 7, we provide some numerical results regarding the vanishing damping limit in periodic domains.

\newpage 

\section{Notation and preliminaries}
In this section, we present our notation and introduce useful lemmas which will be used in the sequel.

\subsection{Notation} In this article, we adopt the following notation conventions:
\begin{itemize}
\item[(1)] We use $ A\lesssim B $ means $ A\leq C B $ for some absolute constant $ C $. The notation $ A\sim B $ means $ A\lesssim B $ and $ B\lesssim A $. Additionally, $ A \lesssim_{\star} B $ implies $ A\leq C B $ with $ C $ dependent on a specific quantity $ \star $, i.e., $ C=C(\star) $. 
\item[(2)] For any two quantities $X$ and $Y$, we shall denote $X\ll Y$ if
$X \le c Y$ for some sufficiently small constant $c$. The smallness of the constant $c$ is
usually clear from the context. The notation $X\gg Y$ is similarly defined. Note that
our use of $\ll$ and $\gg$ here is \emph{different} from the usual Vinogradov notation
in number theory or asymptotic analysis.

\item[(3)]For two vectors $u,v$, we denote the inner product of $u,v$ by $ u\cdot v $. More precisely,
\begin{equation*}
u\cdot v=\sum_{j}u_{j}v_{j}.
\end{equation*}
For two matrices $A,B$ we denote the inner product of $A,B$ by $A:B$, which is expressed by
\begin{equation*}
A:B=\sum_{i,j}A_{ij}B_{ij}.
\end{equation*}

\item[(4)] For a real-valued function $u:\Omega \to \R$ we denote its usual Lebesgue $L^p$-norm by
\begin{align*}
    \|u\|_{p}=\|u\|_{L^p(\Omega)}=\begin{cases}
       & \left(\int_{\Omega} |u|^p\ dx\right)^{\frac{1}{p}},\quad  1\le p<\infty;\\
       & \operatorname{esssup}_{x\in\Omega}|u(x)|,\quad p=\infty.
    \end{cases}
\end{align*}

\item[(5)] Similarly, we use the weak derivative in the following sense: For  $u$, $v\in L^1_{loc}(\Omega)$, (i.e they are locally integrable); $\forall\phi\in C^{\infty}_0(\Omega)$, i.e $\phi$ is infinitely differentiable (smooth) and compactly supported; and 
$$\int_{\Omega}u(x)\ \partial^{\alpha} \phi(x)\ dx=(-1)^{\alpha_1+\cdots+\alpha_n}\int_{\Omega} v(x)\ \phi(x)\ dx ,$$
then $v$ is defined to be the weak partial derivative of $u$, denoted by $\partial^\alpha u$. 
Suppose $u\in L^p(\Omega)$ and all weak derivatives $\partial^\alpha u$ exist for $|\alpha|=\alpha_1+\cdots+\alpha_n \leq k$ , such that $\partial^\alpha u\in L^p(\Omega)$ for $|\alpha|\leq k$, then we denote $u\in W^{k,p}(\Omega)$ to be the standard Sobolev space. The corresponding norm of $W^{k,p}(\Omega)$ is :
$$\| u\|_{W^{k,p}(\Omega)}=\left(\sum_{|\alpha|\leq k}\int_{\Omega}|\partial^\alpha u|^p\ dx\right)^{\frac{1}{p}}\ .$$

\noindent  For $p=2$ case, we use the convention $H^k(\Omega)$ to denote the space $W^{k,2}(\Omega)$. We often use $D^m u$ to denote any differential operator $D^\alpha u$ for any $|\alpha|=m$: $D^2$ denotes $\partial_{x_i}\partial_{x_j}u$ for $1\leq i , j\leq d$ in particular. 
 
\item[(6)] In this paper we use the following convention for Fourier transform on $\mathbb{R}^d$: 
$$f(x)=\frac{1}{(2\pi)^d}\int_{\xi\in \R^d}\hat{f}(k)e^{i\xi\cdot x}\ d\xi\ ,\ \widehat{f}(\xi)=\int_{\R^d}f(x)e^{-i\xi\cdot x}\ dx\ .$$
Taking advantage of the Fourier expansion, we use the well-known equivalent $H^s$-norm and $\dot{H}^s$-semi-norm of function $f$ by $$\| f\|_{H^s}=\frac{1}{(2\pi)^{d/2}}\|(1+|\xi|^{2s})^{\frac12}\hat{f}(\xi)\|_{L^2}\ ,\ \| f\|_{\dot{H}^s}=\frac{1}{(2\pi)^{d/2}}\||\xi|^{s}\hat{f}(\xi)\|_{L^2}. $$
\item[(7)] We sometimes adopt the notation $\La =(-\Delta)^{\frac12}$, which can be understood from the Fourier side:
$$\widehat{\La f}(k)=|k|\widehat{f}(k).$$
Therefore $\|f\|_{\dot{H}^s}=\frac{1}{(2\pi)^{d/2}}\|\La^s f\|_{L^2}$.
\item[(8)] We use the $\langle \cdot, \cdot \rangle$ to denote the $L^2$ inner product in $\mathbb{R}^d$, $\langle f, g\rangle := \int_{\mathbb{R}^d} f(x)g(x)dx$.  
\end{itemize}

\subsection{Preliminaries}
We now recall the Littlewood-Paley decomposition theory and Besov spaces.
\begin{lemm}\label{LPD}
Let $\mathcal{C}$ be the annulus $\{\xi\in\mathbb{R}^d:\frac 3 4\leq|\xi|\leq\frac 8 3\}$. There exist radial functions $\chi$ and $\varphi$, valued in the interval $[0,1]$, belonging respectively to $\mathscr{D}(B(0,\frac 4 3))$ and $\mathscr{D}(\mathcal{C})$, and such that
$$ \forall\xi\in\mathbb{R}^d,\ \chi(\xi)+\sum_{j\geq 0}\varphi(2^{-j}\xi)=1, $$
$$ \forall\xi\in\mathbb{R}^d\backslash\{0\},\ \sum_{j\in\mathbb{Z}}\varphi(2^{-j}\xi)=1, $$
$$ |j-j'|\geq 2\Rightarrow\mathrm{Supp}\ \varphi(2^{-j}\cdot)\cap \mathrm{Supp}\ \varphi(2^{-j'}\cdot)=\emptyset, $$
$$ j\geq 1\Rightarrow\mathrm{Supp}\ \chi(\cdot)\cap \mathrm{Supp}\ \varphi(2^{-j}\cdot)=\emptyset. $$
The set $\widetilde{\mathcal{C}}=B(0,\frac 2 3)+\mathcal{C}$ is an annulus, and we have
$$ |j-j'|\geq 5\Rightarrow 2^{j}\mathcal{C}\cap 2^{j'}\widetilde{\mathcal{C}}=\emptyset. $$
Further, we have
$$ \forall\xi\in\mathbb{R}^d,\ \frac 1 2\leq\chi^2(\xi)+\sum_{j\geq 0}\varphi^2(2^{-j}\xi)\leq 1, $$
$$ \forall\xi\in\mathbb{R}^d\backslash\{0\},\ \frac 1 2\leq\sum_{j\in\mathbb{Z}}\varphi^2(2^{-j}\xi)\leq 1. $$
\end{lemm}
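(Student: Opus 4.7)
The plan is to reduce the construction to a single radial cutoff function $\chi$ and define $\varphi$ as a dyadic difference. First I would pick a smooth radial nonincreasing function $\chi:\R^{d}\to[0,1]$ with $\chi\equiv 1$ on $\overline{B(0,3/4)}$ and $\supp\chi\subset B(0,4/3)$; such a $\chi$ is obtained by mollifying the indicator of a ball of intermediate radius by a sufficiently small radial mollifier, with monotonicity arranged by symmetric rearrangement (alternatively one can simply write $\chi(\xi)=h(|\xi|)$ for $h$ an explicit nonincreasing smooth profile on $[0,\infty)$). Then I would set $\varphi(\xi):=\chi(\xi/2)-\chi(\xi)$. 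Monotonicity of $\chi$ forces $\varphi\geq 0$; moreover $\varphi\equiv 0$ on $|\xi|\leq 3/4$ (both $\chi$-terms equal $1$ and cancel) and on $|\xi|\geq 8/3$ (both vanish, since $|\xi/2|\geq 4/3$), so $\supp\varphi\subset\mathcal{C}$.

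The two partition-of-unity identities then follow from the telescoping relation
$$
\chi(\xi)+\sum_{j=0}^{N}\varphi(2^{-j}\xi)=\chi(2^{-N-1}\xi),
$$
together with $\chi(2^{-N-1}\xi)\to 1$ as $N\to\infty$ and $\chi(2^{M}\xi)\to 0$ as $M\to-\infty$ for $\xi\neq 0$. The support conditions reduce to radius arithmetic: $\supp\varphi(2^{-j}\cdot)\subset\{\tfrac{3}{4}\cdot 2^{j}\leq|\xi|\leq\tfrac{8}{3}\cdot 2^{j}\}$, and if $|j-j'|\geq 2$ the inner radius of the larger annulus (at least $\tfrac{3}{4}\cdot 2^{j+2}=3\cdot 2^{j}$) exceeds the outer radius of the smaller ($\tfrac{8}{3}\cdot 2^{j}$), forcing disjointness; likewise for $j\geq 1$ the inner radius $\tfrac{3}{4}\cdot 2\geq 3/2$ of $\supp\varphi(2^{-j}\cdot)$ exceeds the outer radius $4/3$ of $\supp\chi$. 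For the last inclusion, observing that $\widetilde{\mathcal{C}}=B(0,2/3)+\mathcal{C}$ sits inside $\{\,1/12<|\xi|<10/3\,\}$ (since $|y+x|\geq|x|-|y|>3/4-2/3$ and $|y+x|\leq|x|+|y|<8/3+2/3$), a gap $|j-j'|\geq 5$ produces the factor $2^{5}=32$, which is exactly sufficient to separate $2^{j}\mathcal{C}$ from $2^{j'}\widetilde{\mathcal{C}}$.

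For the two-sided bounds, the upper bound is immediate since each summand lies in $[0,1]$ and the partition of unity forces their sum to equal $1$, so the sum of squares is bounded above by the sum itself. The lower bound rests on the disjointness already established: at any fixed $\xi$, at most two of the functions $\chi,\ \varphi(2^{-j}\cdot)$, $j\geq 0$, are nonzero; calling them $f_{1}(\xi),f_{2}(\xi)$, one has $f_{1}+f_{2}=1$, and the Cauchy--Schwarz inequality gives $f_{1}^{2}+f_{2}^{2}\geq\tfrac{1}{2}(f_{1}+f_{2})^{2}=\tfrac{1}{2}$. The two-sided identity on $\R^{d}\setminus\{0\}$ is treated identically. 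The only mildly delicate step in the whole construction is the initial production of a $\chi$ that is simultaneously smooth, radial, nonincreasing, and supported where prescribed so that the induced $\varphi$ is nonnegative; everything after that is dyadic bookkeeping.
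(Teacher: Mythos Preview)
Your construction is correct and is precisely the standard one: define $\varphi(\xi)=\chi(\xi/2)-\chi(\xi)$ from a radial nonincreasing cutoff $\chi$, then read off the partition of unity from telescoping, the support separations from elementary radius comparisons, and the square bounds from the observation that at most two terms are nonzero at any point. The paper does not supply its own proof of this lemma; it simply cites the reference \cite{BCD11} (Bahouri--Chemin--Danchin), whose proof is exactly the argument you give.
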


$\mathscr{F}$ represents the Fourier transform and  its inverse is denoted by $\mathscr{F}^{-1}$.
Let $u$ be a tempered distribution in $\mathcal{S}'(\mathbb{R}^d)$. For all $j\in\mathbb{Z}$, define
$$
\Delta_j u=0\,\ \text{if}\,\ j\leq -2,\quad
\Delta_{-1} u=\mathscr{F}^{-1}(\chi\mathscr{F}u),\quad
\Delta_j u=\mathscr{F}^{-1}(\varphi(2^{-j}\cdot)\mathscr{F}u)\,\ \text{if}\,\ j\geq 0,\quad
S_j u=\sum_{j'<j}\Delta_{j'}u.
$$
Then the Littlewood-Paley decomposition is given as follows:
$$ u=\sum_{j\in\mathbb{Z}}\Delta_j u \quad \text{in}\ \mathcal{S}'(\mathbb{R}^d). $$
Let $s\in\mathbb{R},\ 1\leq p,r\leq\infty.$ The non-homogeneous Besov spaces $B^s_{p,r}$ and $B^s_{p,r}(\mathcal{L}^{p'})$ are defined by
$$ B^s_{p,r}=\{u\in S':\|u\|_{B^s_{p,r}}=\Big\|(2^{js}\|\Delta_j u\|_{L^p})_j \Big\|_{l^r(\mathbb{Z})}<\infty\}.$$
For any positive time $T$, the Time-Space Besov Spaces are defined by
\begin{align*}
L^{\rho}_T(B^s_{p,r}) = \{u\in S':\|u\|_{L^{\rho}_T(B^s_{p,r})} = \left\|\left\|2^{js}\|\Delta_ju\|_{L^p}\right\|_{l^r({\mathbb{Z}})}\right\|_{L^{\rho}_T}<\infty\},
\end{align*}
and
\begin{align*}
\tilde{L}^{\rho}_T(B^s_{p,r}) = \{u\in S':\|u\|_{\tilde{L}^{\rho}_T(B^s_{p,r})} = \left\|2^{js}\|\Delta_ju\|_{L^{\rho}_T(L^p)}\right\|_{l^r({\mathbb{Z}})}<\infty\}.
\end{align*}
Moreover, the following embedding relationships hold:
$$
L^{\rho}_T(B^s_{p,r})\hookrightarrow \tilde{L}^{\rho}_T(B^s_{p,r}) ~~~~\text{if}~~~~r\geq\rho~~~~\text{and}~~~~
\tilde{L}^{\rho}_T(B^s_{p,r})\hookrightarrow L^{\rho}_T(B^s_{p,r})  ~~~~\text{if}~~~~r\leq\rho.$$

Let $u$ and $v$ be tempered distributions in $\mathscr{S}'$, then the non-homogeneous $\rm paraproduct$ of $v$ by $u$ is defined as follows:
\begin{align*}
	T_u v = \sum_{j} S_{j-1}u\Delta_j v,
\end{align*}
and the non-homogeneous $\rm remainder$ of $u$ and $v$ is defined by
\begin{align*}
	R(u,v) = \sum_{|k-j|\leq1} \Delta_{k} u \Delta_{j} v\triangleq\sum_{k\geq-1} \Delta_{k}u\tilde{\Delta}_k v.
\end{align*}
At least formally, we obtain the so-called Bony's decomposition:
\begin{align*}
	uv=T_u v + T_v u + R(u,v).
\end{align*}
\begin{lemm}\label{T}
	For any $(s,t)\in\mathbb{R}\times(-\infty,0)$ and $(p,p_1,p_2,r,r_1,r_2)\in[1,\infty]^6$, there exists a constant $C$ such that
	\begin{align*}
		\|T_u v\|_{B^s_{p,r}} \leq C^{1+|s|} \|u\|_{L^{p_1}}\|v\|_{B^s_{p_2,r}},
	\end{align*}
	with $(u,v)\in L^{p_1}\times B^s_{p_2,r}$ and $\frac{1}{p}=\frac{1}{p_1}+\frac{1}{p_2}$. Moreover, we have
	\begin{align*}
		\|T_u v\|_{B^{s+t}_{p,r}} \leq \frac{C^{1+|s+t|}}{-t} \|u\|_{B^{t}_{\infty,r_1}}\|v\|_{B^s_{p,r_2}},
	\end{align*}
	with $(u,v) \in B^{t}_{\infty,r_1}\times B^s_{p,r_2}$ and $\frac{1}{r}=\min\{1,\frac{1}{r_1}+\frac{1}{r_2}\}$.
\end{lemm}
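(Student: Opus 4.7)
The plan is to exploit the spectral localization of the paraproduct $T_u v = \sum_j S_{j-1}u\,\Delta_j v$. By Lemma \ref{LPD} and the standard fact that $S_{j-1}u$ has Fourier support in $B(0,\tfrac{2}{3}2^{j})$ while $\Delta_j v$ has support in $2^j\mathcal{C}$, each summand $S_{j-1}u\,\Delta_j v$ is spectrally localized in $2^j\widetilde{\mathcal{C}}$. Consequently there is a fixed integer $N_0\geq 5$ with $\Delta_k(S_{j-1}u\,\Delta_j v)=0$ whenever $|j-k|>N_0$. This reduces both inequalities to an $L^p$-bound on a single block $S_{j-1}u\,\Delta_j v$, followed by a discrete convolution in the shift variable $j-k$ and a Young/Hölder step in $\ell^r$.

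For the first inequality, I would apply Hölder's inequality with $\tfrac{1}{p}=\tfrac{1}{p_1}+\tfrac{1}{p_2}$ to get
\begin{equation*}
\|S_{j-1}u\,\Delta_j v\|_{L^p}\leq \|S_{j-1}u\|_{L^{p_1}}\|\Delta_j v\|_{L^{p_2}}\leq C\|u\|_{L^{p_1}}\|\Delta_j v\|_{L^{p_2}},
\end{equation*}
where the uniform boundedness of $S_{j-1}$ on $L^{p_1}$ follows from Young's convolution inequality, its kernel being a dilate of a fixed Schwartz function with $j$-independent $L^1$-norm. Multiplying by $2^{ks}$, exchanging $2^{ks}$ with $2^{js}$ at the cost of $C^{|s|}$ (since $|j-k|\leq N_0$), and taking the $\ell^r(k)$-norm gives the stated bound with constant $C^{1+|s|}$.

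For the second inequality, the key new ingredient is an estimate of $\|S_{j-1}u\|_{L^\infty}$ in terms of $\|u\|_{B^t_{\infty,r_1}}$. Writing $S_{j-1}u=\sum_{k'\leq j-2}\Delta_{k'}u$ and factoring $\|\Delta_{k'}u\|_{L^\infty}=2^{-k't}\bigl(2^{k't}\|\Delta_{k'}u\|_{L^\infty}\bigr)$, Hölder in $\ell^{r_1}$ yields
\begin{equation*}
\|S_{j-1}u\|_{L^\infty}\leq \Bigl(\sum_{k'\leq j-2}2^{-k'tr_1'}\Bigr)^{1/r_1'}\|u\|_{B^t_{\infty,r_1}}.
\end{equation*}
Because $t<0$, this geometric series is bounded by $C\,2^{-jt}(1-2^{tr_1'})^{-1/r_1'}$, and the elementary bound $1-2^{tr_1'}\gtrsim (-t)r_1'$ together with the crude majorization $(-t)^{-1/r_1'}\leq (-t)^{-1}$ for $-t\leq 1$ delivers the $1/(-t)$ factor (the regime $-t\geq 1$ is absorbed into $C^{1+|s+t|}$). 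Combining this with $\|\Delta_j v\|_{L^p}\leq c_j\,2^{-js}\|v\|_{B^s_{p,r_2}}$ for some $(c_j)\in\ell^{r_2}$ gives
\begin{equation*}
2^{k(s+t)}\|\Delta_k(T_u v)\|_{L^p}\lesssim \frac{C^{1+|s+t|}}{-t}\|u\|_{B^t_{\infty,r_1}}\sum_{|j-k|\leq N_0} c_j,
\end{equation*}
and a final application of the discrete Young inequality with $\tfrac{1}{r}=\min\{1,\tfrac{1}{r_1}+\tfrac{1}{r_2}\}$ closes the estimate.

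The main obstacle is careful bookkeeping of the constants, particularly arranging the Hölder step so that the correct $(-t)^{-1}$ blow-up appears as $t\to 0^-$ (instead of $(-t)^{-1/r_1'}$), and handling the Young convolution step in two cases according to whether $\tfrac{1}{r_1}+\tfrac{1}{r_2}\leq 1$ or not. Both are standard but error-prone; the saving grace is that the spectral-support restriction $|j-k|\leq N_0$ turns the $\ell^r$-combination into a convolution with a finitely supported sequence, which makes all the index juggling automatic once the block-level bound is in hand.
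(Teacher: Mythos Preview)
The paper does not actually prove Lemma~\ref{T}; it is listed among the preliminaries with the remark that ``the proof to the lemmas above are standard and we refer the readers to \cite{BCD11}, \cite{GLY19}, \cite{HKR11} and \cite{L19}''. Your outline is exactly the textbook argument (cf.\ Theorem~2.82 in \cite{BCD11}), and the first inequality is handled correctly.

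In the second inequality there is, however, a bookkeeping gap. By applying H\"older in $\ell^{r_1}$ to bound $\|S_{j-1}u\|_{L^\infty}$ you obtain the \emph{uniform} estimate $\|S_{j-1}u\|_{L^\infty}\leq C\,2^{-jt}(-t)^{-1}\|u\|_{B^t_{\infty,r_1}}$. This is only an $\ell^\infty(j)$-bound on $2^{jt}\|S_{j-1}u\|_{L^\infty}$; the $\ell^{r_1}$ summability has been discarded. Your block-level estimate therefore reads $2^{k(s+t)}\|\Delta_k(T_uv)\|_{L^p}\lesssim (-t)^{-1}\|u\|_{B^t_{\infty,r_1}}\cdot c_k$ with $(c_k)\in\ell^{r_2}$, which places $T_uv$ in $B^{s+t}_{p,r_2}$, not $B^{s+t}_{p,r}$. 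Since $\tfrac{1}{r}=\min\{1,\tfrac{1}{r_1}+\tfrac{1}{r_2}\}\geq\tfrac{1}{r_2}$, the inclusion $\ell^{r_2}\hookrightarrow\ell^{r}$ goes the wrong way, and the ``final Young inequality'' you invoke has nothing left to act on: the $r_1$-information is already gone.

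The fix is to postpone the estimate in $j$. Write
\begin{equation*}
2^{jt}\|S_{j-1}u\|_{L^\infty}\leq \sum_{k'\leq j-2}2^{(j-k')t}\bigl(2^{k't}\|\Delta_{k'}u\|_{L^\infty}\bigr),
\end{equation*}
which is the discrete convolution of the $\ell^{r_1}$-sequence $(2^{k't}\|\Delta_{k'}u\|_{L^\infty})_{k'}$ with the $\ell^1$-sequence $(2^{mt})_{m\geq 2}$ of norm comparable to $(-t)^{-1}$. Young's inequality then gives $(2^{jt}\|S_{j-1}u\|_{L^\infty})_j\in\ell^{r_1}$ with the correct constant, and the product with $(2^{js}\|\Delta_jv\|_{L^p})_j\in\ell^{r_2}$ lands in $\ell^{r}$ by H\"older. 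This is how the standard reference argues, and is what your closing sentence should have implemented.
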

\begin{lemm}\label{R}
	A constant $C$ exists which satisfies the following inequalities. Let $(s_1,s_2)\in\mathbb{R}^2$ and $(p_1,p_2,r_1,r_2) \in [1,\infty]^4$. Assume that $$\frac{1}{p}=\frac{1}{p_1}+\frac{1}{p_2}\leq1~~~~~\text{and}~~~~~\frac{1}{r}=\frac{1}{r_1}+\frac{1}{r_2}\leq1.$$
	If $s_1+s_2>0$, for any $(u,v)\in B^{s_1}_{p_1,r_1}\times B^{s_2}_{p_2,r_2}$, then we have
	\begin{align*}
		\|R(u,v)\|_{B^{s_1+s_2}_{p,r}} \leq \frac{C^{1+|s_1+s_2|}}{s_1+s_2} \|u\|_{B^{s_1}_{p_1,r_1}}\|v\|_{B^{s_2}_{p_2,r_2}}.
	\end{align*}
	If $r=1$ and $s_1+s_2=0$, for any $(u,v)\in B^{s_1}_{p_1,r_1}\times B^{s_2}_{p_2,r_2}$, then we have
	\begin{align*}
		\|R(u,v)\|_{B^{0}_{p,\infty}} \leq C \|u\|_{B^{s_1}_{p_1,r_1}}\|v\|_{B^{s_2}_{p_2,r_2}}.
	\end{align*}
\end{lemm}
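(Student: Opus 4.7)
The plan is to exploit the spectral localization of Littlewood--Paley blocks together with discrete convolution and Hölder inequalities, which is the standard route for estimating the remainder in Bony's decomposition. First I would observe that, in contrast to the paraproduct case, each summand $\Delta_k u\,\tilde{\Delta}_k v$ in $R(u,v)=\sum_{k\geq-1}\Delta_k u\,\tilde{\Delta}_k v$ has its Fourier transform supported in a \emph{ball} of radius $\sim 2^k$ (the sum of two annuli of comparable size), not in an annulus. Consequently, for some fixed integer $N_0$ depending only on the partition of unity in Lemma \ref{LPD}, one has
\begin{equation*}
\Delta_j R(u,v)=\sum_{k\geq j-N_0}\Delta_j\bigl(\Delta_k u\,\tilde{\Delta}_k v\bigr).
\end{equation*}

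Next I would apply Hölder's inequality in the physical variable with exponent $\tfrac{1}{p}=\tfrac{1}{p_1}+\tfrac{1}{p_2}$, together with the boundedness of $\Delta_j$ on $L^p$, to obtain
\begin{equation*}
\|\Delta_j R(u,v)\|_{L^p}\lesssim \sum_{k\geq j-N_0}\|\Delta_k u\|_{L^{p_1}}\,\|\tilde{\Delta}_k v\|_{L^{p_2}}.
\end{equation*}
Setting $a_k:=2^{ks_1}\|\Delta_k u\|_{L^{p_1}}$ and $b_k:=2^{ks_2}\|\tilde{\Delta}_k v\|_{L^{p_2}}$, I would rewrite this as
\begin{equation*}
2^{j(s_1+s_2)}\|\Delta_j R(u,v)\|_{L^p}\lesssim \sum_{k\geq j-N_0}2^{-(k-j)(s_1+s_2)}\,a_k b_k.
\end{equation*}

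For the first case $s_1+s_2>0$, the weights $w_m:=2^{-m(s_1+s_2)}\mathbf{1}_{\{m\geq -N_0\}}$ form an $\ell^1(\mathbb{Z})$ sequence whose norm is comparable to $\tfrac{C^{1+|s_1+s_2|}}{s_1+s_2}$, coming from the geometric series. I would then invoke discrete Young's inequality $\ell^1*\ell^{r}\hookrightarrow\ell^r$ together with Hölder's inequality $\ell^{r_1}\cdot\ell^{r_2}\hookrightarrow\ell^{r}$ (valid since $\tfrac{1}{r}=\tfrac{1}{r_1}+\tfrac{1}{r_2}$) to conclude that the $\ell^r$-norm in $j$ of the left side is controlled by $\tfrac{C^{1+|s_1+s_2|}}{s_1+s_2}\|a\|_{\ell^{r_1}}\|b\|_{\ell^{r_2}}$, which is the announced estimate up to a harmless factor $2^{|s_2|}$ absorbed into $\|v\|_{B^{s_2}_{p_2,r_2}}$. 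For the endpoint case $s_1+s_2=0$ and $r=1$, I would take the $\ell^\infty$ norm in $j$ directly: the right-hand side becomes $\sum_{k\geq j-N_0}a_k b_k\leq\|a_k b_k\|_{\ell^1}\leq\|a\|_{\ell^{r_1}}\|b\|_{\ell^{r_2}}$, which is finite precisely because $\tfrac{1}{r_1}+\tfrac{1}{r_2}=1$, yielding the $B^0_{p,\infty}$ bound.

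No step here is genuinely difficult; the only point that requires care is the bookkeeping of the constant $\tfrac{C^{1+|s_1+s_2|}}{s_1+s_2}$. Its size is dictated by the geometric sum $\sum_{m\geq -N_0}2^{-m(s_1+s_2)}$, which blows up as $s_1+s_2\to 0^+$, and this is exactly why the first estimate degenerates at the endpoint and must be replaced by the weaker $B^0_{p,\infty}$ bound in the second case. Tracking the dependence of the shift $N_0$ and of the replacement of $\tilde{\Delta}_k v$ by neighbouring $\Delta_j v$'s on $|s_1+s_2|$ produces only a multiplicative factor of the form $C^{1+|s_1+s_2|}$, consistent with the statement.
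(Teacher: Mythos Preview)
Your proposal is correct and follows exactly the standard argument from \cite{BCD11}, which is precisely what the paper defers to: the paper does not supply its own proof of Lemma~\ref{R} but instead states that ``the proof to the lemmas above are standard and we refer the readers to \cite{BCD11}, \cite{GLY19}, \cite{HKR11} and \cite{L19} for the details.'' Your sketch (spectral support in a ball forcing $k\geq j-N_0$, then H\"older in $x$ and discrete Young/H\"older in the frequency index, with the geometric series producing the blow-up factor $(s_1+s_2)^{-1}$) is the textbook route, so there is nothing to compare.
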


\begin{lemm}\label{CI}
Let $1\leq p\leq\infty,\ 1\leq r\leq\infty,\ \sigma>-1-d\min(\frac 1 {p}, \frac 1 {p'})$ and $\rm div~u=0$. Define $I_j=[u\cdot\nabla, \Delta_j]f$. There exists a constant $C$ such that for $\sigma>0$, we have
$$\Big\|(2^{j\sigma}\|I_j\|_{L^p_{x}})_j\Big\|_{l^r(\mathbb{Z})}\leq C(\|\nabla u\|_{L^{\infty}}\|f\|_{B^{\sigma}_{p,r}}+\|\nabla u\|_{B^{\sigma-1}_{p,r}}\|\nabla_{x}f\|_{L^{\infty}_{x}}).$$
If $\sigma>1+\frac d p$ or $\sigma=1+\frac d p,~r=1$, we have
$$\Big\|(2^{j\sigma}\|I_j\|_{L^p_{x}})_j\Big\|_{l^r(\mathbb{Z})}\leq C\|\nabla u\|_{B^{\sigma-1}_{p,r}}\|f\|_{B^\sigma_{p,r}}.$$
If $\sigma<1+\frac d p$, we have
$$\Big\|(2^{j\sigma}\|I_j\|_{L^p_{x}})_j\Big\|_{l^r(\mathbb{Z})}\leq C\|\nabla u\|_{B^{\frac {d} {p}}_{p,r}\cap L^{\infty}}\|f\|_{B^\sigma_{p,r}}.$$
\end{lemm}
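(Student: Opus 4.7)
The plan is to derive all three estimates simultaneously from a single decomposition based on Bony's paraproduct. Write
\begin{align*}
u\cdot\nabla f \;=\; T_{u_k}\partial_k f + T_{\partial_k f}\,u_k + R(u_k,\partial_k f),
\end{align*}
and exploit the fact that $\div u=0$ (so that $T_{\partial_k f}u_k = T_{\partial_k f}u_k$ can be written as a divergence and $R(u_k,\partial_k f)=\partial_k R(u_k,f)$, modulo lower-order terms). Applying $[\,\cdot\,,\Delta_j]$ then splits $I_j$ into the paraproduct commutator
\begin{align*}
I_j^{(1)} = [T_{u_k},\Delta_j]\partial_k f = \sum_{|j'-j|\le 4}[S_{j'-1}u_k,\Delta_j]\Delta_{j'}\partial_k f,
\end{align*}
plus two ``off-diagonal'' remainder-type contributions
\begin{align*}
I_j^{(2)} &= T_{\partial_k\Delta_j f}u_k - \Delta_j T_{\partial_k f}u_k,\\
I_j^{(3)} &= R(u_k,\partial_k\Delta_j f) - \Delta_j R(u_k,\partial_k f).
\end{align*}

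First I would treat $I_j^{(1)}$, which is where the $\|\nabla u\|_{L^\infty}\|f\|_{B^\sigma_{p,r}}$ term is born. Writing $\Delta_j g(x) = 2^{jd}\int \check{\varphi}(2^j(x-y))g(y)\,dy$ and inserting a Taylor expansion $S_{j'-1}u_k(y)-S_{j'-1}u_k(x)=\int_0^1 \nabla S_{j'-1}u_k(x+s(y-x))\cdot(y-x)\,ds$, the kernel gains one power of $2^{-j}$, so
\begin{align*}
\|[S_{j'-1}u_k,\Delta_j]\Delta_{j'}\partial_k f\|_{L^p} \lesssim 2^{-j}\|\nabla u\|_{L^\infty}\,\|\Delta_{j'}\partial_k f\|_{L^p}\lesssim \|\nabla u\|_{L^\infty}\|\Delta_{j'}f\|_{L^p}.
\end{align*}
Multiplying by $2^{j\sigma}$ and taking $\ell^r$ gives the first estimate. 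When instead one wants $\|\nabla u\|_{B^{\sigma-1}_{p,r}}\|\nabla f\|_{L^\infty}$, the same kernel representation but placing $\nabla u$ in the $L^p$-slot and $\nabla f$ in $L^\infty$ yields the symmetric bound.

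Next I would dispatch $I_j^{(2)}$ and $I_j^{(3)}$. By spectral support considerations the terms in $I_j^{(2)}$ only survive when the frequency of $u_k$ is comparable to $2^j$, so after summation in the paraproduct index one finds
\begin{align*}
\|I_j^{(2)}\|_{L^p}\lesssim \sum_{j'\ge j-N_0} 2^{j}\|\Delta_{j'}u\|_{L^\infty}\|S_{j'-1}f\|_{L^p},
\end{align*}
which, combined with the condition $\sigma>0$, is absorbed in the right-hand side after $\ell^r$-summation via a discrete convolution. For $I_j^{(3)}$, using $\div u=0$ I rewrite $R(u_k,\partial_k f)=\partial_k R(u_k,f)$ so one derivative is moved outside, and then I invoke Lemma~\ref{R} with exponents $(s_1,s_2)=(1,\sigma-1)$ (or adjusted when $\sigma$ is close to the forbidden thresholds). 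This is the step that produces the lower bound $\sigma>-1-d\min(1/p,1/p')$: it is exactly the condition ensuring the remainder $R$ is well-defined after the one-derivative shift.

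Finally I would derive the second and third estimates (the product-type bounds valid for large/small $\sigma$) from the first. For the regime $\sigma>1+\frac dp$ (or $\sigma=1+\frac dp$ with $r=1$), the Besov embedding $B^{\sigma-1}_{p,r}\hookrightarrow L^\infty$ converts $\|\nabla u\|_{L^\infty}$ into $\|\nabla u\|_{B^{\sigma-1}_{p,r}}$ for free, so both terms on the right of the first inequality collapse into $\|\nabla u\|_{B^{\sigma-1}_{p,r}}\|f\|_{B^\sigma_{p,r}}$. For $\sigma<1+\frac dp$, one instead uses the embedding $B^{d/p}_{p,r}\cap L^\infty \hookrightarrow $ multiplier space for $B^\sigma_{p,r}$, together with the first inequality, to obtain the third bound. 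The main obstacle I expect is the bookkeeping for $I_j^{(3)}$ near the endpoints of the allowable $\sigma$ range, since one must check that after moving a derivative using $\div u=0$ the resulting bilinear remainder still falls into the convergent regime of Lemma~\ref{R}; this is exactly where the strange-looking condition $\sigma>-1-d\min(1/p,1/p')$ shows up, and it must be tracked carefully to ensure the dyadic sums converge.
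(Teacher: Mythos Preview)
The paper does not actually prove Lemma~\ref{CI}; it lists it among the preliminary lemmas and then writes ``The proof to the lemmas above are standard and we refer the readers to \cite{BCD11}, \cite{GLY19}, \cite{HKR11} and \cite{L19} for the details.'' Your Bony-decomposition outline (split into the paraproduct commutator $I_j^{(1)}$, the reverse paraproduct $I_j^{(2)}$, and the remainder $I_j^{(3)}$, with $\div u=0$ used to write $R(u_k,\partial_k f)=\partial_k R(u_k,f)$) is exactly the standard argument from \cite{BCD11}, so in that sense your approach matches what the paper is implicitly invoking.

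There is, however, a real gap in your last paragraph. You claim the third estimate (the case $\sigma<1+\frac dp$) follows from the first ``together with'' a multiplier embedding. It does not: the second term on the right of the first inequality is $\|\nabla u\|_{B^{\sigma-1}_{p,r}}\|\nabla f\|_{L^\infty}$, and when $\sigma<1+\frac dp$ there is no way to control $\|\nabla f\|_{L^\infty}$ by $\|f\|_{B^\sigma_{p,r}}$. The first inequality is simply not available as an intermediate step in this regime, because it presupposes $\nabla f\in L^\infty$. What one actually does (see \cite[Lemma~2.100]{BCD11}) is return to the decomposition and treat $I_j^{(2)}$ with a different H\"older pairing: place $\Delta_{j'}u$ in $L^p$ (controlled via $\|\nabla u\|_{B^{d/p}_{p,r}}$ and Bernstein) and $S_{j'-1}\partial_k f$ in $L^\infty$ only at the dyadic level, then sum using the condition $\sigma<1+\frac dp$ to make the geometric series converge. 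Your handling of $I_j^{(1)}$ and $I_j^{(3)}$ is fine, but the third estimate requires this separate bookkeeping in $I_j^{(2)}$ rather than a black-box reduction to the first inequality.
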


We have the following product rules.
\begin{lemm}\label{PL}
For any $\epsilon>0$, there exists $C>0$  such that
$$ \|uv\|_{B^0_{\infty,1}}\leq C(\|u\|_{L^{\infty}}\|v\|_{B^0_{\infty,1}}+\|u\|_{B^0_{\infty,1}}\|v\|_{B^\epsilon_{\infty,\infty}}).$$
Let ${\rm div}~u=0$. For nonlinear terms that satisfy the transportation structure, the accurate estimate hold 
$$ \|u\cdot \nabla v\|_{B^0_{\infty,1}}\leq C\|u\|_{B^0_{\infty,1}}\|v\|_{B^1_{\infty,1}}. $$
\end{lemm}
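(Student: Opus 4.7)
Both inequalities yield to Bony's paraproduct decomposition; the nontrivial point for the second is to exploit $\div u = 0$ to handle the borderline remainder.

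For the first inequality, I would split $uv = T_u v + T_v u + R(u,v)$. Lemma \ref{T} directly gives $\|T_u v\|_{B^0_{\infty,1}} \lesssim \|u\|_{L^\infty}\|v\|_{B^0_{\infty,1}}$, which is the first term on the RHS. For $T_v u$, Lemma \ref{T} gives $\|T_v u\|_{B^0_{\infty,1}} \lesssim \|v\|_{L^\infty}\|u\|_{B^0_{\infty,1}}$, and I would use the embedding $B^\epsilon_{\infty,\infty}\hookrightarrow L^\infty$ (valid since $\epsilon > 0$, obtained by summing $\sum_j 2^{-j\epsilon}\cdot 2^{j\epsilon}\|\Delta_j v\|_{L^\infty}$) to replace $\|v\|_{L^\infty}$ by $\|v\|_{B^\epsilon_{\infty,\infty}}$. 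For the remainder I would apply Lemma \ref{R} with $s_1 = 0$, $r_1 = 1$, $s_2 = \epsilon$, $r_2 = \infty$, which gives $R(u,v) \in B^\epsilon_{\infty,1}$; the summability condition $s_1 + s_2 = \epsilon > 0$ is precisely what makes the remainder estimate work, and the embedding $B^\epsilon_{\infty,1} \hookrightarrow B^0_{\infty,1}$ closes the argument.

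For the transport-structure inequality, I would write $u\cdot\nabla v = \sum_i u_i\partial_i v$ and decompose each product by Bony: $u_i\partial_i v = T_{u_i}\partial_i v + T_{\partial_i v}u_i + R(u_i,\partial_i v)$. The two paraproducts are controlled without using divergence-free: Lemma \ref{T} gives $\|T_{u_i}\partial_i v\|_{B^0_{\infty,1}} \lesssim \|u\|_{L^\infty}\|\nabla v\|_{B^0_{\infty,1}}$ and $\|T_{\partial_i v}u_i\|_{B^0_{\infty,1}} \lesssim \|\nabla v\|_{L^\infty}\|u\|_{B^0_{\infty,1}}$, and combined with the elementary bounds $\|\nabla v\|_{L^\infty} + \|\nabla v\|_{B^0_{\infty,1}} \lesssim \|v\|_{B^1_{\infty,1}}$ (via Bernstein plus summing $\sum_j 2^j\|\Delta_j v\|_{L^\infty}$), this yields the desired estimate for both pieces.

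The main difficulty is the remainder $\sum_i R(u_i,\partial_i v)$: both factors live at regularity index $0$, so Lemma \ref{R} is exactly borderline ($s_1+s_2=0$) and would only deliver $B^0_{\infty,\infty}$, one scale weaker than the target $B^0_{\infty,1}$. This is where $\div u = 0$ becomes indispensable. I would observe
\[
\sum_i \Delta_k u_i\, \tilde{\Delta}_k\partial_i v = \sum_i \partial_i\bigl(\Delta_k u_i\, \tilde{\Delta}_k v\bigr) - \tilde{\Delta}_k v\, \sum_i \partial_i \Delta_k u_i = \div\bigl(\Delta_k u\, \tilde{\Delta}_k v\bigr),
\]
the second term vanishing by divergence-free. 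Summing over $k$ yields $\sum_i R(u_i,\partial_i v) = \div\bigl(\sum_k \Delta_k u\otimes\tilde{\Delta}_k v\bigr)$. The product $\Delta_k u\,\tilde{\Delta}_k v$ has frequency support in a ball of radius $\sim 2^{k}$, so Bernstein gives $\|\Delta_j\div(\Delta_k u\,\tilde{\Delta}_k v)\|_{L^\infty} \lesssim 2^j\|\Delta_k u\|_{L^\infty}\|\tilde{\Delta}_k v\|_{L^\infty}$, nontrivial only for $j \leq k+C$. Summing the resulting geometric series in $j$ costs a factor $2^k$, which I would pair with $\sum_k \|\Delta_k u\|_{L^\infty}$ (i.e.\ $\|u\|_{B^0_{\infty,1}}$) against $\sup_k 2^k\|\tilde{\Delta}_k v\|_{L^\infty}$ (controlled by $\|v\|_{B^1_{\infty,1}}$), completing the proof.
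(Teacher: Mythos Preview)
Your proof is correct. The paper does not actually prove this lemma: it groups Lemma~\ref{PL} with several other preliminary lemmas and states that ``the proof to the lemmas above are standard and we refer the readers to \cite{BCD11}, \cite{GLY19}, \cite{HKR11} and \cite{L19} for the details.'' Your argument via Bony's decomposition, with the divergence-free identity $\sum_i R(u_i,\partial_i v)=\sum_k \div(\Delta_k u\,\tilde\Delta_k v)$ rescuing the borderline remainder, is exactly the standard route one finds in those references (see in particular \cite{GLY19}), so there is nothing to compare.
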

We introduce the following lemma to describe the action of the heat kernel.
\begin{lemm}\label{H}
Let $\mathcal{C}$ be an annulus. Positive constants $c$ and $C$ exist such that for any $p\in[1,+\infty]$ and any couple $(t,\lambda)$ of positive real numbers, we have
\begin{align*}
\operatorname{Supp}~\hat{u} \subset \lambda\mathcal{C} \Rightarrow \|e^{t\Delta}u\|_{L^p} \leq Ce^{-ct\lambda^2}\|u\|_{L^p}.
\end{align*}
\end{lemm}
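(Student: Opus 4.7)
The plan is to realize $e^{t\Delta}u$ as convolution with an $L^1$ kernel tailored to the frequency support of $u$ and then apply Young's inequality. Choose a cutoff $\psi\in\mathscr{D}(\mathbb{R}^d\setminus\{0\})$ that equals $1$ on $\mathcal{C}$ and is supported in a slightly larger annulus contained in $\{c_0 \le |\eta| \le C_0\}$. Because $\operatorname{Supp}\hat{u}\subset\lambda\mathcal{C}$ we have $\hat{u}=\psi(\cdot/\lambda)\hat{u}$, so
\begin{equation*}
e^{t\Delta}u = K_{t,\lambda}*u,\qquad K_{t,\lambda}(x):=\mathscr{F}^{-1}\bigl(e^{-t|\xi|^2}\psi(\xi/\lambda)\bigr)(x).
\end{equation*}
By Young's convolution inequality, $\|e^{t\Delta}u\|_{L^p}\le \|K_{t,\lambda}\|_{L^1}\|u\|_{L^p}$ for every $p\in[1,\infty]$, and it is enough to prove $\|K_{t,\lambda}\|_{L^1}\lesssim e^{-ct\lambda^2}$ uniformly in $p$.

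The next step is a scaling reduction to a single parameter. Changing variables $\xi=\lambda\eta$ gives $K_{t,\lambda}(x)=\lambda^d h_s(\lambda x)$ with $s:=t\lambda^2$ and
\begin{equation*}
h_s(y) = (2\pi)^{-d}\int_{\mathbb{R}^d} e^{iy\cdot\eta}\,\psi(\eta)\,e^{-s|\eta|^2}\,d\eta,
\end{equation*}
so that $\|K_{t,\lambda}\|_{L^1}=\|h_s\|_{L^1}$. Everything therefore reduces to a uniform estimate $\|h_s\|_{L^1}\lesssim e^{-cs}$ for $s>0$.

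To control $h_s$ I would integrate by parts in the oscillatory integral: for any integer $N\geq 0$,
\begin{equation*}
(1+|y|^{2})^{N}|h_s(y)| \lesssim_N \sum_{|\alpha|\le 2N}\int_{\mathbb{R}^d}\bigl|\partial_\eta^\alpha\bigl[\psi(\eta)\,e^{-s|\eta|^2}\bigr]\bigr|\,d\eta.
\end{equation*}
On $\operatorname{Supp}\psi$ one has $|\eta|\ge c_0$, hence $e^{-s|\eta|^2}\le e^{-c_0^2 s}$; each derivative of $e^{-s|\eta|^2}$ produces a polynomial factor in $s$, but for any $0<c<c_0^2$ and any $k\ge 0$ one has $s^k e^{-c_0^2 s}\lesssim_k e^{-cs}$. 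Combining with the compactly supported smoothness of $\psi$ yields $\sum_{|\alpha|\le 2N}\int|\partial_\eta^\alpha[\psi\,e^{-s|\eta|^2}]|\,d\eta\lesssim_N e^{-cs}$, and choosing $2N>d$ makes $(1+|y|^2)^{-N}$ integrable, so $\|h_s\|_{L^1}\lesssim e^{-cs}$. Substituting $s=t\lambda^2$ completes the argument.

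The only mildly delicate point, which I expect to be the main bookkeeping obstacle, is the absorption $s^k e^{-c_0^2 s}\lesssim_k e^{-cs}$: one must fix the exponential rate $c$ strictly smaller than $c_0^2$ at the outset so that the polynomial growth produced by differentiating $e^{-s|\eta|^2}$ can be absorbed uniformly in $s$. Once this is done, everything else (the rescaling identity, the dependence on $\lambda$, the integration by parts) is bookkeeping, and the constants remain independent of $\lambda$ because the rescaling only shifts factors of $\lambda^d$ between the kernel and the Lebesgue measure in $\|K_{t,\lambda}\|_{L^1}$.
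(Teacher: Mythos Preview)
Your proof is correct and follows the standard argument from Bahouri--Chemin--Danchin \cite{BCD11} (Lemma~2.4 there), which is precisely what the paper defers to: the paper does not give its own proof of this lemma but states that the result is standard and refers the reader to \cite{BCD11}, \cite{GLY19}, \cite{HKR11}, \cite{L19}. Your cutoff-plus-rescaling reduction to $\|h_s\|_{L^1}\lesssim e^{-cs}$, followed by integration by parts and absorption of the polynomial factors $s^k$ into the exponential, is exactly the textbook route.
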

The following commutator lemma is useful to estimate the structural unknown $\Gamma$.
\begin{lemm}\label{CR}
Let ${\rm div}~u=0$ and $\mathcal{R}=-(-\Delta)^{-1}{\rm curl}~{\rm div}$. For every $(r,p)\in[1,\infty]\times(1,\infty)$ and $\varepsilon>0$, there exists a constant $C=C(r,p,\varepsilon)$ such that
\begin{align*}
\|[\mathcal{R},u\cdot\nabla]\tau\|_{B^{0}_{\infty,r}}\leq C(\|\omega\|_{L^{\infty}}+\|\omega\|_{L^p})(\|\tau\|_{B^\varepsilon_{\infty,r}}+\|\tau\|_{L^p}).
\end{align*}
\end{lemm}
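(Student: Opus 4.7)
The plan is to use Bony's paraproduct decomposition together with the structural fact that $\mathcal{R}$ is a zeroth-order Calderon-Zygmund multiplier, so its commutator with multiplication operators gains one derivative. Writing
$$u\cdot\nabla\tau = T_{u_k}\partial_k\tau + T_{\partial_k\tau}u_k + R(u_k,\partial_k\tau),$$
and using $\mathrm{div}\,u=0$ to turn the remainder into divergence form $R(u_k,\partial_k\tau)=\partial_k R(u_k,\tau)$, I would decompose
$$[\mathcal{R},u\cdot\nabla]\tau \;=\; \underbrace{[\mathcal{R},T_{u_k}\partial_k]\tau}_{\text{(I)}} \;+\; \underbrace{\bigl(\mathcal{R}\,T_{\partial_k\tau}u_k - T_{\partial_k\mathcal{R}\tau}u_k\bigr)}_{\text{(II)}} \;+\; \underbrace{\partial_k\bigl(\mathcal{R}R(u_k,\tau)-R(u_k,\mathcal{R}\tau)\bigr)}_{\text{(III)}},$$
and estimate each piece in $B^{0}_{\infty,r}$ separately.

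For (I), expand dyadically $[\mathcal{R},T_{u_k}\partial_k]\tau=\sum_j[\mathcal{R},S_{j-1}u_k\partial_k]\Delta_j\tau$. Using the kernel representation of $\mathcal{R}$ and a first-order Taylor expansion, the classical commutator estimate gives $\|[\mathcal{R},S_{j-1}u_k]f\|_{L^\infty}\lesssim 2^{-j}\|\nabla S_{j-1}u\|_{L^\infty}\|f\|_{L^\infty}$ when $f$ has spectrum in a $2^j$-annulus; pairing this with the $2^j$ produced by $\partial_k\Delta_j\tau$ yields $\|\Delta_j(\text{I})\|_{L^\infty}\lesssim\|\nabla u\|_{L^\infty}\|\Delta_j\tau\|_{L^\infty}$, whose $\ell^r$-summation is controlled by $\|\nabla u\|_{L^\infty}\|\tau\|_{B^0_{\infty,r}}$. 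For (II), the two paraproducts have essentially the same frequency support, and the symbol difference $\sigma_{\mathcal{R}}(\eta+\xi)-\sigma_{\mathcal{R}}(\eta)$ with $|\xi|\ll|\eta|$ is of order $|\xi|/|\eta|$; this gain of one derivative in the low frequency compensates exactly the $\nabla\tau$ on the high frequency, bounding (II) by $\|u\|_{L^\infty}\|\tau\|_{B^{\varepsilon}_{\infty,r}}$ (with a small $\varepsilon>0$ needed for summability and to absorb the mismatch between paraproduct indices). For (III), the outer divergence moves off the remainder using the standard rule $\|\partial_k f\|_{B^0_{\infty,r}}\lesssim\|f\|_{B^1_{\infty,r}}$, and the boundedness of $\mathcal{R}$ on $L^p$ gives $L^p$-type bounds $\|\mathcal{R}R(u,\tau)-R(u,\mathcal{R}\tau)\|_{L^p}\lesssim\|u\|_{L^p}\|\tau\|_{B^\varepsilon_{\infty,r}}$, which after a Bernstein step are absorbed into the right-hand side.

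To convert the $\nabla u$ appearing in (I) into $\omega$, I would split $\nabla u = \Delta_{-1}\nabla u + (\mathrm{Id}-\Delta_{-1})\nabla u$: for the low-frequency part, $L^p$-boundedness of the Biot-Savart map gives $\|\Delta_{-1}\nabla u\|_{L^\infty}\lesssim\|\Delta_{-1}\omega\|_{L^p}\lesssim\|\omega\|_{L^p}$ by Bernstein, while for the high-frequency part the annular frequency localization makes Biot-Savart act as a bounded convolution so $\|\Delta_j\nabla u\|_{L^\infty}\lesssim\|\Delta_j\omega\|_{L^\infty}$ for $j\ge 0$; these assemble into the factor $\|\omega\|_{L^\infty}+\|\omega\|_{L^p}$ on the right-hand side, at the price of pairing against $\|\tau\|_{B^\varepsilon_{\infty,r}}$ rather than $\|\tau\|_{B^0_{\infty,r}}$ (this is exactly what the $\varepsilon>0$ in the statement provides). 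The similar $L^p$ slot $\|\tau\|_{L^p}$ is needed to close (III) after the low-frequency Biot-Savart step.

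The main obstacle, as I see it, is the careful tracking of cancellation in (II): one must write $\mathcal{R}(S_{j-1}\partial_k\tau\,\Delta_j u_k) - S_{j-1}\partial_k\mathcal{R}\tau\,\Delta_j u_k$ and exploit that the symbol $\sigma_{\mathcal{R}}$ is smooth away from the origin, so that a first-order Taylor remainder of $\sigma_{\mathcal{R}}$ around the "high" frequency produces the required gain; doing this cleanly while preserving the divergence-free structure of $u$ (so that no extra term $\mathcal{R}(\text{div}\,u)\,\tau$ or similar is produced) is the delicate bookkeeping step. Once that is handled, summing in $j$ with the $\varepsilon$-weights and applying the previous Biot-Savart reduction closes the estimate.
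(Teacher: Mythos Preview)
The paper does not actually prove this lemma; it is stated in the preliminaries and the proof is deferred to the references, chiefly \cite{HKR11}. The paper does, however, give a full proof of the closely related (and sharper) Lemma~\ref{CR2}, and that proof is organized differently from yours: rather than applying Bony's decomposition to the product $u\cdot\nabla\tau$ and then commuting $\mathcal{R}$ past each piece, it keeps the commutator intact and paradecomposes the pair $(u,\tau)$ symmetrically,
\[
[\mathcal{R}_i,u\cdot\nabla]\tau=\sum_j[\mathcal{R}_i,S_{j-1}u\cdot\nabla]\Delta_j\tau+\sum_j[\mathcal{R}_i,\Delta_ju\cdot\nabla]S_{j-1}\tau+\sum_j[\mathcal{R}_i,\Delta_ju\cdot\nabla]\tilde\Delta_j\tau,
\]
treating each as a genuine commutator. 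Your route is a legitimate alternative, and your handling of (I) together with the $\nabla u\to\omega$ reduction by high/low splitting is correct; the $\varepsilon>0$ is indeed consumed there, absorbing the logarithmic factor $\|\nabla S_{j-1}u\|_{L^\infty}\lesssim j\|\omega\|_{L^\infty}+\|\omega\|_{L^p}$.

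There is, however, a real gap in your argument for (II). The cancellation you invoke does not exist: in $\mathcal{R}(T_{\partial_k\tau}u_k)$ the operator $\mathcal{R}$ acts at the \emph{output} frequency $\zeta\sim 2^j$ (carried by the high-frequency factor $\Delta_j u$), whereas in $T_{\partial_k\mathcal{R}\tau}u_k$ it acts at the \emph{low} frequency $\xi$ of $S_{j-1}\tau$, with $|\xi|\ll|\zeta|$. The difference $\sigma_{\mathcal{R}}(\zeta)-\sigma_{\mathcal{R}}(\xi)$ is between well-separated points and is $O(1)$, not $O(|\xi|/|\zeta|)$; there is no gain of a derivative. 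The correct way to bound (II) is to abandon the commutator viewpoint and estimate the two paraproducts separately: since $u$ sits at the high frequency, one has $\|\Delta_j u\|_{L^\infty}\lesssim 2^{-j}\|\Delta_j\omega\|_{L^\infty}$ for $j\ge0$, and this $2^{-j}$ cancels the $2^{j}$ from $\|S_{j-1}\nabla\tau\|_{L^\infty}$. A Young-convolution argument in $j$ then yields $\|(\text{II})\|_{B^0_{\infty,r}}\lesssim(\|\omega\|_{L^\infty}+\|\omega\|_{L^p})\|\tau\|_{B^0_{\infty,r}}$ directly, with the $L^p$ slot handling the low frequencies of $u$ and of $\mathcal{R}\tau$. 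This is essentially how the middle piece $A^2_j$ is treated in the paper's proof of Lemma~\ref{CR2}.
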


The following facts can be proved by the standard properties of Calderon-Zygmund operator.
\begin{lemm}\label{CZ}
(1) For any $a\in[1,\infty)$ and $b\in[1,\infty]$, there exists positive constant $C$ such that
\begin{align*}
\|\Delta_{-1}\nabla v\|_{L^{\infty}} \leq C\min\{\|\omega\|_{L^a},\|v\|_{L^b}\}.
\end{align*}
(2) For all $s\in\mathbb{R}$ and $1\leq p,r\leq\infty$, there exists a constant $C$ such that
\begin{align*}
\|(Id-\Delta_{-1}) \mathcal{R}f\|_{B^s_{p,r}} \leq C\|f\|_{B^s_{p,r}}.
\end{align*}
\end{lemm}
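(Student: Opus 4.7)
\textbf{Proof plan for Lemma \ref{CZ}.} The unifying idea for both parts is that $\Delta_{-1}$ and each $\Delta_j$ for $j\ge 0$ frequency-localize to a region where the symbol of the Calderon--Zygmund operator in question is smooth; by Young's inequality it then suffices to bound the $L^1$ norm of the resulting convolution kernel, which is easy once a $\mathcal{S}$-type kernel is in hand.

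For part (1), the plan is to read off both bounds from Young's inequality applied to a Schwartz convolution kernel. First, the operator $\Delta_{-1}\nabla$ is a Fourier multiplier with symbol $i\xi\,\chi(\xi)$, which is smooth and compactly supported in a ball by Lemma \ref{LPD}, so its kernel $K_0 := \mathscr{F}^{-1}(i\xi\,\chi(\xi))$ lies in $\mathcal{S}(\mathbb{R}^d)\subset L^q$ for every $q\in[1,\infty]$. Hence Young gives $\|\Delta_{-1}\nabla v\|_{L^\infty}\le \|K_0\|_{L^{b'}}\|v\|_{L^b}$, which handles the $L^b$ side. For the $L^a$ side, I use the Biot--Savart-type recovery: since $\mathrm{div}\,v=0$ one has $-\Delta v=\mathrm{curl}\,\omega$ (in the appropriate sense in 2D or 3D), so each component of $\nabla v$ equals $\mathcal{T}\omega$ for a zero-order Fourier multiplier $\mathcal{T}$ whose symbol is smooth away from the origin. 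Composing with $\Delta_{-1}$ absorbs the singularity at $\xi=0$: the symbol of $\Delta_{-1}\nabla v$ (in terms of $\omega$) becomes smooth and compactly supported, hence its kernel $K_1$ is again Schwartz, and Young yields $\|\Delta_{-1}\nabla v\|_{L^\infty}\le \|K_1\|_{L^{a'}}\|\omega\|_{L^a}$ for every $a\in[1,\infty)$.

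For part (2), I plan to work dyadic block by dyadic block. Since $(Id-\Delta_{-1})=\sum_{j\ge 0}\Delta_j$ and the spectral support of $\Delta_j$ sits in the annulus $2^j\mathcal{C}$, on which the symbol of $\mathcal{R}$ is smooth, I write $\Delta_j\mathcal{R}f=\Delta_j\mathcal{R}\tilde{\Delta}_j f$ where $\tilde{\Delta}_j$ is a slightly fattened frequency localizer (a finite sum of neighboring $\Delta_{j'}$). The Fourier symbol of $\Delta_j\mathcal{R}$ is $m_j(\xi)=\varphi(2^{-j}\xi)\hat{\mathcal{R}}(\xi)$; by homogeneity of degree zero of $\hat{\mathcal{R}}$, a change of variables $\xi\mapsto 2^j\xi$ gives $\|\mathscr{F}^{-1}(m_j)\|_{L^1}=\|\mathscr{F}^{-1}(m_0)\|_{L^1}$, and $m_0=\varphi\,\hat{\mathcal{R}}$ is smooth and compactly supported (this is precisely where the removal of the low-frequency part $\Delta_{-1}$ is essential, since $\hat{\mathcal{R}}$ is singular at the origin). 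Therefore $\mathscr{F}^{-1}(m_0)\in\mathcal{S}\subset L^1$, and Young gives $\|\Delta_j\mathcal{R}f\|_{L^p}\le C\|\tilde{\Delta}_j f\|_{L^p}$ with $C$ independent of $j\ge 0$ and of $p\in[1,\infty]$. Multiplying by $2^{js}$, taking the $\ell^r$ norm over $j\ge 0$, and absorbing the finite overlap of the $\tilde{\Delta}_j$'s into the constant yields the desired Besov bound.

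The main technical point to check carefully is the uniform-in-$j$ estimate $\|\mathscr{F}^{-1}(m_j)\|_{L^1}\le C$ used in the second part; everything else is bookkeeping of Littlewood--Paley blocks. In particular, the proof never needs $p,r$ to lie in $(1,\infty)$: the use of Young's inequality at each frequency scale avoids the classical Calderon--Zygmund endpoint issues, which is exactly what allows the extension to $p,r\in[1,\infty]$ claimed in the lemma.
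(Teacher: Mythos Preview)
Your proposal is correct and follows the standard route. In fact the paper does not give its own proof of this lemma: it simply states that the result is standard and cites \cite{BCD11}, \cite{GLY19}, \cite{HKR11} and \cite{L19}. The arguments you outline --- Schwartz kernels via compactly supported smooth symbols for part (1), and the scale-invariant $L^1$ kernel bound $\|\mathscr{F}^{-1}(\varphi(2^{-j}\cdot)\hat{\mathcal{R}})\|_{L^1}=\|\mathscr{F}^{-1}(\varphi\,\hat{\mathcal{R}})\|_{L^1}$ coming from the homogeneity of degree zero of $\hat{\mathcal{R}}$ for part (2) --- are exactly what one finds in those references (see in particular Lemma~2.4 and Proposition~2.30 in \cite{BCD11} and the related discussion in \cite{HKR11}). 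One small remark: in part (1) you implicitly use $\mathrm{div}\,v=0$ for the Biot--Savart step, which the lemma statement does not make explicit; in the paper this is always applied to the divergence-free velocity, so the assumption is harmless in context, but it is worth flagging.
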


We also present following estimates for the transport equations, which will be frequently used in the subsequent chapters.
\begin{lemm}\label{TP}
Suppose $f$ satisfies the following transport equation:
\begin{align*}
\left\{
\begin{array}{l}
 \partial_t f + v\cdot\nabla f = g\\
 f|_{t=0}=f_0
\end{array}
\right.
\end{align*}
with $v$ divergence free. Then the following estimates hold:
\begin{align*}
  \|f\|_{B^\sigma_{\infty,1}} \leq C \|f_0\|_{B^\sigma_{\infty,1}} e^{\int_0^t \|\nabla v\|_{B^0_{\infty,1}}ds } + C\int_0^t e^{\int_s^{t}\|\nabla v\|_{B^0_{\infty,1}}dt'} \|g\|   
_{B^\sigma_{\infty,1}} ds, 
\end{align*}
where $\sigma\in [0,1]$.
\end{lemm}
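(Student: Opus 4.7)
}
The plan is to localize in frequency, obtain an $L^\infty$ bound on each dyadic block via the characteristic flow, then assemble the pieces into the $B^\sigma_{\infty,1}$ norm using the commutator estimate of Lemma \ref{CI} and conclude with Gronwall. First, applying $\Delta_j$ to the transport equation and commuting $\Delta_j$ through $v\cdot\nabla$ yields
\begin{align*}
\partial_t \Delta_j f + v\cdot\nabla \Delta_j f = \Delta_j g + R_j, \qquad R_j := [v\cdot\nabla,\,\Delta_j]f,
\end{align*}
which we view as a transport equation for $\Delta_j f$ with source $\Delta_j g + R_j$. Since $\div v=0$, composition with the flow preserves the $L^\infty$ norm; evaluating along characteristics (or equivalently testing against $\sgn(\Delta_j f)$ in the standard way) gives
\begin{align*}
\|\Delta_j f(t)\|_{L^\infty} \leq \|\Delta_j f_0\|_{L^\infty} + \int_0^t \bigl(\|\Delta_j g(s)\|_{L^\infty} + \|R_j(s)\|_{L^\infty}\bigr)\,ds.
\end{align*}

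Next I would multiply by $2^{j\sigma}$ and sum over $j\geq -1$ in $\ell^1$ to produce the $B^\sigma_{\infty,1}$ norm on the left and on the first two terms on the right. The remaining piece is $\sum_j 2^{j\sigma}\|R_j(s)\|_{L^\infty}$, which I would control by invoking Lemma \ref{CI} with $p=\infty$, $r=1$. For $\sigma\in[0,1)$ the third clause of that lemma (whose range is $\sigma<1+d/p=1$) gives
\begin{align*}
\sum_j 2^{j\sigma}\|R_j\|_{L^\infty} \;\leq\; C\,\|\nabla v\|_{B^0_{\infty,1}\cap L^\infty}\,\|f\|_{B^\sigma_{\infty,1}} \;\leq\; C\,\|\nabla v\|_{B^0_{\infty,1}}\,\|f\|_{B^\sigma_{\infty,1}},
\end{align*}
using that $B^0_{\infty,1}\hookrightarrow L^\infty$. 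For the endpoint $\sigma=1$ I would use the second clause with the critical choice $\sigma=1+d/p$, $r=1$ (which is precisely the case $p=\infty$, $r=1$), obtaining the same bound $C\|\nabla v\|_{B^0_{\infty,1}}\|f\|_{B^1_{\infty,1}}$. Both branches feed into the same Gronwall input.

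Combining these inputs, the function $t\mapsto\|f(t)\|_{B^\sigma_{\infty,1}}$ satisfies the integral inequality
\begin{align*}
\|f(t)\|_{B^\sigma_{\infty,1}} \leq \|f_0\|_{B^\sigma_{\infty,1}} + \int_0^t \|g(s)\|_{B^\sigma_{\infty,1}}\,ds + C\int_0^t \|\nabla v(s)\|_{B^0_{\infty,1}}\,\|f(s)\|_{B^\sigma_{\infty,1}}\,ds,
\end{align*}
and a standard Gronwall argument on this inequality delivers the stated estimate, with the exponential factor $\exp\!\bigl(\int_s^t\|\nabla v\|_{B^0_{\infty,1}}\,dt'\bigr)$ arising naturally.

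I expect the main obstacle to be the endpoint indices. At $\sigma=0$ the commutator cannot absorb a derivative, and at $\sigma=1$ one sits exactly on the critical regularity line $\sigma=1+d/p$ for $p=\infty$; both cases are covered by Lemma \ref{CI} only thanks to the special summability $r=1$ (which makes $B^0_{\infty,1}$ behave like an $L^\infty$-algebra-controlling space). A minor technical point is that $\Delta_{-1}$ is treated separately in the sum: for $j=-1$ the commutator is harmless since $\Delta_{-1}$ is a smooth Fourier multiplier supported near the origin, and the resulting low-frequency contribution is absorbed into the same right-hand side. With these points handled, the argument closes in the range $\sigma\in[0,1]$ as claimed.
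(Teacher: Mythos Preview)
Your proposal is correct and is precisely the standard argument the paper has in mind: the paper does not give its own proof of Lemma \ref{TP} but defers to \cite{BCD11,GLY19,HKR11,L19}, where exactly this frequency-localization/commutator/Gronwall scheme is carried out. Your handling of the endpoint cases via the three clauses of Lemma \ref{CI} (third clause for $\sigma\in[0,1)$ with $p=\infty$, second clause for the critical endpoint $\sigma=1=1+d/p$, $r=1$) is the right bookkeeping, and the only cosmetic discrepancy is that Gronwall produces $e^{C\int\|\nabla v\|_{B^0_{\infty,1}}}$ rather than $e^{\int\|\nabla v\|_{B^0_{\infty,1}}}$ as written in the statement---a harmless normalization of constants.
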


The proof to the lemmas above are standard and we refer the readers to \cite{BCD11}, \cite{GLY19}, \cite{HKR11} and \cite{L19} for the details.

\newpage 

\section{Local well-posedness with critical regularity}
In this section we show the local well-posedness of \eqref{eq} in both 2D and 3D cases. In fact as will be clear, our proof does not depend on the dimension. To start with we introduce the following lemma which will be useful in dealing with the pressure term $\nabla P$.
\begin{lemm}\label{P}
For any $-1<\sigma\leq 1$, there exists a constant $C$ such that if $\rm div~u=\rm div~v=0$, then
\begin{align*}
\|\nabla(-\Delta)^{-1}\rm div(u\cdot\nabla v)\|_{B^\sigma_{\infty,1}}\leq C\min(\|u\|_{B^\sigma_{\infty,1}}\|v\|_{B^1_{\infty,1}},\|u\|_{B^1_{\infty,1}}\|v\|_{B^\sigma_{\infty,1}}).
\end{align*}
\end{lemm}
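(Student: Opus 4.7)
The strategy is to exploit the divergence-free hypothesis to convert the transport term into a pure divergence form, so that the analysis reduces to estimating a product via Bony's decomposition.

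Since $\operatorname{div} u = 0$, I can rewrite
$$u\cdot\nabla v = \partial_j(u_j v),$$
so that, componentwise,
$$[\nabla(-\Delta)^{-1}\operatorname{div}(u\cdot\nabla v)]_l = \partial_l(-\Delta)^{-1}\partial_m \partial_j(u_j v_m).$$
The right-hand side is a linear combination of $\partial_l \mathcal{R}_j\mathcal{R}_m(u_j v_m)$, where $\mathcal{R}_\cdot$ denotes the Riesz transforms. First I would decompose in Littlewood--Paley blocks and treat high and low frequencies separately. For $q\geq 0$, $\Delta_q \partial_l \mathcal{R}_j\mathcal{R}_m$ is bounded on $L^\infty$ with operator norm $\lesssim 2^q$ (insert a widened cutoff $\widetilde\Delta_q$ so that on its Fourier support the multiplier symbol is smooth and compactly supported, giving a Schwartz kernel with $L^1$-mass $\sim 2^q$). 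Hence
$$\sum_{q\geq 0} 2^{q\sigma}\|\Delta_q[\nabla(-\Delta)^{-1}\operatorname{div}(u\cdot\nabla v)]\|_{L^\infty} \lesssim \sum_{q\geq 0}2^{q(\sigma+1)}\|\Delta_q(u_j v_m)\|_{L^\infty}\lesssim \|u\otimes v\|_{B^{\sigma+1}_{\infty,1}}.$$

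Next, I would apply Bony's decomposition to the product $u_j v_m$:
$$u_j v_m = T_{u_j}v_m + T_{v_m}u_j + R(u_j, v_m),$$
and estimate each piece in $B^{\sigma+1}_{\infty,1}$ using Lemmas~\ref{T} and~\ref{R}. For the remainder, Lemma~\ref{R} with $s_1 = \sigma$ and $s_2 = 1$ requires $s_1 + s_2 = \sigma + 1 > 0$, which is precisely the hypothesis $\sigma > -1$, and gives
$$\|R(u_j, v_m)\|_{B^{\sigma+1}_{\infty,1}} \lesssim \|u\|_{B^\sigma_{\infty,1}}\|v\|_{B^1_{\infty,1}}.$$
For the paraproducts, two regimes appear: when $\sigma < 0$, the second version of Lemma~\ref{T} with negative index $t = \sigma$ gives $\|T_{u_j}v_m\|_{B^{\sigma+1}_{\infty,1}}\lesssim \|u\|_{B^\sigma_{\infty,\infty}}\|v\|_{B^1_{\infty,1}}$ and similarly for $T_{v_m}u_j$, putting the low-regularity factor in the first slot; when $\sigma \geq 0$, a suitable rearrangement of the paraproducts (absorbing the extra derivative into the low-frequency factor via $\sigma + 1 \leq 2$ and the standard first version of Lemma~\ref{T} with $L^\infty$ factor) yields the bound. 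Interchanging the roles of $u$ and $v$ produces the symmetric estimate, and taking the minimum gives the claim.

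The main obstacle is the low-frequency block $\Delta_{-1}[\nabla(-\Delta)^{-1}\operatorname{div}(u\cdot\nabla v)]$: the composite symbol $\chi(\xi)\,\xi_l\xi_j\xi_m/|\xi|^2$ is continuous but only Lipschitz at the origin, so its inverse Fourier transform fails to be in $L^1$ and Young's inequality is not directly available. To circumvent this, I would use the extra factor $\xi_l$ (gained from the outer $\partial_l$) together with Bernstein's inequality on $\Delta_{-1}$ to trade the bad low-frequency singularity for an $L^p$-estimate (for some fixed $p\in(1,\infty)$), invoking the $L^p$-boundedness of the Riesz transforms; the required $L^p$ control on $\Delta_{-1}(u_j v_m)$ then follows from the fact that $\Delta_{-1}$ gains arbitrary integrability from $L^\infty$ via Bernstein on a bounded frequency block. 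This reduces the $\Delta_{-1}$ contribution to a harmless $\|u\|_{L^\infty}\|v\|_{L^\infty}$-type bound, which is dominated by the right-hand side of the target inequality.
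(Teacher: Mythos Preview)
Your high-frequency reduction to $\|u\otimes v\|_{B^{\sigma+1}_{\infty,1}}$ loses too much when $\sigma>0$: the paraproduct $T_{u_j}v_m$ cannot be placed in $B^{\sigma+1}_{\infty,1}$ using only $u\in B^\sigma_{\infty,1}$, $v\in B^1_{\infty,1}$. Indeed, for $u$ constant (which is divergence-free) one has $T_{u_j}v_m\approx u_j v_m$, and $\|v\|_{B^{\sigma+1}_{\infty,1}}$ is not controlled by $\|v\|_{B^1_{\infty,1}}$; yet the original expression $\nabla(-\Delta)^{-1}\operatorname{div}(u\cdot\nabla v)$ vanishes identically for constant $u$. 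The cancellation you have thrown away is precisely $\operatorname{div} v=0$. The paper does not pass through $\|u\otimes v\|_{B^{\sigma+1}_{\infty,1}}$: it first uses $\operatorname{div} v=0$ to write $\operatorname{div}(u\cdot\nabla v)=\partial_j u^i\,\partial_i v^j$ and applies Bony's decomposition to this product of \emph{gradients}. The paraproducts then live in $B^{\sigma-1}_{\infty,1}$, where Lemma~\ref{T} applies directly (first version with $\|\nabla u\|_{L^\infty}$ if $\sigma=1$, second version with negative index $t=\sigma-1<0$ if $\sigma<1$), and $\nabla(-\Delta)^{-1}$ lifts back to $B^\sigma$. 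Your ``rearrangement'' would have to redistribute the two derivatives $\partial_j\partial_m$ one onto each factor via the two divergence-free conditions \emph{before} invoking the paraproduct bounds --- which is exactly the paper's decomposition.

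Your low-frequency argument also has a gap: Bernstein's inequality on a compact frequency block goes from $L^p$ up to $L^q$ for $p\le q$, not downward, so you cannot obtain $\Delta_{-1}(u_jv_m)\in L^p$ for finite $p$ from $L^\infty$ data alone. The paper handles the genuinely low-frequency piece --- which, after the paraproduct analysis on $\partial_j u^i\,\partial_i v^j$, resides only in the remainder --- by splitting the fundamental solution as $E_d=\theta E_d+(1-\theta)E_d$ with a cutoff $\theta$ near the origin: both $\theta E_d$ and $\nabla\partial_i\partial_j[(1-\theta)E_d]$ are $L^1$ kernels, and Young's inequality together with the diagonal structure of the remainder (which gives $\|R(u,v)\|_{L^\infty}\lesssim\|u\|_{B^1_{\infty,1}}\|v\|_{B^\sigma_{\infty,1}}$ precisely when $\sigma>-1$) closes the estimate.
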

\begin{proof}
By Bony's decomposition, we obtain
\begin{align*}
\nabla(-\Delta)^{-1}\rm div(u\cdot\nabla v)= \nabla(-\Delta)^{-1}T_{\partial_j u^i}\partial_i v^j+\nabla(-\Delta)^{-1}T_{\partial_i v^j}\partial_j u^i+\nabla(-\Delta)^{-1}R(\partial_j u^i,\partial_i v^j).
\end{align*}
Using Lemmas \ref{T} and \ref{CZ}, we get
\begin{align*}
\|\nabla(-\Delta)^{-1}T_{\partial_j u^i}\partial_i v^j\|_{B^\sigma_{\infty,1}}\lesssim \|T_{\partial_j u^i}\partial_i v^j\|_{B^{\sigma-1}_{\infty,1}}\lesssim \|\nabla u\|_{L^\infty}\|\nabla v\|_{B^{\sigma-1}_{\infty,1}}\lesssim \|u\|_{B^{1}_{\infty,1}}\|v\|_{B^{\sigma}_{\infty,1}}
\end{align*}
If $\sigma=1$, Lemmas \ref{T} and \ref{CZ}, we obtain
\begin{align*}
\|\nabla(-\Delta)^{-1}T_{\partial_i v^j}\partial_j u^i\|_{B^1_{\infty,1}}\lesssim \|\nabla v\|_{L^\infty}\|\nabla u\|_{B^{0}_{\infty,1}}\lesssim \|u\|_{B^{1}_{\infty,1}}\|v\|_{B^{1}_{\infty,1}}
\end{align*}
If $\sigma<1$, we similarly infer that
\begin{align*}
\|\nabla(-\Delta)^{-1}T_{\partial_i v^j}\partial_j u^i\|_{B^\sigma_{\infty,1}}\lesssim \|\nabla v\|_{B^{\sigma-1}_{\infty,1}}\|\nabla u\|_{B^{0}_{\infty,1}}\lesssim \|u\|_{B^{1}_{\infty,1}}\|v\|_{B^{\sigma}_{\infty,1}}
\end{align*}
For the remaining item, we get
\begin{align*}
\nabla(-\Delta)^{-1}R(\partial_j u^i,\partial_i v^j)=&\nabla(-\Delta)^{-1}(Id-\Delta_{-1})R(\partial_j u^i,\partial_i v^j)\\
&+\theta E_d\ast\Delta_{-1}\nabla R(\partial_j u^i,\partial_i v^j) \\
&+\nabla\partial_i\partial_j[(1-\theta) E_d]\ast\Delta_{-1}R(u^i, v^j),
\end{align*}
where $E_d$ denotes the fundamental solution of the harmonic equation and $\theta$ represents a smooth function supported on the unit sphere. By virtue of Lemmas \ref{R} and \ref{CZ},  we obtain
\begin{align*}
\|\nabla(-\Delta)^{-1}(Id-\Delta_{-1})R(\partial_j u^i,\partial_i v^j)\|_{B^\sigma_{\infty,1}}\lesssim\|R(u^i,v^j)\|_{B^{\sigma+1}_{\infty,1}}\lesssim \|u\|_{B^{1}_{\infty,1}}\|v\|_{B^{\sigma}_{\infty,1}}.
\end{align*}
Moreover, we deduce from Young's inequality that 
\begin{align*}
\|\theta E_d\ast\Delta_{-1}\nabla \Delta_{-1}R(u^i,v^j)\|_{B^{\sigma}_{\infty,1}}\lesssim\|R(u^i,v^j)\|_{L^\infty}\lesssim \|u\|_{B^{1}_{\infty,1}}\|v\|_{B^{\sigma}_{\infty,1}}.
\end{align*}
and 
\begin{align*}
\|\nabla\partial_i\partial_j[(1-\theta) E_d]\ast\Delta_{-1}R(u^i, v^j)\|_{B^\sigma_{\infty,1}}\lesssim\|R(u^i,v^j)\|_{L^\infty}\lesssim \|u\|_{B^{1}_{\infty,1}}\|v\|_{B^{\sigma}_{\infty,1}}.
\end{align*}
Observing the symmetrical structure of $u$ and $v$, we deduce from the above inequalities that
\begin{align*}
\|\nabla(-\Delta)^{-1}\rm div(u\cdot\nabla v)\|_{B^\sigma_{\infty,1}}\lesssim\min(\|u\|_{B^\sigma_{\infty,1}}\|v\|_{B^1_{\infty,1}},\|u\|_{B^1_{\infty,1}}\|v\|_{B^\sigma_{\infty,1}}).
\end{align*}
We thus finish the proof of Lemma.
\end{proof}
We are interested in the well-posedness of $\eqref{eq0}$ in the sense of Hadamard. Since the local estimates are uniformly in $a$, we may drop the upper index $a$ of the solutions for the inviscid Oldroyd-B model \eqref{eq0} without any confusions. 
\begin{defi}[Well-posedness]
    The Cauchy problem \eqref{eq0} is said to be locally well-posed
in a metric space $\mathbb{X}$ if for any $(u_0,\tau_0)\in \mathbb{X}$, there exists a neighbourhood $K$ of $(u_0,\tau_0)$
and $T = T(U) > 0$ such that for any $(v_0,\sigma_0)\in U$, we have \\
(i) (Existence): There exists a distributional solution $(u,\tau) \in C([0, T): \mathbb{X})$ to \eqref{eq0}. \\
(ii) (Uniqueness): The solution $(u,\tau)$ is unique in $E$ for some $E \subset C([0, T): \mathbb{X})$. \\
(iii) (Continuous dependence): The solution map $(v_0,\sigma_0)\rightarrow(v, \sigma)$ is continuous from
$U$ to $C([0, T): \mathbb{X})$.
\end{defi}
We now establish local well-posedness for \eqref{eq0} in the sense of Hadamard with critical regularity.
\begin{prop}\label{3prop1}
Let $d\geq 2$ and $a\in[0,1]$. For any $(u_0,\tau_0)\in B^1_{\infty,1}\times B^0_{\infty,1}$, there exists a time $T>0$, which is independent of damping coefficient $a$, such that $(\ref{eq0})$ has a  unique solution $(u,\tau)\in C([0,T);B^1_{\infty,1})\times C([0,T);B^0_{\infty,1})\cap L^{1}([0,T);B^2_{\infty,1})$. Moreover, the
solution depends continuously on the initial data.
\end{prop}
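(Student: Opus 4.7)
\textbf{Proof Proposal for Proposition \ref{3prop1}.}

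The plan is to construct the local solution by a Picard-type iteration scheme and close the estimates in the functional framework
\[
F_T := L^\infty_T(B^1_{\infty,1}) \times \bigl( L^\infty_T(B^0_{\infty,1}) \cap L^1_T(B^2_{\infty,1}) \bigr),
\]
where the regularity gain for $\tau$ in $L^1_T(B^2_{\infty,1})$ is harvested from the heat operator. Starting from $(u^0,\tau^0) = (S_0 u_0, S_0\tau_0)$, I define $(u^{n+1},\tau^{n+1})$ iteratively as the unique solution to the linear system obtained from \eqref{eq0} by freezing coefficients at level $n$: a transport equation with Leray projection for $u^{n+1}$ with forcing $\mathbb{P}\,\mathrm{div}\,\tau^n$, and a transport-diffusion equation for $\tau^{n+1}$ with forcing $D(u^n) - Q(\nabla u^n,\tau^n)$ and damping $a\tau^{n+1}$. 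Both linear subproblems are well-posed since $u^n$ is Lipschitz (via $B^1_{\infty,1} \hookrightarrow C^{0,1}$), and I will recover the pressure from $\Delta P^{n+1} = -\mathrm{div}(u^n\cdot\nabla u^{n+1}) + \mathrm{div}\,\mathrm{div}\,\tau^n$ and estimate it via Lemma \ref{P}.

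Next, I derive uniform-in-$n$ and uniform-in-$a$ bounds on a small time interval $[0,T]$. Applying the transport estimate of Lemma \ref{TP} (with $\sigma=1$) to $u^{n+1}$, the key forcing to control is $\mathrm{div}\,\tau^n$ in $L^1_T(B^1_{\infty,1})$, which is bounded by $\|\tau^n\|_{L^1_T(B^2_{\infty,1})}$, together with the pressure contribution handled by Lemma \ref{P}. For $\tau^{n+1}$, I write Duhamel's formula with the heat semigroup, apply $\Delta_j$, invoke the smoothing estimate of Lemma \ref{H}, and sum in $j$ to obtain
\[
\|\tau^{n+1}\|_{L^\infty_T B^0_{\infty,1}} + \|\tau^{n+1}\|_{L^1_T B^2_{\infty,1}} \lesssim \|\tau_0\|_{B^0_{\infty,1}} + \int_0^T \bigl(\|u^n\cdot\nabla\tau^{n+1}\|_{B^0_{\infty,1}} + \|Q(\nabla u^n,\tau^n)\|_{B^0_{\infty,1}} + \|D u^n\|_{B^0_{\infty,1}} + a\|\tau^{n+1}\|_{B^0_{\infty,1}}\bigr)\,ds.
\]
The convection term is handled via the transport form of Lemma \ref{PL}. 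For $Q(\nabla u^n,\tau^n)$, which has no transportation structure, I use the general Besov product rule of Lemma \ref{PL} with some $\epsilon\in(0,1)$, then interpolate $\|\tau^n\|_{B^\epsilon_{\infty,\infty}} \lesssim \|\tau^n\|_{B^0_{\infty,1}}^{1-\epsilon/2}\|\tau^n\|_{B^2_{\infty,1}}^{\epsilon/2}$ to trade the missing regularity for the $L^1_T B^2_{\infty,1}$ smoothing. Holder in time yields factors like $T^{1-\epsilon/2}$, which, together with continuity in $T$, allow choosing $T>0$ depending only on $\|u_0\|_{B^1_{\infty,1}} + \|\tau_0\|_{B^0_{\infty,1}}$ (and independent of $a\in[0,1]$ since $a$ only multiplies already-bounded terms) so that $(u^n,\tau^n)$ stays in a fixed ball of $F_T$.

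Then I show strong convergence of the iterates by establishing a contraction for $(\delta u^n, \delta\tau^n) := (u^{n+1}-u^n, \tau^{n+1}-\tau^n)$ in a one-derivative-weaker space $L^\infty_T(B^0_{\infty,1}) \times \bigl(L^\infty_T(B^{-1}_{\infty,1}) \cap L^1_T(B^1_{\infty,1})\bigr)$, which avoids the algebra failure of $B^0_{\infty,1}$ and uses only para-product rules that tolerate the one-level loss. Passing to the limit in the linearized system gives a distributional solution $(u,\tau)$; the uniform bounds in $F_T$ survive weak-$\ast$ compactness, and the time continuity in the critical norms is then recovered from the equation together with Fatou. Uniqueness follows by running the same difference estimate in the weak space on two solutions with the same data. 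For continuous dependence, since the solution map is not expected to be Lipschitz at the critical level, I will use the Bona-Smith argument: mollify initial data, use Lipschitz dependence at the smoother level on the mollified data, and control the mollification error by the uniform smallness of $(I - S_N)(u_0,\tau_0)$ in the critical norm.

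The main obstacle is the lack of an algebra structure in $B^0_{\infty,1}$ when estimating $Q(\nabla u,\tau)$: without the transportation form, Lemma \ref{PL} demands a small positive regularity surplus $\epsilon$ on one factor that neither $\tau_0\in B^0_{\infty,1}$ nor the pure transport yield. The crucial mechanism is to cash in the parabolic smoothing $\tau\in L^1_T(B^2_{\infty,1})$ through the interpolation argument above, which both produces the extra $\epsilon$-derivative and supplies the $T^\gamma$ smallness factor needed to close the contraction. Once this is in place, the analogous arguments for the $u$-transport equation are routine and the damping term $a\tau$ enters harmlessly since $a\leq 1$, making $T$ genuinely independent of $a$.
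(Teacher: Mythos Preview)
Your proposal is correct and follows essentially the same approach as the paper: an iteration scheme in the functional space $L^\infty_T(B^1_{\infty,1})\times\bigl(L^\infty_T(B^0_{\infty,1})\cap L^1_T(B^2_{\infty,1})\bigr)$, uniform bounds via transport/Duhamel estimates and Lemma \ref{P} for the pressure, contraction in the one-derivative-weaker space $L^\infty_T(B^0_{\infty,1})\times\bigl(L^\infty_T(B^{-1}_{\infty,1})\cap L^1_T(B^1_{\infty,1})\bigr)$, and continuous dependence by the Bona--Smith argument. The treatment of the non-algebra issue for $Q(\nabla u,\tau)$ in $B^0_{\infty,1}$ via Lemma \ref{PL} plus interpolation against the parabolic smoothing of $\tau$ is exactly the mechanism the paper uses.

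The only structural difference is that the paper's iteration is semi-implicit: its $\tau^{n+1}$ equation carries $Q(\nabla u^n,\tau^{n+1})$ (linear in the unknown) and its $u^{n+1}$ equation is forced by $\mathrm{div}\,\tau^{n+1}$, whereas you freeze both at level $n$. This makes no real difference to the analysis---in the paper the $Q$ term is still thrown to the right-hand side of Duhamel and absorbed via $T^{1/2}\|\tau^{n+1}\|_{L^2_T(B^1_{\infty,1})}$---so both closures are the same in spirit. Your explicit scheme is marginally simpler to set up, while the paper's ordering (solve $\tau^{n+1}$ first, then feed it into $u^{n+1}$) keeps the coupling closer to the limiting system.
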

\begin{rema}\label{3rema1}
    For the transport equation, the velocity $u\in C^{0,1}$ is the key to maintaining regularity. Notice that $B^1_{\infty,1}\hookrightarrow C^{0,1}$. Under the conditions in Proposition \ref{3prop1}, if additionally $(u_0,\tau_0)\in H^k$ with $k\geq 0$, one can deduce that the solution $(u,\tau)\in C([0,T); H^k)$. This means that we establish local well-posedness for \eqref{eq0} in $(B^1_{\infty,1}\cap H^k)\times(B^0_{\infty,1}\cap H^k)$.
\end{rema}
\begin{proof}
Let $(u^{0},\tau^{0})=(S_0u_0,S_0\tau_0)$. We define a sequence $(u^{n},\tau^{n})_{n\in \mathbb{N}}$ by solving the following
linear equations:
\begin{align}\label{3eq1}
\left\{\begin{array}{l}
\partial_tu^{n+1}+u^{n}\cdot\nabla u^{n+1}+\nabla P^{n+1} ={\rm div}~\tau^{n+1},~~~~{\rm div}~u^{n+1}=0,\\[1ex]
\partial_t\tau^{n+1}+u^{n}\cdot\nabla\tau^{n+1}+a\tau^{n+1}+Q(\nabla u^{n},\tau^{n+1})=D(u^{n})+\Delta\tau^{n+1},\\[1ex]
u^{n+1}|_{t=0}=S_{n+1}u_0,~~\tau^{n+1}|_{t=0}=S_{n+1}\tau_0. \\[1ex]
\end{array}\right.
\end{align}
\textbf{Step 1: Uniform bounds.} \\
Applying $\Delta_j$ to $\eqref{3eq1}_1$, for any $j\geq -1$, we have
\begin{align}\label{3eq2}
\partial_t\Delta_ju^{n+1} + u^{n}\cdot\nabla\Delta_ju^{n+1} = -[\Delta_j,u^{n}\cdot\nabla]u^{n+1} + \Delta_j{\rm div}~\tau^{n+1} - \Delta_j\nabla P^{n+1}.
\end{align}
Integrating $(\ref{3eq2})$ on $[0,T]$,  we infer that
\begin{align}\label{3ineq1}
\|\Delta_ju^{n+1}\|_{L^{\infty}} &\leq \|\Delta_ju_0\|_{L^{\infty}}+ C\int_0^T \|[\Delta_j,u^{n}\cdot\nabla]u^{n+1}\|_{L^{\infty}}dt \\ \notag
&~~~+C\int_0^T \|\Delta_j{\rm div}\tau^{n+1}\|_{L^{\infty}} + \|\Delta_j\nabla P^{n+1}\|_{L^{\infty}}dt.
\end{align}
Multiplying both sides of \eqref{3ineq1} by $2^j$ and summing up $j$, we obtain
\begin{align}\label{3ineq2}
\|u^{n+1}\|_{B^1_{\infty,1}} &\leq \|u_0\|_{B^1_{\infty,1}}+ C\int_0^T \sum_{j\geq -1}2^j\|[\Delta_j,u^{n}\cdot\nabla]u^{n+1}\|_{L^{\infty}}dt \\ \notag
&~~~+C\int_0^T \|\tau^{n+1}\|_{B^2_{\infty,1}} + \|\Delta_j\nabla P^{n+1}\|_{B^1_{\infty,1}}dt.
\end{align}
By virtue of Lemma \ref{CI}, we have
\begin{align}\label{3ineq3}
\sum_{j\geq -1}2^j\|[\Delta_j,u^{n}\cdot\nabla]u^{n+1}\|_{L^{\infty}}\lesssim\|u^{n}\|_{B^1_{\infty,1}}\|u^{n+1}\|_{B^1_{\infty,1}}.
\end{align}
Since ${\rm div}~u^{n}={\rm div}~u^{n+1}=0$, we obtain
\begin{align*}
\nabla P^{n+1}= \nabla\Delta^{-1}{\rm div}~{\rm div}~(\tau^{n+1}-u^{n}\otimes u^{n+1}).
\end{align*}
By virtue of Lemma \ref{P}, we have
\begin{align}\label{3ineq4}
\|\nabla\Delta^{-1}{\rm div}~{\rm div}~(u^{n}\otimes u^{n+1})\|_{B^1_{\infty,1}}\lesssim\|u^{n}\|_{B^1_{\infty,1}}\|u^{n+1}\|_{B^1_{\infty,1}}.
\end{align}
By Lemma \ref{CZ}, we deduce that
\begin{align}\label{3ineq5}
\|(Id-\Delta_{-1})\nabla(-\Delta)^{-1}{\rm div}~{\rm div}~\tau^{n+1}\|_{B^1_{\infty,1}}\lesssim\|\tau^{n+1}\|_{B^2_{\infty,1}},
\end{align}
and
\begin{align}\label{3ineq6}
\|\Delta_{-1}\nabla(-\Delta)^{-1}{\rm div}~{\rm div}~\tau^{n+1}\|_{B^1_{\infty,1}}
&\lesssim\|\Delta_{-1}\nabla(-\Delta)^{-1}{\rm div}~{\rm div}~\tau^{n+1}\|_{L^\infty} \\ \notag
&\lesssim\|\theta E_d\ast\Delta_{-1}\nabla{\rm div}~{\rm div}~\tau^{n+1}\|_{L^\infty}\\ \notag
&~~~+\|\nabla{\rm div}~{\rm div}~(1-\theta) E_d\ast\Delta_{-1}\tau^{n+1}\|_{L^\infty}\\ \notag
&\lesssim\|\Delta_{-1}\tau^{n+1}\|_{L^\infty}\\ \notag
&\lesssim\|\tau^{n+1}\|_{B^2_{\infty,1}}.
\end{align}
Hence, we deduce from \eqref{3ineq2}-\eqref{3ineq6} that
\begin{align}\label{3ineq7}
\|u^{n+1}\|_{L^{\infty}_T(B^1_{\infty,1})} \leq \|u_0\|_{B^1_{\infty,1}} + CT\|u^{n}\|_{L^{\infty}_T(B^1_{\infty,1})}\|u^{n+1}\|_{L^{\infty}_T(B^1_{\infty,1})}  + C\int_0^T \|\tau^{n+1}\|_{B^2_{\infty,1}} dt.
\end{align}
Applying $\Delta_j$ to $\eqref{3eq1}_2$, we obtain
\begin{align}\label{3eq3}
\Delta_j\tau^{n+1} = e^{\Delta t-at}\Delta_j\tau^{n+1}_0 + \int_0^t e^{\Delta(t-s)-a(t-s)}\Delta_j(-Q(\nabla u^{n},\tau^{n+1})-u^{n}\cdot\nabla\tau^{n+1}+D(u^{n}))ds.
\end{align}
By virtue of Lemmas \ref{H} and \ref{PL}, for any $a\in[0,1]$, we infer that
\begin{align}\label{3ineq8}
\|\tau^{n+1}\|_{L^{\infty}_T(B^0_{\infty,1})}+ \|\tau^{n+1}\|_{L^{1}_T(B^2_{\infty,1})} &\leq \|\tau^{n+1}\|_{L^{\infty}_T(B^0_{\infty,1})}+\int_0^T \sum_{j\geq 0}2^{2j}\|\Delta_j \tau^{n+1}\|_{L^{\infty}}dt\\ \notag
&~~~+\int_0^T 2^{-2}\|\Delta_{-1} \tau^{n+1}\|_{L^{\infty}} dt \\ \notag
&\leq (1+T)\|\tau^{n+1}\|_{L^{\infty}_T(B^0_{\infty,1})}+ \int_0^T \sum_{j\geq 0}2^{2j}\|\Delta_j \tau^{n+1}\|_{L^{\infty}}dt \\ \notag
&\leq (1+T)\big(\|\tau_0\|_{B^0_{\infty,1}} + \int_0^T \|Q(\nabla u^{n},\tau^{n+1})\|_{B^0_{\infty,1}}\\ \notag
&~~~+\|u^{n}\cdot\nabla\tau^{n+1}\|_{B^0_{\infty,1}}+\|\nabla u^{n}\|_{B^0_{\infty,1}}dt\big)\\ \notag
&\leq (1+T)\big(\|\tau_0\|_{B^0_{\infty,1}} + C\|u^n\|_{L^{\infty}_T(B^1_{\infty,1})}(T+T^{\frac{1}{2}}\|\tau^{n+1}\|_{L^2_T(B^1_{\infty,1})})\big).
\end{align}
Note that
\begin{align}\label{3ineq9}
\|\tau^{n+1}\|_{L^2_T(B^1_{\infty,1})} \lesssim \|\tau^{n+1}\|^{\frac{1}{2}}_{L^{\infty}_T(B^0_{\infty,1})}\|\tau^{n+1}\|^{\frac{1}{2}}_{L^1_T(B^2_{\infty,1})}.
\end{align}
Assuming that
\begin{align}\label{3ineq10}
\|u^{n}\|_{L^{\infty}_T(B^1_{\infty,1})}+\|\tau^{n}\|_{L^{\infty}_T(B^0_{\infty,1})} + \|\tau^{n}\|_{L^1_T(B^2_{\infty,1})} \leq 4C(\|u_0\|_{B^1_{\infty,1}} + \|\tau_0\|_{B^0_{\infty,1}}),
\end{align}
and
\begin{align}\label{3ineq11}
T=\min\left\{\frac 1 {100C^2},\frac{1}{100C^4(\|u_0\|_{B^1_{\infty,1}} + \|\tau_0\|_{B^0_{\infty,1}})^4}\right\},
\end{align}
where $C>1$. Then we conclude from \eqref{3ineq7}-\eqref{3ineq11} that
\begin{align}\label{3ineq12}
\|u^{n+1}\|_{L^{\infty}_T(B^1_{\infty,1})}+\|\tau^{n+1}\|_{L^{\infty}_T(B^0_{\infty,1})} + \|\tau^{n+1}\|_{L^1_T(B^2_{\infty,1})} \leq 3C(\|u_0\|_{B^1_{\infty,1}} + \|\tau_0\|_{B^0_{\infty,1}}).
\end{align}
We thus finish the proof of the uniform bounds for sequence $(u^{n},\tau^{n})_{n\in \mathbb{N}}$.  \\
\textbf{Step 2: Convergence.} \\
Denote that $\bar{f}^n\triangleq f^{n+1}-f^{n}$. According to \eqref{3eq1}, we deduce that
\begin{align}\label{3eq4}
\left\{\begin{array}{l}
\partial_t\bar{u}^n+u^{n}\cdot\nabla \bar{u}^n+\nabla \bar{P}^n ={\rm div}~\bar{\tau}^n-\bar{u}^{n-1}\cdot\nabla u^n,~~~~{\rm div}~\bar{u}^n=0,\\[1ex]
\partial_t\bar{\tau}^n+u^{n}\cdot\nabla\bar{\tau}^n+a\bar{\tau}^n+Q(\nabla u^{n},\bar{\tau}^n)+Q(\nabla \bar{u}^{n-1},\tau^n)=D(\bar{u}^{n-1})+\Delta\bar{\tau}^n-\bar{u}^{n-1}\cdot\nabla\tau^n,\\[1ex]
\bar{u}^n|_{t=0}=\Delta_{n}u_0,~~\bar{\tau}^n|_{t=0}=\Delta_{n}\tau_0. \\[1ex]
\end{array}\right.
\end{align}
Using Lemmas \ref{CI} and \ref{P}, we obtain
\begin{align}\label{3ineq13}
\|\bar{u}^n\|_{L^{\infty}_T(B^0_{\infty,1})}&\leq \|\Delta_{n} u_0\|_{B^0_{\infty,1}} + CT\|u^{n}\|_{L^{\infty}_T(B^1_{\infty,1})}(\|\bar{u}^n\|_{L^{\infty}_T(B^0_{\infty,1})}+\|\bar{u}^{n-1}\|_{L^{\infty}_T(B^0_{\infty,1})}) \\ \notag
&~~~+C\int_0^T \|\bar{\tau}^n\|_{B^1_{\infty,1}}dt.
\end{align}
By virtue of Lemmas \ref{H} and \ref{PL}, for any $a\in[0,1]$, we infer that
\begin{align}\label{3ineq14}
\|\bar{\tau}^n\|_{L^{\infty}_T(B^{-1}_{\infty,1})}+ \|\bar{\tau}^n\|_{L^{1}_T(B^1_{\infty,1})} &\leq \|\bar{\tau}^n\|_{L^{\infty}_T(B^{-1}_{\infty,1})}+\int_0^T \sum_{j\geq 0}2^{j}\|\Delta_j \bar{\tau}^n\|_{L^{\infty}}dt\\ \notag
&~~~+\int_0^T 2^{-1}\|\Delta_{-1} \bar{\tau}^n\|_{L^{\infty}} dt \\ \notag
&\leq (1+T)\|\bar{\tau}^n\|_{L^{\infty}_T(B^{-1}_{\infty,1})}+ \int_0^T \sum_{j\geq 0}2^{j}\|\Delta_j \bar{\tau}^n\|_{L^{\infty}}dt \\ \notag
&\leq C\big(\|\Delta_{n}\tau_0\|_{B^{-1}_{\infty,1}} + \int_0^T \|Q(\nabla u^{n},\bar{\tau}^n)\|_{B^{-1}_{\infty,1}}+\|Q(\nabla \bar{u}^{n-1},\tau^n)\|_{B^{-1}_{\infty,1}}\\ \notag
&~~~+\|u^{n}\cdot\nabla\bar{\tau}^n\|_{B^{-1}_{\infty,1}}+\|\bar{u}^{n-1}\cdot\nabla\tau^n\|_{B^{-1}_{\infty,1}}+\| \bar{u}^{n-1}\|_{B^0_{\infty,1}}dt\big)\\ \notag
&\leq C\|\Delta_{n}\tau_0\|_{B^{-1}_{\infty,1}} +C\|\bar{u}^{n-1}\|_{L^{\infty}_T(B^0_{\infty,1})}(T+\|\tau^{n}\|_{L^1_T(B^{\frac 1 2}_{\infty,1})})\\ \notag
&~~~+C\int_0^T \|Q(\nabla u^{n},\bar{\tau}^n)\|_{B^{0}_{\infty,1}}+\|Q(\nabla \bar{u}^{n-1},\tau^n)\|_{B^{-1}_{\infty,1}}dt\\ \notag
&~~~+CT\|u^{n}\|_{L^{\infty}_T(B^{1}_{\infty,1})}\|\bar{\tau}^n\|_{L^{\infty}_T(B^{0}_{\infty,1})}\\ \notag
&\leq C\|\Delta_{n}\tau_0\|_{B^{-1}_{\infty,1}}+C\|\bar{u}^{n-1}\|_{L^{\infty}_T(B^0_{\infty,1})}(T+\|\tau^{n}\|_{L^1_T(B^{\frac 1 2}_{\infty,1})})\\ \notag
&~~~+C\|u^{n}\|_{L^{\infty}_T(B^{1}_{\infty,1})}\|\bar{\tau}^n\|_{L^1_T(B^{\frac 1 2}_{\infty,1})}+C\int_0^T \|Q(\nabla \bar{u}^{n-1},\tau^n)\|_{B^{-1}_{\infty,1}}dt.
\end{align}
Using Lemma \ref{PL}, we have
\begin{align}\label{3ineq15}
\|Q(\nabla \bar{u}^{n-1},\tau^n)\|_{B^{-1}_{\infty,1}}&\lesssim \|\partial_kQ(\bar{u}^{n-1},\tau^n)\|_{B^{-1}_{\infty,1}}+\|Q(\bar{u}^{n-1},\partial_k\tau^n)\|_{B^{-1}_{\infty,1}}  \\ \notag
&\lesssim \|Q(\bar{u}^{n-1},\tau^n)\|_{B^{0}_{\infty,1}}+\|Q(\bar{u}^{n-1},\nabla\tau^n)\|_{B^{0}_{\infty,1}} \\ \notag
&\lesssim \|\bar{u}^{n-1}\|_{B^{0}_{\infty,1}}\|\tau^n\|_{B^{\frac 3 2}_{\infty,1}}.
\end{align}
Combining \eqref{3ineq14}, \eqref{3ineq15} and \eqref{3ineq11}, we deduce that
\begin{align}\label{3ineq16}
\|\bar{\tau}^n\|_{L^{\infty}_T(B^{-1}_{\infty,1})}+ \|\bar{\tau}^n\|_{L^{1}_T(B^1_{\infty,1})}&\leq C\|\Delta_{n}\tau_0\|_{B^{-1}_{\infty,1}}+C\|\bar{u}^{n-1}\|_{L^{\infty}_T(B^0_{\infty,1})}(T+\|\tau^{n}\|_{L^1_T(B^{\frac 3 2}_{\infty,1})})\\ \notag
&\leq C(\|u_0\|_{B^1_{\infty,1}} + \|\tau_0\|_{B^0_{\infty,1}})(2^{-n}+\|\bar{u}^{n-1}\|_{L^{\infty}_T(B^0_{\infty,1})}T^{\frac 1 4}).
\end{align}
This together with \eqref{3ineq13} implies that 
\begin{align}\label{3ineq17}
\|\bar{u}^n\|_{L^{\infty}_T(B^0_{\infty,1})}&\leq C(\|u_0\|_{B^1_{\infty,1}} + \|\tau_0\|_{B^0_{\infty,1}})(2^{-n}+\|\bar{u}^{n-1}\|_{L^{\infty}_T(B^0_{\infty,1})}T^{\frac 1 4}).
\end{align}
Then we have 
\begin{align*}
\sum_{n=0}^{m}\|\bar{u}^n\|_{L^{\infty}_T(B^0_{\infty,1})}&\leq C(\|u_0\|_{B^1_{\infty,1}} + \|\tau_0\|_{B^0_{\infty,1}})(\sum_{n=0}^{m}2^{-n}+T^{\frac 1 4}\sum_{n=1}^{m}\|\bar{u}^{n-1}\|_{L^{\infty}_T(B^0_{\infty,1})}).
\end{align*}
This together with\eqref{3ineq11} and \eqref{3ineq16} implies that 
\begin{align*}
\sum_{n=0}^{\infty}\big(\|\bar{u}^n\|_{L^{\infty}_T(B^0_{\infty,1})}+\|\bar{\tau}^n\|_{L^{\infty}_T(B^{-1}_{\infty,1})}+ \|\bar{\tau}^n\|_{L^{1}_T(B^1_{\infty,1})}\big)\leq C(\|u_0\|_{B^1_{\infty,1}} + \|\tau_0\|_{B^0_{\infty,1}}).
\end{align*}
Hence, we deduce that $(u^{n},\tau^{n})_{n\in \mathbb{N}}$ is a Cauchy sequence in  $L^{\infty}([0,T);B^0_{\infty,1})\times L^{\infty}([0,T);B^{-1}_{\infty,1})\cap L^{1}([0,T);B^1_{\infty,1})$.
Uniform bounds for $(u^{n},\tau^{n})_{n\in \mathbb{N}}$ in Besov spaces ensure that there exists $(u,\tau)\in L^{\infty}([0,T);B^1_{\infty,1})\times L^{\infty}([0,T);B^{0}_{\infty,1})\cap L^{1}([0,T);B^2_{\infty,1})$ satisfying \eqref{eq0} such that
\begin{align*}
u^n\rightarrow u~~~~\text{in}~~~~L^{\infty}([0,T);B^{s'}_{\infty,1})~~~~\text{and}~~~~\tau^n\rightarrow \tau~~~~\text{in}~~~~L^{\infty}([0,T);B^{s'-1}_{\infty,1})\cap L^{1}([0,T);B^{s'+1}_{\infty,1}),
\end{align*}
where $s'<1$. Note that the solutions $(u,\tau)$ also belong to $C([0,T);B^1_{\infty,1})\times C([0,T);B^{0}_{\infty,1})$. Moreover, if $(u_0,\tau_0)\in B^s_{\infty,1}\times B^{s-1}_{\infty,1}$ for any $s>1$, similar to \textbf{step 1},
then there exists $C = C(s,d) > 0$ such that
\begin{align}\label{3ineq18}
\|u\|_{L^{\infty}_T(B^s_{\infty,1})}+\|\tau\|_{L^{\infty}_T(B^{s-1}_{\infty,1})} + \|\tau\|_{L^1_T(B^{s+1}_{\infty,1})} \leq C(\|u_0\|_{B^s_{\infty,1}} + \|\tau_0\|_{B^{s-1}_{\infty,1}}).
\end{align}
\textbf{Step 3: Uniqueness.} \\
Let $(u,\tau)$ and $(v,\sigma)$ be two solutions of $\eqref{eq0}$ with the initial data $(u_0,\tau_0)$ and $(v_0,\sigma_0)$. Then we get 
\begin{align}\label{3eq5}
\left\{\begin{array}{l}
\partial_t(u-v)+u\cdot\nabla (u-v)+\nabla (P_u-P_v) ={\rm div}~(\tau-\sigma)-(u-v)\cdot\nabla v,~~~~{\rm div}~(u-v)=0,\\[1ex]
\partial_t(\tau-\sigma)+u\cdot\nabla(\tau-\sigma)-\Delta(\tau-\sigma)+a(\tau-\sigma)+Q(\nabla u,\tau-\sigma)\\[1ex]
=D(u-v)-(u-v)\cdot\nabla\sigma-Q(\nabla (u-v),\sigma),\\[1ex]
(u-v)|_{t=0}=u_0-v_0,~~(\tau-\sigma)|_{t=0}=\tau_0-\sigma_0. \\[1ex]
\end{array}\right.
\end{align}
Using Lemmas \ref{CI} and \ref{P}, we obtain
\begin{align*}
\|u-v\|_{L^{\infty}_T(B^0_{\infty,1})}&\leq \|u_0-v_0\|_{B^0_{\infty,1}}+C\int_0^T \|u-v\|_{B^0_{\infty,1}}(\|u\|_{B^1_{\infty,1}}+\|v\|_{B^1_{\infty,1}})+\|\tau-\sigma\|_{B^1_{\infty,1}}dt \\ \notag
&\leq\|u_0-v_0\|_{B^0_{\infty,1}}+\frac 1 2\|u-v\|_{L^{\infty}_T(B^0_{\infty,1})}+C\|\tau-\sigma\|_{L^{1}_T(B^1_{\infty,1})},
\end{align*}
which implies that
\begin{align}\label{3ineq19}
\|u-v\|_{L^{\infty}_T(B^0_{\infty,1})}\leq C\|u_0-v_0\|_{B^0_{\infty,1}}+C\|\tau-\sigma\|_{L^{1}_T(B^1_{\infty,1})}.
\end{align}
By virtue of Lemmas \ref{H} and \ref{PL}, for any $a\in[0,1]$, we infer from \eqref{3ineq15} that
\begin{align}\label{3ineq20}
\|\tau-\sigma\|_{L^{\infty}_T(B^{-1}_{\infty,1})}+\|\tau-\sigma\|_{L^{1}_T(B^1_{\infty,1})} 
&\leq (1+T)\|\tau-\sigma\|_{L^{\infty}_T(B^{-1}_{\infty,1})}+ \int_0^T \sum_{j\geq 0}2^{j}\|\Delta_j (\tau-\sigma)\|_{L^{\infty}}dt \\ \notag
&\leq C\big(\|\tau_0-\sigma_0\|_{B^{-1}_{\infty,1}} + \int_0^T \|Q(\nabla u,\tau-\sigma)\|_{B^{-1}_{\infty,1}}\\ \notag
&~~~+\|Q(\nabla (u-v),\sigma)\|_{B^{-1}_{\infty,1}}+\|u\cdot\nabla(\tau-\sigma)\|_{B^{-1}_{\infty,1}}\\ \notag
&~~~+\|(u-v)\cdot\nabla\sigma\|_{B^{-1}_{\infty,1}}+\|u-v\|_{B^0_{\infty,1}}dt\big)\\ \notag
&\leq C\|\tau_0-\sigma_0\|_{B^{-1}_{\infty,1}}+C\int_0^T \|Q(\nabla (u-v),\sigma)\|_{B^{-1}_{\infty,1}}dt\\ \notag
&~~~+C\int_0^T \|u-v\|_{B^{0}_{\infty,1}}(1+\|\sigma\|_{B^{1}_{\infty,1}})+\|u\|_{B^{1}_{\infty,1}}\|\tau-\sigma\|_{B^{0}_{\infty,1}}dt\\ \notag
&\leq C\|\tau_0-\sigma_0\|_{B^{-1}_{\infty,1}}+C\int_0^T \|u-v\|_{B^{0}_{\infty,1}}(1+\|\sigma\|_{B^{\frac 3 2}_{\infty,1}})dt\\ \notag
&~~~+\int_0^T\|u\|_{B^{1}_{\infty,1}}\|\tau-\sigma\|_{B^{\frac 1 2}_{\infty,1}}dt\\ \notag
&\leq C\|\tau_0-\sigma_0\|_{B^{-1}_{\infty,1}}+\frac 1 {4C}\|u-v\|_{L^{\infty}_T(B^0_{\infty,1})}\\ \notag
&~~~+\frac 1 4(\|\tau-\sigma\|_{L^{\infty}_T(B^{-1}_{\infty,1})}+\|\tau-\sigma\|_{L^{1}_T(B^1_{\infty,1})}).
\end{align}
Combining \eqref{3ineq19} and \eqref{3ineq20}, we conclude that
\begin{align}\label{3ineq21}
\|u-v\|_{L^{\infty}_T(B^0_{\infty,1})}+\|\tau-\sigma\|_{L^{\infty}_T(B^{-1}_{\infty,1})\cap L^{1}_T(B^1_{\infty,1})}\leq C\|u_0-v_0\|_{B^0_{\infty,1}}+C\|\tau_0-\sigma_0\|_{B^{-1}_{\infty,1}}.
\end{align}
We thus prove the uniqueness for \eqref{eq0} on $[0,T]$. \\
\textbf{Step 4: Continuous dependence.} \\
Assume that $(u,\tau)$ and $(v,\sigma)$ are two solutions of $\eqref{eq0}$ with the initial data $(u_0,\tau_0)$ and $(v_0,\sigma_0)$. Due to the difficulty of exceeding regularity, we introduce a smoothing operator $S_N$ to quantify regularity, see \cite{BS75}. Let $(u_N,\tau_N)$ and $(v_N,\sigma_N)$ be two solutions of $\eqref{eq0}$ with the initial data $(S_N u_0,S_N\tau_0)$ and $(S_N v_0,S_N\sigma_0)$. Similar to \textbf{Step 3}, we deduce from \eqref{3ineq21} that 
\begin{align}\label{3ineq22}
&\|u-u_N\|_{L^{\infty}_T(B^0_{\infty,1})}+\|\tau-\tau_N\|_{L^{\infty}_T(B^{-1}_{\infty,1})\cap L^{1}_T(B^1_{\infty,1})}\\ \notag
\leq&C\|(Id-S_N)u_0\|_{B^0_{\infty,1}}+C\|(Id-S_N)\tau_0\|_{B^{-1}_{\infty,1}} \\ \notag
\leq&C2^{-N}(\|(Id-S_N)u_0\|_{B^1_{\infty,1}}+\|(Id-S_N)\tau_0\|_{B^{0}_{\infty,1}}).
\end{align}
and 
\begin{align}\label{3ineq23}
&\|v-v_N\|_{L^{\infty}_T(B^0_{\infty,1})}+\|\sigma-\sigma_N\|_{L^{\infty}_T(B^{-1}_{\infty,1})\cap L^{1}_T(B^1_{\infty,1})}\\ \notag
\leq&C\|(Id-S_N)v_0\|_{B^0_{\infty,1}}+C\|(Id-S_N)\sigma_0\|_{B^{-1}_{\infty,1}} \\ \notag
\leq&C2^{-N}(\|(Id-S_N)v_0\|_{B^1_{\infty,1}}+\|(Id-S_N)\sigma_0\|_{B^{0}_{\infty,1}}).
\end{align}
Replacing $(v,\sigma)$ with $(u_N,\tau_N)$ in \eqref{3eq5}, by Lemmas \ref{CI} and \ref{P}, we obtain
\begin{align*}
\|(u-u_N)\|_{L^{\infty}_T(B^1_{\infty,1})}&\leq \|(Id-S_N)u_0\|_{B^1_{\infty,1}}+C\int_0^T \|u-u_N\|_{B^1_{\infty,1}}(\|u\|_{B^1_{\infty,1}}+\|u_N\|_{B^1_{\infty,1}})dt\\ \notag
&~~~+C\int_0^T\|(u-u_N)\cdot\nabla u_N\|_{B^1_{\infty,1}}+\|\tau-\tau_N\|_{B^2_{\infty,1}}dt.
\end{align*}
According to Lemmas \ref{T} and \ref{R}, we have
\begin{align*}
\|(u-u_N)\cdot\nabla u_N\|_{B^1_{\infty,1}}&\lesssim \|u-u_N\|_{B^1_{\infty,1}}\|u_N\|_{B^1_{\infty,1}}+\|u-u_N\|_{B^0_{\infty,1}}\|u_N\|_{B^2_{\infty,1}}.
\end{align*}
According to \eqref{3ineq22}, \eqref{3ineq18}, we infer that
\begin{align*}
\|u-u_N\|_{B^0_{\infty,1}}\|u_N\|_{B^2_{\infty,1}}
&\lesssim(\|(Id-S_N)u_0\|_{B^1_{\infty,1}}+\|(Id-S_N)\tau_0\|_{B^{0}_{\infty,1}})(\|u_0\|_{B^1_{\infty,1}}+\|\tau_0\|_{B^{0}_{\infty,1}}).
\end{align*}
The above inequalities ensure that
\begin{align}\label{3ineq24}
\|u-u_N\|_{L^{\infty}_T(B^1_{\infty,1})}\leq C(\|(Id-S_N)u_0\|_{B^1_{\infty,1}}+\|(Id-S_N)\tau_0\|_{B^{0}_{\infty,1}})+C\|\tau-\tau_N\|_{L^{1}_T(B^2_{\infty,1})}.
\end{align}
By virtue of Lemmas \ref{H} and \ref{PL}, for any $a\in[0,1]$, we infer that
\begin{align}\label{3ineq25}
\|\tau-\tau_N\|_{L^{\infty}_T(B^{0}_{\infty,1})\cap L^{1}_T(B^2_{\infty,1})} 
&\leq C\bigg(\|(Id-S_N)\tau_0\|_{B^{0}_{\infty,1}} + \int_0^T \|Q(\nabla u,\tau-\tau_N)\|_{B^{0}_{\infty,1}}\\ \notag
&~~~+\|Q(\nabla (u-u_N),\tau_N)\|_{B^{0}_{\infty,1}}+\|u\cdot\nabla(\tau-\tau_N)\|_{B^{0}_{\infty,1}}\\ \notag
&~~~+\|(u-u_N)\cdot\nabla\tau_N\|_{B^{0}_{\infty,1}}+\|u-u_N\|_{B^1_{\infty,1}} dt\bigg)\\ \notag
&\leq C\|(Id-S_N)\tau_0\|_{B^{0}_{\infty,1}}+C\int_0^T \|u-u_N\|_{B^{1}_{\infty,1}}(1+\|\tau_N\|_{B^{1}_{\infty,1}})dt \\ \notag
&~~~+C\int_0^T \|u\|_{B^{1}_{\infty,1}}\|\tau-\tau_N\|_{B^{1}_{\infty,1}} dt\\ \notag
&\leq C\|\tau_0-\sigma_0\|_{B^{-1}_{\infty,1}}+\frac 1 {4C}\|u-u_N\|_{L^{\infty}_T(B^1_{\infty,1})}\\ \notag
&~~~+\frac 1 4\|\tau-\tau_N\|_{L^{\infty}_T(B^{0}_{\infty,1})\cap L^{1}_T(B^2_{\infty,1})}.
\end{align}
According to \eqref{3ineq24} and \eqref{3ineq25}, we obtain
\begin{align}\label{3ineq26}
&\|u-u_N\|_{L^{\infty}_T(B^1_{\infty,1})}+\|\tau-\tau_N\|_{L^{\infty}_T(B^{0}_{\infty,1})\cap L^{1}_T(B^2_{\infty,1})} \\ \notag
\leq&C(\|(Id-S_N)u_0\|_{B^1_{\infty,1}}+\|(Id-S_N)\tau_0\|_{B^{0}_{\infty,1}}).
\end{align}
Analogously, we have
\begin{align}\label{3ineq27}
&\|v-v_N\|_{L^{\infty}_T(B^1_{\infty,1})}+\|\sigma-\sigma_N\|_{L^{\infty}_T(B^{0}_{\infty,1})\cap L^{1}_T(B^2_{\infty,1})} \\ \notag
\leq&C(\|(Id-S_N)v_0\|_{B^1_{\infty,1}}+\|(Id-S_N)\sigma_0\|_{B^{0}_{\infty,1}}).
\end{align}
According to the interpolation inequality, \eqref{3ineq18} and \eqref{3ineq21}, we know
\begin{align}\label{3ineq28}
&\|u_N-v_N\|_{L^{\infty}_T(B^1_{\infty,1})}+\|\tau_N-\sigma_N\|_{L^{\infty}_T(B^{0}_{\infty,1})\cap L^{1}_T(B^2_{\infty,1})} \\ \notag
\leq&C(\|u_N-v_N\|_{L^{\infty}_T(B^0_{\infty,1})}+\|\tau_N-\sigma_N\|_{L^{\infty}_T(B^{-1}_{\infty,1})\cap L^{1}_T(B^1_{\infty,1})})^{\frac 12}(\|u_N-v_N\|_{L^{\infty}_T(B^2_{\infty,1})}\\ \notag
&+\|\tau_N-\sigma_N\|_{L^{\infty}_T(B^{1}_{\infty,1})\cap L^{1}_T(B^3_{\infty,1})})^{\frac 12}\\ \notag
\leq &C2^{\frac N 2}(\|u_0-v_0\|_{B^0_{\infty,1}}+\|\tau_0-\sigma_0\|_{B^{-1}_{\infty,1}})^{\frac 1 2}(\|u_0\|_{B^1_{\infty,1}}+\|v_0\|_{B^1_{\infty,1}}+\|\tau_0\|_{B^{0}_{\infty,1}}+\|\sigma_0\|_{B^{0}_{\infty,1}})^{\frac 1 2}.
\end{align}
Combining \eqref{3ineq26}-\eqref{3ineq28}, we conclude that
\begin{align}\label{3ineq29}
&\|u-v\|_{L^{\infty}_T(B^1_{\infty,1})}+\|\tau-\sigma\|_{L^{\infty}_T(B^{0}_{\infty,1})\cap L^{1}_T(B^2_{\infty,1})} \\ \notag
\leq& C(\|(Id-S_N)u_0\|_{B^1_{\infty,1}}+\|(Id-S_N)\tau_0\|_{B^{0}_{\infty,1}}+\|(Id-S_N)v_0\|_{B^1_{\infty,1}}+\|(Id-S_N)\sigma_0\|_{B^{0}_{\infty,1}})\\ \notag
&+C2^{\frac N 2}(\|u_0-v_0\|_{B^0_{\infty,1}}+\|\tau_0-\sigma_0\|_{B^{-1}_{\infty,1}})^{\frac 1 2}(\|u_0\|_{B^1_{\infty,1}}+\|v_0\|_{B^1_{\infty,1}}+\|\tau_0\|_{B^{0}_{\infty,1}}+\|\sigma_0\|_{B^{0}_{\infty,1}})^{\frac 1 2}.
\end{align}
For any $R>0$, we consider $(u_0,\tau_0)\in B_R=\{(f,g)\in B^1_{\infty,1}\times B^0_{\infty,1}: \|(f,g)\|_{B^1_{\infty,1}\times B^0_{\infty,1}}\leq R\}$. Then we will prove that the solution map $(u_0,\tau_0)$ to $(u,\tau)$ is continuous from $B_R$ to $C([0, T]; B^1_{\infty,1}\times B^0_{\infty,1}$ uniformly with respect to $a$. Note that $\lim_{N\rightarrow\infty}\|(Id-S_N)f\|_{B^1_{\infty,1}}=0$. For any $\ep>0$, there exists a $M(\ep,(u_0,\tau_0),R)>0$, for any $N\geq M$, there holds  
$$C(\|(Id-S_N)u_0\|_{B^1_{\infty,1}}+\|(Id-S_N)\tau_0\|_{B^{0}_{\infty,1}})\leq \frac{\ep}{4}.$$
Then, there exists a $\delta(\ep,N,(u_0,\tau_0),R)>0$ such that for any $\|u_0-v_0\|_{B^1_{\infty,1}}+\|\tau_0-\sigma_0\|_{B^{0}_{\infty,1}}\leq \delta$, there holds
\begin{align*}
C(\|(Id-S_N)v_0\|_{B^1_{\infty,1}}+\|(Id-S_N)\sigma_0\|_{B^{0}_{\infty,1}})
&\leq C(\|(Id-S_N)u_0\|_{B^1_{\infty,1}}+\|(Id-S_N)\tau_0\|_{B^{0}_{\infty,1}}) \\ 
&~~~+C(\|u_0-v_0\|_{B^1_{\infty,1}}+\|\tau_0-\sigma_0\|_{B^{0}_{\infty,1}}) \\
&\leq \frac{\ep}{4}+C\delta\leq \frac{\ep}{2},   
\end{align*}
and
$$C2^{\frac N 2}\left(\|u_0-v_0\|_{B^0_{\infty,1}}+\|\tau_0-\sigma_0\|_{B^{-1}_{\infty,1}}\right)^{\frac 1 2}\left(\|u_0\|_{B^1_{\infty,1}}+\|v_0\|_{B^1_{\infty,1}}+\|\tau_0\|_{B^{0}_{\infty,1}}+\|\sigma_0\|_{B^{0}_{\infty,1}}\right)^{\frac 1 2}\leq \frac{\ep}{4}.$$
One can infer from \eqref{3ineq29} that 
$$\|u-v\|_{L^{\infty}_T(B^1_{\infty,1})}+\|\tau-\sigma\|_{L^{\infty}_T(B^{0}_{\infty,1})\cap L^{1}_T(B^2_{\infty,1})}\leq \ep.$$
This completes the proof of Proposition \ref{3prop1}.
\end{proof}

\newpage 

\section{Global existence of solutions of low regularity in $\R^2$ with damping}
In this section, we consider the 2D inviscid
Oldroyd-B model \eqref{eq0} with $a>0$ and prove Theorem~\ref{6theo}. We point out that damping effect is the key to reducing the regularity of $\tau$; indeed we then proved the global well-posedness of lowered regularity with help of our new observation.

%{\color{red}The result of global well-posedness with low regularity improves the work by T. M. Elgindi and F. Rousset \cite{ER15}. }

Denote $A(D) f=\mathcal{F}^{-1}(A(\xi) \widehat{f})$. In the next proposition, we recall some properties of the Riesz operator.
	\begin{lemm}\label{CR1}
		Let $\mathcal{R}_i$ be the Riesz operator $\mathcal{R}_i=\frac{\partial_i}{|D|}$. Then the following hold true.\\
		(1) For any $p\in(1,\infty)$, there exists a positive constant $C(p)$ such that
		\begin{align*}
			\|\mathcal{R}_i\|_{\mathcal{L}(L^p)}\leq C.
		\end{align*}
		(2) Let $\chi\in\mathcal{D}(\mathbb{R}^d)$. Then, there exists a positive constant $C$ such that
		\begin{align*}
			\||D|^s\chi(2^{-q}|D|)\mathcal{R}_i\|_{\mathcal{L}(L^p)}\leq C2^{qs},
		\end{align*}
		~~~~~for any $(p,s,q)\in [1,\infty]\times(0,\infty)\times{\rm N}$.\\
		(3) Let $\mathcal{C}$ be a fixed ring. Then, there exists $g\in\mathcal{S}$ whose spectrum dose not meet the origin such that
		\begin{align*}
			\mathcal{R}_if = 2^{qd}g(2^q \dot) \ast f,
		\end{align*}
		~~~~~for any $f$ with Fourier transform supported in $2^q\mathcal{C}$.
	\end{lemm}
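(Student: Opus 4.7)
The plan is to prove the three assertions in sequence, treating (1) as a classical fact, (2) as a rescaling argument reducing to an $L^1$ kernel estimate, and (3) as a direct consequence of spectral localization.

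\textbf{Part (1).} This is the classical $L^p$-boundedness of the Riesz transform. The multiplier $i\xi_i/|\xi|$ is smooth and bounded away from $0$ and homogeneous of degree $0$; equivalently $\mathcal{R}_i$ is the principal-value convolution with the standard Calderon--Zygmund kernel $c_d\,x_i/|x|^{d+1}$. The Calderon--Zygmund theorem then yields $\|\mathcal{R}_i\|_{\mathcal{L}(L^p)}\le C(p)$ for every $p\in(1,\infty)$.

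\textbf{Part (2).} I would reduce to the dyadic scale $q=0$ via rescaling. Writing $\eta=2^{-q}\xi$, the Fourier multiplier factors as
\[
|\xi|^s\chi(2^{-q}\xi)\,\frac{i\xi_i}{|\xi|}=2^{qs}\,\tilde m(2^{-q}\xi),\qquad \tilde m(\eta):=i\eta_i|\eta|^{s-1}\chi(\eta),
\]
so the convolution kernel of the operator equals $K_q(x)=2^{q(s+d)}K_0(2^q x)$ with $K_0=\mathcal{F}^{-1}\tilde m$; hence $\|K_q\|_{L^1}=2^{qs}\|K_0\|_{L^1}$, and the desired bound on $\mathcal{L}(L^p)$ for every $p\in[1,\infty]$ follows from Young's inequality once $K_0\in L^1$. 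To establish this I would split $\tilde m=\phi\tilde m+(1-\phi)\tilde m$ with $\phi\in\mathcal{D}(\mathbb{R}^d)$ equal to $1$ on a neighbourhood of the origin. The piece $(1-\phi)\tilde m$ is $C^\infty_c$ hence Schwartz, and its inverse Fourier transform decays faster than any polynomial, so lies in $L^1$. For $\phi\tilde m$, the factor $|\eta|^s$ with $s>0$ gives a continuous symbol supported near $0$; Leibniz together with the local integrability of $|\eta|^{s-|\alpha|}$ whenever $|\alpha|<s+d$ shows $\partial^\alpha(\phi\tilde m)\in L^1$ for those $\alpha$, so $|x|^{|\alpha|}\,|\mathcal{F}^{-1}(\phi\tilde m)(x)|\lesssim 1$. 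Choosing any $|\alpha|>d$ (permitted since $s>0$) yields the required $L^1$-integrability of $K_0$.

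\textbf{Part (3).} If $\mathrm{Supp}\,\hat f\subset 2^q\mathcal{C}$, I would pick $\tilde\phi\in\mathcal{D}(\mathbb{R}^d)$ with $\tilde\phi\equiv 1$ on $\mathcal{C}$ and supported in a slightly larger annulus still disjoint from the origin, so that $f=\tilde\phi(2^{-q}D)f$ and $\mathcal{R}_i f=\tilde\phi(2^{-q}D)\mathcal{R}_i f$. After the substitution $\eta=2^{-q}\xi$, the resulting symbol becomes $\tilde\phi(\eta)\,i\eta_i/|\eta|$, which is smooth and compactly supported because $\tilde\phi$ vanishes near $0$; its inverse Fourier transform $g\in\mathcal{S}$ has spectrum equal to $\mathrm{Supp}\,\tilde\phi$, which does not meet the origin. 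A direct rescaling then produces the convolution representation $\mathcal{R}_i f=2^{qd}g(2^q\cdot)\ast f$.

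\textbf{Main obstacle.} The only nontrivial step is the $L^1$-estimate of $K_0$ in part (2). When $s\in(0,1)$ the symbol $\tilde m$ has a genuine, non-smooth singularity at the origin, and one must take enough derivatives to ensure decay of the kernel at infinity while preserving integrability of those derivatives near the origin. The hypothesis $s>0$ is precisely what permits choosing a multi-index with $|\alpha|>d$ and $|\alpha|<s+d$ simultaneously; this is the pinch-point of the whole argument, while parts (1) and (3) are routine.
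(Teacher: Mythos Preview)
The paper does not actually prove this lemma; the sentence introducing it reads ``we recall some properties of the Riesz operator,'' and the result is quoted from the literature (Hmidi--Keraani--Rousset \cite{HKR11}; see also \cite{L19}). So there is no in-paper proof to compare against, and your proposal stands on its own. Parts (1) and (3) are correct and standard: (1) is textbook Calder\'on--Zygmund, and your construction in (3) via a smooth cutoff $\tilde\phi$ equal to $1$ on $\mathcal{C}$ and vanishing near the origin is exactly the right one.

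In part (2) your scaling reduction to $q=0$ is clean and correct, but the $L^1$ estimate on $K_0=\mathcal{F}^{-1}\tilde m$ has a genuine gap in the range $0<s\le 1$. You need an integer $|\alpha|$ satisfying $d<|\alpha|<s+d$; since $d$ is itself an integer, the open interval $(d,d+s)$ contains an integer only when $s>1$, so the phrase ``permitted since $s>0$'' is false as written. The conclusion is still true, but the argument has to be refined. The cleanest repair is a dyadic decomposition of $\tilde m$ near the origin: write $\tilde m=\sum_{j\le 0}\psi(2^{-j}\cdot)\tilde m$ with $\psi$ supported in a fixed annulus; each piece is now smooth with all derivatives bounded, and your own scaling argument (applied to a genuinely smooth symbol) gives $\|\mathcal{F}^{-1}(\psi(2^{-j}\cdot)\tilde m)\|_{L^1}\lesssim 2^{js}$, which sums over $j\le 0$ precisely because $s>0$. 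This is where the hypothesis $s>0$ really enters, and it closes the pinch-point you correctly identified.
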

	As explained in the introduction, we give the proof of a new commutator lemma between the Riesz operator $\mathcal{R}_i$ and the convection operator $u\cdot\nabla$, which plays a crucial role in reducing the regularity. Note that the following commutator lemma of Calderon-Zygmund operator is sharp, which does not require additional regularity of $\|\tau\|_{B^\ep_{\infty,1}}$ with $\ep>0$ as in Lemma \eqref{CR}. For more details, one can refer to \cite{HKR11,L19}.
	\begin{lemm}\label{CR2}
        Let $\mathcal{R}_i$ be the Riesz operator $\mathcal{R}_i=\frac{\partial_i}{|D|}$. Then there exists a constant $C>0$ such that
		\begin{align*}
			\|[\mathcal{R}_i,u\cdot\nabla]\tau\|_{B^0_{\infty,1}} \leq C(\|\nabla u\|_{L^\infty}\|\tau\|_{B^{0}_{\infty,1}}+\|u\|_{L^2}\|\tau\|_{L^2}).
		\end{align*}
	\end{lemm}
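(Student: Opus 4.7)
The plan rests on the algebraic identity
\begin{equation*}
[\mathcal{R}_i, u\cdot\nabla]\tau \;=\; [\mathcal{R}_i, u^k]\partial_k\tau,
\end{equation*}
which is immediate because $\mathcal{R}_i$ commutes with every partial derivative. This reduces the problem to a Riesz-commutator estimate between pointwise multiplication by $u^k$ and the function $\partial_k\tau$, with $\tau$ only at $B^0_{\infty,1}$ regularity. I then split dyadically: the low-frequency block $\Delta_{-1}$ will produce the $\|u\|_{L^2}\|\tau\|_{L^2}$ contribution, while the high-frequency blocks $\Delta_j$ with $j\ge 0$ will yield $\|\nabla u\|_{L^\infty}\|\tau\|_{B^0_{\infty,1}}$.

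For the low-frequency block I use the divergence-free identity $u\cdot\nabla\tau = \partial_k(u^k\tau)$ to rewrite
\begin{equation*}
\Delta_{-1}[\mathcal{R}_i, u\cdot\nabla]\tau \;=\; \Delta_{-1}\partial_k\mathcal{R}_i(u^k\tau) \,-\, \Delta_{-1}\partial_k(u^k\mathcal{R}_i\tau).
\end{equation*}
Both terms have Fourier support in a fixed ball, so Bernstein's inequality together with the $L^2$-boundedness of $\mathcal{R}_i$ (Lemma \ref{CR1}(1)) and H\"older give
\begin{equation*}
\|\Delta_{-1}[\mathcal{R}_i, u\cdot\nabla]\tau\|_{L^\infty} \;\lesssim\; \|u\tau\|_{L^1} + \|u\,\mathcal{R}_i\tau\|_{L^1} \;\lesssim\; \|u\|_{L^2}\|\tau\|_{L^2}.
\end{equation*}
For the high-frequency blocks I apply Bony's decomposition both to $u^k\partial_k\tau$ and to $u^k\mathcal{R}_i\partial_k\tau$ and subtract, producing
\begin{equation*}
[\mathcal{R}_i, u^k]\partial_k\tau \;=\; [\mathcal{R}_i, T_{u^k}]\partial_k\tau \,+\, \bigl(\mathcal{R}_iT_{\partial_k\tau}u^k - T_{\mathcal{R}_i\partial_k\tau}u^k\bigr) \,+\, \bigl(\mathcal{R}_iR(u^k,\partial_k\tau) - R(u^k,\mathcal{R}_i\partial_k\tau)\bigr).
\end{equation*}
The paraproduct commutator summand $[\mathcal{R}_i,S_{l-1}u^k]\Delta_l\partial_k\tau$ has Fourier support in $2^l\widetilde{\mathcal{C}}$, so Lemma \ref{CR1}(3) realizes $\mathcal{R}_i$ as convolution with a Schwartz function at scale $2^{-l}$; a standard mean-value bound on $S_{l-1}u^k$ then yields a bound by $\|\nabla u\|_{L^\infty}\|\Delta_l\tau\|_{L^\infty}$, which sums in $l$ to $\|\nabla u\|_{L^\infty}\|\tau\|_{B^0_{\infty,1}}$.

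For the two remaining Bony contributions $\partial_k$ lands on $\tau$, which is only in $B^0_{\infty,1}$, so a direct bound would lose regularity. The crux is to exploit the divergence-free property $\partial_k\Delta_l u^k = \Delta_l\,\mathrm{div}\,u = 0$ to integrate the $\partial_k$ by parts onto $u$, rewriting each summand schematically as $\partial_k\bigl([\mathcal{R}_i,\Delta_l u^k]S_{l-1}\tau\bigr)$ and its remainder analogue. Since $\Delta_l u^k$ is spectrally localized in $2^l\mathcal{C}$, another application of Lemma \ref{CR1}(3) controls the inner Riesz commutator by a factor $2^{-l}\|\nabla u\|_{L^\infty}\|\tau\|_{B^0_{\infty,1}}$, and the $2^l$ from the outer $\partial_k$ cancels this decay at each scale, yielding the summable bound; the residual piece coming from $\mathcal{R}_i\Delta_{-1}\tau$, on which Lemma \ref{CR1}(3) does not apply directly, is handled via the $L^2$-boundedness of $\mathcal{R}_i$ and Bernstein, and contributes again to the $\|u\|_{L^2}\|\tau\|_{L^2}$ term. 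The main obstacle is precisely this last step: avoiding the $\epsilon$-loss of the classical bound (Lemma \ref{CR}) requires combining the divergence-free integration by parts with the frequency-localized representation of $\mathcal{R}_i$ on the spectrally-localized $u$-factor, organized so that the scale-$2^l$ estimates are simultaneously summable and sharp.
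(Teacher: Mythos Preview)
Your overall framework is the same as the paper's: Bony decomposition of the commutator into three pieces, the convolution-kernel/mean-value argument of Lemma~\ref{CR1}(3) for the first paraproduct, the divergence-free rewriting for the remainder, and $L^2$ bounds for the lowest block. The low-frequency treatment and pieces corresponding to $A^1_j$ and $A^3_j$ match the paper closely.

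There is, however, a genuine gap in your handling of the second paraproduct piece (your piece (b), the paper's $A^2_j$). You propose to integrate $\partial_k$ by parts onto $\Delta_l u^k$ first, writing the summand as $\partial_k\bigl([\mathcal{R}_i,\Delta_l u^k]S_{l-1}\tau\bigr)$, and then to apply the mean-value commutator bound. But that mean-value estimate yields
\[
\bigl\|[\mathcal{R}_i,\Delta_l u^k]S_{l-1}\tau\bigr\|_{L^\infty}\;\lesssim\;2^{-l}\|\nabla\Delta_l u\|_{L^\infty}\|S_{l-1}\tau\|_{L^\infty}\;\lesssim\;2^{-l}\|\nabla u\|_{L^\infty}\|\tau\|_{B^0_{\infty,1}},
\]
and the outer $\partial_k$ contributes $2^j\sim 2^l$ after localizing with $\Delta_j$. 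The $2^l$ and $2^{-l}$ cancel, leaving a contribution of size $\|\nabla u\|_{L^\infty}\|\tau\|_{B^0_{\infty,1}}$ at \emph{every} scale $j\ge 0$; the sum over $j$ in the $B^0_{\infty,1}$ norm then diverges. The paper avoids this by \emph{not} integrating by parts in $A^2_j$: it keeps $\nabla S_{j-1}\tau$, so the mean-value bound produces $2^{-j}\|\nabla\Delta_j u\|_{L^\infty}\|\nabla S_{j-1}\tau\|_{L^\infty}$, and the crucial point is that $\|\nabla S_{j-1}\tau\|_{L^\infty}\lesssim\sum_{q\le j-2}2^q\|\Delta_q\tau\|_{L^\infty}$, yielding the geometric factor $2^{q-j}$ that makes $\sum_j$ converge. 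In short, moving $\partial_k$ off $\tau$ throws away exactly the frequency gain needed for summability; for this piece you should keep the derivative on $\tau$ and run the mean-value argument directly, as the paper does.
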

	\begin{proof}
		Firstly, by Bony's decomposition, we split the commutator into three parts: 
		\begin{align}\label{6eq1}
			[\mathcal{R}_i,u\cdot\nabla]\tau &= \sum_{j\in N} [\mathcal{R}_i,S_{j-1}u\cdot\nabla]\Delta_j\tau + \sum_{j\in N} [\mathcal{R}_i,\Delta_{j}u\cdot\nabla]S_{j-1}\tau + \sum_{j\geq-1} [\mathcal{R}_i,\Delta_{j}u\cdot\nabla]\tilde{\Delta}_j \tau \\ \notag
			&\triangleq\sum_{j\in N} A^1_j + \sum_{j\in N} A^2_j + \sum_{j\geq-1} A^3_j,
		\end{align}
  where $\tilde{\Delta}_j \tau=\sum_{j-1}^{j+1}\Delta_k \tau$.
		By Lemma \ref{CR1}, we know that there exists $g \in \mathcal{S}$ whose spectrum does not meet the origin such that
		\begin{align*}
			A^1_j = g_j \ast \left(S_{j-1}u\cdot\nabla\Delta_j\tau\right) - S_{j-1}u\cdot  \left(g_j \ast\nabla\Delta_j\tau\right),
		\end{align*}
		where $g_j(x) = 2^{dq} g(2^qx)$. According to Bernstein's inequality and $\|xg_j\|_{L^1} = 2^{-j}\|xg\|_{L^1}$, we obtain
		\begin{align*}
			\|A^1_j\|_{L^\infty} &=\left\|g_j \ast \left(S_{j-1}u\cdot\nabla\Delta_j\tau\right) - S_{j-1}u\cdot  \left(g_j \ast\nabla\Delta_j\tau\right)\right\|_{L^\infty}\\ \notag
   &=\left\|\int_{\mathbb{R}^d}g_j(x-y)(S_{j-1}u(y)-S_{j-1}u(x))\cdot\nabla\Delta_j\tau(y)dy\right\|_{L^\infty}\\ \notag
			&=\left\|\int_{0}^{1}\int_{\mathbb{R}^d}g_j(x-y)[(y-x)\cdot\nabla S_{j-1}u(x+t(y-x))]\cdot\nabla\Delta_j\tau(y)dy\right\|_{L^\infty}\\ \notag
   &\lesssim \|x g_j\|_{L^1} \|\nabla S_{j-1}u\|_{L^\infty}\|\nabla\Delta_j\tau\|_{L^{\infty}} \\ \notag
			&\lesssim \|\nabla u\|_{L^\infty}\|\Delta_j\tau\|_{L^\infty}.
		\end{align*}
		Then we obtain
		\begin{align*}
			 \sum_{k\geq -1}\|\Delta_k\sum_{j\in N}A^1_j\|_{L^\infty} \lesssim \sum_{j\in N} \|A^1_j\|_{L^\infty} \lesssim  \|\nabla u\|_{L^\infty}\|\tau\|_{B^0_{\infty,1}}.
		\end{align*}
		We also write
		\begin{align*}
			A^2_j = g_j \ast \left(\Delta_ju\cdot\nabla S_{j-1}\tau\right) - \Delta_ju\cdot  \left(g_j \ast\nabla S_{j-1}\tau\right).
		\end{align*}
		Similarly, we deduce that
		\begin{align*}
			\sum_{k\geq -1}\|\Delta_k\sum_{j\in N}A^2_j\|_{L^\infty} &\lesssim \sum_{j\in N}2^{-j}\|\Delta_j\nabla u\|_{L^\infty}\|S_{j-1}\nabla\tau\|_{L^{\infty}} \\ \notag
			&\lesssim \|\nabla u\|_{L^\infty}\sum_{j\in N}\sum_{q\leq j-2}2^{q-j}\|\Delta_q\tau\|_{L^{\infty}} \\ \notag
		    &\lesssim  \|\nabla u\|_{L^\infty}\|\tau\|_{B^0_{\infty,1}}.
		\end{align*}
		Using ${\rm div}~u=0$, we rewrite $A^3_j$ as
		\begin{align}\label{6eq2}
			\sum_{j\geq-1} A^3_j &= \sum_{j\geq2} \mathcal{R}_i{\rm div}~(\Delta_ju\tilde{\Delta}_j\tau)-\sum_{j\geq2} {\rm div}~(\Delta_ju\mathcal{R}_i\tilde{\Delta}_j\tau) + \sum_{j\leq1} [\mathcal{R}_i,\Delta_ju\cdot\nabla]\tilde{\Delta}_j\tau\\ \notag
			&\triangleq\sum_{j\geq2} B^1_j+\sum_{j\geq2} B^2_j + \sum_{j\leq1} B^3_j.
		\end{align}
		Using \eqref{6eq2} and Lemma \ref{CR1}, we have
		\begin{align*}
			\sum_{k\geq -1}\|\Delta_{k}\sum_{j\geq2}B^1_j\|_{L^\infty} &\lesssim \sum_{k\geq -1}\sum_{j\geq \max\{2,k-4\}}2^k\|\Delta_j u\|_{L^\infty}\|\tilde{\Delta}_j\tau\|_{L^\infty} \\
   &\lesssim \sum_{k\geq -1}\sum_{j\geq \max\{2,k-4\}}2^{k-j}\|\nabla\Delta_j u\|_{L^\infty}\|\tilde{\Delta}_j\tau\|_{L^\infty}\\
   &\lesssim  \|\nabla u\|_{L^\infty}\|\tau\|_{B^0_{\infty,1}}.
		\end{align*}
		For $j\geq2$, $\tilde{\Delta}_j\tau$ is supported away from zero. Then, there exists $g\in\mathcal{S}$ such that
$\mathcal{R}_i\tilde{\Delta}_j\tau=g\ast\tilde{\Delta}_j\tau$. 
		Then we obtain
		\begin{align*}
			\sum_{k\geq -1}\|\Delta_{k}\sum_{j\geq2}B^2_j\|_{L^\infty} \lesssim \|\nabla u\|_{L^\infty}\|\tau\|_{B^0_{\infty,1}}.
		\end{align*}
		 Finally, we deduce that
		\begin{align*} 
			\sum_{k\geq -1}\|\Delta_{k}\sum_{j\leq 1}B^3_j\|_{L^\infty} 
   &\lesssim\sum_{k\leq 4}\sum_{j\leq 1}\|\Delta_{k}B^3_j\|_{L^\infty} \\
   &\lesssim\sum_{j\leq 1}\|[\mathcal{R}_i,\Delta_ju\cdot\nabla]\tilde{\Delta}_j\tau\|_{L^\infty} \\
   &\lesssim\sum_{j\leq 1}\|\Delta_ju\cdot\nabla\tilde{\Delta}_j\tau\|_{L^2}+\|\Delta_ju\cdot\nabla\mathcal{R}_i\tilde{\Delta}_j\tau\|_{L^2} \\
			&\lesssim \|u\|_{L^2}\|\tau\|_{L^2}.
		\end{align*}	
	We thus complete the proof of Lemma \ref{CR2}.
	\end{proof}
	\begin{coro}\label{CR3}
		Let ${\rm div}~u=0$ and $\mathcal{R}=-(-\Delta)^{-1}{\rm curl}~{\rm div}$. There exists a constant $C>0$ such that
		\begin{align*}
			\|[\mathcal{R},u\cdot\nabla]\tau\|_{B^0_{\infty,1}}\leq C(\|\nabla u\|_{L^\infty}+ \|u\|_{L^2})(\|\tau\|_{B^0_{\infty,1}}+\|\tau\|_{L^2}).
		\end{align*}
	\end{coro}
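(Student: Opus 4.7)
\textbf{Proof proposal for Corollary \ref{CR3}.} The plan is to reduce $\mathcal{R}=-(-\Delta)^{-1}\mathrm{curl}\,\mathrm{div}$ to a finite linear combination of double Riesz compositions acting componentwise on $\tau$, and then deploy Lemma~\ref{CR2} after an algebraic splitting. First, using the identity $-(-\Delta)^{-1}\partial_i\partial_j=-\mathcal{R}_i\mathcal{R}_j$ together with the definition of $\mathrm{curl}\,\mathrm{div}$ acting on a matrix field, I would write
\begin{equation*}
\mathcal{R}\tau=\sum_{i,j,k,l}c_{ijkl}\,\mathcal{R}_i\mathcal{R}_j\,\tau_{kl},
\qquad c_{ijkl}\in\{0,\pm 1\}.
\end{equation*}
For each fixed multi-index, the commutator product rule $[AB,C]=A[B,C]+[A,C]B$ with $A=\mathcal{R}_i$, $B=\mathcal{R}_j$, $C=u\cdot\nabla$ yields
\begin{equation*}
[\mathcal{R}_i\mathcal{R}_j,\,u\cdot\nabla]\tau_{kl}
=\mathcal{R}_i\bigl([\mathcal{R}_j,\,u\cdot\nabla]\tau_{kl}\bigr)+[\mathcal{R}_i,\,u\cdot\nabla]\bigl(\mathcal{R}_j\tau_{kl}\bigr),
\end{equation*}
so it suffices to control these two types of terms in $B^0_{\infty,1}$.

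For the term $[\mathcal{R}_i,u\cdot\nabla](\mathcal{R}_j\tau_{kl})$, set $g=\mathcal{R}_j\tau_{kl}$. The $L^2$ boundedness of the Riesz transform gives $\|g\|_{L^2}\lesssim\|\tau\|_{L^2}$. For the $B^0_{\infty,1}$ norm of $g$, I split into high and low frequency: Lemma~\ref{CZ}(2) delivers $\|(Id-\Delta_{-1})g\|_{B^0_{\infty,1}}\lesssim\|\tau\|_{B^0_{\infty,1}}$, while Bernstein's inequality combined with $L^2$ boundedness gives $\|\Delta_{-1}g\|_{L^\infty}\lesssim\|\Delta_{-1}g\|_{L^2}\lesssim\|\tau\|_{L^2}$. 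Hence $\|g\|_{B^0_{\infty,1}}\lesssim\|\tau\|_{B^0_{\infty,1}}+\|\tau\|_{L^2}$, and Lemma~\ref{CR2} applied to $[\mathcal{R}_i,u\cdot\nabla]g$ yields the bound $(\|\nabla u\|_{L^\infty}+\|u\|_{L^2})(\|\tau\|_{B^0_{\infty,1}}+\|\tau\|_{L^2})$.

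For the term $\mathcal{R}_i([\mathcal{R}_j,u\cdot\nabla]\tau_{kl})$, set $h=[\mathcal{R}_j,u\cdot\nabla]\tau_{kl}$. Lemma~\ref{CR2} immediately provides $\|h\|_{B^0_{\infty,1}}\lesssim\|\nabla u\|_{L^\infty}\|\tau\|_{B^0_{\infty,1}}+\|u\|_{L^2}\|\tau\|_{L^2}$. For $\|h\|_{L^2}$, I would use the divergence-free condition to rewrite $h=\sum_k\partial_k[\mathcal{R}_j,u_k]\tau_{kl}$ and then invoke the classical Calder\'on commutator estimate $\|[\mathcal{R}_j,u_k]\partial_k\tau_{kl}\|_{L^2}\lesssim\|\nabla u\|_{L^\infty}\|\tau\|_{L^2}$. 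Applying the same high/low frequency splitting as above to the outer $\mathcal{R}_i$ gives
\begin{equation*}
\|\mathcal{R}_i h\|_{B^0_{\infty,1}}\lesssim\|h\|_{L^2}+\|h\|_{B^0_{\infty,1}}\lesssim(\|\nabla u\|_{L^\infty}+\|u\|_{L^2})(\|\tau\|_{B^0_{\infty,1}}+\|\tau\|_{L^2}).
\end{equation*}
Summing the finitely many index contributions completes the proof.

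\textbf{Main obstacle.} The only subtle point is that the Riesz transforms $\mathcal{R}_i$ are not bounded on $L^\infty$ (or on $B^0_{\infty,1}$) globally: their symbols are singular at the origin. This forces us to handle the low-frequency block $\Delta_{-1}\mathcal{R}_i$ separately through Bernstein's inequality and the $L^2$ estimate; in turn, this is exactly why the $L^2$ norms $\|u\|_{L^2}$ and $\|\tau\|_{L^2}$ must appear on the right-hand side. Once this low-frequency defect is absorbed via the $L^2$ terms (and once we observe that the classical Calder\'on commutator gives the needed $L^2$ control on $h$ thanks to $\mathrm{div}\,u=0$), the argument reduces to a routine combination of Lemma~\ref{CR2}, Lemma~\ref{CZ} and Bernstein's inequality.
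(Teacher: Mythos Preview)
Your proof is correct and follows essentially the same route as the paper: the same reduction of $\mathcal{R}$ to double Riesz transforms, the same commutator identity $[\mathcal{R}_i\mathcal{R}_j,u\cdot\nabla]=\mathcal{R}_i[\mathcal{R}_j,u\cdot\nabla]+[\mathcal{R}_i,u\cdot\nabla]\mathcal{R}_j$, and the same high/low frequency splitting combined with Lemma~\ref{CR2} and Lemma~\ref{CZ}. The only minor divergence is in the $L^2$ control of $h=[\mathcal{R}_j,u\cdot\nabla]\tau$: the paper avoids the Calder\'on commutator entirely and simply bounds $\|\Delta_{-1}h\|_{L^2}\lesssim\|u\tau\|_{L^1}+\|u\,\mathcal{R}_j\tau\|_{L^1}\lesssim\|u\|_{L^2}\|\tau\|_{L^2}$ via Bernstein and the divergence form, whereas your appeal to the classical Calder\'on commutator is equally valid but a slightly heavier tool.
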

	\begin{proof}
		Firstly, the commutator can be rewritten as follows:
		\begin{align*}
			[\mathcal{R},u\cdot\nabla] \tau &= \mathcal{R}_i\mathcal{R}_j(u\cdot\nabla\tau) - u\cdot\nabla\left(\mathcal{R}_i\mathcal{R}_j(\tau)\right) \\ \notag
&=\mathcal{R}_i\left([\mathcal{R}_j,u\cdot\nabla]\tau\right) + [\mathcal{R}_i,u\cdot\nabla](\mathcal{R}_j\tau).
		\end{align*}
		By virtue of Lemmas \ref{CZ} and \ref{CR1}, we infer that
		\begin{align*}	\|\mathcal{R}_i\left([\mathcal{R}_j,u\cdot\nabla]\tau\right)\|_{B^0_{\infty,1}} 
  &\lesssim \|(Id-\Delta_{-1})\mathcal{R}_i\left([\mathcal{R}_j,u\cdot\nabla]\tau\right)\|_{B^0_{\infty,1}}+\|\Delta_{-1}[\mathcal{R}_j,u\cdot\nabla]\tau\|_{L^2}\\
  &\leq\|[\mathcal{R}_j,u\cdot\nabla]\tau\|_{B^0_{\infty,1}}+ \|u\|_{L^2}\|\tau\|_{L^2}\\ \notag
			&\lesssim \|\nabla u\|_{L^\infty}\|\tau\|_{B^0_{\infty,1}}+\|u\|_{L^2}\|\tau\|_{L^2}.
		\end{align*}
		Similarly, we obtain
		\begin{align*}
			\|[\mathcal{R}_i,u\cdot\nabla](\mathcal{R}_j\tau)\|_{B^0_{\infty,1}}
			&\lesssim \|\nabla u\|_{L^\infty}\left(\|\Delta_{-1}\mathcal{R}_j\tau\|_{B^0_{\infty,1}}+\|(Id-\Delta_{-1})\mathcal{R}_j\tau\|_{B^0_{\infty,1}}\right) + \|u\|_{L^2}\|\tau\|_{L^2}\\ \notag
			&\lesssim \|\nabla u\|_{L^\infty}\left(\|\mathcal{R}_j\tau\|_{L^2}+\|\tau\|_{B^0_{\infty,1}}\right)+ \|u\|_{L^2}\|\tau\|_{L^2} \\ \notag
			&\lesssim  (\|\nabla u\|_{L^\infty}+ \|u\|_{L^2})(\|\tau\|_{B^0_{\infty,1}}+\|\tau\|_{L^2}).
		\end{align*}
		Thus we conclude that
		\begin{align*}
			\|[\mathcal{R},u\cdot\nabla]\tau\|_{B^0_{\infty,1}}\lesssim (\|\nabla u\|_{L^\infty}+ \|u\|_{L^2})(\|\tau\|_{B^0_{\infty,1}}+\|\tau\|_{L^2}).
		\end{align*}
		This completes the proof of Corollary \ref{CR3}.
	\end{proof}
 Now, we establish global existence for \eqref{eq0} with low regularity.

{\bf Proof of Theorem \ref{6theo} :}  \\
Firstly, for any $T>0$, we assume that $$\|(u,\tau)\|_{L_T^{\infty}(L^2\cap B^1_{\infty,1})\times L_T^{\infty}(L^2\cap B^0_{\infty,1})}\leq 4c^{\frac 1 2}.$$
Taking $L^2$ inner product for $\eqref{eq0}_1$ with $u$, we obtain
		\begin{align}\label{6ineq1}
		\frac 1 2\frac{d}{dt}\|u\|^2_{L^2} = \langle{\rm div}~\tau,u\rangle.
		\end{align}
		Taking $L^2$ inner product for $\eqref{eq0}_2$ with $\tau$, we get
		\begin{align}\label{6ineq2}
		\frac 1 2\frac{d}{dt}\|\tau\|^2_{L^2}+\|\nabla\tau\|^2_{L^2} + a\|\tau\|^2_{L^2} - \langle D u,\tau\rangle &= - \langle Q(\nabla u,\tau),\tau\rangle \\ \notag
		&\leq C\|\nabla u\|_{L^\infty}\|\tau\|^2_{L^2} \\ \notag
		&\leq \frac a 2\|\tau\|^2_{L^2}
		\end{align}
		where we take $c^{\frac 1 2}\leq\frac{a}{100C}$.
		By virtue of cancellation $\langle{\rm div}~\tau,u\rangle+\langle D u,\tau\rangle=0$, we infer from \eqref{6ineq1} and \eqref{6ineq2} that
		\begin{align}\label{6ineq3}
		\|(u,\tau)\|^2_{L^\infty_T(L^2)} + \|\nabla\tau\|^2_{L^2_T(L^2)}+a\|\tau\|^2_{L^2_T(L^2)}
		\leq \|(u_0,\tau_0)\|^2_{L^2}.
		\end{align}
  
  	From now on, we focus on the estimate of $\|\tau\|_{L^\infty_T(B^0_{\infty,1})}$. Applying Duhamel's principle to $\eqref{eq0}_2$ and taking $\Delta_j$ with $j\geq 0$, we have
		\begin{align}\label{6ineq4}
			\Delta_j\tau = e^{\Delta t}e^{-at}\Delta_j\tau_0 - \int_0^t e^{\Delta(t-s)}e^{-a(t-s)}\Delta_j(Q(\nabla u,\tau)+u\cdot\nabla\tau-Du)dt'.
		\end{align}
		Taking $L^{\infty}_T(L^{\infty})$ norm to \eqref{6ineq4}, for $j\geq 0$, we infer from Lemma \ref{H} that
		\begin{align}\label{6ineq5}
			\|\Delta_j\tau\|_{L^{\infty}_T(L^{\infty})} &\lesssim \|\Delta_j\tau_0\|_{L^{\infty}} + \int_0^T e^{-2^{2j}(T-t)}\|\Delta_jQ(\nabla u,\tau)\|_{L^{\infty}}dt \\ \notag
			&~~~+\int_0^T e^{-2^{2j}(T-t)}\|\Delta_j(u\cdot\nabla\tau)\|_{L^{\infty}}dt+\int_0^T e^{-2^{2j}(T-t)}\|\Delta_j Du\|_{L^{\infty}}dt.
		\end{align}
		Applying Bernstein's inequality, we infer from \eqref{6ineq3} and \eqref{6con} that
		\begin{align}\label{6ineq6}
			\int_0^T e^{-2^{2j}(T-t)}\|\Delta_j Du\|_{L^{\infty}}dt \lesssim \int_0^T e^{-2^{2j}(T-t)}2^j\|\Delta_j u\|_{L^{\infty}}dt
			\lesssim 2^{-j}\|u\|_{L_T^{\infty}(L^{\infty})}\lesssim 2^{-j}c^{\frac 3 4}.
		\end{align}
		 According to Young's inequality, we have
		\begin{align}\label{6ineq7}
			\int_0^T e^{-2^{2j}(T-t)}\|\Delta_jQ(\nabla u,\tau)\|_{L^{\infty}}dt &\lesssim \int_0^T e^{-2^{2j}(T-t)}2^j\|\Delta_jQ(\nabla u,\tau)\|_{L^{2}}dt \\ \notag
			&\lesssim  2^{-j}\|\nabla u\|_{L_T^{\infty}(L^{\infty})}\|\tau\|_{L_T^{\infty}(L^{2})}\lesssim  2^{-j}c
		\end{align}
		and
		\begin{align}\label{6ineq8}
			\int_0^T e^{-2^{2j}(T-t)}\|\Delta_j(u\cdot\nabla\tau)\|_{L^{\infty}}dt &\lesssim \int_0^T e^{-2^{2j}(T-t)}2^j\|\Delta_j(u\tau)\|_{L^{\infty}}dt\\ \notag
			&\lesssim  2^{-j}\|u\|_{L_T^{\infty}(L^{\infty})}\|\tau\|_{L_T^{\infty}(L^{\infty})}\lesssim  2^{-j}c.
		\end{align}
		Combining the estimates \eqref{6ineq5}-\eqref{6ineq8}, we obtain
		\begin{align}\label{6ineq9}
			\|\tau\|_{L^\infty_T(B^0_{\infty,1})} &\lesssim \|\Delta_{-1}\tau\|_{L^\infty_T(L^{\infty})}+\sum_{j\geq 0}\|\Delta_j\tau\|_{L^{\infty}_T(L^{\infty})} \\ \notag
			&\lesssim \|\tau_0\|_{L^{2}} + c^{\frac 3 4}\lesssim c^{\frac 3 4}.
		\end{align}
		
  	Moreover, we can cancel ${\rm div}~\tau$ and $\Delta\tau$ in \eqref{eq0} by virtue of the structural trick
		\begin{align}\label{6eq4}
		\Gamma = \omega-\mathcal{R}\tau,
		\end{align}
		where $\mathcal{R} =-(-\Delta)^{-1}{\rm curl}~{\rm div}$. Using \eqref{6eq4} and \eqref{eq0}, we infer that
		\begin{align}\label{6eq5}
			\partial_t\Gamma + u\cdot\nabla\Gamma +\frac 12\Gamma=(a-\frac{1}{2})\mathcal{R}\tau + \mathcal{R}Q(\nabla u,\tau) + [\mathcal{R},u\cdot\nabla]\tau.
		\end{align}
		According to \eqref{6eq5}, we deduce that
		\begin{align}\label{6ineq10}
			\|\Gamma\|_{L^\infty_T(B^0_{\infty,1})} &\lesssim \sup_{t\in [0,T]}\|\Gamma_0\|_{B^0_{\infty,1}}e^{-\frac 12 t+\int_0^t \|\nabla u\|_{L^\infty}dt'} \\ \notag
   &~~~+\int_0^T e^{-\frac 12 (T-t)+\int_t^T \|\nabla u\|_{L^\infty}dt'}( \|\mathcal{R}\tau\|_{B^0_{\infty,1}} + \|\mathcal{R}Q(\nabla u,\tau)\|_{B^0_{\infty,1}} + \|[\mathcal{R},u\cdot\nabla]\tau\|_{B^0_{\infty,1}})dt \\ \notag
   &\lesssim \|\Gamma_0\|_{B^0_{\infty,1}}+\int_0^T e^{-\frac 14 (T-t)}(\|\mathcal{R}\tau\|_{B^0_{\infty,1}} + \|\mathcal{R}Q(\nabla u,\tau)\|_{B^0_{\infty,1}} + \|[\mathcal{R},u\cdot\nabla]\tau\|_{B^0_{\infty,1}})dt.
		\end{align}
		By virtue of Lemmas \ref{CZ} and Corollary \ref{CR3}, we have
		\begin{align}\label{6ineq11}
			\|\mathcal{R}\tau\|_{B^0_{\infty,1}} + \|[\mathcal{R},u\cdot\nabla]\tau\|_{B^0_{\infty,1}}&\lesssim \|\tau\|_{L^2} + \|\tau\|_{B^0_{\infty,1}} + (\|\nabla u\|_{L^\infty}+ \|u\|_{L^2})(\|\tau\|_{B^0_{\infty,1}}+\|\tau\|_{L^2})\\ \notag
			&\lesssim  c^{\frac 3 4}.
		\end{align}
  Since $B^0_{\infty,1}$ is a critical space and $Q$ does not have transportation structure, product laws may require additional regularity. However, in the 2D case, we discover that $Q$ can be controlled by a new estimation.
		Using Lemmas \ref{CZ}, \ref{T} and \ref{R}, we have \begin{align}\label{6ineq12}
  \|\mathcal{R}Q(\nabla u,\tau)\|_{B^0_{\infty,1}} &\lesssim \|Q(\nabla u,\tau)\|_{B^0_{\infty,1}}+ \|Q(\nabla u,\tau)\|_{L^2}\\ \notag
			&\lesssim \|T_{\nabla u}\tau\|_{B^0_{\infty,1}}+ \|T_{\tau}\nabla u\|_{B^0_{\infty,1}}+\|R(\nabla u,\tau)\|_{B^0_{\infty,1}}+\|\nabla u\|_{L^\infty}\|\tau\|_{L^2} \\ \notag
			&\lesssim \|\nabla u\|_{B^0_{\infty,1}} \|\tau\|_{B^0_{\infty,1}}+\|R(\nabla u,\tau)\|_{B^1_{2,1}}+\|\nabla u\|_{L^\infty}\|\tau\|_{L^2} \\ \notag
   &\lesssim \|\nabla u\|_{B^0_{\infty,1}} \|\tau\|_{B^0_{\infty,1}}+\|\nabla u\|_{B^0_{\infty,1}}\|\tau\|_{H^1}+\|\nabla u\|_{L^\infty}\|\tau\|_{L^2}\\ \notag
   &\lesssim c+c^{\frac 12}\|\nabla \tau\|_{L^2}.
  \end{align}
	According to \eqref{6ineq10}-\eqref{6ineq12}, we conclude from \eqref{6ineq3} that
		\begin{align}\label{6ineq13}
			\|\Gamma\|_{L^\infty_T(B^0_{\infty,1})} &\lesssim c^{\frac 3 4}.
		\end{align}
This together with \eqref{6ineq3} and \eqref{6ineq9} ensures that $$\|(u,\tau)\|_{L_T^{\infty}(L^2\cap B^1_{\infty,1})\times L_T^{\infty}(L^2\cap B^0_{\infty,1})}\leq 4c^{\frac 1 2}.$$
According to Proposition \ref{3prop1} and the continuity method, we complete the proof of Theorem \ref{6theo}.
\hfill$\Box$

\newpage 

\section{Global existence and uniform vanishing damping limit in $\R^2$}
In this section, we first look for a global estimate for \eqref{eq0} in $H^1$. Then we can obtain the global strong solution through smallness condition in the critical Besov space. Indeed our global well-posedness results hold for all damping $a\in[0,1]$ and therefore extend the work in the existing literature.

%{\color{red}The result of the global existence extends the work of T. M. Elgindi and F. Rousset \cite{ER15} to the case $a\in[0,1]$.} 

Moreover, global solutions is uniformly bounded in time, which is useful to prove that the solution of supercritical regularity is also uniformly bounded in time. The results of global existence play a crucial role in considering vanishing damping limit in the same topology for any $T>0$. 

Then, by virtue of the improved Fourier splitting method, for any $a\in[0,1]$, we obtain optimal time decay rates of global solutions for the inviscid Oldroyd-B model \eqref{eq0}. Since $\|\tau\|_{B^0_{\infty,1}}\leq C(1+t)^{-1}$ in the case of high regularity for $d=2$, the time integrability of $\|\tau\|_{B^0_{\infty,1}}$ cannot be obtained. We introduce a novel method of high-low frequency decomposition to obtain time integrability of $\|\nabla u\|_{B^0_{\infty,1}}$. By virtue of the optimal decay rates and key integrability, we obtain the uniform vanishing damping limit and discover a new phenomenon for \eqref{eq0} that the sharp rate of uniform vanishing damping limit in $L^2$ is related to the time decay rate in $L^2$.
\subsection{Global existence and vanishing damping limit}
Since the global estimates are uniformly in $a$, we may drop the upper index $a$ of the solutions for the inviscid Oldroyd-B model 
\eqref{eq0} without any confusions. We first provide the following global estimate for \eqref{eq0} in $H^1$.
\begin{prop}\label{4prop1}
Let $d=2$ and $a\in[0,1]$. Assume $(u,\tau)$ is a smooth solution of \eqref{eq0} with divergence-free field $u_0\in H^1$ and a symmetric matrix $\tau_0\in H^1$.  There exists some sufficiently small constant $\varepsilon>0$ such that if
		\begin{align}\label{4ineq0}
		\|(u_0,\tau_0)\|_{H^1}\leq \ep,
		\end{align}
		then there exist a fixed constant $\eta$, for any $t>0$, we have
		\begin{align}\label{4ineq1}
		\frac{d}{dt}\left(\frac{1}{2}\|(u,\tau)\|^2_{H^1}-\eta\langle\tau,\nabla u\rangle\right) + \frac \eta 4\|\nabla u\|^2_{L^2} + \frac{1}{4}\|\nabla\tau\|^2_{H^1}\leq 0,
		\end{align}
		and
		\begin{align}\label{4ineq2}
		\frac{d}{dt}\|\nabla(u,\tau)\|^2_{L^2} + \|\nabla^2\tau\|^2_{L^2}\lesssim \|\nabla u\|^2_{L^2}\|\tau\|^2_{H^1}.
		\end{align}
\end{prop}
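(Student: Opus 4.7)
The plan is to assemble \eqref{4ineq1} as a weighted sum of three identities --- the $L^2$ energy of $(u,\tau)$, the $\dot H^1$ energy of $(u,\tau)$, and the evolution of the cross term $\langle-\eta\tau,\nabla u\rangle$ --- and to read off \eqref{4ineq2} from the $\dot H^1$ identity alone. The cross-term trick from \cite{WWXZ22}, sketched in the introduction, is the crucial point: it produces a velocity dissipation $\tfrac{\eta}{2}\|\nabla u\|_{L^2}^2$ on the left that the inviscid $u$-equation cannot generate by itself.

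Pairing $\eqref{eq0}_1$ with $u$ and $\eqref{eq0}_2$ with $\tau$ and exploiting the symmetry of $\tau$ yields the cancellation $\langle\mathrm{div}\,\tau,u\rangle+\langle D(u),\tau\rangle=0$; the surviving $\langle Q,\tau\rangle$ term vanishes on its antisymmetric $\tau\Omega-\Omega\tau$ part by cyclic trace and is controlled on the rest by $\|\nabla u\|_{L^2}\|\tau\|_{L^2}\|\nabla\tau\|_{L^2}$ via 2D Gagliardo-Nirenberg, then absorbed using \eqref{4ineq0}. Applying $\nabla$ and pairing with $(\nabla u,\nabla\tau)$ produces the $\dot H^1$ identity: the 2D identity $\langle\nabla(u\cdot\nabla u),\nabla u\rangle=0$ (equivalent to Euler's conservation of $\|\omega\|_{L^2}$) kills the velocity transport, the pressure vanishes by $\mathrm{div}\,u=0$, and two integrations by parts give $\langle\nabla\,\mathrm{div}\,\tau,\nabla u\rangle+\langle\nabla D(u),\nabla\tau\rangle=0$ using the symmetry of $\tau$. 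Differentiating the cross term and substituting both equations, using $\langle D(u),\nabla u\rangle=\tfrac12\|\nabla u\|_{L^2}^2$ (the $(\nabla u)^T$ contribution vanishing by $\mathrm{div}\,u=0$) and $\partial_tu=\mathbb{P}(\mathrm{div}\,\tau-u\cdot\nabla u)$, reproduces exactly the identity displayed in the introduction with $+\tfrac{\eta}{2}\|\nabla u\|_{L^2}^2$ on the left; Young's inequality bounds every right-hand term by a small fraction of $\|\nabla u\|_{L^2}^2+\|\nabla\tau\|_{H^1}^2$ once $\eta\ll 1$ and \eqref{4ineq0} supplies the smallness factor. Summing the three identities and fixing $\eta$ small independently of $a$ yields \eqref{4ineq1}, the modified energy being equivalent to $\|(u,\tau)\|_{H^1}^2$ by Cauchy-Schwarz.

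For \eqref{4ineq2}, I isolate the $\dot H^1$ identity. The $\tau$-transport term reduces by divergence-freeness to $\langle\nabla u\cdot\nabla\tau,\nabla\tau\rangle\lesssim\|\nabla u\|_{L^2}\|\nabla\tau\|_{L^4}^2\lesssim\|\nabla u\|_{L^2}\|\nabla\tau\|_{L^2}\|\nabla^2\tau\|_{L^2}$ by 2D Gagliardo-Nirenberg, split into $\tfrac14\|\nabla^2\tau\|_{L^2}^2+C\|\nabla u\|_{L^2}^2\|\nabla\tau\|_{L^2}^2$ via Young. The main obstacle is the $Q$-term $\langle\nabla Q,\nabla\tau\rangle=-\langle Q,\Delta\tau\rangle$: using the bilinear structure $Q\sim\nabla u\cdot\tau$ together with a H\"older/Gagliardo-Nirenberg chain (and, where needed, the smallness from \eqref{4ineq0}) I would bound $\|Q\|_{L^2}\lesssim\|\nabla u\|_{L^2}\|\tau\|_{H^1}$, so that Young's inequality produces $\tfrac14\|\nabla^2\tau\|_{L^2}^2+C\|\nabla u\|_{L^2}^2\|\tau\|_{H^1}^2$; this is precisely the right-hand side of \eqref{4ineq2}.
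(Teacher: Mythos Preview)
Your overall approach matches the paper's: combine the $L^2$ energy identity, the $\dot H^1$ energy identity (exploiting $\langle u\cdot\nabla u,\Delta u\rangle=0$ in 2D), and the cross term $-\eta\langle\tau,\nabla u\rangle$, then choose $\eta$ small. Two points need correction.

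First, the smallness you repeatedly invoke from \eqref{4ineq0} concerns only $t=0$; the estimates along the way need $\|(u,\tau)(t)\|_{H^1}\lesssim\ep$ for all $t$. The paper makes this explicit via a bootstrap: assume $\|(u,\tau)(t)\|_{H^1}\le 2\ep$, derive \eqref{4ineq1}, and use the equivalence of the modified energy with $\|(u,\tau)\|_{H^1}^2$ to close back to $\tfrac32\ep$. You have the ingredients but should state the bootstrap.

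Second, and more seriously, the bound $\|Q\|_{L^2}\lesssim\|\nabla u\|_{L^2}\|\tau\|_{H^1}$ is false in $\R^2$. Since $Q\sim\nabla u\cdot\tau$ and $\nabla u$ is only in $L^2$, this would force $\tau\in L^\infty$, but $H^1(\R^2)\not\hookrightarrow L^\infty$; no choice of H\"older exponents or Gagliardo--Nirenberg avoids this, and smallness does not repair a failed embedding. The paper's fix in \eqref{4eq7} is not to estimate $\|Q\|_{L^2}$ in isolation but to keep the $L^\infty$ factor and interpolate against the dissipation: from
\[
|\langle Q,\Delta\tau\rangle|\lesssim\|\nabla u\|_{L^2}\|\tau\|_{L^\infty}\|\nabla^2\tau\|_{L^2},
\]
apply the 2D Agmon inequality $\|\tau\|_{L^\infty}^2\lesssim\|\tau\|_{L^2}\|\tau\|_{H^2}$, then Young on the resulting $\|\nabla^2\tau\|_{L^2}^{3/2}$ factor. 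The remainder is $C\|\nabla u\|_{L^2}^2\|\tau\|_{H^1}^2+C\|\nabla u\|_{L^2}^4\|\tau\|_{L^2}^2$, and the bootstrap bound $\|\nabla u\|_{L^2}^2\lesssim\ep^2$ reduces the second piece to the first, yielding \eqref{4ineq2}.
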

\begin{proof}
We start with a \emph{bootstrap assumption}
\begin{equation}\label{boot}
\|(u,\tau)(t)\|_{H^{1}}\leq 2\ep, \quad t\geq 0.
\end{equation}
Testing the first equation of \eqref{eq0} by $ u $ and the second equation of \eqref{eq0} by $ \tau $, we obtain the following energy identity:
\begin{align}\label{4eq1}
\frac{d}{dt}\frac{1}{2}\left\|(u,\tau)\right\|_{L^{2}}^{2}+\|\nabla \tau\|^2_{L^{2}}+a\|\tau\|_{L^{2}}^{2}=-\langle Q(\nabla u,\tau),\tau \rangle.
\end{align}
In the above, we have employed the identity:
\begin{equation*}
\langle u, \mathrm{div}\tau \rangle + \langle D(u), \tau \rangle =0.
\end{equation*}
Applying Sobolev's inequality, we have
\begin{align}\label{4eq2}
\left|\langle Q(\nabla u,\tau),\tau \rangle\right|&\leq \|\nabla u\|_{L^{2}}\|\tau\|_{L^{4}}^{2}\\ \notag
&\lesssim \|\nabla u\|_{L^{2}}\|\tau\|_{L^{2}}\|\nabla \tau\|_{L^{2}}\\ \notag
&\leq \frac{1}{2}\|\nabla \tau\|^2_{L^{2}}+C\|\nabla u\|_{L^{2}}^{2}\|\tau\|^2_{L^{2}} \notag.
\end{align}
Therefore,
\begin{align}\label{4eq3}
\frac{d}{dt}\frac{1}{2}\left\|(u,\tau)\right\|_{L^{2}}^{2}+\|\nabla \tau\|^2_{L^{2}}+a\|\tau\|_{L^{2}}^{2}\lesssim \|\nabla u\|_{L^{2}}^{2}\|\tau\|^2_{L^{2}}.
\end{align}
To absorb the right-hand side, we choose $ \eta>0 $, which will be determined later. By direct computation we obtain that
\begin{align}\label{4eq4}
\frac{d}{dt} \langle -\eta\tau, \nabla u \ \rangle &=\eta\langle\partial_{t}u,  \mathrm{div}\tau\ \rangle 
-\eta\langle\partial_{t}\tau, \nabla u\ \rangle\\
&=-\frac{\eta}{2}\|\nabla u\|_{L^2}^{2}+ \notag
\eta \langle\mathbb{P}\left(\mathrm{div}\tau-u\cdot\nabla u\right), \mathrm{div}\tau \rangle\\ \notag
&\quad+\eta \langle u\cdot\nabla\tau+a\tau+Q(\nabla u,\tau)-\Delta \tau, \nabla u \rangle. \notag
\end{align}
Applying Sobolev's inequality, we obtain that
\begin{align}\label{4eq5}
&\frac{d}{dt}\langle -\eta\tau, \nabla u \ \rangle+\frac{\eta}{2}\|\nabla u\|_{L^2}^{2}dx\\ \notag
&=\eta \langle\mathbb{P}\left(\mathrm{div}\tau-u\cdot\nabla u\right),\mathrm{div}\tau \rangle+\eta \langle u\cdot\nabla\tau+a\tau+Q(\nabla u,\tau)-\Delta \tau, \nabla u \rangle\\ \notag
&\leq \eta\Bigg(\|\nabla \tau\|_{L^{2}}^{2}+\|u\|_{L^{4}}\|\nabla u\|_{L^{2}}\|\nabla \tau\|_{L^{4}}+a\|\tau\|_{L^{2}}\|\nabla u\|_{L^{2}}+\|\tau\|_{L^{\infty}}\|\nabla u\|^2_{L^{2}}+\|\Delta \tau\|_{L^{2}}\|\nabla u\|_{L^{2}}\Bigg)\\ \notag
&\leq \frac{\eta}{8}\|\nabla u\|_{L^{2}}^{2}+C\eta\|\nabla \tau\|_{H^{1}}^{2}+\frac{a}{8}\|\tau\|^2_{L^{2}}, \notag
\end{align}
provided  $\eta$ is smaller than an absolute constant which only depends on the coefficients in Sobolev's inequalities.

Finally, testing the first equation of \eqref{eq0} by $-\Delta u$, and the second equation of \eqref{eq0} by $ -\Delta \tau $, we obtain the following energy identity:
\begin{align}\label{4eq6}
\frac{1}{2}\frac{d}{dt}\left\|(\nabla u,\nabla \tau)\right\|^2_{L^{2}}+a\|\nabla\tau\|^2_{L^{2}}+\|\nabla^{2}\tau\|^2_{L^{2}}=\langle (u\cdot\nabla)\tau, \Delta \tau \rangle +\langle Q(\nabla u,\tau), \Delta\tau \rangle,
\end{align}
where we have used the fact that for $d=2$ 
$$\langle (u\cdot\nabla)u, \Delta u \rangle = 0.$$
Applying Sobolev's inequality, we get
\begin{align}\label{4eq7}
&\frac{1}{2}\frac{d}{dt}\left\|(\nabla u,\nabla \tau)\right\|^2_{L^{2}}+a\|\nabla\tau\|^2_{L^{2}}+\|\nabla^{2}\tau\|^2_{L^{2}}\\ \notag
\leq& \|\nabla u\|_{L^{2}}\|\tau\|_{L^{\infty}}\|\nabla^{2}\tau\|_{L^{2}}+\|u\|_{L^{4}}\|\nabla\tau\|_{L^{4}}\|\nabla^{2}\tau\|_{L^{2}}\\ \notag
\leq& \frac{1}{8}\|\nabla^{2} \tau\|_{L^{2}}^{2}+C\left(\|\nabla u\|_{L^{2}}^{2}\|\tau\|_{L^{\infty}}^{2}+\|u\|_{L^{4}}^{2}\|\nabla\tau\|_{L^{4}}^{2}\right)\\ \notag
\leq& \frac{1}{8}\|\nabla^{2}\tau\|_{L^{2}}^{2} + C\left(\|\nabla u\|_{L^{2}}^{2}\|\tau\|_{L^{2}}\|\nabla^{2}\tau\|_{L^{2}}+\|u\|_{H^{1}}^{2}\|\nabla\tau\|_{L^{2}}\|\nabla^{2}\tau\|_{L^{2}}\right)\\ \notag
\leq&\frac{1}{4}\|\nabla^{2} \tau\|_{L^{2}}^{2}+C\left(\|\nabla u\|_{L^{2}}^{4}\|\tau\|_{L^{2}}^{2}+\|u\|_{H^{1}}^{4}\|\nabla\tau\|_{L^{2}}^{2}\right)\\ \notag
\leq& \frac{1}{4}\|\nabla^{2} \tau\|_{L^{2}}^{2}+C\ep^{4}\|\nabla u\|_{L^{2}}^{2}+C\ep^{4}\|\nabla\tau\|_{L^{2}}^{2},
\end{align}
where we have employed the bootstrap assumption in the last inequality. 

If we choose $ \eta>0, \ep>0 $  such that
\begin{equation*}
C\eta\leq\frac{1}{4},\quad C\ep^{4}\leq\frac{\eta}{8},
\end{equation*}
and combine \eqref{4eq3}-\eqref{4eq7}, we obtain the energy inequality
\begin{align}\label{4eq8}
\frac{d}{dt}\left(\frac{1}{2}\left\|(u,\tau)\right\|_{H^{1}}^{2}-\eta\langle \tau,\nabla u\rangle \right)+\frac{\eta}{4}\|\nabla u\|_{L^{2}}^{2}+\frac{1}{2}\|\nabla \tau\|_{H^{1}}^{2}\leq 0.
\end{align}
When $ \eta \leq \frac{1}{10} $, by \eqref{4eq8}, we obtain
\begin{align}\label{4eq9}
\|(u,\tau)\|_{H^{1}}\leq\frac{3}{2}\ep,
\end{align}
which is stronger than our bootstrap assumption at the start. Hence, \eqref{4ineq1} follows from \eqref{4eq8} and the standard bootstrap argument. The inequality \eqref{4ineq2} is a consequence of \eqref{4eq6}. We finish the proof of Proposition~\ref{4prop1}.
\end{proof}
\begin{rema}
Through the global estimates in Proposition \ref{4prop1} and compactness methods, one can infer that the equation \eqref{eq0} has global $H^{1}$ weak solutions when $d=2$.
\end{rema}
 Since the core difficulty term $Q$ exists, we cannot elevate weak solutions to strong solutions solely by improving regularity. Specifically, we fail to obtain the global estimate of $\|\omega\|_{L^\infty}$ to prove uniqueness of the solutions, see \cite{Y63}. However, we can obtain the global strong solution for $a\in [0,1]$ through smallness condition in the critical Besov space. The following result of the global existence do not rely on damping and is original in the literature to our knowledge.
\begin{prop}\label{4prop2}
		Let $d=2$ and $a\in [0,1]$. Assume a divergence-free field $u_0\in H^1\cap B^1_{\infty,1}$ and a symmetric matrix $\tau_0\in H^1\cap B^0_{\infty,1}$. There exists some positive constant $\ep$ small enough such that if
	    \begin{align}
		\|(u_0,\tau_0)\|_{H^1}+\|(\nabla u_0,\tau_0)\|_{B^0_{\infty,1}} \leq \ep,
		\end{align}
		then \eqref{eq0} admits a global solution $(u,\tau)$ with
		$(u,\tau) \in L^{\infty}(0,\infty;H^1\cap B^1_{\infty,1})\times L^{\infty}(0,\infty;H^1\cap B^0_{\infty,1})$. Moreover, there exists a $C_2>1$ such that
  $$\|(u,\tau)\|_{L^{\infty}(0,\infty;H^1\cap B^1_{\infty,1})\times L^{\infty}(0,\infty;H^1\cap B^0_{\infty,1})}\leq C_2\ep.
		$$
  \end{prop}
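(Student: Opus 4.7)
My plan is to run a bootstrap argument in the space $(H^{1}\cap B^{1}_{\infty,1})\times (H^{1}\cap B^{0}_{\infty,1})$, combining three ingredients: the $H^{1}$ estimate of Proposition \ref{4prop1} (which already provides global dissipation uniformly in $a\in[0,1]$), the parabolic smoothing Duhamel bound for $\tau$ as used in the proof of Theorem \ref{6theo}, and the structural unknown $\Gamma=\omega-\mathcal{R}\tau$ which, as noted in the introduction, transfers the $D(u)$ source term out of the $\tau$ equation and replaces it with the favorable transport-with-damping equation
\begin{equation*}
\partial_{t}\Gamma+u\cdot\nabla\Gamma+\tfrac{1}{2}\Gamma=(a-\tfrac{1}{2})\mathcal{R}\tau+\mathcal{R}Q(\nabla u,\tau)+[\mathcal{R},u\cdot\nabla]\tau.
\end{equation*}
The bootstrap assumption will be $\|(u,\tau)(t)\|_{(H^{1}\cap B^{1}_{\infty,1})\times (H^{1}\cap B^{0}_{\infty,1})}\leq 2C_{2}\varepsilon$ on a maximal interval $[0,T^{*})$, and I would aim to upgrade it to $\tfrac{3}{2}C_{2}\varepsilon$.

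First I would invoke Proposition \ref{4prop1} verbatim: it gives, uniformly in $a\in[0,1]$ and in time, the bounds $\|(u,\tau)\|_{L^{\infty}_{t}H^{1}}\lesssim\varepsilon$, $\|\nabla u\|_{L^{2}_{t}L^{2}}\lesssim\varepsilon$, and $\|\nabla\tau\|_{L^{2}_{t}H^{1}}\lesssim\varepsilon$. These play the role of the small $L^{2}$--type quantities in the commutator and remainder estimates. Next, for $\|\tau\|_{B^{0}_{\infty,1}}$ I would write the Duhamel formula for $\partial_{t}\tau-\Delta\tau+a\tau=Du-u\cdot\nabla\tau-Q(\nabla u,\tau)$ and, following the argument leading to \eqref{6ineq5}--\eqref{6ineq9}, split $\|\tau\|_{B^{0}_{\infty,1}}\leq\|\Delta_{-1}\tau\|_{L^{\infty}}+\sum_{j\geq 0}\|\Delta_{j}\tau\|_{L^{\infty}}$. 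The low block is controlled by $\|\tau\|_{L^{2}}$ via Bernstein; the high blocks exploit the heat kernel decay $e^{-c 2^{2j}(t-s)}$ with $a\geq 0$ used only nonnegatively, yielding $\|\tau\|_{L^{\infty}_{t}B^{0}_{\infty,1}}\lesssim \|\tau_{0}\|_{B^{0}_{\infty,1}}+\|\tau_{0}\|_{L^{2}}+(\text{quadratic in the bootstrap})$.

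Next I would estimate $\Gamma$. Applying Lemma \ref{TP} to the $\Gamma$ equation and using the artificial damping $\tfrac{1}{2}\Gamma$ in the transport estimate, I get
\begin{equation*}
\|\Gamma\|_{L^{\infty}_{t}B^{0}_{\infty,1}}\lesssim \|\Gamma_{0}\|_{B^{0}_{\infty,1}}+\int_{0}^{t}e^{-\frac{1}{4}(t-s)}\bigl(\|\mathcal{R}\tau\|_{B^{0}_{\infty,1}}+\|\mathcal{R}Q(\nabla u,\tau)\|_{B^{0}_{\infty,1}}+\|[\mathcal{R},u\cdot\nabla]\tau\|_{B^{0}_{\infty,1}}\bigr)\,ds,
\end{equation*}
provided the exponential $e^{\int\|\nabla u\|_{B^{0}_{\infty,1}}\,ds}$ is absorbed, which forces me to track $\|\nabla u\|_{B^{0}_{\infty,1}}\sim \|\omega\|_{B^{0}_{\infty,1}}\leq\|\Gamma\|_{B^{0}_{\infty,1}}+\|\mathcal{R}\tau\|_{B^{0}_{\infty,1}}$ under the bootstrap (so $\int_{0}^{t}\|\nabla u\|_{B^{0}_{\infty,1}}\,ds$ grows like $\varepsilon t$, and one absorbs it by the $e^{-\frac{1}{4}(t-s)}$ factor for $\varepsilon$ sufficiently small). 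Here Corollary \ref{CR3} handles $[\mathcal{R},u\cdot\nabla]\tau$ in terms of $(\|\nabla u\|_{L^{\infty}}+\|u\|_{L^{2}})(\|\tau\|_{B^{0}_{\infty,1}}+\|\tau\|_{L^{2}})$, while $\mathcal{R}Q$ is controlled by the 2D Bony decomposition estimate highlighted in the introduction,
\begin{equation*}
\|\mathcal{R}Q(\nabla u,\tau)\|_{B^{0}_{\infty,1}}\lesssim \|\nabla u\|_{B^{0}_{\infty,1}}\|\tau\|_{B^{0}_{\infty,1}}+\|\nabla u\|_{B^{0}_{\infty,1}}\|\tau\|_{H^{1}}+\|\nabla u\|_{L^{\infty}}\|\tau\|_{L^{2}},
\end{equation*}
in which the $H^{1}$ factor on $\tau$ is exactly what Proposition \ref{4prop1} supplies.

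Finally I would recover $\|u\|_{B^{1}_{\infty,1}}$ from $\|\omega\|_{B^{0}_{\infty,1}}\leq \|\Gamma\|_{B^{0}_{\infty,1}}+\|\mathcal{R}\tau\|_{B^{0}_{\infty,1}}\lesssim \|\Gamma\|_{B^{0}_{\infty,1}}+\|\tau\|_{B^{0}_{\infty,1}}+\|\tau\|_{L^{2}}$, the low-frequency block $\|\Delta_{-1}u\|_{L^{\infty}}\lesssim \|u\|_{L^{2}}$ coming from Lemma \ref{CZ}(1), and the standard fact $\|u\|_{B^{1}_{\infty,1}}\sim \|\Delta_{-1}u\|_{L^{\infty}}+\|\omega\|_{B^{0}_{\infty,1}}$ for divergence-free fields. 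Combining the three blocks gives $\|(u,\tau)\|_{(H^{1}\cap B^{1}_{\infty,1})\times(H^{1}\cap B^{0}_{\infty,1})}\leq C(\varepsilon+\varepsilon^{2})\leq \tfrac{3}{2}C_{2}\varepsilon$ for $\varepsilon$ small, closing the bootstrap and yielding the global bound with some explicit $C_{2}$. Existence of a local solution in this class is guaranteed by Proposition \ref{3prop1}, so standard continuation completes the proof.

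The main obstacle I anticipate is the interplay between the growth factor $e^{\int_{0}^{t}\|\nabla u\|_{B^{0}_{\infty,1}}\,ds}$ in the transport estimate for $\Gamma$ and the fact that, without additional time integrability of $\|\nabla u\|_{B^{0}_{\infty,1}}$ (which is only proven later in Theorem \ref{theo2}), this factor is \emph{a priori} exponential in $t$. The resolution is that the intrinsic damping $\tfrac{1}{2}\Gamma$ in the $\Gamma$-equation provides a factor $e^{-\frac{1}{2}(t-s)}$ that dominates $e^{\varepsilon(t-s)}$ whenever $\varepsilon$ is chosen small enough, and this smallness is uniform in $a\in[0,1]$ because the $\tfrac{1}{2}$ in the damping term comes from the identity $\mathcal{R}D(u)=\tfrac{1}{2}\omega$ rather than from $a$ itself.
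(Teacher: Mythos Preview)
Your overall architecture matches the paper's exactly: invoke Proposition \ref{4prop1} for the uniform $H^{1}$ bound and the dissipation $\int_{0}^{\infty}\|\nabla\tau\|_{H^{1}}^{2}\,dt\lesssim\varepsilon^{2}$, control $\|\tau\|_{B^{0}_{\infty,1}}$ by Duhamel plus heat smoothing, and close a bootstrap on $\|\nabla u\|_{B^{0}_{\infty,1}}$ via the damped transport equation for $\Gamma$. The one substantive variation is in how you bound the forcing $F$: you use Corollary \ref{CR3} and the 2D Bony estimate \eqref{6ineq12}, needing only $\|\tau\|_{B^{0}_{\infty,1}}+\|\tau\|_{H^{1}}$, whereas the paper uses Lemma \ref{CR} and Lemma \ref{PL}, which cost an extra $\|\tau\|_{B^{\delta}_{\infty,1}}$ and are then closed through $\|\nabla^{2}\tau\|_{L^{2}}$ and its $L^{2}_{t}$ integrability from Proposition \ref{4prop1}. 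Both routes work; yours is the sharper one already used in the proof of Theorem \ref{6theo}.

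There is, however, one point where your write-up would not close as stated. You say the Duhamel bound for $\tau$ produces only terms ``quadratic in the bootstrap'', but the source $Du$ is \emph{linear}, and if you follow \eqref{6ineq6} literally you pick up $\|u\|_{L^{\infty}}\lesssim 2C_{2}\varepsilon$, so that $\|\tau\|_{B^{0}_{\infty,1}}$ itself carries a factor $C_{2}$; this feeds back into $\|\mathcal{R}\tau\|_{B^{0}_{\infty,1}}$ in the $\Gamma$ estimate and the bootstrap constant cannot be improved. The paper avoids this by estimating the $Du$ contribution via Bernstein in $L^{2}$, namely $\|\Delta_{j}Du\|_{L^{\infty}}\lesssim 2^{3j/2}\|\nabla u\|_{L^{2}}$, so that $\|\tau\|_{B^{0}_{\infty,1}}\leq C_{1}\varepsilon$ follows from Proposition \ref{4prop1} alone with an \emph{absolute} constant $C_{1}$, and only afterwards is the bootstrap on $\|\nabla u\|_{B^{0}_{\infty,1}}$ launched with $C_{2}\gg C_{1}$. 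You should make the same adjustment.
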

 \begin{proof} 
 By Proposition \ref{4prop1}, we have already obtained
  \begin{align}\label{4eq10}
  \left\|(u(t),\tau(t))\right\|_{H^{1}}\leq \frac{3}{2}\ep,\quad t\geq 0.
  \end{align}
  Now we need to estimate
  \begin{equation*}
  \left\|(\nabla u(t),\tau(t))\right\|_{B^{0}_{\infty,1}}.
  \end{equation*}
\textbf{Estimate of $\|\tau(t)\|_{B^{0}_{\infty,1}}$.} \\
 By Duhamel's formula, we get
  \begin{equation*}
  \tau(t)=e^{(\Delta-a)t}\tau_{0}+\int_{0}^{t}e^{(\Delta-a)(t-t')}\left[Du-u\cdot\nabla\tau-Q(\nabla u,\tau)\right]dt'.
  \end{equation*}
  Applying Sobolev's inequality and Bernstein's inequality, we obtain
  \begin{align*}
\|\tau\|_{B^{0}_{\infty,1}}&=\|\Delta_{-1}\tau\|_{L^{\infty}}+\sum_{j\geq 0}\|\Delta_{j}\tau\|_{L^{\infty}}\\
  &\lesssim \|\tau_{0}\|_{B^{0}_{\infty,1}}+\|\tau\|_{L^{2}}+\sum_{j\geq 0}\int_{0}^{t}\left\|e^{(\Delta-a)(t-t')}\Delta_{j}\left[Du-u\cdot\nabla\tau-Q(\nabla u,\tau)\right]\right\|_{L^{\infty}}dt'\\
  &\lesssim \|\tau_{0}\|_{B^{0}_{\infty,1}}+\|\tau\|_{L^{2}}+\sum_{j\geq 0}\int_{0}^{t}e^{-2^{2j}(t-t')}2^{\frac{3}{2}j}\left(\|Du\|_{L^{2}}+\|u\|_{L^{8}}\|\tau\|_{L^{8}}+\|\nabla u\|_{L^{2}}\|\tau\|_{L^{4}}\right)dt'.
  \end{align*}
  This together with \eqref{4eq10} ensures that there exist a $C_1>1$ such that
  \begin{align}\label{4eq11}
  \|\tau(t)\|_{B^{0}_{\infty,1}}\leq C_{1}\ep.
  \end{align}
  The estimate for $\|\tau(t)\|_{B^{0}_{\infty,1}}$ is crucial for obtaining global strong solutions, which are time uniformly bounded. \\
\textbf{Estimate of $\|\nabla u(t)\|_{B^{0}_{\infty,1}}$.} \\
 We still start with a \emph{bootstrap assumption}
  \begin{align}\label{boot2}
  \|\nabla u(t)\|_{B^{0}_{\infty,1}}\leq C_{2}\ep,
  \end{align}
  where $ C_{2}\geq 10CC_1 $ and $ \ep\leq\frac{1}{CC_{2}} $.
  
  We now introduce the following structural variable $\Gamma$:
  \begin{equation*}
  \Gamma=\omega-\mathcal{R}\tau.
  \end{equation*}
By direct calculation, $\Gamma$ satisfies the following transport equation with damping:
  \begin{align}\label{4eq12}
  \partial_{t}\Gamma+u\cdot\nabla\Gamma+\frac{1}{2}\Gamma=(a-\frac{1}{2})\mathcal{R}\tau+\mathcal{R}Q+[\mathcal{R},u\cdot\nabla]\tau:=F.
  \end{align}
The structural trick $\Gamma$ allows us to transfer dissipation from $\tau$ to $u$, which is crucial for obtaining a closed estimate of the global solution. 

By Lemma \ref{PL}, Lemma \ref{CR}, and \eqref{4eq10}, \eqref{4eq11},
\begin{align*}
\|F\|_{B^{0}_{\infty,1}}&\leq \left\|\mathcal{R}(\tau)\right\|_{B^{0}_{\infty,1}}+\left\|\mathcal{R}Q\right\|_{B^{0}_{\infty,1}}+\left\|[\mathcal{R},u\cdot\nabla]\tau\right\|_{B^{0}_{\infty,1}}\\
&\lesssim \|\tau\|_{L^{2}}+\|\tau\|_{B^{0}_{\infty,1}}+\|Q\|_{L^{2}}+\|Q\|_{B^{0}_{\infty,1}}+\left\|[\mathcal{R},u\cdot\nabla]\tau\right\|_{B^{0}_{\infty,1}}\\
&\lesssim \|\tau\|_{L^{2}}+\|\tau\|_{B^{0}_{\infty,1}}+\|\nabla u\|_{L^{2}\cap B^{0}_{\infty,1}}\|\tau\|_{H^{2}}\\
&\leq 2CC_1\ep+C_{2}\ep^{2}+CC_{2}\ep\|\nabla^{2} \tau\|_{L^{2}}.
\end{align*}
By the equation \eqref{4eq12}, and Proposition \ref{4prop1}, we infer that
\begin{align*}
\|\Gamma\|_{B^{0}_{\infty,1}}&\leq \|\Gamma_{0}\|_{B^{0}_{\infty,1}}+\int_{0}^{t}e^{-\frac{1}{4}(t-t')}\|F\|_{B^{0}_{\infty,1}}dt'\\
&\lesssim 3CC_1\ep+CC_{2}\ep^{2}.
\end{align*}
Hence, we obtain
\begin{align*}
\|\nabla u\|_{B^{0}_{\infty,1}}&\lesssim \|\nabla u\|_{L^{2}}+\|\omega\|_{B^{0}_{\infty,1}}\\
&\leq \|\nabla u\|_{L^{2}}+\|\mathcal{R}\tau\|_{B^{0}_{\infty,1}}+\|\Gamma\|_{B^{0}_{\infty,1}}\\
&\leq 4CC_1\ep+CC_{2}\ep^{2}\leq \frac{1}{2}C_{2}\ep,
\end{align*}
which is stronger than our bootstrap assumption \eqref{boot2}. Hence, we obtain the uniformly bound
\begin{equation}\label{4eq13}
\|\nabla u(t)\|_{B^{0}_{\infty,1}}\leq C_{2}\ep.
\end{equation}
We finish the proof of Proposition \ref{4prop2}.
\end{proof}
According to Proposition \ref{4prop2}, one can see that global solutions is uniformly bounded in time, which is useful to improve the regularity of the solution under additional initial conditions. The following corollary show that the solution of supercritical regularity is also uniformly bounded in time, which plays a crucial role in considering vanishing damping limit for any $T>0$.
  \begin{coro}\label{4cor1}
     Under the conditions in Proposition \ref{4prop2}, if additionally $u_0\in H^2\cap B^2_{\infty,1}$ and $\tau_0\in H^2\cap B^1_{\infty,1}$, then 
		$(u,\tau) \in L^{\infty}(0,\infty;H^2\cap B^2_{\infty,1})\times L^{\infty}(0,\infty;H^2\cap B^1_{\infty,1})$. Moreover, 
		there holds
     \begin{align*}
\|(u,\tau)\|_{L^{\infty}(0,\infty;H^2)}+\|(\nabla u,\tau)\|_{L^{\infty}(0,\infty;B^1_{\infty,1})}
\leq C(\|(u_0,\tau_0)\|_{H^2}+\|(\nabla u_0,\tau_0)\|_{B^1_{\infty,1}}). 
\end{align*}
  \end{coro}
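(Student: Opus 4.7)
The plan is to propagate higher regularity through a bootstrap loop analogous to the one in Proposition~\ref{4prop2}, but one derivative higher throughout, with the uniform critical smallness $\|(u,\tau)\|_{H^1}+\|(\nabla u,\tau)\|_{B^0_{\infty,1}}\le C_2\ep$ from Proposition~\ref{4prop2} used as an input. Whenever a supercritical norm appears on the right-hand side of a higher-order energy estimate, it will either multiply one of these already-small critical quantities---producing an absorbable $O(\ep)$ coefficient---or multiply the higher initial data in a linear way. This is the mechanism that upgrades uniform critical smallness into a uniform bound depending linearly on the higher initial data.

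For the $H^{2}$ estimate, I would differentiate both equations in \eqref{eq0} once, test with $\nabla u$ and $\nabla\tau$, and exploit the two-dimensional divergence-free cancellation $\int u^{j}\partial_{j}\partial_{k}f\cdot\partial_{k}f\,dx=0$. The remaining nonlinear terms, namely $\nabla u\cdot\nabla u$, $\nabla u\cdot\nabla\tau$, and $\nabla Q(\nabla u,\tau)$, are handled via $\|\nabla u\|_{L^\infty}\lesssim\|\nabla u\|_{B^0_{\infty,1}}\le C_{2}\ep$ together with $\|\tau\|_{L^\infty}\lesssim\|\tau\|_{B^0_{\infty,1}}\le C_{2}\ep$. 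Adding Proposition~\ref{4prop1}'s cross term $-\eta\,\langle\nabla\tau,\nabla^{2}u\rangle$ extracts the artificial dissipation $\|\nabla^{2}u\|_{L^{2}}^{2}$, closing the $H^{2}$ energy inequality with a right-hand side bounded by $C\ep\,\|(u,\tau)\|_{H^{2}}^{2}$ and giving the uniform-in-$t$, uniform-in-$a$ bound $\|(u,\tau)\|_{H^{2}}\lesssim\|(u_{0},\tau_{0})\|_{H^{2}}$.

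For the Besov bound, I would first estimate $\|\tau\|_{B^{1}_{\infty,1}}$ by Duhamel's formula and the heat-kernel smoothing of $e^{(\Delta-a)t}$, exactly as in the derivation of \eqref{4eq11} but with an extra $2^{j}$ weight; the forcing terms $Du$, $u\cdot\nabla\tau$ and $Q(\nabla u,\tau)$ are controlled by the $H^{2}$ quantity just obtained together with the critical smallness. Next I apply Lemma~\ref{TP} at $\sigma=1$ to the $\Gamma$ equation \eqref{4eq12}:
$$\|\Gamma\|_{B^{1}_{\infty,1}}(t)\le C\|\Gamma_{0}\|_{B^{1}_{\infty,1}}\,e^{-t/2+\int_{0}^{t}\|\nabla u\|_{B^{0}_{\infty,1}}\,ds}+C\int_{0}^{t}e^{-(t-s)/2+\int_{s}^{t}\|\nabla u\|_{B^{0}_{\infty,1}}\,dt'}\|F\|_{B^{1}_{\infty,1}}\,ds.$$
Since $\|\nabla u\|_{B^{0}_{\infty,1}}\le C_{2}\ep$ uniformly in time and $\ep$ is chosen so that $C_{2}\ep\le 1/4$, the exponent collapses to $-(t-s)/4$. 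Combining Lemmas~\ref{CZ},~\ref{CR},~\ref{PL} to estimate $\|F\|_{B^{1}_{\infty,1}}\le C(\|\tau\|_{B^{1}_{\infty,1}}+\|\tau\|_{H^{2}})+C\ep\,\|\nabla u\|_{B^{1}_{\infty,1}}$ with the identity $\|\nabla u\|_{B^{1}_{\infty,1}}\le C(\|\nabla u\|_{L^{2}}+\|\Gamma\|_{B^{1}_{\infty,1}}+\|\mathcal{R}\tau\|_{B^{1}_{\infty,1}})$ closes the bootstrap and yields the desired linear bound in $\|(\nabla u_{0},\tau_{0})\|_{B^{1}_{\infty,1}}$.

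I expect the main obstacle to be the $B^{1}_{\infty,1}$ estimates of the bilinear pieces of $F$, namely $\mathcal{R}Q(\nabla u,\tau)$ and the commutator $[\mathcal{R},u\cdot\nabla]\tau$. Because $B^{0}_{\infty,1}$ is not an algebra and $B^{1}_{\infty,1}$ is critical, one must split each product via Bony's decomposition so that at least one factor is placed in a critical norm already known to be $O(\ep)$ by Proposition~\ref{4prop2}, leaving the other at the supercritical level that we are trying to control. Lemma~\ref{CR} is precisely adapted to this, and it produces the absorbable factor $C\ep\,\|\nabla u\|_{B^{1}_{\infty,1}}$ that closes the bootstrap---the same mechanism that was used at the critical level in Proposition~\ref{4prop2}, now transferred one derivative up.
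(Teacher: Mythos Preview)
Your plan matches the paper's proof almost exactly: the same $H^{2}$ energy inequality with the shifted cross term $-\eta\langle\nabla\tau,\nabla^{2}u\rangle$ (yielding~\eqref{4su3}--\eqref{4su4}), the same Duhamel argument for $\|\tau\|_{B^{1}_{\infty,1}}$ (equation~\eqref{4su5}), and the same $B^{1}_{\infty,1}$ transport estimate on $\Gamma$ closed via the already-known smallness of $\|(\nabla u,\tau)\|_{B^{0}_{\infty,1}}$. The only nuance is that the paper does not obtain your clean pointwise bound $\|F\|_{B^{1}_{\infty,1}}\le C(\|\tau\|_{B^{1}_{\infty,1}}+\|\tau\|_{H^{2}})+C\ep\,\|\nabla u\|_{B^{1}_{\infty,1}}$ directly---using Lemma~\ref{CR} (rather than the sharp Corollary~\ref{CR3}) it instead arrives at $\|\Gamma\|_{B^{1}_{\infty,1}}\|\tau\|_{H^{2}}+\|u\|_{B^{1}_{\infty,1}}\|\tau\|_{H^{3}}$ and recovers the absorbable $\ep$-factor only \emph{after} integrating against $e^{-(t-s)/4}$, by invoking the $H^{1}$-level time-integrability $\int_{0}^{\infty}\|\nabla^{2}\tau\|_{L^{2}}^{2}\,ds\lesssim\ep^{2}$ from~\eqref{4ineq1} and $\int_{0}^{\infty}\|\nabla^{3}\tau\|_{L^{2}}^{2}\,ds\lesssim\|(u_{0},\tau_{0})\|_{H^{2}}^{2}$ from~\eqref{4su4}.
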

  \begin{rema}
    According to the theory for the transport equation, the velocity $u\in B^1_{\infty,1}\hookrightarrow C^{0,1}$ is the key to maintaining regularity. Under the conditions in Proposition \ref{4prop2}, one can prove the global existence of smooth solutions $(u,\tau)$ for the inviscid Oldroyd-B equation \eqref{eq0} with extra smooth initial data $(u_0,\tau_0)$.
\end{rema}
  \begin{proof}
By energy estimates for \eqref{eq0}, we have
\begin{align}\label{4su1}
&\frac{1}{2}\frac{d}{dt} \left\|\nabla^2(u,\tau)\right\|_{L^{2}}^{2}+\|\nabla^3 \tau\|_{L^{2}}^2+a\|\nabla^2\tau\|_{L^{2}}^{2} \\ \notag
=&-\langle \nabla^2(u\cdot\nabla u), \nabla^2 u \rangle - \langle \nabla^2(u\cdot\nabla\tau), \nabla^2\tau \rangle - \langle \nabla^2 Q,\nabla^2\tau \rangle.
\end{align}
By virtue of integration by parts, we infer from Proposition \ref{4prop1} that
\begin{align*}
&|\langle -\nabla^2(u\cdot\nabla u), \nabla^2 u \rangle - \langle \nabla^2(u\cdot\nabla\tau), \nabla^2\tau \rangle - \langle \nabla^2 Q,\nabla^2\tau \rangle|\\
=&|\langle -[\nabla^2,u\cdot\nabla] u, \nabla^2 u \rangle + \langle \nabla(u\cdot\nabla\tau), \nabla^3\tau \rangle + \langle \nabla Q,\nabla^3\tau \rangle| \\
\lesssim & \|\nabla^2 u\|_{L^{2}}^2\|\nabla u\|_{L^{\infty}}
+ \|\nabla^3 \tau\|_{L^{2}}\|\nabla \tau\|_{H^{1}} \|u\|_{B^1_{\infty,1}}
+ \|\nabla^3 \tau\|_{L^{2}}\|\nabla^2 u\|_{L^{2}}\|\tau\|_{L^{\infty}}\\
\lesssim & \ep \|\nabla^2 u\|_{L^{2}}^2 
+ \ep\|\nabla^3 \tau\|_{L^{2}}^2 
+ \ep\|\nabla \tau\|_{H^{1}}^2,
\end{align*}
Similarly, we deduce that
\begin{align}\label{4su2}
\frac{d}{dt} \langle \eta\nabla\tau,-\nabla^2 u \rangle + \frac{\eta}{2}\|\nabla^2 u\|_{L^{2}}^2 &=
-\eta \langle\mathbb{P}\left(\mathrm{div}~\tau-u\cdot\nabla u\right),\Delta\mathrm{div}~\tau \rangle \\ \notag
&~~~+\eta \langle \nabla(u\cdot\nabla\tau+a\tau+Q-\Delta \tau),\nabla^2 u\rangle.
\end{align}
Then we have
\begin{align*}
|\eta \langle\mathbb{P}\left(\mathrm{div}\tau-u\cdot\nabla u\right),\Delta\mathrm{div}\tau \rangle|
&\lesssim  \eta \|\nabla^2 \tau\|_{L^{2}}^2 
+ \eta \|\nabla^3 \tau\|_{L^{2}}\|\nabla u\|_{L^{2}}\|u\|_{L^{\infty}} \\
&\lesssim  \eta \|\nabla^2 \tau\|_{L^{2}}^2 
+ \ep\|\nabla^3 \tau\|_{L^{2}}^2 + \ep\|\nabla^2 u\|_{L^{2}}^2.
\end{align*}
and
\begin{align*}
&|\eta \langle \nabla(u\cdot\nabla\tau+a\tau+Q-\Delta \tau),\nabla^2 u\rangle|\\
\lesssim & \eta \|\nabla^2 u\|_{L^{2}}\|\nabla \tau\|_{H^{1}} \|u\|_{B^1_{\infty,1}} 
+ \eta a \|\nabla^2 \tau\|_{L^2}\|\nabla u\|_{L^{2}} + \eta \|\nabla^2 u\|_{L^{2}}^2\|\tau\|_{L^{\infty}}\\
&+ \eta \|\nabla^2 u\|_{L^{2}}\|\nabla \tau\|_{L^2}\|u\|_{B^1_{\infty,1}}
+ \eta \|\nabla^3 \tau\|_{L^{2}}\|\nabla^2u\|_{L^{2}}\\
\lesssim & \ep\|\nabla \tau\|_{H^{2}}^2 
+ \ep\|\nabla u\|_{H^{1}}^2
+ \eta^{\frac{1}{2}}\|\nabla^2 \tau\|_{H^{1}}^2
+ \eta^{\frac{3}{2}}\|\nabla u\|_{H^{1}}^2.
\end{align*}
Combining the above inequalities and \eqref{4ineq1}, we obtain 
\begin{equation}\label{4su3}
\frac{d}{dt}\left(\frac{1}{2}\left\|(u,\tau)\right\|_{H^{2}}^{2}+\eta\langle \tau,-\nabla u \rangle + \eta\langle \nabla\tau,-\nabla^2 u \rangle\right)
+\frac{1}{8}\|\nabla \tau\|_{H^{2}}^2
+ \frac{\eta}{8}\|\nabla u\|_{H^{1}}^2
\leq 0,
\end{equation}
which implies that for any $T>0$, we have
\begin{equation}\label{4su4}
\|(u,\tau)\|_{L_T^{\infty}(H^2)}^2+  \int_0^T \|\nabla \tau\|_{H^{2}}^2
+ \|\nabla u\|_{H^{1}}^2 dt \leq C\|(u_0,\tau_0)\|_{H^2}^2.
\end{equation}

We now focus on the estimate of $\|\tau\|_{L_T^{\infty}(B^1_{\infty,1})}$. Using Lemma \ref{H}, we get
  \begin{align}\label{4su5}
\|\tau\|_{L_T^{\infty}(B^1_{\infty,1})}&=\frac 1 2\|\Delta_{-1}\tau\|_{L_T^{\infty}(L^{\infty})}+\sum_{j\geq 0}2^j\|\Delta_{j}\tau\|_{L_T^{\infty}(L^{\infty})}\\ \notag
  &\lesssim \|\tau_{0}\|_{B^{1}_{\infty,1}}+\|\tau_0\|_{L^{2}}+\sum_{j\geq 0}\int_{0}^{t}2^j\left\|e^{(\Delta-a)(t-t')}\Delta_{j}\left[Du-u\cdot\nabla\tau-Q(\nabla u,\tau)\right]\right\|_{L^{\infty}}dt'\\ \notag
  &\lesssim \|\tau_{0}\|_{B^{1}_{\infty,1}\cap L^{2}}+\sum_{j\geq 0}\int_{0}^{t}e^{-2^{2j}(t-t')}2^{\frac{3}{2}j}\left(\|u\|_{H^{2}}+\|u\|_{L^{\infty}}\|\nabla\tau\|_{L^{4}}+\|\nabla u\|_{L^{\infty}}\|\tau\|_{L^{4}}\right)dt'\\ \notag
  &\lesssim \|\tau_{0}\|_{B^{1}_{\infty,1}}+\|(u_0,\tau_0)\|_{H^2}.
  \end{align}
Using Lemma \ref{CI}, we have
\begin{align*}
\Big\|(2^{j}\|[\Delta_j,u\cdot \nabla] \Gamma\|_{L^{\infty}_{x}})_j\Big\|_{l^1(\mathbb{Z})}\leq C\|\nabla u\|_{B^{0}_{\infty,1}}\|\Gamma\|_{B^1_{\infty,1}}.
\end{align*}
This together with Proposition \ref{4prop2} implies that
\begin{align}\label{4su6}
\|\Gamma\|_{L_T^{\infty}(B^1_{\infty,1})} 
\lesssim &\|\Gamma_0\|_{B^1_{\infty,1}}+\int_0^T e^{-\frac 1 4(T-t)} \|F\|_{B^1_{\infty,1}} dt
\end{align}
By Lemmas \ref{PL} and \ref{CR}, we infer from \eqref{4su4} and  \eqref{4su5} that
\begin{align*}
\|F\|_{B^{1}_{\infty,1}}&\lesssim \|F\|_{B^{0}_{\infty,1}}+\|\nabla F\|_{B^{0}_{\infty,1}} \\ 
&\lesssim \|(u_0,\tau_0)\|_{H^1}+\|(\nabla u_0,\tau_0)\|_{B^0_{\infty,1}}+\left\|\tau\right\|_{B^{1}_{\infty,1}}+\left\|Q\right\|_{B^{1}_{\infty,1}}+\left\|\nabla([\mathcal{R},u\cdot\nabla]\tau)\right\|_{B^{0}_{\infty,1}}\\
&\lesssim \|(u_0,\tau_0)\|_{H^2}+\|(u_0,\tau_0)\|_{B^1_{\infty,1}}+\left\|\Gamma\right\|_{B^{1}_{\infty,1}}\left\|\tau\right\|_{B^{0}_{\infty,1}}+\left\|u\right\|_{B^{1}_{\infty,1}}\left\|\tau\right\|_{B^{1}_{\infty,1}} \\
&+\left\|[\mathcal{R}, \nabla u\cdot\nabla]\tau\right\|_{B^{0}_{\infty,1}}+\left\|[\mathcal{R}, u\cdot\nabla]\nabla\tau\right\|_{B^{0}_{\infty,1}} \\
&\lesssim \|(u_0,\tau_0)\|_{H^2}+\|(u_0,\tau_0)\|_{B^1_{\infty,1}}+\left\|\Gamma\right\|_{B^{1}_{\infty,1}}\left\|\tau\right\|_{B^{\frac 1 2}_{\infty,1}}+\left\|u\right\|_{B^{1}_{\infty,1}}\left\|\tau\right\|_{B^{\frac 3 2}_{\infty,1}} \\
&\lesssim \|(u_0,\tau_0)\|_{H^2}+\|(u_0,\tau_0)\|_{B^1_{\infty,1}}+\left\|\Gamma\right\|_{B^{1}_{\infty,1}}\left\|\tau\right\|_{H^2}+\left\|u\right\|_{B^{1}_{\infty,1}}\left\|\tau\right\|_{H^3}.
\end{align*}
This together with \eqref{4su6} ensures that
\begin{align*}
\|\Gamma\|_{L_T^{\infty}(B^1_{\infty,1})} 
&\lesssim \|\Gamma_0\|_{B^1_{\infty,1}}+\|(u_0,\tau_0)\|_{H^2}+\|(u_0,\tau_0)\|_{B^1_{\infty,1}} \\ \notag
&~~~+\int_0^T e^{-\frac 1 4(T-t)} (\left\|\Gamma\right\|_{B^{1}_{\infty,1}}\left\|\tau\right\|_{H^2}+\left\|u\right\|_{B^{1}_{\infty,1}}\left\|\tau\right\|_{H^3}) dt \\ \notag
&\lesssim \|(u_0,\tau_0)\|_{H^2}+\|(\nabla u_0,\tau_0)\|_{B^1_{\infty,1}}+\ep\|\Gamma\|_{L_T^{\infty}(B^1_{\infty,1})}. 
\end{align*}
This implies that
\begin{align}\label{4su7}
\|\Gamma\|_{L_T^{\infty}(B^1_{\infty,1})} 
\lesssim \|(u_0,\tau_0)\|_{H^2}+\|(\nabla u_0,\tau_0)\|_{B^1_{\infty,1}}. 
\end{align}
We thus complete the proof of Corollary \ref{4cor1}.
\end{proof}
With the results of the solutions are uniformly bounded in time, we can consider vanishing damping limit for any $T>0$ under the same topology and prove vanishing damping rate for low frequency of the solutions. Since the solutions $(u,\tau)$ of the inviscid Oldroyd-B model \eqref{eq0} depends on damping $a\in[0,1]$, when considering the problems of vanishing damping limit, we denote the solutions of \eqref{eq0} as $(u^a,\tau^a)$.
\begin{prop}\label{4prop3} 
Under the conditions in Proposition \ref{4prop2}, for any $T>0$, there holds 
  $$\|(u^a,\tau^a)-(u^0,\tau^0)\|_{L^{\infty}(0,T;L^2)}\leq C_Ta,
  $$ 
  and 
  $$\lim_{a\rightarrow0}\|(u^a,\tau^a)-(u^0,\tau^0)\|_{L^{\infty}(0,T;\dot{H}^1\cap B^1_{\infty,1})\times L^{\infty}(0,T;\dot{H}^1\cap B^0_{\infty,1})}=0.
  $$
  \end{prop}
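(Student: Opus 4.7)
The plan is to derive the $L^{2}$ convergence rate $\|(u^a-u^0,\tau^a-\tau^0)\|_{L^\infty(0,T;L^2)}\leq C_T a$ by a direct energy estimate on the difference system, and then to promote the convergence to the same (critical) topology via a Bona--Smith regularization, combining this $L^2$ rate with the uniform-in-time supercritical bounds supplied by Corollary~\ref{4cor1}.

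For the first step, set $\bar u=u^a-u^0$, $\bar\tau=\tau^a-\tau^0$, $\bar P=P^a-P^0$. Subtracting the two copies of \eqref{eq0}, one finds
\begin{align*}
\partial_t\bar u+u^a\cdot\nabla\bar u+\bar u\cdot\nabla u^0+\nabla\bar P&=\mathrm{div}~\bar\tau,\qquad \mathrm{div}~\bar u=0,\\
\partial_t\bar\tau+u^a\cdot\nabla\bar\tau+\bar u\cdot\nabla\tau^0+a\bar\tau-\Delta\bar\tau+Q(\nabla u^a,\bar\tau)+Q(\nabla\bar u,\tau^0)&=D(\bar u)-a\tau^0,
\end{align*}
with zero initial data. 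Taking the $L^2$ inner product with $(\bar u,\bar\tau)$, using the cancellation $\langle\mathrm{div}\,\bar\tau,\bar u\rangle+\langle D(\bar u),\bar\tau\rangle=0$, and integrating by parts to shift the $\nabla\bar u$ derivative in the delicate term $\langle Q(\nabla\bar u,\tau^0),\bar\tau\rangle$ onto either $\tau^0$ or $\bar\tau$, each bilinear error term is either absorbed by the dissipation $\|\nabla\bar\tau\|_{L^2}^{2}$ or carries the small coefficient $O(\varepsilon)$ supplied by Proposition~\ref{4prop2}. The only genuine forcing is $-a\langle\tau^0,\bar\tau\rangle$, which is $O(a)\,\|\bar\tau\|_{L^2}$. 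Gronwall's inequality on $[0,T]$ then yields $\|(\bar u,\bar\tau)\|_{L^\infty(0,T;L^2)}\leq C_T\, a$.

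For the second statement I apply the Bona--Smith regularization. Let $(u^a_N,\tau^a_N)$ solve \eqref{eq0} with initial data $(S_Nu_0,S_N\tau_0)$. Since $(S_Nu_0,S_N\tau_0)\in (H^2\cap B^2_{\infty,1})\times(H^2\cap B^1_{\infty,1})$ with norms controlled by $2^{N}\,\varepsilon$, Corollary~\ref{4cor1} yields a bound on $(u^a_N,\tau^a_N)$ of the same order, uniformly in $a\in[0,1]$. Applying the $L^2$-rate of the first step to $(u^a_N,\tau^a_N)$ and interpolating against this supercritical bound gives
\begin{equation*}
\|\nabla(u^a_N-u^0_N,\tau^a_N-\tau^0_N)\|_{L^\infty(0,T;L^2)}\leq C_T\, a\, 2^N.
\end{equation*}
Meanwhile, the same difference-energy argument applied between $(u^a,\tau^a)$ and $(u^a_N,\tau^a_N)$, which share the same equation but differ only in initial data, yields uniformly in $a$
\begin{equation*}
\|\nabla(u^a-u^a_N,\tau^a-\tau^a_N)\|_{L^\infty(0,T;L^2)}\leq C_T\,\|(Id-S_N)(u_0,\tau_0)\|_{H^1\times H^1}.
\end{equation*}
Summing the two contributions by the triangle inequality and choosing $N=N(a)\to\infty$ slowly (e.g.\ $2^N=a^{-1/2}$) as $a\to 0$ gives $\lim_{a\to 0}\|\nabla(u^a-u^0,\tau^a-\tau^0)\|_{L^\infty(0,T;L^2)}=0$. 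The $B^1_{\infty,1}\times B^0_{\infty,1}$ part is obtained by the same two-scale device applied to the structural unknown $\Gamma^a=\omega^a-\mathcal{R}\tau^a$: transport estimates in $B^0_{\infty,1}$ on the difference equation \eqref{4eq12}, together with Lemma~\ref{CR} and the uniform bounds of Corollary~\ref{4cor1}, provide $\|\Gamma^a-\Gamma^0\|_{L^\infty(0,T;B^0_{\infty,1})}\leq C_T(\|(Id-S_N)(u_0,\tau_0)\|_{B^1_{\infty,1}\times B^0_{\infty,1}}+a^{1/2}2^N)\to 0$; the companion Duhamel-heat estimate for $\tau^a-\tau^0$ proceeds exactly as in Step~1 of Proposition~\ref{4prop2}; and $u^a-u^0$ in $B^1_{\infty,1}$ is recovered from $\omega^a-\omega^0=(\Gamma^a-\Gamma^0)+\mathcal{R}(\tau^a-\tau^0)$ modulo the low-frequency $L^2$-rate.

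The principal technical obstacle is \emph{Difficulty~(2)} of the introduction: the transport nonlinearity $u^a\cdot\nabla u^a$ precludes a direct closed Gronwall estimate in the critical $B^1_{\infty,1}$ topology, since such an estimate would formally demand one extra derivative on the solution. Bona--Smith regularization is exactly the device that sidesteps this obstruction, pitting a quantitative rate $a\,2^N$ (which blows up as $N\to\infty$) against a qualitative smallness $\|(Id-S_N)(u_0,\tau_0)\|$ (which fails as $a\to 0$), so that a diagonal limit $2^N=a^{-\alpha}$ extinguishes both simultaneously.
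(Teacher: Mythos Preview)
Your proposal is correct and follows essentially the same three-step architecture as the paper: direct $L^{2}$ energy estimate on the difference for the rate $C_{T}a$, then Bona--Smith regularization for the $\dot H^{1}$ limit, then the same device applied to the structural variable $\Gamma$ for the $B^{1}_{\infty,1}$ limit (with the Duhamel heat estimate handling $\tau^{a}-\tau^{0}$ in $B^{0}_{\infty,1}$). One small clarification: the rate $a\,2^{N}$ you quote for $\|\nabla(u^{a}_{N}-u^{0}_{N})\|_{L^{2}}$ is what the paper obtains by a \emph{direct} $\dot H^{1}$ energy estimate on the mollified difference (using $\|\nabla^{2}u^{0}_{N}\|_{L^{\infty}}\lesssim 2^{N}$ from Corollary~\ref{4cor1}), not by interpolation between $L^{2}$ and $H^{2}$---pure interpolation would give $a^{1/2}2^{N/2}$, which also suffices but does not match the rate you wrote.
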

    \begin{proof} Let $(u^a,\tau^a)$ be a solution of $\eqref{eq0}$ with the initial data $(u_0,\tau_0)$ and $a\in[0,1]$. Then we get 
\begin{align}\label{4lo1}
\left\{\begin{array}{l}
\partial_t(u^a-u^0)+u^a\cdot\nabla (u^a-u^0)+\nabla (P_{u^a}-P_{u^0}) \\[1ex]
={\rm div}~(\tau^a-\tau^0)-(u^a-u^0)\cdot\nabla u^0,~~~~{\rm div}~(u^a-u^0)=0,\\[1ex]
\partial_t(\tau^a-\tau^0)+u^a\cdot\nabla(\tau^a-\tau^0)-\Delta(\tau^a-\tau^0)+Q(\nabla u^a,\tau^a-\tau^0)\\[1ex]
=-a\tau^a+D(u^a-u^0)-(u^a-u^0)\cdot\nabla\tau^0-Q(\nabla (u^a-u^0),\tau^0),\\[1ex]
(u^a-u^0)|_{t=0}=0,~~(\tau^a-\tau^0)|_{t=0}=0. \\[1ex]
\end{array}\right.
\end{align}
We divide the proof into three parts. \\
\textbf{Step 1: Vanishing damping limit for low frequency.} \\
By \eqref{4lo1}, we obtain the following energy identity:
\begin{align}\label{4lo2}
&\frac{1}{2}\frac{d}{dt}\left\|(u^{a}-u^{0},\tau^{a}-\tau^{0})\right\|_{L^{2}}^{2}+\|\nabla (\tau^{a}-\tau^{0})\|_{L^{2}}^{2}\\
=
& -\langle a\tau^{a}+Q(\nabla u^{a},\tau^{a}-\tau^{0})+Q(\nabla(u^{a}-u^{0}),\tau^{0})+(u^{a}-u^{0})\cdot\nabla \tau^{0}, (\tau^{a}-\tau^{0})\rangle \notag\\
& -\langle \left[(u^{a}-u^{0})\cdot\nabla u^{0}\right], (u^{a}-u^{0}) \rangle. \notag
\end{align}
Define
\begin{align*}
\phi(t):=\left\|(u^{a}-u^{0},\tau^{a}-\tau^{0})\right\|_{L^{2}}.
\end{align*}
Then we have the following inequalities:
\begin{align}\label{4lo3}
\left| \langle (u^{a}-u^{0})\cdot\nabla u^{0}, u^{a}-u^{0}\rangle\right|\leq \|\nabla u^{0}\|_{L^{\infty}}\phi^{2}(t),
\end{align}
and
\begin{align}\label{4lo4}
\left|\langle a\tau^{a}, \tau^{a}-\tau^{0}\rangle\right|\leq a^{2}+\|\tau^{a}\|_{L^{2}}^{2}\phi^{2}(t).
\end{align}
One can infer that
\begin{align}\label{4lo5}
\left|\langle Q(\nabla u^{a},\tau^{a}-\tau^{0}), \tau^{a}-\tau^{0}\rangle\right|\leq \|\nabla u^{a}\|_{L^{\infty}}\phi^{2}(t),
\end{align}
and
\begin{align}\label{4lo6}
&\left|\langle Q(\nabla(u^{a}-u^{0}),\tau^{0}), \tau^{a}-\tau^{0}\rangle \right|\\\notag
\leq &\|u^{a}-u^{0}\|_{L^{2}}\left(\|\nabla\tau^{0}\|_{L^{4}}\|\tau^{a}-\tau^{0}\|_{L^{4}}+\|\tau^{0}\|_{L^{\infty}}\|\nabla(\tau^{a}-\tau^{0})\|_{L^{2}}\right)\\\notag
\leq &\frac{1}{4}\|\nabla(\tau^{a}-\tau^{0})\|_{L^{2}}^{2}+C\phi^{2}(t)\left(\|\nabla\tau^{0}\|_{L^{4}}+\|\nabla\tau_{0}\|_{L^{4}}^{2}+\|\tau^{0}\|_{L^{\infty}}^{2}\right).
\end{align} 
Moreover, we obtain
\begin{align}\label{4lo7}
\left|\langle (u^{a}-u^{0})\cdot\nabla \tau^{0}, (\tau^{a}-\tau^{0})\rangle \right|&\leq\|u^{a}-u^{0}\|_{L^{2}}\|\nabla\tau^{0}\|_{L^{4}}\|\tau^{a}-\tau^{0}\|_{L^{4}}\\ \notag
&\leq\frac{1}{4}\|\nabla(\tau^{a}-\tau^{0})\|_{L^{2}}^{2}+C\phi^{2}(t)\left(\|\nabla\tau^{0}\|_{L^{4}}+\|\nabla\tau^{0}\|_{L^{4}}^{2}\right). \notag
\end{align}
Applying Gronwall's inequality, we infer from \eqref{4lo2}-\eqref{4lo7} that
\begin{equation}\label{4lo8}
\sup_{0\leq t\leq T}\phi^{2}(t)\lesssim_{T} a^{2}.
\end{equation}

Next we estimate $\|\tau^{a}-\tau^{0}\|_{B^{0}_{\infty,1}}$. Note that
\begin{align}\label{4lo9}
\|\tau^{a}-\tau^{0}\|_{B^{0}_{\infty,1}}&=\|\Delta_{-1}(\tau^{a}-\tau^{0})\|_{L^{\infty}}+\sum_{j\geq 0}\|\Delta_{j}(\tau^{a}-\tau^{0})\|_{L^{\infty}}\\ \notag
&\lesssim \|\tau^{a}-\tau^{0}\|_{L^{2}}+\sum_{j\geq 0}\|\Delta_{j}(\tau^{a}-\tau^{0})\|_{L^{\infty}}\\ \notag
&\lesssim_{T}a+\sum_{j\geq 0}\|\Delta_{j}(\tau^{a}-\tau^{0})\|_{L^{\infty}}\\. \notag
\end{align}
By Duhamel's formula
\begin{align*}
\Delta_{j}(\tau^{a}-\tau^{0})(t)&=\int_{0}^{t}e^{\Delta(t-s)}\Delta_{j}\bigg[D(u^{a}-u^{0})-u^{a}\cdot\nabla(\tau^{a}-\tau^{0})\\
&\quad -(u^{a}-u^{0})\cdot\nabla\tau^{0}-Q(\nabla u^{a},(\tau^{a}-\tau^{0}))-Q(\nabla(u^{a}-u^{0}),\tau^{0})-a\tau^{a})\bigg]ds.
\end{align*}
Applying Lemma \ref{H}, we have
\begin{align*}
\|\Delta_{j}(\tau^{a}-\tau^{0})(t)\|_{L^{\infty}}&\lesssim \int_{0}^{t}e^{-ct2^{2j}}\Bigg[\|\Delta_{j}D(u^{a}-u^{0})\|_{L^{\infty}}+\|\Delta_{j}\left[u^{a}\cdot\nabla(\tau^{a}-\tau^{0})\right]\|_{L^{\infty}}\\
&\quad+\|\Delta_{j}\left[(u^{a}-u^{0})\cdot\nabla\tau^{0}\right]\|_{L^{\infty}}+\|\Delta_{j}Q(\nabla u^{a},(\tau^{a}-\tau^{0}))\|_{L^{\infty}}\\
&\quad+\|\Delta_{j}Q(\nabla(u^{a}-u^{0}),\tau^{0})\|_{L^{\infty}}+a\|\Delta_{j}\tau^{a}\|_{L^{\infty}}\Bigg]ds\\
&=A_{1}+A_{2}+A_{3}+A_{4}+A_{5}+A_{6}.
\end{align*}
Applying Bernstein's inequality and Proposition \ref{4prop2}, we have the following inequalities:
\begin{align*}
A_{1}&\lesssim \int_{0}^{t}e^{-c(t-s)2^{2j}}2^{\frac{3}{2}j}\|u^{a}-u^{0}\|_{L^{4}}ds\\
&\lesssim \int_{0}^{t}e^{-c(t-s)2^{2j}}2^{\frac{3}{2}j}\|u^{a}-u^{0}\|_{L^{2}}^{\frac{3}{4}}\|u^{a}-u^{0}\|_{B^{1}_{\infty,1}}^{\frac{1}{4}}ds\\
&\lesssim _{T}2^{-\frac{j}{2}}a^{\frac{3}{4}},
\end{align*}
and
\begin{align*}
A_{2}&\lesssim \int_{0}^{t}e^{-c(t-s)2^{2j}}2^{\frac{7}{4}j}\|u^{a}\|_{L^{\infty}}\|\tau^{a}-\tau^{0}\|_{L^{\frac{8}{3}}}ds\\
&\lesssim \int_{0}^{t}e^{-c(t-s)2^{2j}}2^{\frac{7}{4}j}\|u^{a}\|_{L^{\infty}}\|\tau^{a}-\tau^{0}\|_{L^{2}}^{\frac{3}{4}}\|\tau^{a}-\tau^{0}\|_{L^{\infty}}ds\\
&\lesssim_{T} 2^{-\frac{j}{4}}a^{\frac{3}{4}}.
\end{align*}
One can deduce that
\begin{align*}
A_{3}&\lesssim \int_{0}^{t}e^{-c(t-s)2^{2j}}2^{\frac{3}{2}j}\|u^{a}-u^{0}\|_{L^{4}}\|\tau^{0}\|_{L^{\infty}}ds\\
&\lesssim \int_{0}^{t}e^{-c(t-s)2^{2j}}2^{\frac{3}{2}j}\|u^{a}-u^{0}\|_{L^{2}}^{\frac{3}{4}}\|u^{a}-u^{0}\|_{B^{1}_{\infty,1}}^{\frac{1}{4}}\|\tau^{0}\|_{L^{\infty}}ds\\
&\lesssim_{T}a^{\frac{3}{4}}2^{-\frac{j}{2}},
\end{align*}
and
\begin{align*}
A_{4}&\lesssim \int_{0}^{t}e^{-c(t-s)2^{2j}}2^{j}\|\nabla u^{a}\|_{L^{\infty}}\|\tau^{a}-\tau^{0}\|_{L^{2}}ds\\
&\lesssim_{T} 2^{-j}a.
\end{align*}
Similarly, we infer that
\begin{align*}
A_{5}&\lesssim \int_{0}^{t}e^{-c(t-s)2^{2j}}2^{\frac{3}{2}j}\Big[\|u^{a}-u^{0}\|_{L^{4}}\|\tau^{0}\|_{L^{\infty}}+\|u^{a}-u^{0}\|_{L^{\infty}}\|\nabla\tau^{0}\|_{L^{2}}\Big]ds\\
&\lesssim_{T}2^{-\frac{j}{2}}a^{\frac{3}{4}}.
\end{align*}
and
\begin{align*}
A_{6}&\lesssim \int_{0}^{t}e^{-c(t-s)2^{2j}}a\|\tau^{a}\|_{L^{\infty}}ds\lesssim 2^{-2j}a.
\end{align*}
The above inequalities together with \eqref{4lo9} ensure that
\begin{equation}\label{4lo10}
\sup_{0\leq t\leq T}\|\tau^{a}-\tau^{0}\|_{B^{0}_{\infty,1}}\lesssim_{T}a^{\frac{3}{4}}.
\end{equation}
\textbf{Step 2: Vanishing damping limit in $\dot{H}^1$.} \\
In general analysis for stability and vanishing limit, there is a phenomenon of supercritical regularity in the estimate of transport terms $u^a\cdot \nabla u^a$. We introduce a low-frequency truncated smoothing operator $S_N$ to quantify the regularity.

Let $(u_N^a,\tau_N^a)$ be a solution of $\eqref{eq0}$ with the initial data $(S_N u_0,S_N\tau_0)$ and $a\in[0,1]$. Then we get 
\begin{align}\label{4lo11}
\left\{\begin{array}{l}
\partial_t(u^a-u_N^a)+u^a\cdot\nabla (u^a-u_N^a)+\nabla (P_{u^a}-P_{u_N^a}) \\[1ex]
={\rm div}~(\tau^a-\tau_N^a)-(u^a-u_N^a)\cdot\nabla u_N^a,~~~~{\rm div}~(u^a-u_N^a)=0,\\[1ex]
\partial_t(\tau^a-\tau_N^a)+u^a\cdot\nabla(\tau^a-\tau_N^a)-\Delta(\tau^a-\tau_N^a)+a(\tau^a-\tau_N^a)+Q(\nabla u^a,\tau^a-\tau_N^a)\\[1ex]
=D(u^a-u_N^a)-(u^a-u_N^a)\cdot\nabla\tau_N^a-Q(\nabla (u^a-u_N^a),\tau_N^a),\\[1ex]
(u^a-u_N^a)|_{t=0}=(Id-S_N)u_0,~~(\tau^a-\tau_N^a)|_{t=0}=(Id-S_N)\tau_0. \\[1ex]
\end{array}\right.
\end{align}
Proceeding $L^{2}$ energy estimate again, we obtain
\begin{align}\label{4lo12}
\sup_{0\leq t\leq T}\|(u^{a}-u_{N}^{a},\tau^{a}-\tau^{a}_{N})\|_{L^{2}}&\lesssim_{T}\|(Id-S_{N})u_{0},(Id-S_{N})\tau_{0}\|_{L^{2}}\\\notag
&\lesssim_{T} 2^{-N}\|(Id-S_{N})u_{0},(Id-S_{N})\tau_{0}\|_{H^{1}}.\notag
\end{align}
Let $c_{N}$ be any quantity only dependent on $N$ such that
\begin{equation*}
c_{N}\to 0,\quad \text{as}~\ N\to\infty.
\end{equation*}
Then \eqref{4lo12} can be simplified by
\begin{align}\label{4lo13}
\sup_{0\leq t\leq T}\|(u^{a}-u_{N}^{a},\tau^{a}-\tau{a}_{N})\|_{L^{2}}\leq_{T} 2^{-N}c_{N}.
\end{align}
Next, we test the first equation of \eqref{4lo11} by $-\Delta(u^{a}-u^{a}_{N})$ and the second equation of \eqref{4lo11} by $-\Delta(\tau^{a}-\tau^{a}_{N})$, we obtain the following energy inequality:
\begin{align}\label{4lo14}
&\frac{1}{2}\frac{d}{dt}\left\|\nabla(u^{a}-u^{a}_{N}),\nabla(\tau^{a}-\tau^{a}_{N})\right\|_{L^{2}}^{2}+\left\|\nabla^{2}(\tau^{a}-\tau^{a}_{N})\right\|_{L^{2}}^{2}\\\notag
\leq&\left|\langle u^{a}\cdot\nabla(u^{a}-u^{a}_{N}), \Delta(u^{a}-u^{a}_{N})\rangle\right|+\left|\langle(u^{a}-u^{a}_{N})\cdot\nabla u^{a}_{N}, \Delta (u^{a}-u^{a}_{N})\rangle\right|\\\notag
&+\left|\langle u^{a}\cdot\nabla(\tau^{a}-\tau^{a}_{N}), \Delta(\tau^{a}-\tau^{a}_{N})\rangle \right|+\left|\langle(u^{a}-u^{a}_{N})\cdot\nabla\tau^{a}_{N}, \Delta(\tau^{a}-\tau^{a}_{N})\rangle \right|\\\notag
&+\left|\langle Q(\nabla u^{a},\tau^{a}-\tau^{a}_{N}), \Delta(\tau^{a}-\tau^{a}_{N})d\rangle \right|+\left|\langle Q(\nabla(u^{a}-u^{a}_{N}),\tau^{a}_{N}), \Delta(\tau^{a}-\tau^{a}_{N})\rangle\right|\\\notag
=:&B_{1}+B_{2}+B_{3}+B_{4}+B_{5}+B_{6}.\notag
\end{align}
By virtue of Corollary \ref{4cor1} and integrating by parts, we obtain
\begin{align*}
B_{1}+B_{2}&\leq \left(\|\nabla u^{a}\|_{L^{\infty}}+\|\nabla u^{a}_{N}\|_{L^{\infty}}\right)\|\nabla(u^{a}-u^{a}_{N})\|_{L^{2}}^{2}+\|u^{a}-u^{a}_{N}\|_{L^{2}}\|\nabla^{2}u^{a}_{N}\|_{L^{\infty}}\|\nabla(u^{a}-u^{a}_{N})\|_{L^{2}}\\
&\lesssim_{T}  \left(\|\nabla u^{a}\|_{L^{\infty}}+\|\nabla u^{a}_{N}\|_{L^{\infty}}\right)\|\nabla(u^{a}-u^{a}_{N})\|_{L^{2}}^{2}+\|\nabla(u^{a}-u^{a}_{N})\|_{L^{2}}c_{N}\\
&\lesssim_{T}  \left(\|\nabla u^{a}\|_{L^{\infty}}+\|\nabla u^{a}_{N}\|_{L^{\infty}}+1\right)\|\nabla(u^{a}-u^{a}_{N})\|_{L^{2}}^{2}+c^2_{N}.
\end{align*}
By H\"older's inequality and Corollary \ref{4cor1},
\begin{align*}
&B_{3}+B_{4}+B_{5}+B_{6}\\
\lesssim&\|u^{a}\|_{L^{\infty}}^{2}\|\nabla(\tau^{a}-\tau^{a}_{N})\|_{L^{2}}^{2}+\|u^{a}-u^{a}_{N}\|_{L^{2}}^{2}\|\nabla\tau^{a}_{N}\|_{L^{\infty}}^{2}\\
&+\|\nabla u^{a}\|_{L^{\infty}}^{2}\|\tau^{a}-\tau^{a}_{N}\|_{L^{2}}^{2}+\|\nabla(u^{a}-u^{a}_{N})\|_{L^{2}}^{2}\|\tau^{a}_{N}\|_{L^{\infty}}^{2}+\frac{1}{2}\|\nabla^{2}(\tau^{a}-\tau^{a}_{N})\|_{L^{2}}^{2}\\
\lesssim _{T}&\|u^{a}\|_{L^{\infty}}^{2}\|\nabla(\tau^{a}-\tau^{a}_{N})\|_{L^{2}}^{2}+\|\tau^{a}_{N}\|_{L^{\infty}}^{2}\|\nabla(u^{a}-u^{a}_{N})\|_{L^{2}}^{2}+\frac{1}{2}\|\nabla^{2}(\tau^{a}-\tau^{a}_{N})\|_{L^{2}}^{2}+c^2_{N}.
\end{align*}
By the above inequalities, applying Gronwall's inequality to \eqref{4lo14}, we have
\begin{align}\label{4lo15}
\sup_{0\leq t\leq T}\left\|\nabla(u^{a}-u^{a}_{N}),\nabla(\tau^{a}-\tau^{a}_{N})\right\|_{L^{2}}\lesssim_{T} c_{N}.
\end{align}
According to  \eqref{eq0}, we get
\begin{align}\label{4lo16}
\left\{\begin{array}{l}
\partial_t(u_N^a-u_N^0)+u_N^a\cdot\nabla (u_N^a-u_N^0)+\nabla (P_{u_N^a}-P_{u_N^0}) \\[1ex]
={\rm div}~(\tau_N^a-\tau_N^0)-(u_N^a-u_N^0)\cdot\nabla u_N^0,~~~~{\rm div}~(u_N^a-u_N^0)=0,\\[1ex]
\partial_t(\tau_N^a-\tau_N^0)+u_N^a\cdot\nabla(\tau_N^a-\tau_N^0)-\Delta(\tau_N^a-\tau_N^0)+Q(\nabla u_N^a,\tau_N^a-\tau_N^0)\\[1ex]
=-a\tau_N^a+D(u_N^a-u_N^0)-(u_N^a-u_N^0)\cdot\nabla\tau_N^0-Q(\nabla (u_N^a-u_N^0),\tau_N^0),\\[1ex]
(u_N^a-u_N^0)|_{t=0}=0,~~(\tau_N^a-\tau_N^0)|_{t=0}=0. \\[1ex]
\end{array}\right.
\end{align}
Testing the first equation of \eqref{4lo16} by $-\Delta(u^{a}_{N}-u^{0}_{N})$, the second equation by $-\Delta(\tau^{a}_{N}-\tau^{0}_{N})$, and integrating by parts, we have the following energy inequality:
\begin{align}\label{4lo17}
&\frac{1}{2}\frac{d}{dt}{\underbrace{\left\|\nabla(u^{a}_{N}-u^{0}_{N}),\nabla(\tau^{a}_{N}-\tau^{0}_{N})\right\|_{L^{2}}^{2}}_{M_{1}(t)}}+\left\|\nabla^{2}(\tau^{a}_{N}-\tau^{0}_{N})\right\|_{L^{2}}^{2}\\\notag
\leq&\left|\langle u^{a}_{N}\cdot\nabla(u^{a}_{N}-u^{0}_{N}), \Delta(u^{a}_{N}-u^{0}_{N})\rangle\right|+\left|\langle (u^{a}_{N}-u^{0}_{N})\cdot\nabla u^{0}_{N}, \Delta (u^{a}_{N}-u^{0}_{N})\rangle \right|\\\notag
&+\left|\langle a\tau^{a}_{N}, \Delta(\tau^{a}_{N}-\tau^{0}_{N})\rangle\right|+\left|\langle u^{a}_{N}\cdot\nabla(\tau^{a}_{N}-\tau^{0}_{N}), \Delta(\tau^{a}_{N}-\tau^{0}_{N})\rangle\right|\\\notag
&+\left|\langle(u^{a}_{N}-u^{0}_{N})\cdot\nabla\tau^{0}_{N}, \Delta(\tau^{a}_{N}-\tau^{0}_{N})\rangle\right|+\left|\langle Q(\nabla u^{a}_{N},\tau^{a}_{N}-\tau^{0}_{N}), \Delta(\tau^{a}_{N}-\tau^{0}_{N})\rangle\right|\\\notag
&+\left|\langle Q(\nabla(u^{a}_{N}-u^{0}_{N}),\tau^{0}_{N}), \Delta(\tau^{a}_{N}-\tau^{0}_{N})\rangle\right|\\\notag
\leq&\left(\|\nabla u^{a}_{N}\|_{L^{\infty}}+\|\nabla u^{0}_{N}\|_{L^{\infty}}\right)M_{1}(t)+\|u^{a}_{N}-u^{0}_{N}\|_{L^{2}}\|\nabla^{2}u^{0}_{N}\|_{L^{\infty}}\|\nabla(u^{a}_{N}-u^{0}_{N})\|_{L^{2}}\\\notag
&+\frac{1}{2}\|\nabla^{2}(\tau^{a}_{N}-\tau^{0}_{N})\|_{L^{2}}^{2}+a^{2}\|\tau^{a}_{N}\|_{L^{2}}^{2}+\|u^{a}_{N}\|_{L^{\infty}}^{2}M_{1}(t)\\\notag
&+\|u^{a}_{N}-u^{0}_{N}\|_{L^{2}}^{2}\|\nabla\tau^{0}_{N}\|_{L^{\infty}}^{2}+\|\nabla u^{a}_{N}\|_{L^{\infty}}^{2}\|\tau^{a}-\tau^{a}_{N}\|_{L^{2}}^{2}+\|\tau^{0}_{N}\|_{L^{\infty}}^{2}M_{1}(t).\notag
\end{align}
By the same argument in \textbf{Step 1}, we have
\begin{align*}
\|(u^{a}_{N}-u^{0}_{N},\tau^{a}_{N}-\tau^{0}_{N})\|_{L^{2}}\lesssim _{T} a.
\end{align*}
By the virtue of Corollary \ref{4cor1}, we get
\begin{align*}
\|\nabla^{2}u^{0}_{N}\|_{L^{\infty}}+\|\nabla \tau^{0}_{N}\|_{L^{\infty}}\lesssim 2^{N}.
\end{align*}
Applying Gronwall's inequality, we have
\begin{align}\label{4lo18}
\sup_{0\leq t\leq T}M_1(t)\lesssim _{T}a^{2}2^{2N}.
\end{align}
Combining with \eqref{4lo15} and \eqref{4lo18}, we obtain
\begin{align}\label{4lo19}
\sup_{0\leq t\leq T}\|\nabla(u^{a}-u^{0}),\nabla(\tau^{a}-\tau^{0})\|_{L^{2}}\lesssim_{T} c_{N}+a2^{N},
\end{align}
which implies that
  $$\lim_{a\rightarrow0}\|(u^a,\tau^a)-(u^0,\tau^0)\|_{L^{\infty}(0,T;\dot{H}^1)\times L^{\infty}(0,T;\dot{H}^1)}=0.$$
    \textbf{Step 3: Vanishing damping limit in $B^1_{\infty,1}$.} \\
    We also need a low-frequency truncated smoothing operator $S_N$ to quantify the regularity. In addition, we also need to establish a low-frequency analysis of the global stability for $(u^a,\tau^a)$ regarding the smoothing operator $S_N$.
    Define
    \begin{equation*}
    \begin{cases}
    M_{2}(t)=\left\|(u^{a}-u^{a}_{N})(t)\right\|_{B^{0}_{\infty,1}},\\
    M_{3}(t)=\left\|(\tau^{a}-\tau^{a}_{N})(t)\right\|_{B^{-1}_{\infty,1}}+\displaystyle\int_{0}^{t}\left\|\tau^{a}-\tau^{a}_{N}\right\|_{B^{1}_{\infty,1}}ds.
    \end{cases}
    \end{equation*}
    By the first equation of \eqref{4lo11}, we infer from Proposition \ref{4prop2} that
    \begin{align}\label{4lo20}
    M_{2}(t)\lesssim _{T}2^{-N}c_{N}+\int_{0}^{t}\left\|(u^{a}-u^{a}_{N})\cdot\nabla u^{a}_{N}\right\|_{B^{0}_{\infty,1}}ds+\int_{0}^{t}\left\|\nabla(p_{u^{a}}-p_{u^{a}_{N}})\right\|_{B^{0}_{\infty,1}}ds+M_{3}(t).
    \end{align}
    Applying Lemma \ref{PL}, we have
    \begin{align*}
    \int_{0}^{t}\left\|(u^{a}-u^{a}_{N})\cdot \nabla u^{a}_{N}\right\|_{B^{0}_{\infty,1}}ds\lesssim \int_{0}^{t}M_{2}(s)ds.
    \end{align*}
    By the virtue of Lemma \ref{P}, we deduce that
    \begin{align*}
    \int_{0}^{t}\left\|\nabla(p_{u^{a}}-p_{u^{a}_{N}})\right\|_{B^{0}_{\infty,1}}ds\lesssim &\int_{0}^{t}\left\|\nabla(-\Delta)^{-1}\Big[u^{a}\cdot\nabla(u^{a}-u^{a}_{N})+(u^{a}-u^{a}_{N})\cdot\nabla u^{a}_{N}\Big]\right\|_{B^{0}_{\infty,1}}ds\\
    &+\int_{0}^{t}\left\|\nabla(-\Delta)^{-1}\mathrm{div}\mathrm{div}(\tau^{a}-\tau^{a}_{N})\right\|_{B^{0}_{\infty,1}}ds\\
    \lesssim & \int_{0}^{t}M_{2}(s)ds+M_{3}(t)+2^{-N}c_{N}.
    \end{align*}
    These together with \eqref{4lo20} ensure that
    \begin{align}\label{4lo21}
    M_{2}(t)\lesssim _{T}2^{-N}c_{N}+\int_{0}^{t}M_{2}(s)ds+M_{3}(t).
    \end{align}
    Using frequency decomposition, the standard estimate for the transport-diffusion equation for high frequency and Proposition \ref{4prop2}, we deduce that
    \begin{align}\label{4lo22}
    M_{3}(t)\lesssim_{T}&\  2^{-N}c_{N}+\int_{0}^{t}\|u^{a}\cdot\nabla(\tau^{a}-\tau^{a}_{N})\|_{B^{-1}_{\infty,1}}ds+\int_{0}^{t}\left\|(u^{a}-u^{a}_{N})\cdot\nabla\tau^{a}_{N}\right\|_{B^{-1}_{\infty,1}}ds+\\\notag
    &\int_{0}^{t}\left\|Q(\nabla u^{a},\tau^{a}-\tau^{a}_{N})\right\|_{B^{-1}_{\infty,1}}ds+\int_{0}^{t}\left\|Q(\nabla(u^{a}-u^{a}_{N}),\tau^{a}_{N})\right\|_{B^{-1}_{\infty,1}}ds+\int_{0}^{t}M_{2}(s)ds.\notag
    \end{align}
    By the transport structure and Lemma \ref{PL},
    \begin{align*}
    \int_{0}^{t}\|u^{a}\cdot\nabla(\tau^{a}-\tau^{a}_{N})\|_{B^{-1}_{\infty,1}}ds&\lesssim \int_{0}^{t}\|u^{a}\|_{B^{0}_{\infty,1}}\cdot \|\tau^{a}-\tau^{a}_{N}\|_{B^{1}_{\infty,1}}ds\\
    &\lesssim \epsilon M_{3}(t).
    \end{align*}
    By Lemma \ref{PL}, we have
    \begin{align*}
    \int_{0}^{t}\left\|(u^{a}-u^{a}_{N})\cdot\nabla\tau^{a}_{N}\right\|_{B^{-1}_{\infty,1}}ds&\lesssim \int_{0}^{t}\left\|u^{a}-u^{a}_{N}\right\|_{B^{0}_{\infty,1}}\|\tau^{a}_{N}\|_{H^{2}}ds.
    \end{align*}
    By virtue of Bony's decomposition, we infer that
    \begin{align*}
    \int_{0}^{t}\|Q(\nabla u^{a},\tau^{a}-\tau^{a}_{N})\|_{B^{-1}_{\infty,1}} ds&\lesssim \int_{0}^{t}\|\nabla u^{a}\|_{B^{0}_{\infty,1}}\|\tau^{a}-\tau^{a}_{N}\|_{B^{1}_{\infty,1}} ds\\
    &\lesssim  \int_{0}^{t}\|\nabla u^{a}\|_{B^{0}_{\infty,1}}\|\tau^{a}-\tau^{a}_{N}\|_{B^{1}_{\infty,1}}ds\\
    &\lesssim \ep M_{3}(t),
    \end{align*}
    and
    \begin{align*}
    \int_{0}^{t}\left\|Q(\nabla(u^{a}-u^{a}_{N}),\tau^{a}_{N})\right\|_{B^{-1}_{\infty,1}}ds\lesssim \int_{0}^{t}M_{2}(s)\|\tau^{a}_{N}\|_{H^{2}}ds.
    \end{align*}
Then we get
\begin{align}\label{4lo23}
M_{3}(t)\lesssim 2^{-N}c_{N}+\int_{0}^{t}M_{2}(s)(1+\|\tau^{a}_{N}\|_{H^{2}})ds.
\end{align}
Combining \eqref{4lo21} and \eqref{4lo23}, and applying Gronwall's inequality, we obtain
\begin{align}\label{4lo24}
\sup_{0\leq t\leq T}\left[M_{2}(t)+M_{3}(t)\right]\lesssim _{T}2^{-N}c_{N}.
\end{align}
Recall that
\begin{align}\label{4lo25}
\begin{cases}
\Gamma^{a}=\omega^{a}-\mathcal{R}\tau^{a},\\
\Gamma^{a}_{N}=\omega^{a}_{N}-\mathcal{R}\tau^{a}_{N}.
\end{cases}
\end{align}
According to \eqref{eq0} and \eqref{4lo25}, we infer that
\begin{align}\label{4lo26}
&\partial_t (\Gamma^a-\Gamma_N^a) + u^a\cdot\nabla(\Gamma^a-\Gamma_N^a) + \frac 1 2(\Gamma^a-\Gamma_N^a) \\ \notag
= &-(u^a-u_N^a)\cdot\nabla\Gamma_N^a+(a-\frac{1}{2})\mathcal{R}(\tau^a-\tau_N^a)  \\ \notag
&+ [\mathcal{R}, u^a\cdot\nabla](\tau^a-\tau_N^a)+ [\mathcal{R}, (u^a-u_N^a)\cdot\nabla]\tau_N^a\\ \notag
&+\mathcal{R}Q(\nabla u^a,\tau^a-\tau_N^a)+\mathcal{R}Q(\nabla (u^a-u_N^a),\tau_N^a).
\end{align}
Define
\begin{align*}
M_{4}(t):=\sup_{0\leq s\leq t}\|\Gamma^{a}-\Gamma^{a}_{N}\|_{B^{0}_{\infty,1}}.
\end{align*}
By the standard estimate for the transport equation, we have
\begin{align}\label{4lo27}
&\|\Gamma^{a}-\Gamma^{a}_{N}\|_{B^{0}_{\infty,1}}\\\notag
\lesssim&e^{-\frac{1}{4}t}c_{N}+\int_{0}^{t}e^{-\frac{1}{4}(t-s)}\left\|(u^{a}-u^{a}_{N})\cdot\nabla\Gamma^{a}_{N}\right\|_{B^{0}_{\infty,1}}ds\\\notag
&+\int_{0}^{t}e^{-\frac{1}{4}(t-s)}\left\|\mathcal{R}(\tau^{a}-\tau^{a}_{N})\right\|_{B^{0}_{\infty,1}}ds+\int_{0}^{t}e^{-\frac{1}{4}(t-s)}\left\| [\mathcal{R}, u^a\cdot\nabla](\tau^a-\tau_N^a)\right\|_{B^{0}_{\infty,1}}ds\\\notag
&+\int_{0}^{t}e^{-\frac{1}{4}(t-s)}\left\| [\mathcal{R}, (u^a-u_N^a)\cdot\nabla]\tau_N^a\right\|_{B^{0}_{\infty,1}}ds+\int_{0}^{t}e^{-\frac{1}{4}(t-s)}\left\|\mathcal{R}Q(\nabla u^a,\tau^a-\tau_N^a)\right\|_{B^{0}_{\infty,1}}ds\\\notag
&+\int_{0}^{t}e^{-\frac{1}{4}(t-s)}\left\|\mathcal{R}Q(\nabla (u^a-u_N^a),\tau_N^a)\right\|_{B^{0}_{\infty,1}}ds=: e^{-\frac{1}{4}t}c_{N}+C_{1}+C_{2}+C_{3}+C_{4}+C_{5}+C_{6}.\notag
\end{align}
By Proposition \ref{4cor1} and \eqref{4lo24}, we deduce that
\begin{align*}
C_{1}\lesssim \int_{0}^{t}\|u^{a}-u^{a}_{N}\|_{B^{0}_{\infty,1}}\|\Gamma^{a}_{N}\|_{B^{1}_{\infty,1}}ds\lesssim_{T} c_{N}.
\end{align*}
By the property of the Calderon-Zygmund operator, we have
\begin{align*}
C_{2}\lesssim \int_{0}^{t}e^{-\frac{1}{4}(t-s)}\left(\|\tau^{a}-\tau^{a}_{N}\|_{L^{2}}+\|\tau^{a}-\tau^{a}_{N}\|_{B^{0}_{\infty,1}}\right)ds\lesssim_{T} 2^{-N}c_{N}.
\end{align*}
By Corollary \ref{CR3}, there holds
\begin{align*}
C_{3}\lesssim\int_{0}^{t}e^{-\frac{1}{4}(t-s)}\left(\|\nabla u^{a}\|_{L^{\infty}}\|\tau^{a}-\tau^{a}_{N}\|_{B^{0}_{\infty,1}}+\|u^{a}\|_{L^{2}}\|\tau^{a}-\tau^{a}_{N}\|_{L^{2}}\right)ds\lesssim_{T} 2^{-N}c_{N}.
\end{align*}
Applying Corollary \ref{CR3} again, we get
\begin{align*}
C_{4}&\lesssim \int_{0}^{t}e^{-\frac{1}{4}(t-s)}\left(\|\omega^{a}-\omega^{a}_{N}\|_{B^{0}_{\infty,1}}+\|\omega^{a}-\omega^{a}_{N}\|_{L^{2}}\right)\left(\|\tau^{a}_{N}\|_{B^{0}_{\infty,1}}+\|\tau^{a}_{N}\|_{L^{2}}\right)ds\\
&\lesssim \ep M_{4}(t)+C_Tc_{N},
\end{align*}
where we have used the fact that
\begin{align*}
\|\omega^{a}-\omega^{a}_{N}\|_{B^{0}_{\infty,1}}\leq \|\Gamma^{a}-\Gamma^{a}_{N}\|_{B^{0}_{\infty,1}}+\|\mathcal{R}(\tau^{a}-\tau^{a}_{N})\|_{B^{0}_{\infty,1}}.
\end{align*}
By Lemma \ref{PL} and \eqref{4lo24}, we obtain
\begin{align*}
C_{5}&\lesssim \int_{0}^{t}e^{-\frac{1}{4}(t-s)}\|\nabla u^{a}\|_{B^{0}_{\infty,1}}\left(\|\tau^{a}-\tau^{a}_{N}\|_{L^{2}}+\|\tau^{a}-\tau^{a}_{N}\|_{B^{1}_{\infty,1}}\right)ds\\
&\lesssim_{T} 2^{-N}c_{N}.
\end{align*}
Now we deal with $C_6$:
\begin{align*}
C_{6}\lesssim& \int_{0}^{t}e^{-\frac{1}{4}(t-s)}\left\|\mathcal{R}Q(\nabla (u^a-u_N^a),\tau_N^a)\right\|_{L^{2}}ds\\
&+\int_{0}^{t}e^{-\frac{1}{4}(t-s)}\left\|Q(\nabla (u^a-u_N^a),\tau_N^a)\right\|_{B^{0}_{\infty,1}}ds\\
=:&C_{61}+C_{62}.
\end{align*}
$C_{61}$ can be estimated directly by virtue of \eqref{4lo15},
\begin{align*}
C_{61}\leq \int_{0}^{t}e^{-\frac{1}{4}(t-s)}\|\nabla(u^{a}-u^{a}_{N})\|_{L^{2}}\|\tau^{a}_{N}\|_{L^{\infty}}ds\lesssim _{T}c_{N}.
\end{align*}
For the term $C_{62}$, applying Lemma \ref{PL},
\begin{align*}
C_{62}&\lesssim \int_{0}^{t}e^{-\frac{1}{4}(t-s)}\|\nabla(u^{a}-u^{a}_{N})\|_{B^{0}_{\infty,1}}\|\tau^{a}_{N}\|_{H^{2}}ds\\
&\lesssim \int_{0}^{t}e^{-\frac{1}{4}(t-s)}\left(\|\Gamma^{a}-\Gamma^{a}_{N}\|_{B^{0}_{\infty,1}}+\|u^{a}-u^{a}_{N}\|_{L^{2}}+\|\mathcal{R}(\tau^{a}-\tau^{a}_{N})\|_{B^{0}_{\infty,1}}\right)\|\tau^{a}_{N}\|_{H^{2}}ds
\\
&\lesssim \ep M_{4}(t)+C_T2^{-N}c_{N},
\end{align*}
where we have used Proposition \ref{4prop1}. Due to \eqref{4lo27}, we infer that
\begin{align}\label{4lo28}
\sup_{0\leq t\leq T}\|\Gamma^{a}-\Gamma^{a}_{N}\|_{B^{0}_{\infty,1}}\lesssim _{T} c_{N}.
\end{align}
Finally, we estimate $\|\Gamma^{a}_{N}-\Gamma^{0}_{N}\|_{B^{0}_{\infty,1}}$. According to \eqref{eq0}, we infer that
\begin{align}\label{4lo29}
&\partial_t (\Gamma_N^a-\Gamma_N^0) + u_N^a\cdot\nabla(\Gamma_N^a-\Gamma_N^0) + \frac 1 2(\Gamma_N^a-\Gamma_N^0) \\ \notag
=&-(u_N^a-u_N^0)\cdot\nabla\Gamma_N^0-\frac{1}{2}\mathcal{R}(\tau_N^a-\tau_N^0)+a\mathcal{R}\tau_N^a  \\ \notag
&+ [\mathcal{R}, u_N^a\cdot\nabla](\tau_N^a-\tau_N^0)+ [\mathcal{R}, (u_N^a-u_N^0)\cdot\nabla]\tau_N^0\\ \notag
&+\mathcal{R}Q(\nabla u_N^a,\tau_N^a-\tau_N^0)+\mathcal{R}Q(\nabla (u_N^a-u_N^0),\tau_N^0).
\end{align}
Define
$$ M_{5}(t):=\sup_{0\leq s\leq t}\|(\Gamma^{a}_{N}-\Gamma^{0}_{N})(s)\|_{B^{0}_{\infty,1}}. $$
By the standard estimate for the transport equations, we have
\begin{align}\label{4lo30}
&\|(\Gamma^{a}_{N}-\Gamma^{0}_{N})(t)\|_{B^{0}_{\infty,1}}\\\notag
\lesssim&\int_{0}^{t}e^{-\frac{1}{4}(t-s)}\left\|(u_N^a-u_N^0)\cdot\nabla\Gamma_N^0\right\|_{B^{0}_{\infty,1}}ds+ \int_{0}^{t}e^{-\frac{1}{4}(t-s)}\left\|\mathcal{R}(\tau_N^a-\tau_N^0)\right\|_{B^{0}_{\infty,1}}ds\\\notag
&+ \int_{0}^{t}e^{-\frac{1}{4}(t-s)}a\left\|\mathcal{R}\tau_N^a\right\|_{B^{0}_{\infty,1}}ds+ \int_{0}^{t}e^{-\frac{1}{4}(t-s)}\left\| [\mathcal{R}, u_N^a\cdot\nabla](\tau_N^a-\tau_N^0)\right\|_{B^{0}_{\infty,1}}ds\\\notag
&+\int_{0}^{t}e^{-\frac{1}{4}(t-s)}\left\| [\mathcal{R}, (u_N^a-u_N^0)\cdot\nabla]\tau_N^0\right\|_{B^{0}_{\infty,1}}ds+\int_{0}^{t}e^{-\frac{1}{4}(t-s)}\left\|\mathcal{R}Q(\nabla u_N^a,\tau_N^a-\tau_N^0)\right\|_{B^{0}_{\infty,1}}ds\\\notag
&+\int_{0}^{t}e^{-\frac{1}{4}(t-s)}\left\|\mathcal{R}Q(\nabla (u_N^a-u_N^0),\tau_N^0)\right\|_{B^{0}_{\infty,1}}ds=:\sum_{j=1}^{7}D_{j}.
\end{align}
By \eqref{4lo8}, Lemma \ref{PL} and Corollary \ref{4cor1}, we infer that
\begin{align*}
D_{1}&\lesssim \int_{0}^{t}e^{-\frac{1}{4}(t-s)}\|u^{a}_{N}-u^{0}_{N}\|_{B^{0}_{\infty,1}}\|\Gamma^{0}_{N}\|_{B^{1}_{\infty,1}}ds\\
&\lesssim  \int_{0}^{t}e^{-\frac{1}{4}(t-s)}\|u^{a}_{N}-u^{0}_{N}\|_{L^{2}}^{\frac{1}{2}}\|u^{a}_{N}-u^{0}_{N}\|_{B^{1}_{\infty,1}}^{\frac{1}{2}}\|\Gamma^{0}_{N}\|_{B^{1}_{\infty,1}}ds\\
&\lesssim_{T}a^{\frac{1}{2}}2^{N}.
\end{align*}
By the property of the Calderon-Zygmund operator, \eqref{4lo8} and \eqref{4lo10}, we get
\begin{align*}
D_{2}\lesssim \int_{0}^{t}e^{-\frac{1}{4}(t-s)}\left(\|\tau^{a}_{N}-\tau^{0}_{N}\|_{L^{2}}+\|\tau^{a}_{N}-\tau^{0}_{N}\|_{B^{0}_{\infty,1}}\right)ds\lesssim _{T}a^{\frac{3}{4}}.
\end{align*}
By the embedding $H^{2}\hookrightarrow B^{0}_{\infty,1}$ and Proposition \ref{4prop1},
\begin{equation*}
D_{3}\lesssim \int_{0}^{t}e^{-\frac{1}{4}(t-s)}a\|\tau^{a}_{N}\|_{H^{2}}ds\lesssim a.
\end{equation*}
Applying Corollary \ref{CR3}, we have
\begin{align*}
D_{4}\lesssim \int_{0}^{t}e^{-\frac{1}{4}(t-s)}\left(\|\nabla u^{a}_{N}\|_{L^{\infty}}+\| u^{a}_{N}\|_{L^{2}}\right)\left(\|\tau^{a}_{N}-\tau^{0}_{N}\|_{B^{0}_{\infty,1}}+\|\tau^{a}_{N}-\tau^{0}_{N}\|_{L^{2}}\right)ds\lesssim a^{\frac{3}{4}}.
\end{align*}
Applying Lemma \ref{CR}, we deduce that
\begin{align*}
D_{5}&\lesssim \int_{0}^{t}e^{-\frac{1}{4}(t-s)}\|\omega^{a}_{N}-\omega^{0}_{N}\|_{L^{\infty}\cap L^{2}}\|\tau^{0}_{N}\|_{H^{2}}ds\\
&\leq \int_{0}^{t}e^{-\frac{1}{4}(t-s)}\left(\|\Gamma^{a}_{N}-\Gamma^{0}_{N}\|_{L^{\infty}}+\|\mathcal{R}(\tau^{a}_{N}-\tau^{0}_{N})\|_{L^{\infty}}+\|\omega^{a}_{N}-\omega^{0}_{N}\|_{L^{2}}\right)\|\tau^{0}_{N}\|_{H^{2}}ds\\
&\lesssim \ep M_{5}(t)+C_T\left(a^{\frac{3}{4}}+a2^{N}\right).
\end{align*}
We divide the estimate of $D_{6}$ into two parts: 
\begin{align*}
D_{6}\lesssim &\int_{0}^{t}e^{-\frac{1}{4}(t-s)}\left\|Q(\nabla u_N^a,\tau_N^a-\tau_N^0)\right\|_{L^{2}}ds\\
&+\int_{0}^{t}e^{-\frac{1}{4}(t-s)}\left\|Q(\nabla u_N^a,\tau_N^a-\tau_N^0)\right\|_{B^{0}_{\infty,1}}ds\\
=:& D_{61}+D_{62}.
\end{align*}
By \eqref{4lo8}, we get
\begin{align*}
D_{61}\leq \int_{0}^{t}e^{-\frac{1}{4}(t-s)}\|\nabla u_N^a\|_{L^{\infty}}\|\tau_N^a-\tau_N^0\|_{L^{2}}ds\lesssim_{T} a.
\end{align*}
By virtue of Bony's decomposition, we have
\begin{align*}
D_{62}&\lesssim \int_{0}^{t}e^{-\frac{1}{4}(t-s)}\|\nabla u_N^a\|_{B^{0}_{\infty,1}}\left(\|\tau_N^a-\tau_N^0\|_{B^{0}_{\infty,1}}+\|\tau_N^a-\tau_N^0\|_{H^{1}}\right)ds\\
&\lesssim _{T}a^{\frac{3}{4}}+a2^{N}.
\end{align*}
We also divide the estimate of $D_{7}$ into two parts:
\begin{align*}
D_{7}\lesssim &\int_{0}^{t}e^{-\frac{1}{4}(t-s)}\left\|\mathcal{R}Q(\nabla (u_N^a-u_N^0),\tau_N^0)\right\|_{L^{2}}ds\\
&+\int_{0}^{t}e^{-\frac{1}{4}(t-s)}\left\|Q(\nabla (u_N^a-u_N^0),\tau_N^0)\right\|_{B^{0}_{\infty,1}}ds\\
=:& D_{71}+D_{72}.
\end{align*}
By \eqref{4lo18}, we obtain
\begin{align*}
D_{71}\lesssim\int_{0}^{t}e^{-\frac{1}{4}(t-s)}\|\nabla (u_N^a-u_N^0)\|_{L^{2}}\|\tau_N^0\|_{L^{\infty}}ds\lesssim_{T}a2^{N}.
\end{align*}
For the term $D_{72}$, applying Lemma \ref{PL},
\begin{align*}
D_{72}&\lesssim \int_{0}^{t}e^{-\frac{1}{4}(t-s)}\|\nabla (u_N^a-u_N^0)\|_{B^{0}_{\infty,1}}\|\tau_{N}^{0}\|_{H^{2}}ds\\
&\lesssim \int_{0}^{t}e^{-\frac{1}{4}(t-s)}\left[\|\Gamma_{N}^{a}-\Gamma_{N}^{0}\|_{B^{0}_{\infty,1}}+\|\mathcal{R}(\tau_{N}^{a}-\tau_{N}^{0})\|_{B^{0}_{\infty,1}}+\|u^{a}_{N}-u^{0}_{N}\|_{L^{2}}\right]\|\tau_{N}^{0}\|_{H^{2}}ds\\
&\lesssim \ep M_{5}(t)+C_{T}a^{\frac{3}{4}}.
\end{align*}
Recall \eqref{4lo30}, we infer that
\begin{align}\label{4lo31}
\sup_{0\leq t\leq T}M_{5}(t)\lesssim_{T}a^{\frac{1}{2}}2^{N}.
\end{align}
Finally, we obtain
\begin{align}\label{4lo32}
\sup_{0\leq t\leq T}\|\Gamma^{a}-\Gamma^{0}\|_{B^{0}_{\infty,1}}\lesssim_{T}c_{N}+a^{\frac{1}{2}}2^{N}.
\end{align}
This implies that
 $$\lim_{a\rightarrow0}\|u^a-u^0\|_{L^{\infty}(0,T; B^1_{\infty,1})}=0.$$
 We thus complete the proof of Proposition \ref{4prop3}.
 \end{proof}
{\bf The proof of Theorem \ref{theo1} :}  \\
Combining Propositions \ref{4prop2} and \ref{4prop3}, we complete the proof of Theorem \ref{theo1}. 
\hfill$\Box$

\subsection{The key integrability and uniform vanishing damping limit} 
By virtue of the improved Fourier splitting method, for any $a\in[0,1]$, we obtain optimal time decay rates of global solutions for the inviscid Oldroyd-B model. However, we fail to obtain the decay rates of $\|(\nabla u,\tau)\|_{B^0_{\infty,1}}$ in the case of critical regularity. Since $\|\tau\|_{B^0_{\infty,1}}\leq C(1+t)^{-1}$ in the case of high regularity for $d=2$, the time integrability of $\|\tau\|_{B^0_{\infty,1}}$ is critical, which cannot be obtained even in the case of high regularity. We introduce a different method of high-low frequency decomposition to obtain time integrability of $\|\nabla u\|_{B^0_{\infty,1}}$. 
  \begin{prop}\label{4prop4}
  Under the conditions in Proposition \ref{4prop2}, if additionally $(u_0,\tau_0) \in \dot{B}^{-1}_{2,\infty},$
		then there holds
		$$
		\|(u,\tau)\|_{L^2} +(1+t)^{\frac{1}{2}}\|\nabla(u,\tau)\|_{L^2}\leq C_0(1+t)^{-\frac{1}{2}},
		$$
  and 
  $$\int_{0}^{\infty}\|\nabla u\|_{B^0_{\infty,1}}dt\leq C_0,$$
  where $C_0$ depends on $\|(u_0,\tau_0)\|_{H^1\cap \dot{B}^{-1}_{2,\infty}}$ and $\|(\nabla u_0,\tau_0)\|_{B^0_{\infty,1}}.$
  \end{prop}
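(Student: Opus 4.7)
My plan is to proceed in two main stages: first obtain the time decay rates, then derive the integrability of $\|\nabla u\|_{B^0_{\infty,1}}$. For the decay, I would first verify that the $\dot{B}^{-1}_{2,\infty}$-norm of $(u,\tau)$ remains uniformly bounded in time. This can be done by projecting the system onto low frequencies and using Duhamel's formula on the $\tau$-equation together with a direct energy-type argument on the $u$-equation at regularity $\dot{B}^{-1}_{2,\infty}$, absorbing nonlinear contributions via the uniform bounds already supplied by Proposition~\ref{4prop2}. With this low-frequency bound in hand, I apply the Fourier splitting method to the energy inequality \eqref{4ineq1}: on the ball $S(t)=\{\xi:|\xi|^2\leq C_\star(1+t)^{-1}\}$ the dissipation controls $(1+t)^{-1}\|(u,\tau)\|^2_{L^2}$ from below up to the remainder $C_\star(1+t)^{-2}\|(u,\tau)\|^2_{\dot{B}^{-1}_{2,\infty}}$; integrating the resulting differential inequality yields $\|(u,\tau)\|_{L^2}\lesssim(1+t)^{-1/2}$. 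The $\dot H^1$ decay $\|\nabla(u,\tau)\|_{L^2}\lesssim(1+t)^{-1}$ then follows by multiplying \eqref{4ineq2} (together with the coercive form \eqref{4ineq1}) by suitable weights $(1+t)^\beta$, absorbing $\|\nabla u\|^2_{L^2}\|\tau\|^2_{H^1}$ using the already-obtained $L^2$ decay and the $H^1$-smallness of $\tau$.

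\textbf{High-frequency part of the key integrability.} For the integrability $\int_0^\infty\|\nabla u\|_{B^0_{\infty,1}}dt\leq C_0$, I split via the structural identity $\omega=\Gamma+\mathcal R\tau$ and a $\Delta_{-1}$/$(Id-\Delta_{-1})$ cut-off. The high-frequency piece is dominated by $\|(Id-\Delta_{-1})\Gamma\|_{B^0_{\infty,1}}+\|(Id-\Delta_{-1})\mathcal R\tau\|_{B^0_{\infty,1}}$. For the $\tau$-contribution, Bernstein combined with Cauchy--Schwarz yields $\|(Id-\Delta_{-1})\tau\|_{B^0_{\infty,1}}\lesssim\|\nabla^2\tau\|_{L^2}$, and a time-weighted higher-order estimate $\int_0^t(1+t')^{2-\delta}\|\nabla^2\tau\|^2_{L^2}dt'\lesssim C_0/\delta$ (derived by multiplying \eqref{4ineq2} and its higher-order analogue by $(1+t)^{2-\delta}$ and bootstrapping with the $\dot H^1$ decay) together with H\"older integration over $t$ delivers $\int_0^\infty\|\nabla^2\tau\|_{L^2}dt\lesssim C_0$. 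For the $\Gamma$-contribution, I exploit the damped transport equation \eqref{4eq12}: the $-\tfrac12\Gamma$ term gives exponential-in-time smoothing, so Fubini converts pointwise-in-time control of the forcing into time integrability, reducing the $\Gamma$-integrability to the already-handled $\int_0^\infty\|(Id-\Delta_{-1})\mathcal R\tau\|_{B^0_{\infty,1}}dt$ plus commutator and quadratic terms that are small by Proposition~\ref{4prop2} and the decay already established.

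\textbf{Low-frequency part and the main obstacle.} The genuine difficulty is the low-frequency integrability $\int_1^\infty\|\Delta_{-1}\nabla u\|_{L^\infty}dt\leq C_0$, since the natural bound $\|\Delta_{-1}\nabla u\|_{L^\infty}\lesssim\|\nabla u\|_{L^2}\lesssim(1+t)^{-1}$ is only marginally non-integrable. To close this, I would use the interpolation $\|\Delta_{-1}\nabla u\|_{L^\infty}\lesssim\|\nabla u\|_{L^2}^{1/2}\|\Delta_{-1}\nabla^2 u\|_{L^2}^{1/2}$ and derive a weighted higher-order bound $\int_0^t(1+t')^{3}\|\Delta_{-1}\nabla^2 u\|^2_{L^2}dt'\lesssim C_0(1+t)^{3/2}$, obtained by testing the $u$-equation against a suitable low-frequency weighted quantity and absorbing using the prior decay rates. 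The growth $(1+t)^{3/2}$ on the right would defeat direct integration, but the dyadic decomposition $\int_1^\infty(\cdot)dt=\sum_{j\geq 0}\int_{2^j}^{2^{j+1}}(\cdot)dt$ combined with H\"older's inequality on each slab pays for this growth at the cost of a summable factor $2^{-j/8}$, as sketched in the paper's introduction. The main obstacle is precisely calibrating the weights $\beta,\delta$ and the additional weight in the $\Delta_{-1}\nabla^2 u$ bound so that all the time-weighted inequalities close simultaneously with the $L^2$ and $\dot H^1$ decays; once this is achieved, combining the high- and low-frequency bounds gives Proposition~\ref{4prop4}.
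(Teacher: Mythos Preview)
Your proposal is correct and follows essentially the same route as the paper: the decay rates are obtained by the improved Fourier splitting method (which the paper simply cites from \cite{DLY23}), the high-frequency integrability is handled via the structural variable $\Gamma$ and the weighted bound $\int_0^t(1+t')^{2-\delta}\|\nabla^2\tau\|_{L^2}^2dt'\lesssim C_0/\delta$, and the low-frequency piece is closed by exactly the interpolation and dyadic summation you describe. One small clarification: the weighted bound $\int_0^t(1+t')^3\|\Delta_{-1}\nabla^2 u\|_{L^2}^2dt'\lesssim C_0(1+t)^{3/2}$ is not obtained by testing the $u$-equation alone (which has no dissipation) but by differentiating the cross inner product $\langle\Delta_{-1}\nabla\tau,-\Delta_{-1}\nabla^2 u\rangle$ in time, so that the source term $D(u)$ in the $\tau$-equation produces the needed $\|\Delta_{-1}\nabla^2 u\|_{L^2}^2$ term---this is the same mechanism as in \eqref{4eq4}.
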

  \begin{proof}
      For $d=2$, we deduce from $\rm div~u=0$ that 
      $$\langle u\cdot\nabla u, \Delta u \rangle=0.$$
      By virtue of the improved Fourier splitting method, for any $a\in[0,1]$, we obtain
      \begin{align}\label{4in1}
		\|(u,\tau)\|_{L^2} +(1+t)^{\frac{1}{2}}\|\nabla(u,\tau)\|_{L^2}\leq C_0(1+t)^{-\frac{1}{2}}.
	\end{align}
 For more details, one can refer to \cite{DLY23}.

		\begin{align*}
		\frac{d}{dt}\left(\frac{1}{2}\|(u,\tau)\|^2_{H^1}-\eta\langle\tau,\nabla u\rangle\right) + \frac \eta 4\|\nabla u\|^2_{L^2} + \frac{1}{4}\|\nabla\tau\|^2_{H^1}\leq 0,
		\end{align*}
	Multiplying $(1+t)^{1-\delta}$ with $\delta\in(0,1)$ to \eqref{4ineq1} and integrating, then we infer from \eqref{4in1} that
 \begin{align}\label{4in2}
	\int_{0}^{t}(1+t')^{1-\delta}(\|\nabla u\|^2_{L^{2}}+\|\nabla \tau\|^2_{H^{1}}) dt'
 &\lesssim C_0+(1+t)^{1-\delta}\|(u,\tau)\|^2_{H^1} \\ \notag
 &~~~+\int_{0}^{t}(1+t')^{-\delta}\|(u,\tau)\|^2_{H^1} dt' 
 \\  \notag
  &\lesssim C_0+C_0\int_{0}^{t}(1+t')^{-\delta-1} dt' \\ \notag
  &\lesssim C_0\frac {1}{\delta}.
\end{align}
		Combining \eqref{4in1} and \eqref{4ineq2}, we get
  \begin{align}\label{4in3}
	\int_{0}^{t}(1+t')^{2-\delta}\|\nabla^2 \tau\|^2_{L^{2}}dt'
 &\lesssim C_0+(1+t)^{2-\delta}\|\nabla(u,\tau)\|^2_{L^2} \\ \notag
 &~~~+\int_{0}^{t}(1+t')^{1-\delta}\|\nabla(u,\tau)\|^2_{L^2} dt' \\ \notag
 &~~~+\int_{0}^{t}(1+t')^{2-\delta}\|\nabla u\|^2_{L^2}\|\tau\|^2_{H^1} dt'
 \\  \notag
  &\lesssim C_0+C_0\int_{0}^{t}(1+t')^{-\delta-1} dt' \\ \notag
  &\lesssim C_0\frac {1}{\delta},
	\end{align}
 where $\delta\in(0,1)$.

      Since $\|\tau\|_{B^0_{\infty,1}}\leq C(1+t)^{-1}$ in the case of high regularity for $d=2$, the time integrability of $\|\tau\|_{B^0_{\infty,1}}$ cannot be obtained. The core difficulty lies in the low-frequency decay rate of $\tau$. We now introduce a new method to get the following key integrability:
 $$\int_{0}^{\infty}\|\nabla u\|_{B^0_{\infty,1}}dt'\leq C_0.$$

 We first deal with high-frequency case.
 According to \eqref{4in3}, for any $t>0$, we infer that
 \begin{align}\label{4in4}
	\int_{0}^{t}\|(Id-\Delta_{-1})\mathcal{R} \tau\|_{B^0_{\infty,1}}dt'
 &\lesssim\int_{0}^{t}\|(Id-\Delta_{-1})\tau\|_{B^0_{\infty,1}}dt' \\ \notag
 &\lesssim\int_{0}^{t}\sum_{j\geq 0}2^{-j}\|\Delta_{j}\nabla^2\tau\|_{L^2} dt' \\ \notag
 &\lesssim\int_{0}^{t}\|\nabla^2 \tau\|_{L^2} dt'\lesssim\left(\int_{0}^{t}(1+t')^{\frac 5 4}\|\nabla \tau\|^2_{L^{2}} dt'\right)^{\frac 1 2}\lesssim C_0,
	\end{align}
This means that the high frequency of $\tau$ is integrable. With this information, we can study the integrability of high frequency for $\nabla u$. We deduce from the equation of $\Gamma$ that
\begin{align}\label{eqH}
 &\partial_t (Id-\Delta_{-1})\Gamma + (u\cdot\nabla)(Id-\Delta_{-1})\Gamma + \frac 1 2(Id-\Delta_{-1})\Gamma \\ \notag
 = &(a-\frac{1}{2})(Id-\Delta_{-1})\mathcal{R}\tau + (Id-\Delta_{-1})[\mathcal{R}, u\cdot\nabla]\tau  \\ \notag
&+ (Id-\Delta_{-1})\mathcal{R}Q-[(Id-\Delta_{-1}),(u\cdot\nabla)]\Gamma.   
\end{align}
where $\Gamma = \omega - \mathcal{R}\tau$. 
Then by Proposition \ref{4prop2} and Lemma \ref{TP}, we have
\begin{align}\label{4in5}
\|(Id-\Delta_{-1})\Gamma\|_{B^0_{\infty,1}} &\lesssim \|\Gamma_0\|_{B^0_{\infty,1}} e^{-\frac {1}{4}t} + \int_0^t e^{-\frac 1 4(t-t')} \|(Id-\Delta_{-1})\mathcal{R}\tau\|   
_{B^0_{\infty,1}} dt' \\ \notag
&~~~+ \int_0^t e^{-\frac 1 4(t-t')} \|[\mathcal{R}, u\cdot\nabla]\tau\|   
_{B^0_{\infty,1}} +\| \mathcal{R}Q\|   
_{B^0_{\infty,1}} + \|[\Delta_{-1},(u\cdot\nabla)]\Gamma\|   
_{B^0_{\infty,1}} dt'.
\end{align}
This together with \eqref{4in1}-\eqref{4in4}, Lemmas \ref{CZ}, \ref{PL} and \ref{CR} ensures that
\begin{align}\label{4in6}
\int_0^t\|(Id-\Delta_{-1})\Gamma\|_{B^0_{\infty,1}}dt' &\lesssim C_0+ \int_0^t \|[\mathcal{R}, u\cdot\nabla]\tau\|   
_{B^0_{\infty,1}} +\| \mathcal{R}Q\|   
_{B^0_{\infty,1}} dt'  \\ \notag
&~~~+ \int_0^t\|[\Delta_{-1},(u\cdot\nabla)]\Gamma\|   
_{B^0_{\infty,1}} dt' \\ \notag
&\lesssim C_0+ \int_0^t (\|\nabla u\|_{B^0_{\infty,1}}+\|\omega\|_{L^2})(\|\tau\|_{B^{\frac 1 4}_{\infty,1}}+\|\tau\|_{L^2})dt' \\ \notag
&~~~+\int_0^t \|\Delta_{-1}(u\cdot\nabla\Gamma)\|   
_{B^0_{\infty,1}}+\|(u\cdot\nabla)\Delta_{-1}\Gamma\|   
_{B^0_{\infty,1}} dt' \\ \notag
&\lesssim C_0+ \int_0^t \|\nabla u\|_{B^0_{\infty,1}}\|\tau\|_{L^2}+\|u\|   
_{L^4}\|\Gamma\|   
_{L^2} + \|u\|   
_{B^0_{\infty,1}}\|\Delta_{-1}\Gamma\|   
_{B^1_{\infty,1}}dt' \\ \notag
&\lesssim C_0+ \int_0^t \|\nabla u\|_{B^0_{\infty,1}}(\|\tau\|_{L^2} + \|u\|   
_{L^2})dt' \\ \notag
&\lesssim C_0+\ep\int_0^t\|(Id-\Delta_{-1})\Gamma\|_{B^0_{\infty,1}}dt'.
\end{align}
Then we have 
\begin{align}\label{4in7}
\int_0^t\|(Id-\Delta_{-1})\nabla u\|_{B^0_{\infty,1}}dt'\lesssim \int_0^t\|(Id-\Delta_{-1})\Gamma\|_{B^0_{\infty,1}}+\|(Id-\Delta_{-1})\mathcal{R} \tau\|_{B^0_{\infty,1}}dt'\lesssim C_0.
\end{align}

Then we deal with low-frequency case. Since low-frequency operator $\Delta_{-1}$ have a smoothing effect, we can consider the time weighted integrability of solutions with high-order regularity. From \eqref{eq0}, we deduce that
\begin{align}\label{4in8}
\frac{d}{dt} \langle\Delta_{-1}\nabla\tau,-\Delta_{-1}\nabla^2 u \rangle + \frac{1}{2}\|\Delta_{-1}\nabla^2 u\|_{L^{2}}^2 &=
-\langle\mathbb{P}\Delta_{-1}\left(\mathrm{div}~\tau-u\cdot\nabla u\right),\Delta_{-1}\Delta\mathrm{div}~\tau \rangle \\ \notag
&~~~+\langle \Delta_{-1}\nabla(u\cdot\nabla\tau+a\tau+Q-\Delta \tau),\Delta_{-1}\nabla^2 u\rangle.
\end{align}
Then we have
\begin{align*}
|\langle\mathbb{P}\Delta_{-1}\left(\mathrm{div}~\tau-u\cdot\nabla u\right),\Delta_{-1}\Delta\mathrm{div}~\tau \rangle|
&\lesssim \|\Delta_{-1}\nabla^2 \tau\|_{L^{2}}^2 
+ \|\Delta_{-1}\nabla^3 \tau\|_{L^{2}}\|\nabla u\|_{L^{2}}\|u\|_{L^{2}} \\
&\lesssim  \|\nabla^2 \tau\|_{L^{2}}^2 
+\|\nabla u\|_{L^{2}}^2\|u\|^2_{L^{2}}.
\end{align*}
and
\begin{align*}
&|\Delta_{-1}\langle \nabla(u\cdot\nabla\tau+a\tau+Q-\Delta \tau),\Delta_{-1}\nabla^2 u\rangle|\\
\lesssim & \|\Delta_{-1}\nabla^2 u\|_{L^{2}}\|\nabla \tau\|_{L^{2}} \|u\|_{L^2} 
+ a \|\Delta_{-1}\nabla^2 \tau\|_{L^2}\|\Delta_{-1}\nabla u\|_{L^{2}}\\
&+ \|\Delta_{-1}\nabla^2 u\|_{L^{2}}\| \tau\|_{L^2}\|\nabla u\|_{L^2}
+ \|\Delta_{-1}\nabla^3 \tau\|_{L^{2}}\|\Delta_{-1}\nabla^2u\|_{L^{2}}\\
\leq & \frac{1}{4}\|\Delta_{-1}\nabla^2 u\|_{L^{2}}^2+C(\|\nabla \tau\|^2_{L^{2}} \|u\|^2_{L^2} +\|\nabla^2 \tau\|^2_{L^2}+\| \tau\|^2_{L^2}\|\nabla u\|^2_{L^2}+\|\nabla^2 \tau\|_{L^2}\|\nabla u\|_{L^{2}}).
\end{align*}
Multiplying $(1+t)^3$ to \eqref{4in8}, then we infer from \eqref{4in1} that
\begin{align}\label{4in9}
&\frac{d}{dt} [(1+t)^3\langle\Delta_{-1}\nabla\tau,-\Delta_{-1}\nabla^2 u \rangle] + \frac{1}{4}(1+t)^3\|\Delta_{-1}\nabla^2 u\|_{L^{2}}^2\\ \notag
&\lesssim C_0+(1+t)^3\|\nabla^2 \tau\|^2_{L^2}+ (1+t)^2\|\nabla^2 \tau\|_{L^2}+(1+t)^2|\langle\Delta_{-1}\nabla\tau,-\Delta_{-1}\nabla^2 u \rangle|.
\end{align}
This together with \eqref{4in8} and \eqref{4ineq2} ensures that
\begin{align}\label{4in10}
\int_{0}^{t}(1+t')^3\|\Delta_{-1}\nabla^2 u\|_{L^{2}}^2dt' &\lesssim C_0(1+t)+\int_{0}^{t}(1+t')^3\|\nabla^2 \tau\|^2_{L^2}+C_0(1+t')^2\|\nabla^2 \tau\|_{L^2}dt'\\ \notag
&~~~+(1+t)^3|\langle\Delta_{-1}\nabla\tau,-\Delta_{-1}\nabla^2 u \rangle| \\ \notag
&~~~+\int_{0}^{t}(1+t')^2|\langle\Delta_{-1}\nabla\tau,-\Delta_{-1}\nabla^2 u \rangle|dt' \\ \notag
&\lesssim C_0(1+t)+C_0(1+t)\left(\int_{0}^{t}(1+t')^3\|\nabla^2 \tau\|^2_{L^2}dt'\right)^{\frac 12}\\ \notag
&\lesssim C_0(1+t)^{\frac 32}.
\end{align}
Our goal is to prove that
\begin{equation}\label{4in11}
\int_{0}^{\infty}\left\|\Delta_{-1}\nabla u\right\|_{L^{\infty}}dt\lesssim C_{0}.
\end{equation}
By Proposition \ref{4prop2}, we only need to show that
\begin{equation*}
\int_{1}^{\infty}\left\|\Delta_{-1}\nabla u\right\|_{L^{\infty}}dt\lesssim C_{0}.
\end{equation*}
Applying Bernstein's inequality and Sobolev's inequality, we infer that
\begin{align*}
\int_{1}^{\infty}\left\|\Delta_{-1}\nabla u\right\|_{L^{\infty}}dt&\lesssim \int_{1}^{\infty}\left\|\Delta_{-1}\nabla u\right\|_{L^4}dt\\
&\lesssim \int_{1}^{\infty}\left\|\nabla u\right\|_{L^{2}}^{\frac{1}{2}}\left\|\Delta_{-1}\nabla^{2}u\right\|_{L^{2}}^{\frac{1}{2}}dt.
\end{align*}
Note that there is time growth on the right side of the inequality \eqref{4in10}. We now introduce a novel inequality to control time growth. By Lebesgue's monotone convergence theorem and H\"older's inequality, we deduce from \eqref{4in1} and \eqref{4in10} that 
\begin{align}\label{4in12}
 &\int_{1}^{\infty}\left\|\nabla u\right\|_{L^{2}}^{\frac{1}{2}}\left\|\Delta_{-1}\nabla^{2}u\right\|_{L^{2}}^{\frac{1}{2}}dt \\ \notag
 =&\sum_{j=0}^{\infty}\int_{2^{j}}^{2^{j+1}}\left\|\nabla u\right\|_{L^{2}}^{\frac{1}{2}}\left\|\Delta_{-1}\nabla^{2}u\right\|_{L^{2}}^{\frac{1}{2}}dt\\ \notag
 \lesssim&C_0\sum_{j=0}^{\infty}2^{-\frac{j}{2}}\int_{2^{j}}^{2^{j+1}}\left\|\Delta_{-1}\nabla^{2}u\right\|_{L^{2}}^{\frac{1}{2}}dt\\ \notag
 \leq&C_0\sum_{j=0}^{\infty}2^{-\frac{j}{2}}\left(\int_{2^{j}}^{2^{j+1}}(1+t)^{3}\left\|\Delta_{-1}\nabla^{2}u\right\|_{L^{2}}^{2}dt\right)^{\frac{1}{4}}\left(\int_{2^{j}}^{2^{j+1}}(1+t)^{-1}dt\right)^{\frac{3}{4}}\\ \notag
 \lesssim&C_0\sum_{j=0}^{\infty}2^{-\frac{j}{8}}\lesssim C_0.
 \end{align}
This implies that \eqref{4in11} is valid. We thus finish the proof of Proposition \ref{4prop4}.
  \end{proof}
By virtue of the optimal decay rates and key integrability in the Proposition \ref{4prop4}, for $d=2$, we finally obtain the uniform vanishing damping limit in the following proposition. In addition, we discover a new phenomenon for \eqref{eq0} that the rate of uniform vanishing damping limit in $L^2$ is related to the time decay rate in $L^2$.
  \begin{prop}\label{4prop5}
  Under the conditions in Proposition \ref{4prop4}, there holds 
  $$\|(u^a,\tau^a)-(u^0,\tau^0)\|_{L^{\infty}(0,\infty;L^2)}\leq C_0a^{\frac 1 2},
  $$ 
  and 
   $$\lim_{a\rightarrow0}\|(u^a,\tau^a)-(u^0,\tau^0)\|_{L^{\infty}(0,\infty;\dot{H}^1\cap B^1_{\infty,1})\times L^{\infty}(0,\infty;\dot{H}^1\cap B^0_{\infty,1})}=0.
  $$
	\end{prop}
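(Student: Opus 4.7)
The plan is to separate the claim into the sharp uniform $L^{2}$ rate and the vanishing of the higher-order difference. Both parts lean heavily on Proposition~\ref{4prop4}, which supplies the time decay $\|(u^{a},\tau^{a})\|_{L^{2}} \lesssim (1+t)^{-1/2}$ and $\|\nabla(u^{a},\tau^{a})\|_{L^{2}} \lesssim (1+t)^{-1}$, and crucially the key integrability $\int_{0}^{\infty}\|\nabla u^{a}\|_{B^{0}_{\infty,1}}\,dt \leq C_{0}$ uniformly in $a\in[0,1]$.

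\textbf{Step 1 (sharp $L^{2}$ rate).} I revisit the $L^{2}$ energy identity for the difference system \eqref{4lo1}, writing $\phi(t)=\|(u^{a}-u^{0},\tau^{a}-\tau^{0})(t)\|_{L^{2}}$. As in Step~1 of the proof of Proposition~\ref{4prop3}, every term except the damping forcing $-\langle a\tau^{a},\tau^{a}-\tau^{0}\rangle$ can be absorbed as $A(t)\phi^{2}(t)+\tfrac{1}{2}\|\nabla(\tau^{a}-\tau^{0})\|_{L^{2}}^{2}$, with $A(t)$ collecting $\|\nabla u^{a}\|_{L^{\infty}}$, $\|\nabla u^{0}\|_{L^{\infty}}$, $\|\nabla\tau^{0}\|_{L^{4}}^{2}$ and $\|\tau\|_{L^{\infty}}^{2}$. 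The key new input is $\int_{0}^{\infty}A(t)\,dt\leq C_{0}$ uniformly in $a$, which follows from the integrability of $\|\nabla u^{a}\|_{B^{0}_{\infty,1}}$ together with Gagliardo--Nirenberg interpolations and the decay estimates of Proposition~\ref{4prop4}. Gronwall's inequality then yields
\begin{equation*}
\phi(t)\leq C\int_{0}^{t}a\|\tau^{a}(s)\|_{L^{2}}\,ds\leq C_{0}\,a\,(1+t)^{1/2}.
\end{equation*}
On $t\leq a^{-1}$ this gives $\phi(t)\leq C_{0}a^{1/2}$, while on $t\geq a^{-1}$ the triangle inequality combined with $\|(u^{a},\tau^{a})\|_{L^{2}}+\|(u^{0},\tau^{0})\|_{L^{2}}\lesssim (1+t)^{-1/2}\leq a^{1/2}$ gives the same bound. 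Merging the two regimes produces \eqref{sharptwod}.

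\textbf{Step 2 (uniform threshold time $T^{*}$).} For $\delta>0$ the goal is to find $T^{*}=T^{*}(\delta)$, independent of $a\in[0,1]$, such that $\|(u^{a},\tau^{a})(t)\|_{(H^{1}\cap B^{1}_{\infty,1})\times(H^{1}\cap B^{0}_{\infty,1})}\leq \delta$ whenever $t\geq T^{*}$. The $H^{1}$-portion is immediate from $\|(u^{a},\tau^{a})\|_{H^{1}}\lesssim (1+t)^{-1/2}$. For the Besov portion I argue by mean value: the key integrability $\int_{0}^{\infty}\|\nabla u^{a}\|_{B^{0}_{\infty,1}}\,dt\leq C_{0}$ together with the weighted bound $\int_{0}^{\infty}(1+t)^{2-\sigma}\|\nabla^{2}\tau^{a}\|_{L^{2}}^{2}\,dt\leq C_{0}/\sigma$ extracted inside the proof of Proposition~\ref{4prop4} produce, on any interval $[T,2T]$, a time $t_{a}$ at which both $\|\nabla u^{a}(t_{a})\|_{B^{0}_{\infty,1}}$ and $\|\nabla^{2}\tau^{a}(t_{a})\|_{L^{2}}$ are arbitrarily small once $T$ is large. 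The Bernstein bounds
\begin{equation*}
\|\Delta_{-1}\tau^{a}\|_{L^{\infty}}\lesssim \|\tau^{a}\|_{L^{2}},\qquad \|(\mathrm{Id}-\Delta_{-1})\tau^{a}\|_{B^{0}_{\infty,1}}\lesssim \|\nabla^{2}\tau^{a}\|_{L^{2}}
\end{equation*}
combined with the $H^{1}$ decay then make $\|(u^{a},\tau^{a})(t_{a})\|_{(H^{1}\cap B^{1}_{\infty,1})\times(H^{1}\cap B^{0}_{\infty,1})}$ as small as desired, uniformly in $a$. Restarting Proposition~\ref{4prop2} from $t_{a}$ (whose bootstrap is insensitive to the time origin) propagates this smallness to all $t\geq t_{a}$, which yields the advertised $T^{*}$.

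\textbf{Step 3 (conclusion).} With $T^{*}$ fixed, Proposition~\ref{4prop3} supplies $a_{0}=a_{0}(\delta,T^{*})>0$ such that for $a\leq a_{0}$,
\begin{equation*}
\|(u^{a}-u^{0},\tau^{a}-\tau^{0})\|_{L^{\infty}(0,T^{*};\dot{H}^{1}\cap B^{1}_{\infty,1})\times L^{\infty}(0,T^{*};\dot{H}^{1}\cap B^{0}_{\infty,1})}\leq \delta.
\end{equation*}
On $[T^{*},\infty)$ the triangle inequality combined with Step~2 gives the same bound, so the full $L^{\infty}(0,\infty;\cdot)$-norm is $\lesssim \delta$; sending $\delta\to 0$ completes the proof. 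The main obstacle is Step~2, specifically producing a time $t_{a}$ at which $\|\tau^{a}(t_{a})\|_{B^{0}_{\infty,1}}$ is small: at critical regularity in 2D one has no direct time-integral control of $\|\tau^{a}\|_{B^{0}_{\infty,1}}$, so the argument must be routed through the weighted $L^{2}_{t}H^{2}_{x}$ estimate for $\tau^{a}$ and a high--low frequency split.
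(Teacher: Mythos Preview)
Your proposal is correct and follows essentially the same route as the paper. Step~1 (Gronwall with $\int_0^\infty A(t)\,dt<\infty$ furnished by the key integrability, then splitting $t\le a^{-1}$ versus $t\ge a^{-1}$) and Step~3 (Proposition~\ref{4prop3} on $[0,T^*]$ plus the triangle inequality on $[T^*,\infty)$) match the paper verbatim. The only cosmetic difference is in Step~2: the paper applies the mean-value theorem to the single integrable quantity $\|\nabla u^a\|_{B^0_{\infty,1}}+\|\tau^a\|_{B^0_{\infty,1}}^2$ (the square of the $B^0_{\infty,1}$ norm of $\tau^a$ being time-integrable in 2D via the same high--low split you outline), whereas you route the $\tau^a$-smallness through $\|\nabla^2\tau^a\|_{L^2}$ first and reassemble afterwards; the two arguments are equivalent.
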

 \begin{proof}   Let $(u^a,\tau^a)$ be a solution of $\eqref{eq0}$ with the initial data $(u_0,\tau_0)$ and $a\in[0,1]$. Then we get 
\begin{align}\label{4gl1}
\left\{\begin{array}{l}
\partial_t(u^a-u^0)+u^a\cdot\nabla (u^a-u^0)+\nabla (P_{u^a}-P_{u^0}) \\[1ex]
={\rm div}~(\tau^a-\tau^0)-(u^a-u^0)\cdot\nabla u^0,~~~~{\rm div}~(u^a-u^0)=0,\\[1ex]
\partial_t(\tau^a-\tau^0)+u^a\cdot\nabla(\tau^a-\tau^0)-\Delta(\tau^a-\tau^0)+Q(\nabla u^a,\tau^a-\tau^0)\\[1ex]
=-a\tau^a+D(u^a-u^0)-(u^a-u^0)\cdot\nabla\tau^0-Q(\nabla (u^a-u^0),\tau^0),\\[1ex]
(u^a-u^0)|_{t=0}=0,~~(\tau^a-\tau^0)|_{t=0}=0. \\[1ex]
\end{array}\right.
\end{align}
\textbf{Step 1: $L^{2}$ convergence.} \\
Testing the first equation of \eqref{4gl1} by $(u^{a}-u^{0})$ and the second equation of \eqref{4gl1} by $(\tau^{a}-\tau^{0})$, we obtain the following energy identity:
\begin{align}\label{4gl2}
&\frac{1}{2}\frac{d}{dt}\left\|(u^{a}-u^{0}, \tau^{a}-\tau^{0})\right\|_{L^{2}}^{2}+\|\nabla (\tau^{a}-\tau^{0})\|_{L^{2}}^{2}= -\langle (u^{a}-u^{0})\cdot\nabla u^{0}, u^{a}-u^{0}\rangle\\ \notag
&\quad -\langle a\tau^{a}+Q(\nabla u^{a},\tau^{a}-\tau^{0})+Q(\nabla(u^{a}-u^{0}),\tau^{0})+(u^{a}-u^{0})\cdot\nabla \tau^{0}, \tau^{a}-\tau^{0}\rangle.\notag
\end{align}
Define
\begin{align*}
\phi(t):=\left\|(u^{a}-u^{0},\tau^{a}-\tau^{0})\right\|_{L^{2}}.
\end{align*}
We have the following inequalities:
\begin{align}\label{4gl3}
\left |\langle (u^{a}-u^{0})\cdot\nabla u^{0}, (u^{a}-u^{0})\rangle\right|\leq \|\nabla u^{0}\|_{L^{\infty}}\phi^{2}(t),
\end{align}
and
\begin{align}\label{4gl4}
\left| \int_{\mathbb{R}^{2}} a\tau^{a}:(\tau^{a}-\tau^{0})dx\right|\leq a\|\tau^{a}\|_{L^{2}}\phi(t)\lesssim aC_0(1+t)^{-\frac{1}{2}}\phi(t).
\end{align}
One can deduce that
\begin{align}\label{4gl5}
\left|\langle Q(\nabla u^{a},\tau^{a}-\tau^{0}), \tau^{a}-\tau^{0}\rangle\right|\leq \|\nabla u^{a}\|_{L^{\infty}}\phi^{2}(t),
\end{align}
and
\begin{align}\label{4gl6}
&\left|\langle Q(\nabla(u^{a}-u^{0}),\tau^{0}), \tau^{a}-\tau^{0}\rangle\right|\\\notag
\leq &\|u^{a}-u^{0}\|_{L^{2}}\|\nabla \tau^{0}\|_{L^{4}}\|\tau^{a}-\tau^{0}\|_{L^{4}}+\|u^{a}-u^{0}\|_{L^{2}}\|\|\tau^{0}\|_{L^{\infty}}\|\nabla(\tau^{a}-\tau^{0})\|_{L^{2}}\\\notag
\leq &C\|u^{a}-u^{0}\|_{L^{2}}\|\nabla \tau^{0}\|_{L^{4}}\left(\|\tau^{a}-\tau^{0}\|_{L^{2}}+\|\nabla(\tau^{a}-\tau^{0})\|_{L^{2}}\right)\\\notag
&+C\|u^{a}-u^{0}\|_{L^{2}}^{2}\|\tau^{0}\|_{L^{\infty}}^{2}+\frac{1}{8}\|\nabla(\tau^{a}-\tau^{0})\|_{L^{2}}^{2}\\\notag
\leq&C\left(\|\nabla\tau^{0}\|_{L^{4}}+\|\nabla\tau^{0}\|_{L^{4}}^{2}+\|\tau^{0}\|_{L^{\infty}}^{2}\right)\phi^{2}(t)+\frac{1}{4}\|\nabla(\tau^{a}-\tau^{0})\|_{L^{2}}^{2}.
\end{align}
Moreover, we obtain
\begin{align}\label{4gl7}
\left|\langle (u^{a}-u^{0})\cdot\nabla \tau^{0}, \tau^{a}-\tau^{0}\rangle \right|&\leq \|u^{a}-u^{0}\|_{L^{2}}\|\nabla \tau^{0}\|_{L^{4}}\|\tau^{a}-\tau^{0}\|_{L^{4}}\\\notag
&\lesssim  \|u^{a}-u^{0}\|_{L^{2}}\|\nabla \tau^{0}\|_{L^{4}}\|\tau^{a}-\tau^{0}\|_{H^{1}}\\\notag
&\leq C\left(\|\nabla \tau^{0}\|_{L^{4}}+\|\nabla \tau^{0}\|_{L^{4}}^{2}\right)\phi^{2}(t)+\frac{1}{4}\|\nabla(\tau^{a}-\tau^{0})\|_{L^{2}}^{2}.\notag
\end{align}
Then, we deduce that
\begin{align}\label{4gl8}
\phi'(t)\lesssim \psi(t)\phi(t)+aC_0(1+t)^{-\frac{1}{2}},
\end{align}
where we define $\psi(t)$ as follows:
\begin{align*}
\psi(t)=\|\nabla u^{0}\|_{L^{\infty}}+\|\nabla u^{a}\|_{L^{\infty}}+\|\nabla\tau^{0}\|_{L^{4}}+\|\nabla\tau^{0}\|_{L^{4}}^{2}+\|\tau^{0}\|_{L^{\infty}}^{2}.
\end{align*}
Applying \eqref{4in3}, Proposition \ref{4prop4} and the following interpolation inequalities for $d=2$:
$$\|\nabla \tau^{0}\|_{L^{4}(\mathbb{R}^{2})}^{2}\lesssim \|\nabla \tau^{0}\|_{L^{2}(\mathbb{R}^{2})}\|\nabla^{2}\tau^{0}\|_{L^{2}(\mathbb{R}^{2})},\quad \|\tau^{0}\|_{L^{\infty}(\mathbb{R}^{2})}^{2}\lesssim \|\tau^{0}\|_{L^{2}(\mathbb{R}^{2})}\|\nabla^{2} \tau^{0}\|_{L^{2}(\mathbb{R}^{2})},$$
we have
\begin{align*}
\int_{0}^{\infty} \left(\|\nabla u^{0}\|_{L^{\infty}}+\|\nabla u^{a}\|_{L^{\infty}}+\|\nabla\tau^{0}\|_{L^{4}}+\|\nabla\tau^{0}\|_{L^{4}}^{2}+\|\tau^{0}\|_{L^{\infty}}^{2}\right)dt\lesssim C_0.
\end{align*}
Applying Gronwall's inequality, we infer that
\begin{align}
\phi(t)\lesssim aC_0(1+t)^{\frac{1}{2}}.
\end{align}
If $at\leq 1$, then we get
\begin{align*}
\phi(t)\leq C_0 a^{\frac{1}{2}}.
\end{align*}
If $at\geq 1$, applying Proposition \ref{4prop4}, we also deduce that
\begin{align}
\phi(t)\lesssim C_0(1+t)^{-\frac{1}{2}}\leq C_0a^{\frac{1}{2}}.
\end{align}
\textbf{Step 2: High-order convergence.} \\
We need to show that: For any $0<\delta<1$, there exists a positive number $a_{0}$ such that
\begin{align*}
\|(u^a,\tau^a)-(u^0,\tau^0)\|_{L^{\infty}(0,\infty;\dot{H}^1\cap B^1_{\infty,1})\times L^{\infty}(0,\infty;\dot{H}^1\cap B^0_{\infty,1})}\lesssim \delta,
\end{align*}
whenever $a\leq a_{0}$. 

Firstly, we claim that for above $\delta>0$, by Proposition \ref{4prop4}, there exists $T^{*}\in (0,\infty)$ independent with $a\in[0,1]$, such that
\begin{align}\label{4gl9}
\|(u^a,\tau^a)(t)\|_{H^1\cap B^1_{\infty,1}\times H^1\cap B^0_{\infty,1}}\lesssim \delta, \quad\text{for all}\ a\in[0,1], ~t\geq T^{*}.
\end{align}
Indeed, by \eqref{4in1}, there exists $T>0$, which is independent with $a\in[0,1]$, such that
\begin{align*}
\|(u^a,\tau^a)(t)\|_{H^{1}\times H^{1}}\leq \frac{\delta}{2},\quad \text{for all}\ a\in[0,1],t\geq T.
\end{align*}
By Proposition \ref{4prop4} and \eqref{4in3}, there exists $M>0$ such that
\begin{equation*}
\int_{0}^{\infty}\left(\|\nabla u^{a}\|_{B^{0}_{\infty,1}}+\|\tau^{a}\|_{B^{0}_{\infty,1}}^{2}\right)dt\leq M.
\end{equation*}
By mean-value theorem, for any $a>0$, there exists $T_{a}\in [T,T+4M\delta^{-2}]$ such that
\begin{equation*}
\|\nabla u^{a}(T_{a})\|_{B^{0}_{\infty,1}}+\|\tau^{a}(T_{a})\|_{B^{0}_{\infty,1}}^{2}\leq \frac{\delta^{2}}{4}.
\end{equation*}
Then we have
\begin{equation*}
\|(u^a,\tau^a)(T_{a})\|_{H^1\cap B^1_{\infty,1}\times H^1\cap B^0_{\infty,1}}\leq \delta.
\end{equation*}
Applying the result of global well-posedness in Proposition \ref{4prop2},
\begin{equation*}
\|(u^a,\tau^a)(t)\|_{H^1\cap B^1_{\infty,1}\times H^1\cap B^0_{\infty,1}}\lesssim \delta,\quad \text{for all}\ t\geq T_{a}.
\end{equation*}
Taking $T^{*}=T+4M\delta^{-2}$, we finish the proof of the claim \eqref{4gl9}.

By Proposition \ref{4prop3}, there exists $a_{0}>0$ such that
\begin{align*}
\|(u^a,\tau^a)-(u^0,\tau^0)\|_{L^{\infty}(0,T^{*};\dot{H}^1\cap B^1_{\infty,1})\times L^{\infty}(0,T^{*};\dot{H}^1\cap B^0_{\infty,1})}\leq \delta,
\end{align*}
whenever $a\leq a_{0}$. 
This together with the claim \eqref{4gl9} and triangle inequality ensures that
\begin{align*}
\|(u^a,\tau^a)-(u^0,\tau^0)\|_{L^{\infty}(0,\infty;\dot{H}^1\cap B^1_{\infty,1})\times L^{\infty}(0,\infty;\dot{H}^1\cap B^0_{\infty,1})}\lesssim \delta.
\end{align*}
We finish the proof of Proposition \ref{4prop5}.
\end{proof}

In the following we show that the vanishing damping rate in Proposition \ref{4prop5} is sharp in the following sense. 
\begin{lemm}\label{2dsharp}
We can actually find initial data $(u_0, \tau_0)$ satisfies the conditions of Proposition \ref{4prop5} such that the following estimates hold:
$$\|(u^a,\tau^a)-(u^0,\tau^0)\|_{L^{\infty}(0,\infty;L^2)}\gtrsim a^{\frac 1 2}.
  $$ 
\end{lemm}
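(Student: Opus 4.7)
The plan is to follow the heuristic outlined by the authors: rather than working with the full system \eqref{eq0}, I will reduce the problem to a scalar damped heat equation by looking at $\tilde{\tau}^a := \mathrm{trace}~\tau^a$. Since $\mathrm{trace}~D(u^a) = \mathrm{div}~u^a = 0$, taking the trace of the $\tau^a$-equation kills the linear forcing from the velocity and produces
\begin{equation*}
\partial_t \tilde{\tau}^a - \Delta \tilde{\tau}^a + a\tilde{\tau}^a = \mathcal{Q}(u^a,\tau^a),
\end{equation*}
where $\mathcal{Q}$ collects the purely quadratic contributions $Q_1(u^a,\nabla \tau^a) + Q_2(\nabla u^a,\tau^a)$ (plus the transport term $u^a\cdot\nabla\tilde\tau^a$, which I will group into $\mathcal{Q}$ as well after writing it in divergence form). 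By Duhamel, $\tilde{\tau}^a - \tilde{\tau}^0 = (e^{(\Delta - a)t} - e^{\Delta t})\tilde{\tau}_0 + \mathcal{N}^a(t)$, where $\mathcal{N}^a(t)$ is the integrated nonlinear remainder.

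Next I would choose the initial data carefully. Take $u_0 \equiv 0$ (or any small divergence-free field) and choose a symmetric $\tau_0$ satisfying the assumptions of Proposition \ref{4prop5}, such that $\widehat{\tilde{\tau}_0}(\xi)$ is a fixed smooth nonnegative bump supported in $\{|\xi|\le 1\}$ with $\widehat{\tilde{\tau}_0}(0) = 1$; verifying $\tau_0 \in H^1 \cap B^1_{\infty,1} \cap \dot{B}^{-1}_{2,\infty}$ and smallness is then immediate. For the linear piece, Plancherel gives
\begin{equation*}
\|(e^{(\Delta-a)t} - e^{\Delta t})\tilde{\tau}_0\|_{L^2}^2 \sim \int_{|\xi|\le 1} (1 - e^{-at})^2 e^{-2t|\xi|^2}\,d\xi,
\end{equation*}
and evaluating at the critical time $t \sim a^{-1}$ yields a lower bound $\gtrsim a^{1/2}$, because $1 - e^{-at} \sim 1$ while the heat factor integrated over $|\xi| \lesssim t^{-1/2} \sim a^{1/2}$ contributes the area $a$.

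The main obstacle is showing that the nonlinear remainder $\mathcal{N}^a(t)$ is of strictly lower order than $a^{1/2}$, uniformly on the relevant time window $t \le C a^{-1}$. For this I will exploit the decay estimates from Proposition \ref{4prop4}: $\|(u^a,\tau^a)\|_{L^2} \lesssim (1+t)^{-1/2}$ and $\|\nabla(u^a,\tau^a)\|_{L^2}\lesssim (1+t)^{-1}$, together with the integrability $\int_0^\infty \|\nabla u^a\|_{B^0_{\infty,1}}\,dt \lesssim 1$. After rewriting the transport term as $\mathrm{div}(u^a\tilde\tau^a)$, a standard heat-kernel (or Fourier-splitting) estimate will give $\|\mathcal{N}^a(t)\|_{L^2}\lesssim \int_0^t (1+t-s)^{-1/2}(1+s)^{-3/2}\,ds \lesssim (1+t)^{-1/2}$, so that $\|\mathcal{N}^a - \mathcal{N}^0\|_{L^2}\lesssim a^{3/4}$ by interpolating this bound with the qualitative vanishing-damping convergence already established in Proposition \ref{4prop5}. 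Combining the linear lower bound with this nonlinear upper bound, and applying the triangle inequality,
\begin{equation*}
\|\tilde{\tau}^a - \tilde{\tau}^0\|_{L^\infty_t L^2_x} \ge c\, a^{1/2} - C a^{3/4} \gtrsim a^{1/2}
\end{equation*}
for all sufficiently small $a$, which, since $\|\tilde{\tau}^a - \tilde{\tau}^0\|_{L^2} \le C\|\tau^a - \tau^0\|_{L^2}$, proves the claimed sharpness.
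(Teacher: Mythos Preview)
Your overall strategy---reduce to the scalar damped heat equation for $\tilde\tau^a=\operatorname{tr}\tau^a$, isolate the linear piece $(1-e^{-at})e^{\Delta t}\tilde\tau_0$ by Duhamel, and obtain the lower bound $\gtrsim a^{1/2}$ at $t\sim a^{-1}$---is exactly the paper's approach, and the linear computation is correct.

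The gap is in the nonlinear step. Your claim that ``interpolating'' the bound $\|\mathcal N^a(t)\|_{L^2}\lesssim(1+t)^{-1/2}$ with the vanishing-damping result of Proposition~\ref{4prop5} yields $\|\mathcal N^a-\mathcal N^0\|_{L^2}\lesssim a^{3/4}$ is not justified: Proposition~\ref{4prop5} only gives $\|\tau^a-\tau^0\|_{L^2}\lesssim a^{1/2}$ (and merely qualitative convergence in $\dot H^1$), so both available bounds on $\mathcal N^a-\mathcal N^0$ are of size $a^{1/2}$ at the critical time $t\sim a^{-1}$, and no interpolation improves the exponent in $a$. In particular, the cross term $u^0\cdot\nabla(\tau^a-\tau^0)$ in $\mathcal Q^0-\mathcal Q^a$ requires a quantitative rate for $\|\nabla(\tau^a-\tau^0)\|_{L^2}$, which is not provided by the results you cite.

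The mechanism that actually closes the argument is the \emph{smallness parameter}, not a better power of $a$. Since the initial data must satisfy $\|(u_0,\tau_0)\|\le\ep$, the quadratic forcing $\mathcal Q$ carries an $\ep^2$, so $\|\mathcal N^a(t)\|_{L^2}\lesssim\ep^2(1+t)^{-1/2}$ while the linear part is $\gtrsim\ep a^{1/2}$; at $t\sim a^{-1}$ the nonlinear remainder is then $O(\ep^2 a^{1/2})$, and for $\ep$ small one concludes $\|\tilde\tau^a-\tilde\tau^0\|_{L^2}\ge c\ep a^{1/2}-C\ep^2 a^{1/2}\gtrsim\ep a^{1/2}$. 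The paper implements precisely this, choosing $u_0=\ep\varphi$, $\tau_0=\ep\psi$ with $\varphi,\psi\in C_0^\infty$; to bound the cross-difference term it additionally invokes Corollary~\ref{4cor1} (available because of the smoothness of the data) to obtain the uniform-in-time rate $\|\nabla(u^a-u^0,\tau^a-\tau^0)\|_{L^2}\lesssim\ep a^{3/4}$, and then interpolates this with the decay of $\|\nabla\tau^a\|_{L^2}$. Either route works, but you must track $\ep$ explicitly; the exponent $3/4$ in $a$ does not come for free.
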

\begin{proof}
By taking  $\tilde{\tau}^a := \text{tr}~\tau^a$, we infer from \eqref{eq0} that
\begin{align*}
    \partial_t \tilde{\tau}^a - \Delta \tilde{\tau}^a + a \tilde{\tau}^a = Q_1(u^a, \nabla \tau^a) + Q_2(\nabla u^a, \tau^a),
\end{align*}
where $Q_1=-\text{tr}~(u^a\cdot\nabla\tau^a)$ and $Q_2=-\text{tr}~Q(\nabla u^a,\tau^a)$ are quadratic terms and we have used the fact that $\text{tr}~Du = 0$. By using Duhamel's principle, we have
\begin{align*}
\tilde{\tau}^a (t) = e^{-at}e^{\Delta t} \tilde{\tau}_0 + \int_0^t e^{-a(t-s)}e^{\Delta (t-s)}[Q_1(u^a, \nabla \tau^a) + Q_2(\nabla u^a, \tau^a)] ds.
\end{align*}
Hence
\begin{align*}
\tilde{\tau}^0 (t) - \tilde{\tau}^a (t) = &(1-e^{-at})e^{\Delta t} \tilde{\tau}_0 + \int_0^t (1-e^{-a(t-s)})e^{\Delta (t-s)}[Q_1(u^a, \nabla \tau^a) + Q_2(\nabla u^a, \tau^a)] ds\\
&+ 
\int_0^t e^{\Delta (t-s)}[Q_1(u^0, \nabla \tau^0)-Q_1(u^a, \nabla \tau^a) + Q_2(\nabla u^0, \tau^0)-Q_2(\nabla u^a, \tau^a)] ds.
\end{align*}
We choose some $\ep$ to be sufficiently small, and choose $u_0 = \ep \varphi, \tau_0 = \ep \psi$ with $ \varphi, \psi \in C^\infty_0$. In addition, we assume that $\tilde{\tau}_0 = \ep \phi$ such that $\hat{\phi}(\xi) = 1$ for $|\xi|\le 1$. We have that for $t\ge 1$,
\begin{align*}
 \|(1-e^{-at})e^{\Delta t} \tilde{\tau}_0\|_{L^2}& = \|(1-e^{-at})e^{-|\xi|^2 t} \epsilon\hat{\phi}(\xi)\|_{L^2}\\
 &\gtrsim \ep (1-e^{-at}) \|(1-e^{-at})e^{-|\xi|^2 t} \|_{L^2(|\xi|\le 1)}\\
 &\gtrsim \ep (1-e^{-at})t^{-\frac{1}{2}}.
\end{align*}
Hence
\begin{align*}
\|(1-e^{-at})e^{\Delta t} \tilde{\tau}_0\|_{L^\infty(0, \infty; L^2)} \gtrsim \|\ep (1-e^{-at})t^{-\frac{1}{2}}\|_{L^\infty_t} \gtrsim \ep a^{\frac{1}{2}}.
\end{align*}
Moreover, under our choice of $(u_0, \tau_0)$,  we have that
\begin{align}\label{one}
    &\|(u, \tau)\|_{L^2}+(1+t)^{\frac{1}{2}}\|(\nabla u, \nabla\tau)\|_{L^2}\lesssim \ep (1+t)^{-\frac{1}{2}},
\end{align}
and 
\begin{align}\label{two}
    \|(u^a - u^0, \tau^a - \tau^0)\|_{L^2}\lesssim \ep a^{\frac{1}{2}}.
\end{align}
According to the smoothness of the solution, we similarly obtain
\begin{align}\label{three}
    \|(\nabla u^a - \nabla u^0, \nabla \tau^a - \nabla \tau^0)\|_{L^2}\lesssim \ep a^{\frac{3}{4}}.
\end{align}

Here \eqref{one} is the consequence of Proposition \ref{4prop4} and \eqref{two} is the result of Proposition \ref{4prop5}. To derive \eqref{three}, we only need to apply Corollary \ref{4cor1} and mimic the proof of the first step of Proposition \ref{4prop5}. Hence, we have
\begin{align*}
    \left\|\int_0^t (1-e^{-a(t-s)})e^{\Delta (t-s)}[Q_1 + Q_2] ds\right\|_{L^2} &\lesssim \int_0^t \min\{1, a(t-s)\}(t-s)^{-\frac{1}{2}}\|Q_1 + Q_2\|_{L^1} ds\\
    &\lesssim \int_0^t a^{\frac{1}{2}} \ep^2 (1+s)^{-\frac{3}{2}} ds\\
    &\lesssim \ep^2 a^{\frac{1}{2}},
\end{align*}
and
\begin{align*}
  &\left\|\int_0^t e^{\Delta (t-s)}[Q_1(u^0, \nabla \tau^0)-Q_1(u^a, \nabla \tau^a)] ds\right\|_{L^2} \\ \lesssim&  \int_0^t (t-s)^{-\frac{1}{2}} (\|u^0 - u^a\|_{L^2} \|\nabla \tau^0 \|_{L^2}+ \|u^a\|_{L^2} \|\nabla \tau^0 - \nabla \tau^a\|_{L^2}) ds\\
  \lesssim&  \int_0^t (t-s)^{-\frac{1}{2}} (\epsilon^2 a^{\frac{1}{2}} (1+s)^{-1} + \epsilon^2 a^{\frac{1}{2}} (1+s)^{-\frac{5}{6}}) ds\\
  \lesssim& \ep^2 a^{\frac{1}{2}},
\end{align*}
and
\begin{align*}
 &\left\|\int_0^t e^{\Delta (t-s)}[Q_2(\nabla u^0, \tau^0)-Q_2(\nabla u^a, \tau^a)] ds\right\|_{L^2} \\ \lesssim&  \int_0^t (t-s)^{-\frac{1}{2}} (\|\nabla u^0 - \nabla u^a\|_{L^2} \|\tau^0 \|_{L^2}+ \|\nabla u^a\|_{L^2} \|\tau^0 -\tau^a\|_{L^2}) ds\\
 \lesssim&  \int_0^t (t-s)^{-\frac{1}{2}} (\epsilon^2 a^{\frac{1}{2}} (1+s)^{-\frac{5}{6}} + \epsilon^2 a^{\frac{1}{2}} (1+s)^{-1}) ds\\
  \lesssim& \ep^2 a^{\frac{1}{2}}.
\end{align*}
Hence we conclude that 
\begin{align*}
\| \tilde{\tau}^0 - \tilde{\tau}^a\|_{L^\infty(0, \infty; L^2)} \gtrsim \ep a^{\frac{1}{2}},
\end{align*}
which implies
$$\|(u^a,\tau^a)-(u^0,\tau^0)\|_{L^{\infty}(0,\infty;L^2)}\gtrsim \ep a^{\frac 1 2}.
  $$ 
  We complete the proof of Lemma \ref{2dsharp}.
\end{proof}
{\bf Proof of Theorem \ref{theo2} :}  \\
Combining Propositions \ref{4prop4}, \ref{4prop5}, and Lemma \ref{2dsharp}, we finish the proof of Theorem \ref{theo2}. 
\hfill$\Box$

\newpage 

\section{Global existence and uniform vanishing damping limit in $\R^3$}
In this section, we first consider global existence for the inviscid Oldroyd-B model \eqref{eq0}. Since 
$$\int_{\mathbb{R}^{3}}(u\cdot\nabla)u\cdot \Delta u dx\neq 0,$$
for $d=3$, we fail to close the global estimate for \eqref{eq0} in $H^1$. Considering the equation of vorticity $\omega$ for $d=3$, one can see that there is an external high-order nonlinear term $\omega\cdot\nabla u$. By virtue of transportation structure of $\omega\cdot\nabla u$, for any $a\in[0,1]$, we fortunately close the global estimate for vorticity $\omega$ through smallness condition in the critical Besov space $B^0_{\infty,1}$, which is not an algebra. The results of global existence play a crucial role in considering vanishing damping limit in the same topology for any $T>0$. 

Then, by virtue of the improved Fourier splitting method and time weighted estimates, for any $a\in[0,1]$, we obtain optimal time decay rates and the time integrability of global solutions for the inviscid Oldroyd-B model \eqref{eq0}. Finally, we obtain the uniform vanishing damping limit and discover a new phenomenon for \eqref{eq0} that the sharp rate of uniform vanishing damping limit in $L^2$ is related to the time decay rate in $L^2$.
\subsection{Global existence and vanishing damping limit}
Firstly, we prove the global strong solution for $a\in [0,1]$ only assuming smallness in the critical Besov space. Similar to the 2D case, our global well-posedness
results do not rely on the damping and therefore extend the work in the existing literature.
\begin{prop}\label{5prop1}
		Let $d=3$. Assume a divergence-free field $u_0\in H^1\cap B^1_{\infty,1}$ and a symmetric matrix $\tau_0\in H^1\cap B^0_{\infty,1}$. There exists some positive constant $\ep$ small enough such that if
	    \begin{align}
		\|(u_0,\tau_0)\|_{H^1}+\|(\nabla u_0,\tau_0)\|_{B^0_{\infty,1}} \leq \ep,
		\end{align}
		then \eqref{eq0} admits a global solution $(u,\tau)$ with
		$$
		(u,\tau) \in L^{\infty}(0,\infty;H^1\cap B^1_{\infty,1})\times L^{\infty}(0,\infty;H^1\cap B^0_{\infty,1}).
		$$
  \end{prop}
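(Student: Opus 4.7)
The strategy is to run a continuity/bootstrap argument on
$$
X(t):=\|(u,\tau)(t)\|_{(H^1\cap B^1_{\infty,1})\times (H^1\cap B^0_{\infty,1})},
$$
assuming $X(t)\leq 2C_\star\ep$ on a maximal interval $[0,T]$ and improving this to $X(t)\leq \frac{3}{2}C_\star\ep$. The skeleton mirrors Proposition \ref{4prop2}: a basic $L^2$ energy identity combined with the anti-symmetric inner-product trick $\frac{d}{dt}\langle -\eta\tau,\nabla u\rangle$ produces dissipation of $\nabla u$ uniformly in $a\in[0,1]$; a Duhamel representation plus Lemma \ref{H} controls $\|\tau\|_{B^0_{\infty,1}}$; and a damped transport estimate via Lemma \ref{TP} for the structural unknown $\Gamma=\omega-\mathcal{R}\tau$ controls $\|\nabla u\|_{B^0_{\infty,1}}$. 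I will focus only on the two places where $d=3$ genuinely changes the analysis.

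The first novelty appears in the $\dot H^1$ estimate. Unlike the 2D case, the cubic term $\langle(u\cdot\nabla)u,\Delta u\rangle$ no longer vanishes, and one only has
$$
|\langle(u\cdot\nabla)u,\Delta u\rangle|\ls \|\nabla u\|_{L^\infty}\|\nabla u\|_{L^2}^2.
$$
This forces us to abandon a pure $H^1$ argument and feed the $B^0_{\infty,1}$ bootstrap directly into the energy estimate: since $\|\nabla u\|_{L^\infty}\ls \|\nabla u\|_{B^0_{\infty,1}}\leq 2C_\star\ep$, the offending term is absorbed by the $\nabla u$ dissipation, and the remaining $H^1$ nonlinearities are treated as in Proposition \ref{4prop1} (using the 3D Sobolev inequalities $\|\cdot\|_{L^6}\ls \|\nabla\cdot\|_{L^2}$, $\|\cdot\|_{L^3}\ls \|\cdot\|_{H^1}$ in place of the 2D interpolations). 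The $\|\tau\|_{B^0_{\infty,1}}$ bound then follows verbatim from Duhamel's formula after applying $\Delta_j$: Lemma \ref{H} gives the heat-kernel factor $e^{-c2^{2j}(t-s)}$ (the damping $e^{-a(t-s)}$ is harmless since $a\geq 0$), Bernstein produces powers of $2^j$ that are summable against $e^{-c2^{2j}(t-s)}$, and each source ($Du$, $u\cdot\nabla\tau$, $Q(\nabla u,\tau)$) is controlled by $\|(u,\tau)\|_{H^1}^2+X(t)^2$.

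The main obstacle, and the decisive 3D effect, lives in the $\Gamma$-equation. A direct computation analogous to \eqref{4eq12} produces the additional vortex-stretching term $\omega\cdot\nabla u$:
$$
\pa_t\Gamma+u\cdot\nabla\Gamma+\tfrac12\Gamma=(a-\tfrac12)\mathcal{R}\tau+\mathcal{R}Q(\nabla u,\tau)+[\mathcal{R},u\cdot\nabla]\tau+\omega\cdot\nabla u.
$$
Since $B^0_{\infty,1}$ is critical and not an algebra, a naive product law would cost a derivative. However $\div~u=0$ gives the identity $(\omega\cdot\nabla)u=\pa_j(u^j\omega)$, which has exactly the transport structure of the accurate estimate in Lemma \ref{PL}; this yields the crucial bound
$$
\|\omega\cdot\nabla u\|_{B^0_{\infty,1}}\ls \|\omega\|_{B^0_{\infty,1}}\|u\|_{B^1_{\infty,1}}\ls \ep\,\|\omega\|_{B^0_{\infty,1}}.
$$
This is the step I expect to be the hardest to set up correctly; once it is in hand, applying Lemma \ref{TP} to the damped transport equation above, together with Lemmas \ref{CZ}, \ref{CR} and the $\|\tau\|_{B^0_{\infty,1}}$ bound from the previous paragraph, produces $\|\Gamma\|_{B^0_{\infty,1}}\ls \ep$. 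Using $\omega=\Gamma+\mathcal{R}\tau$ and the uniform $L^2$ control of $\omega$ then gives $\|\nabla u\|_{B^0_{\infty,1}}\ls \ep$, closing the bootstrap and yielding global existence. Uniqueness and the $C([0,\infty);\cdots)$ regularity follow from Proposition \ref{3prop1} by a standard continuation argument.
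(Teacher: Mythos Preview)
Your approach matches the paper's essentially step for step: a coupled $H^1$ energy estimate with the inner-product trick $\langle -\eta\tau,\nabla u\rangle$ (feeding the bootstrap bound on $\|\nabla u\|_{L^\infty}$ into the cubic term $\langle u\cdot\nabla u,\Delta u\rangle$), then Duhamel plus Lemma~\ref{H} for $\|\tau\|_{B^0_{\infty,1}}$, and finally Lemma~\ref{TP} on the $\Gamma$-equation with the extra vortex-stretching term. One small correction: the identity you need is $(\omega\cdot\nabla)u=\pa_j(\omega^j u)$, which comes from $\div\omega=0$ (not $\div u=0$); this is what places $\omega$ in the divergence-free transporting slot of Lemma~\ref{PL}. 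The paper carries out the Bony decomposition of $\omega\cdot\nabla u$ explicitly---writing the remainder as $\div R(\omega,u)$---rather than citing Lemma~\ref{PL}, but the content is the same.
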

  \begin{proof}
We start with a bootstrap assumption. Assume that for $T>0$, it holds that
\begin{equation}\label{5EI}
    \sup_{0\le t\le T}\left( \|u\|_{H^1} + \|u\|_{B^1_{\infty,1}} + \|\tau\|_{H^1} + \|\tau\|_{B^0_{\infty,1}} \right) \le C_1\ep,
\end{equation}
where $C_1$ is a fixed constant to be chosen later.
Firstly, we obtain the following energy identities:
\begin{align}\label{5g1}
    \frac{d}{dt}\frac{1}{2}\left\|(u,\tau)\right\|_{L^{2}}^{2}+\|\nabla \tau\|_{L^{2}}^2+a\|\tau\|_{L^{2}}^{2}=-\langle Q(\nabla u,\tau),\tau \rangle,
    \end{align}
    and
    \begin{align}\label{5g2}
    \frac{d}{dt}\frac{1}{2}\left\|\nabla(u,\tau)\right\|_{L^{2}}^{2}+\|\nabla^2 \tau\|_{L^{2}}^2+a\|\nabla\tau\|_{L^{2}}^{2}=\langle u\cdot\nabla u, \Delta u \rangle + \langle u\cdot\nabla\tau, \Delta\tau \rangle + \langle Q(\nabla u,\tau),\Delta\tau \rangle.
\end{align}
By virtue of the estimate for the inner product, we obtain
    \begin{align}\label{5g3}
    \frac{d}{dt} \langle \eta\tau,-\nabla u \rangle + \frac{\eta}{2}\|\nabla u\|_{L^{2}}^2 
    &=\eta \langle\mathbb{P}\left(\mathrm{div}\tau
-u\cdot\nabla u\right),\mathrm{div}\tau \rangle \\ \notag
&~~~+\eta \langle u\cdot\nabla\tau+a\tau+Q(\nabla u,\tau)-\Delta \tau,\nabla u\rangle.
\end{align}
where we choose $\eta$ to be a constant such that $0\ll \ep \ll \eta \ll 1.$
We deduce that
\begin{align*}
    |\langle Q(\nabla u,\tau),\tau \rangle|&\lesssim  \|\nabla u\|_{L^{2}} \|\nabla \tau\|_{L^{2}}\|\tau\|_{L^{3}} \\
    &\lesssim C_1\ep \left(\|\nabla u\|_{L^{2}}^2 + \|\nabla \tau\|_{L^{2}}^2\right),
\end{align*} 
and
\begin{align*}
 &\eta |\langle\mathbb{P}\left(\mathrm{div}\tau-u\cdot\nabla u\right),\mathrm{div}\tau \rangle | + \eta |\langle u\cdot\nabla\tau+a\tau+Q(\nabla u,\tau)-\Delta \tau,\nabla u\rangle|\\
 \lesssim&  \eta \|\nabla \tau\|_{L^{2}}^2 + \eta\|\nabla u\|_{L^{2}} \|\nabla \tau\|_{L^{2}} \|u\|_{L^{\infty}} + \eta a \|\tau\|_{L^{2}}\|\nabla u\|_{L^{2}} \\
 &+\eta \|\nabla u\|_{L^{2}}^2\|\tau\|_{L^{\infty}} + \eta \|\nabla^2 \tau\|_{L^{2}}\|\nabla u\|_{L^{2}}\\
 \le& C\eta \|\nabla \tau\|_{L^{2}}^2 + \frac{\eta}{4} \|\nabla u\|_{L^{2}}^2 + \frac{a}{4} \|\nabla \tau\|_{L^{2}}^2 + \frac{1}{4} \|\nabla^2 \tau\|_{L^{2}}^2.
\end{align*}
Moreover, one can infer that
\begin{align*}
&|\langle u\cdot\nabla u, \Delta u \rangle| + |\langle u\cdot\nabla\tau, \Delta\tau \rangle| + |\langle Q(\nabla u,\tau),\Delta\tau \rangle|\\
\lesssim&  \|\nabla u\|_{L^{2}}^2\|\nabla u\|_{L^{\infty}} 
+ \|\nabla \tau\|_{L^{2}}\|\nabla^2 \tau\|_{L^{2}}\|u\|_{L^{\infty}} 
+ \|\nabla u\|_{L^{2}}\|\nabla^2 \tau\|_{L^{2}}\|\tau\|_{L^{\infty}}\\
\lesssim& \ep \|\nabla u\|_{L^{2}}^2 
+ \ep\|\nabla \tau\|_{H^{1}}^2.
\end{align*}
Combining these three inequalities, we deduce from \eqref{5g1}-\eqref{5g3} that
\begin{equation}\label{3d-H1-energyinequality}
\frac{d}{dt}\left(\frac{1}{2}\left\|(u,\tau)\right\|_{H^{1}}^{2}+\eta\langle \tau,-\nabla u \rangle\right)
+\frac{1}{2}\|\nabla \tau\|_{H^{1}}^2
+ \frac{\eta}{8}\|\nabla u\|_{L^{2}}^2
+\frac{a}{2}\|\tau\|_{H^{1}}^{2}\le 0,
\end{equation}
which implies that
\begin{equation*}
\sup_{0\le t\le \infty}\left( \|u\|_{H^1}^2 + \|\tau\|_{H^1}^2 \right) +  \int_0^\infty \|\nabla \tau\|_{H^{1}}^2
+ \frac{\eta}{4}\|\nabla u\|_{L^{2}}^2 dt\le 4\ep^2.
\end{equation*}
To estimate $\|\tau\|_{B^0_{\infty,0}}$, we infer that
\begin{align*}
\|\tau\|_{B^0_{\infty,1}} \le& \|\tau\|_{L^{2}} + \sum_{j\ge 0} \|\Delta_j \tau\|_{L^{\infty}}\\
\le & 2\ep + \sum_{j\ge 0}\int_0^t \left\|e^{(t-s)\Delta}\Delta_j (u\cdot\nabla\tau + Q - Du)\right\|_{L^{\infty}} ds\\
\le & 2\ep + \sum_{j\ge 0}\int_0^t e^{-(t-s)2^{2j}} 2^{\frac{7}{4}j}\left( \|u\cdot \nabla \tau\|_{L^{\frac{12}{7}}} + \|Q\|_{L^{\frac{12}{7}}} + \|\nabla u\|_{L^{2}}\right) ds\\
\le & 2\ep + \sum_{j\ge 0}\int_0^t e^{-(t-s)2^{2j}} 2^{\frac{7}{4}j}\left( \|\nabla \tau\|_{L^{2}}\| u\|_{L^{12}} + \|\nabla u\|_{L^{2}}\|\tau\|_{L^{12}} + \|\nabla u\|_{L^{2}}\right) ds\\
\le & 2\ep + 3\ep \sum_{j\ge 0}\int_0^t e^{-(t-s)2^{2j}} 2^{\frac{7}{4}j}ds\\
\le & 30\ep.
\end{align*}
To estimate $\|u\|_{B^1_{\infty,0}}$, we also define $\Gamma := \omega - \mathcal{R}\tau$. Then we have
\begin{equation*}
\partial_t \Gamma + (u\cdot\nabla)\Gamma + \frac{1}{2}\Gamma = (a-\frac{1}{2})\mathcal{R}\tau - [\mathcal{R}, u\cdot\nabla]\tau + \mathcal{R}Q + \omega\cdot\nabla u := F.
\end{equation*}
Then by Lemma \ref{TP}, we get
\begin{align}\label{5g4}
\|\Gamma\|_{B^0_{\infty,1}} &\lesssim \|\Gamma_0\|_{B^0_{\infty,1}} e^{-t+\int_0^t \|\nabla u\|_{B^0_{\infty,1}}ds } + \int_0^t e^{-(t-s)+\int_s^{t}\|\nabla u\|_{B^0_{\infty,1}}dt'} \|F\|   
_{B^0_{\infty,1}} ds \\ \notag
&\lesssim \ep + \int_0^t e^{-\frac{1}{4}(t-s)} \|F\|  
_{B^0_{\infty,1}} ds.
\end{align}
By Lemmas \ref{T}, \ref{R}, \ref{PL} and \ref{CR}, we deduce that 
\begin{align*}
\|(1-a)\mathcal{R}\tau \|_{B^0_{\infty,1}} \lesssim  \|\tau\|_{L^{2}} + \|\tau\|_{B^0_{\infty,1}}\lesssim \ep,
\end{align*}
and for any $0<\delta<\frac{1}{2}$, 
\begin{align*}
\|[\mathcal{R}, u\cdot\nabla]\tau\|_{B^0_{\infty,1}}
&\lesssim (\|\omega\|_{L^{2}} + \|\omega\|_{L^{\infty}})(\|\tau\|_{L^{2}} + \|\tau\|_{B^{\delta}_{\infty,1}})\\
&\lesssim C_1 \ep (\|\tau\|_{L^{2}} + \|\nabla^2\tau\|_{L^{2}})\\
&\lesssim \ep + \|\nabla^2\tau\|_{L^{2}}.
\end{align*}
Similarly, we infer that
\begin{align*}
\|\mathcal{R}Q\|_{B^0_{\infty,1}}
&\lesssim \|Q\|_{L^{2}} + \|Q\|_{B^0_{\infty,1}} \\
&\lesssim \|\nabla u\|_{L^{2}}\|\tau\|_{L^{\infty}} + \|\nabla u\|_{B^0_{\infty,1}}\|\tau\|_{B^{\delta}_{\infty,1}}\\
&\lesssim \ep^2 + C_1 \epsilon (\|\tau\|_{L^{2}} + \|\nabla^2\tau\|_{L^{2}})\\
&\lesssim \ep + \|\nabla^2\tau\|_{L^{2}},
\end{align*}
and 
\begin{align*}
\|\omega\cdot\nabla u\|_{B^0_{\infty,1}}
&\lesssim \|T_{\omega}\nabla u\|_{B^0_{\infty,1}} + \|T_{\nabla u}\omega\|_{B^0_{\infty,1}} + \|\rm div~\mathcal{R}(\omega, u)\|_{B^0_{\infty,1}}\\
&\lesssim \|\omega\|_{L^{\infty}} \|\nabla u\|_{B^0_{\infty,1}} + \|\nabla u\|_{L^{\infty}} \|\omega\|_{B^0_{\infty,1}} + \|\omega\|_{B^0_{\infty,1}}\|u\|_{B^1_{\infty,1}}\\
&\lesssim \ep.
\end{align*} 
Combining these estimates, we deduce from \eqref{5g4} that
\begin{align*}
\|\Gamma\|_{B^0_{\infty,1}}
&\lesssim \ep + \int_0^t e^{-\frac{1}{4}(t-s)} \|F\|  
_{B^0_{\infty,1}} ds\\
&\lesssim \ep + \int_0^t e^{-\frac{1}{4}(t-s)} (\ep +  \|\nabla^2\tau\|_{L^{2}}) ds \\
&\lesssim \ep,
\end{align*}
which implies that
\begin{align*}
\|u\|_{B^1_{\infty,1}} \le C\|u\|_{L^{2}} 
+ C\|\Gamma\|_{B^0_{\infty,1}} 
+ C\|\tau\|_{B^0_{\infty,1}}\le C \ep.
\end{align*}
Choosing $C_1 = 200C$ and $C>1$, we prove that \eqref{5EI} holds for any $T>0$. Therefore, we complete the proof of Proposition \ref{5prop1}.
  \end{proof}
 The following corollary show that the solution of supercritical regularity is also uniformly bounded in time, which plays a crucial role in considering vanishing damping limit for any $T>0$.
  \begin{coro}\label{5cor1}
     Under the conditions in Proposition \ref{5prop1}, if additionally $u_0\in H^2\cap B^2_{\infty,1}$ and $\tau_0\in H^2\cap B^1_{\infty,1}$, then
     $$
		(u,\tau) \in L^{\infty}(0,\infty;H^2\cap B^2_{\infty,1})\times L^{\infty}(0,\infty;H^2\cap B^1_{\infty,1}).
		$$
   Moreover, there holds
     \begin{align*}
\|(u,\tau)\|_{L^{\infty}(0,\infty;H^2)}+\|(\nabla u,\tau)\|_{L^{\infty}(0,\infty;B^1_{\infty,1})}
\leq C(\|(u_0,\tau_0)\|_{H^2}+\|(\nabla u_0,\tau_0)\|_{B^1_{\infty,1}}). 
\end{align*}
  \end{coro}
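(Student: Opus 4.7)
The plan is to propagate higher regularity on top of the uniform critical bound
\begin{equation*}
\|(u,\tau)\|_{L^\infty(0,\infty;H^1\cap B^1_{\infty,1})\times L^\infty(0,\infty;H^1\cap B^0_{\infty,1})}\lesssim \varepsilon
\end{equation*}
provided by Proposition \ref{5prop1}, mimicking the three-step structure of Corollary \ref{4cor1} but adapted to the 3D vorticity dynamics. The key philosophical point is that the critical-norm smallness will play the role of an \emph{a priori} small parameter, so the higher-order estimates become linear-in-initial-data after absorbing all terms proportional to $\varepsilon$ times the higher-order norms.

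\textbf{Step 1 ($H^2$ estimate).} I would test $\eqref{eq0}_1$ against $-\Delta u$ and $\eqref{eq0}_2$ against $-\Delta\tau$, and couple these with the inner-product trick $\frac{d}{dt}\langle -\eta\nabla\tau,\nabla^{2}u\rangle$ that produced \eqref{4su2}. In contrast to the 2D case, the transport nonlinearity $\langle u\cdot\nabla u,\Delta u\rangle$ does not vanish; however it is bounded by $\|\nabla u\|_{L^\infty}\|\nabla u\|_{L^2}^2\lesssim \varepsilon\|\nabla u\|_{L^2}^2$ using $\|u\|_{B^1_{\infty,1}}\lesssim\varepsilon$ from Proposition \ref{5prop1}. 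All remaining $H^2$-level nonlinearities (transport of $\tau$, $Q$-terms, pressure) split into one factor in a critical norm bounded by $\varepsilon$ and one factor in an $H^2$-level norm to be absorbed. Choosing $\eta$ and $\varepsilon$ small enough yields an inequality of the form $\frac{d}{dt}\mathcal{E}_2(t)+\frac{1}{8}\|\nabla u\|_{H^1}^2+\frac{1}{8}\|\nabla\tau\|_{H^2}^2\le 0$ with $\mathcal{E}_2\sim\|(u,\tau)\|_{H^2}^2$, giving the time-uniform $H^2$ bound and the crucial space-time integrability $\int_0^\infty\|\nabla\tau\|_{H^2}^2+\|\nabla u\|_{H^1}^2\,ds\lesssim \|(u_0,\tau_0)\|_{H^2}^2$.

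\textbf{Step 2 ($\|\tau\|_{B^1_{\infty,1}}$ and $\|u\|_{B^2_{\infty,1}}$).} For $\tau$, I apply $\Delta_j$ to $\eqref{eq0}_2$ and Duhamel with the damped heat semigroup, estimating each frequency block via Lemma \ref{H}. Exactly as in \eqref{4su5}, the $2^j$ factor from the $B^1_{\infty,1}$-summation is absorbed by $2^{-2j}$ heat decay (losing $2^{\frac{3j}{2}}$ through Bernstein applied to the $L^{\frac{12}{7}}$-norms of $u\cdot\nabla\tau$, $Q$, and $Du$, which are controlled by the $H^2$ estimate from Step 1). This yields $\|\tau\|_{L^\infty_T(B^1_{\infty,1})}\lesssim\|\tau_0\|_{B^1_{\infty,1}}+\|(u_0,\tau_0)\|_{H^2}$. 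For $u$, I use the structural variable $\Gamma=\omega-\mathcal{R}\tau$ which now (for $d=3$) satisfies
\begin{equation*}
\partial_t\Gamma+u\cdot\nabla\Gamma+\tfrac{1}{2}\Gamma=(a-\tfrac{1}{2})\mathcal{R}\tau+\mathcal{R}Q+[\mathcal{R},u\cdot\nabla]\tau+\omega\cdot\nabla u =: F,
\end{equation*}
and apply Lemma \ref{TP} at the $B^1_{\infty,1}$ level together with Lemma \ref{CI} to absorb the transport-commutator into the exponent via $\|\nabla u\|_{B^0_{\infty,1}}\lesssim\varepsilon$. Then $\|u\|_{B^2_{\infty,1}}\lesssim \|u\|_{L^2}+\|\Gamma\|_{B^1_{\infty,1}}+\|\mathcal{R}\tau\|_{B^1_{\infty,1}}$, and the task reduces to bounding $\|F\|_{B^1_{\infty,1}}$.

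\textbf{Main obstacle.} The genuinely 3D difficulty is controlling the vortex-stretching source $\omega\cdot\nabla u$ in $B^1_{\infty,1}$. The critical-regularity analogue \eqref{1in7} used $\|\omega\cdot\nabla u\|_{B^0_{\infty,1}}\lesssim\|\omega\|_{B^0_{\infty,1}}\|u\|_{B^1_{\infty,1}}$ via divergence structure; in the supercritical $B^1_{\infty,1}$ setting, $B^1_{\infty,1}$ is an algebra, so I would write $\omega\cdot\nabla u=\operatorname{div}(\omega\otimes u)$ and use Bony's decomposition plus Lemma \ref{PL} to obtain
\begin{equation*}
\|\omega\cdot\nabla u\|_{B^1_{\infty,1}}\lesssim \|\omega\|_{B^1_{\infty,1}}\|u\|_{B^1_{\infty,1}}+\|\omega\|_{B^0_{\infty,1}}\|u\|_{B^2_{\infty,1}}\lesssim \varepsilon\bigl(\|\Gamma\|_{B^1_{\infty,1}}+\|\tau\|_{B^1_{\infty,1}}\bigr).
\end{equation*}
Combined with Lemmas \ref{CZ}, \ref{PL}, \ref{CR} applied to the other three components of $F$ (using the $\|\tau\|_{B^1_{\infty,1}}$ bound already obtained), Grönwall then closes with a constant \emph{linear} in $\|(u_0,\tau_0)\|_{H^2\cap B^2_{\infty,1}\times H^2\cap B^1_{\infty,1}}$, which is exactly the form needed so that the subsequent Bona--Smith vanishing-damping argument on finite time intervals can be carried out verbatim as in Step 2 of Proposition \ref{4prop3}.
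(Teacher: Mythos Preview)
Your proposal follows essentially the same three-step architecture as the paper's proof: (1) an $H^2$ energy estimate coupled with the inner product $\langle-\eta\nabla\tau,\nabla^2 u\rangle$, in which every nonlinear term carries a small $\varepsilon$-factor from the critical bound of Proposition~\ref{5prop1}; (2) a Duhamel estimate for $\|\tau\|_{B^1_{\infty,1}}$ via heat smoothing, controlled by the $H^2$ bound from Step~1; (3) control of $\|\Gamma\|_{B^1_{\infty,1}}$ through the damped transport equation and Lemma~\ref{TP}. Your handling of the vortex-stretching term $\omega\cdot\nabla u$ via the divergence structure and Bony's decomposition is equivalent to what the paper does directly through the algebra property $\|\omega\cdot\nabla u\|_{B^1_{\infty,1}}\lesssim\|\nabla u\|_{L^\infty}\|\nabla u\|_{B^1_{\infty,1}}$; both close with the same $\varepsilon\|\Gamma\|_{B^1_{\infty,1}}$ to be absorbed.

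There is one arithmetic slip in Step~2: you invoke $L^{12/7}$-norms with a claimed Bernstein loss of $2^{3j/2}$, but in $\mathbb{R}^3$ the embedding $L^{12/7}\hookrightarrow L^\infty$ for a block at frequency $2^j$ costs $2^{7j/4}$, so after multiplying by $2^j$ (from the $B^1$ weight) and $2^{-2j}$ (from heat) you are left with $2^{3j/4}$, which does not sum. The paper instead places the sources in $L^4$ (e.g.\ $\|u\cdot\nabla\tau\|_{L^4}\le\|u\|_{L^\infty}\|\nabla\tau\|_{L^4}$, $\|Q\|_{L^4}\le\|\tau\|_{L^\infty}\|\nabla u\|_{L^4}$, $\|\nabla u\|_{L^4}$), all bounded by $\|(u,\tau)\|_{H^2}$ via $H^1\hookrightarrow L^6$; this yields $2^j\cdot 2^{3j/4}\cdot 2^{-2j}=2^{-j/4}$, which is summable. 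With that correction your argument coincides with the paper's.
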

\begin{proof}
By energy estimates, we obtain
\begin{align*}
\frac{1}{2}\frac{d}{dt}\left\|\nabla^2(u,\tau)\right\|_{L^{2}}^{2}+\|\nabla^3 \tau\|_{L^{2}}^2+a\|\nabla^2\tau\|_{L^{2}}^{2}&=-\langle \nabla^2(u\cdot\nabla u), \nabla^2 u \rangle - \langle \nabla^2(u\cdot\nabla\tau), \nabla^2\tau \rangle \\
&~~~-\langle \nabla^2 Q,\nabla^2\tau \rangle,
\end{align*}
and
\begin{align*}
\frac{d}{dt} \langle \eta\nabla\tau,-\nabla^2 u \rangle + \frac{\eta}{2}\|\nabla^2 u\|_{L^{2}}^2 
&=-\eta \langle\mathbb{P}\left(\mathrm{div}\tau
-u\cdot\nabla u\right),\Delta\mathrm{div}\tau \rangle \\
&~~~+\eta \langle \nabla(u\cdot\nabla\tau+a\tau+Q-\Delta \tau),\nabla^2 u\rangle.
\end{align*}
One can infer from \eqref{5EI} that
\begin{align*}
&|\langle -\nabla^2(u\cdot\nabla u), \nabla^2 u \rangle - \langle \nabla^2(u\cdot\nabla\tau), \nabla^2\tau \rangle - \langle \nabla^2 Q,\nabla^2\tau \rangle|\\
\lesssim &\ \|\nabla^2 u\|_{L^{2}}^2\|\nabla u\|_{L^{\infty}}
+ \|\nabla^3 \tau\|_{L^{2}}\|\nabla \tau\|_{H^{1}} \|u\|_{B^1_{\infty,1}}
+ \|\nabla^3 \tau\|_{L^{2}}\|\nabla^2 u\|_{L^{2}}\|\tau\|_{L^{\infty}}\\
\lesssim &\ \ep \|\nabla^2 u\|_{L^{2}}^2 
+ \ep\|\nabla^3 \tau\|_{L^{2}}^2 
+ \ep\|\nabla \tau\|_{H^{1}}^2,
\end{align*}
and
\begin{align*}
&|-\eta \langle\mathbb{P}\left(\mathrm{div}\tau-u\cdot\nabla u\right),\Delta\mathrm{div}\tau \rangle
+\eta \langle \nabla(u\cdot\nabla\tau+a\tau+Q-\Delta \tau),\nabla^2 u\rangle|\\
\lesssim & \eta \|\nabla^2 \tau\|_{L^{2}}^2 
+ \eta \|\nabla^3 \tau\|_{L^{2}}\|\nabla u\|_{L^{2}}\|u\|_{L^{\infty}} 
+ \eta \|\nabla^2 u\|_{L^{2}}\|\nabla \tau\|_{H^{1}} \|u\|_{B^1_{\infty,1}} 
+ \eta a \|\nabla^2 \tau\|_{L^2}\|\nabla u\|_{L^{2}} \\
&+ \eta \|\nabla^2 u\|_{L^{2}}^2\|\tau\|_{L^{\infty}}
+ \eta \|\nabla^2 u\|_{L^{2}}\|\nabla \tau\|_{L^2}\|u\|_{B^1_{\infty,1}}
+ \eta \|\nabla^3 \tau\|_{L^{2}}\|\nabla^2u\|_{L^{2}}\\
\lesssim & \eta \|\nabla^2 \tau\|_{L^{2}}^2 
+ \ep\|\nabla \tau\|_{H^{2}}^2 
+ \ep\|\nabla u\|_{H^{1}}^2
+ \eta^{\frac{1}{2}}a^2\|\nabla^2 \tau\|_{L^{2}}^2
+ \eta^{\frac{3}{2}}\|\nabla u\|_{H^{1}}^2
+ \eta^{\frac{1}{2}}\|\nabla^3 \tau\|_{L^{2}}^2.
\end{align*}
Combining these two inequalities and \eqref{3d-H1-energyinequality}, we deduce that 
\begin{equation}\label{3d-H2-energyinequality}
\frac{d}{dt}\left(\frac{1}{2}\left\|(u,\tau)\right\|_{H^{2}}^{2}+\eta\langle \tau,-\nabla u \rangle + \eta\langle \nabla\tau,-\nabla^2 u \rangle\right)
+\frac{1}{4}\|\nabla \tau\|_{H^{2}}^2
+ \frac{\eta}{16}\|\nabla u\|_{H^{1}}^2
+\frac{a}{4}\|\tau\|_{H^{2}}^{2}\le 0,
\end{equation}
which implies that
\begin{equation*}
\sup_{0\le t \le \infty}\left( \|u\|_{H^2}^2 + \|\tau\|_{H^2}^2 \right) +  \int_0^\infty \|\nabla \tau\|_{H^{2}}^2
+ \|\nabla u\|_{H^{1}}^2 dt \lesssim \|u_0\|_{H^2}^2 + \|\tau_0\|_{H^2}^2.
\end{equation*}
To estimate $\|\tau\|_{B^1_{\infty,1}}$, similar to the estimate of $\|\tau\|_{B^0_{\infty,1}}$, we have
\begin{align*}
\|\tau\|_{B^1_{\infty,1}} \lesssim& \|\tau\|_{L^{2}} + \sum_{j\ge 0} 2^j\|\Delta_j \tau\|_{L^{\infty}}\\
\lesssim & \|\tau\|_{L^{2}} + \|\tau_0\|_{B^1_{\infty,1}} + \sum_{j\ge 0}\int_0^t 2^j\left\|e^{(t-s)\Delta}\Delta_j (u\cdot\nabla\tau + Q - Du)\right\|_{L^{\infty}} ds\\
\lesssim & \|\tau\|_{L^{2}} + \|\tau_0\|_{B^1_{\infty,1}} + \sum_{j\ge 0}\int_0^t e^{-(t-s)2^{2j}} 2^{\frac{7}{4}j}\left( \|u\cdot \nabla \tau\|_{L^4} + \|Q\|_{L^4} + \|\nabla u\|_{L^4}\right) ds\\
\lesssim & \|\tau\|_{L^{2}} + \|\tau_0\|_{B^1_{\infty,1}} + \sum_{j\ge 0}\int_0^t e^{-(t-s)2^{2j}} 2^{\frac{7}{4}j}\left( \| \tau\|_{H^{2}}\| u\|_{L^{\infty}} + \| u\|_{H^{2}}\|\tau\|_{L^{\infty}} + \| u\|_{H^{2}}\right) ds\\
\lesssim & \|\tau_0\|_{B^1_{\infty,1}} + \| \tau_0\|_{H^{2}} + \| u_0\|_{H^{2}}.
\end{align*}
According to Lemma \ref{TP}, we have
\begin{align}\label{5g5}
\|\Gamma\|_{B^1_{\infty,1}} &\lesssim \|\Gamma_0\|_{B^1_{\infty,1}} e^{-t+\int_0^t \|\nabla u\|_{B^0_{\infty,1}}} + \int_0^t e^{-(t-s)+\int_s^{t}\|\nabla u\|_{B^0_{\infty,1}}dt'} \|F\|   
_{B^1_{\infty,1}} ds \\ \notag
&\lesssim \|\Gamma_0\|_{B^1_{\infty,1}} + \int_0^t e^{-\frac{1}{4}(t-s)} \|F\|  
_{B^1_{\infty,1}} ds.
\end{align}
One can deduce that
\begin{align*}
\|F\|_{B^{1}_{\infty,1}}&\lesssim \|F\|_{B^{0}_{\infty,1}}+\|\nabla F\|_{B^{0}_{\infty,1}} \\ 
&\lesssim \|(u_0,\tau_0)\|_{H^1}+\|(\nabla u_0,\tau_0)\|_{B^0_{\infty,1}}+\left\|\tau\right\|_{B^{1}_{\infty,1}}+\left\|\omega\cdot\nabla u\right\|_{B^{1}_{\infty,1}}+\left\|Q\right\|_{B^{1}_{\infty,1}}\\
&~~~ +\left\|\nabla([\mathcal{R},u\cdot\nabla]\tau)\right\|_{B^{0}_{\infty,1}}\\
&\lesssim \|(u_0,\tau_0)\|_{H^2}
+\|(\nabla u_0,\tau_0)\|_{B^1_{\infty,1}}
+\left\|\tau\right\|_{B^{1}_{\infty,1}}
+\left\|\nabla u\right\|_{L^\infty}\left\|\nabla u\right\|_{B^{1}_{\infty,1}}
+\left\|\nabla u\right\|_{B^{1}_{\infty,1}}\left\|\tau\right\|_{L^\infty}\\
&~~~ +\left\|\nabla u\right\|_{L^\infty}\left\|\tau\right\|_{B^{1}_{\infty,1}} 
+\left\|[\mathcal{R}, \nabla u\cdot\nabla]\tau\right\|_{B^{0}_{\infty,1}}+\left\|[\mathcal{R}, u\cdot\nabla]\nabla\tau\right\|_{B^{0}_{\infty,1}} \\
&\lesssim \|(u_0,\tau_0)\|_{H^2}+\|(\nabla u_0,\tau_0)\|_{B^1_{\infty,1}}
+\left\|\tau\right\|_{B^{1}_{\infty,1}}
+\left\| u\right\|_{B^{1}_{\infty,1}}\left\|\Gamma\right\|_{B^{1}_{\infty,1}}
+\left\| u\right\|_{B^{1}_{\infty,1}}\left\|\tau\right\|_{B^{1}_{\infty,1}}\\
&~~~+\left\| \tau\right\|_{B^{0}_{\infty,1}}\left\|\tau\right\|_{B^{1}_{\infty,1}}
+\left\|\Gamma\right\|_{B^{1}_{\infty,1}}\left\|\tau\right\|_{B^{\frac 1 2}_{\infty,1}}+\left\|u\right\|_{B^{1}_{\infty,1}}\left\|\tau\right\|_{B^{\frac 4 3}_{\infty,1}} \\
&\lesssim \|(u_0,\tau_0)\|_{H^2}+\|(\nabla u_0,\tau_0)\|_{B^1_{\infty,1}}
+\ep^{\frac{1}{2}} \left\|\Gamma\right\|_{B^{1}_{\infty,1}} 
+\ep\left\|\nabla\tau\right\|_{H^2}.
\end{align*}
The above inequality together with \eqref{5g5} ensures that
\begin{align*}
\|\Gamma\|_{L_T^{\infty}(B^1_{\infty,1})}  &\lesssim \|\Gamma_0\|_{B^1_{\infty,1}} + \int_0^T e^{-\frac{1}{4}(T-s)} \|F\|  
_{B^1_{\infty,1}} ds\\
&\lesssim \|(u_0,\tau_0)\|_{H^2}+\|(\nabla u_0,\tau_0)\|_{B^1_{\infty,1}} 
+ \int_0^t e^{-\frac{1}{4}(T-s)} \left(\epsilon^{\frac{1}{2}} \left\|\Gamma\right\|_{B^{1}_{\infty,1}} 
+\epsilon\left\|\nabla\tau\right\|_{H^2}\right) ds\\
&\lesssim \|(u_0,\tau_0)\|_{H^2}+\|(\nabla u_0,\tau_0)\|_{B^1_{\infty,1}} 
+ \ep^{\frac{1}{2}}\|\Gamma\|_{L_T^{\infty}(B^1_{\infty,1})},
\end{align*}
which implies that
\begin{align*}
\|\Gamma\|_{L_T^{\infty}(B^1_{\infty,1})} 
\lesssim \|(u_0,\tau_0)\|_{H^2}+\|(\nabla u_0,\tau_0)\|_{B^1_{\infty,1}}.
\end{align*}
We thus finish the proof of Corollary \eqref{5cor1}.
\end{proof}
Now, we consider vanishing damping limit for any $T>0$ under the same topology. Since the solutions $(u,\tau)$ of the inviscid Oldroyd-B model \eqref{eq0} depends on damping $a\in[0,1]$, when considering the problems of vanishing damping limit, we denote the solutions of \eqref{eq0} as $(u^a,\tau^a)$.
\begin{prop}\label{5prop2} 
Under the conditions in Proposition \ref{5prop1}, for any $T>0$, there holds 
  $$\|(u^a,\tau^a)-(u^0,\tau^0)\|_{L^{\infty}(0,T;L^2)}\leq C_Ta,
  $$ 
  and 
  $$\lim_{a\rightarrow0}\|(u^a,\tau^a)-(u^0,\tau^0)\|_{L^{\infty}(0,T;\dot{H}^1\cap B^1_{\infty,1})\times L^{\infty}(0,T;\dot{H}^1\cap B^0_{\infty,1})}=0.
  $$
  \end{prop}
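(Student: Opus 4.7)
The plan is to mirror the three-step strategy used in Proposition~\ref{4prop3} for the 2D case, namely: (i) $L^{2}$ vanishing damping rate $O(a)$ on $[0,T]$, (ii) vanishing damping in $\dot H^{1}$ via a Bona--Smith type smoothing, and (iii) vanishing damping in $B^{1}_{\infty,1}\times B^{0}_{\infty,1}$ through the structural variable $\Gamma^{a}=\omega^{a}-\mathcal{R}\tau^{a}$, while in each step accounting for the additional vortex stretching term $\omega^{a}\cdot\nabla u^{a}$ that is specific to $d=3$. Throughout I will freely use Proposition~\ref{5prop1} and Corollary~\ref{5cor1} to obtain time-uniform bounds on $(u^a,\tau^a)$ in $H^{1}\cap B^{1}_{\infty,1}\times H^{1}\cap B^{0}_{\infty,1}$ for $H^{1}$-small data, and in $H^{2}\cap B^{2}_{\infty,1}\times H^{2}\cap B^{1}_{\infty,1}$ when the initial data lie in those larger spaces.

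\textbf{Step 1 ($L^{2}$ estimate).} Subtracting the equations for $(u^{a},\tau^{a})$ and $(u^{0},\tau^{0})$ and testing by $(u^{a}-u^{0},\tau^{a}-\tau^{0})$ produces the same energy identity as in \eqref{4lo2} plus the extra forcing $-a\tau^{a}$. In 3D the nonlinearities $Q(\nabla u^{a},\tau^{a}-\tau^{0})$, $Q(\nabla(u^{a}-u^{0}),\tau^{0})$ and $(u^{a}-u^{0})\cdot\nabla\tau^{0}$ are controlled by H\"older and Sobolev (using $L^{3}$ in place of $L^{4}$) together with the $\dot H^{1}$ dissipation of $\tau$; the time-integrability of $\|\nabla u^{0}\|_{L^{\infty}}+\|\nabla u^{a}\|_{L^{\infty}}+\|\tau^{0}\|_{L^{\infty}}^{2}+\|\nabla\tau^{0}\|_{L^{3}}^{2}$ on $[0,T]$ follows from Proposition~\ref{5prop1} (these norms are bounded by $\|u^{a}\|_{B^{1}_{\infty,1}}$, $\|\tau^{0}\|_{B^{0}_{\infty,1}}$ and the $H^{1}$-bound of $\tau^{0}$, all uniform in $a$). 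Gr\"onwall's inequality then yields $\sup_{[0,T]}\|(u^{a}-u^{0},\tau^{a}-\tau^{0})\|_{L^{2}}\le C_{T}a$.

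\textbf{Step 2 ($\dot H^{1}$ estimate).} I introduce the truncated data $(S_{N}u_{0},S_{N}\tau_{0})$ and denote by $(u^{a}_{N},\tau^{a}_{N})$ the corresponding solutions. The $L^{2}$ argument of Step~1 gives $\|(u^{a}-u^{a}_{N},\tau^{a}-\tau^{a}_{N})\|_{L^{2}}\lesssim 2^{-N}c_{N}$ with $c_{N}\to 0$. Testing \eqref{4lo11} against $-\Delta(u^{a}-u^{a}_{N})$ and $-\Delta(\tau^{a}-\tau^{a}_{N})$ I pick up, beyond the terms already present in 2D, the stretching contribution $\langle (u^{a}-u^{a}_{N})\cdot\nabla u^{a}_{N},\Delta(u^{a}-u^{a}_{N})\rangle$; this is absorbed using $\|\nabla u^{a}_{N}\|_{L^{\infty}}\lesssim \|\nabla u^{a}_{N}\|_{B^{0}_{\infty,1}}\lesssim\ep$ coming from Proposition~\ref{5prop1} (uniform in $N$ and $a$), so that Gr\"onwall in $t$ gives $\sup_{[0,T]}\|\nabla(u^{a}-u^{a}_{N},\tau^{a}-\tau^{a}_{N})\|_{L^{2}}\lesssim c_{N}$. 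Analogously, comparing $(u^{a}_{N},\tau^{a}_{N})$ and $(u^{0}_{N},\tau^{0}_{N})$ at the $\dot H^{1}$ level and invoking Corollary~\ref{5cor1} to bound $\|\nabla^{2}u^{0}_{N}\|_{L^{\infty}}+\|\nabla\tau^{0}_{N}\|_{L^{\infty}}\lesssim 2^{N}$ yields $\sup_{[0,T]}\|\nabla(u^{a}_{N}-u^{0}_{N},\tau^{a}_{N}-\tau^{0}_{N})\|_{L^{2}}\lesssim_{T} a\,2^{N}$. Combining and optimising $N$ so that $a\,2^{N}\to 0$ while $c_{N}\to 0$ (e.g.\ $2^{N}=a^{-1/2}$) produces $\lim_{a\to 0}\|(u^{a},\tau^{a})-(u^{0},\tau^{0})\|_{L^{\infty}(0,T;\dot H^{1})}=0$.

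\textbf{Step 3 ($B^{1}_{\infty,1}\times B^{0}_{\infty,1}$ estimate).} This is where the 3D case differs most seriously from the 2D one: the equation for $\Gamma^{a}=\omega^{a}-\mathcal{R}\tau^{a}$ carries the extra term $\omega^{a}\cdot\nabla u^{a}$, which is not an algebra-type product in $B^{0}_{\infty,1}$. To handle it I rely on the divergence-type identity $\|\omega^{a}\cdot\nabla u^{a}\|_{B^{0}_{\infty,1}}\lesssim\|\omega^{a}\|_{B^{0}_{\infty,1}}\|u^{a}\|_{B^{1}_{\infty,1}}$ (already exploited in Proposition~\ref{5prop1}), applied to each difference $\omega^{a}\cdot\nabla u^{a}-\omega^{a}_{N}\cdot\nabla u^{a}_{N}$ and $\omega^{a}_{N}\cdot\nabla u^{a}_{N}-\omega^{0}_{N}\cdot\nabla u^{0}_{N}$. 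With this extra term under control the rest of the 2D argument transfers verbatim: (a) estimate $\|\tau^{a}-\tau^{a}_{N}\|_{B^{-1}_{\infty,1}\cap L^{1}_{T}B^{1}_{\infty,1}}$ and $\|u^{a}-u^{a}_{N}\|_{L^{\infty}_{T}B^{0}_{\infty,1}}$ via the heat semigroup, transport structure, Lemma~\ref{PL}, Lemma~\ref{P} and Corollary~\ref{CR3}; (b) run the transport-damped equation for $\Gamma^{a}-\Gamma^{a}_{N}$ in $B^{0}_{\infty,1}$ and for $\Gamma^{a}_{N}-\Gamma^{0}_{N}$ in $B^{0}_{\infty,1}$, using the smallness $\ep$ in Proposition~\ref{5prop1} to absorb the self-dependent terms; (c) convert $\omega$-bounds into $\nabla u$-bounds via $u=\mathcal{R}\tau+\Gamma$ and Lemma~\ref{CZ}. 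This produces $\|\Gamma^{a}-\Gamma^{0}\|_{L^{\infty}_{T}B^{0}_{\infty,1}}\lesssim_{T}c_{N}+a^{1/2}2^{N}$, hence $\lim_{a\to 0}\|u^{a}-u^{0}\|_{L^{\infty}_{T}B^{1}_{\infty,1}}=0$, and by the Duhamel/heat-kernel decomposition of $\tau^{a}-\tau^{0}$ used in \eqref{4lo10}, $\lim_{a\to 0}\|\tau^{a}-\tau^{0}\|_{L^{\infty}_{T}B^{0}_{\infty,1}}=0$.

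\textbf{Main obstacle.} The main analytical difficulty I expect is precisely the treatment of $\omega^{a}\cdot\nabla u^{a}-\omega^{0}\cdot\nabla u^{0}$ in Step 3: because $B^{0}_{\infty,1}$ is not an algebra, one must split this difference so that either a divergence appears (allowing the gain of one derivative via paraproduct/remainder) or one factor is in the higher-regularity space $B^{1}_{\infty,1}$ whose uniform bound comes from Corollary~\ref{5cor1}. Balancing these two mechanisms against the smallness of the difference (which is only $O(a)$ in $L^{2}$ and $o(1)$ in $\dot H^{1}$) is the key technical step, and it is what forces the supercritical-regularity uniform bounds of Corollary~\ref{5cor1}.
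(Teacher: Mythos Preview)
Your proposal is correct and follows essentially the same three-step architecture as the paper's own proof: an $L^{2}$ energy estimate giving the $O(a)$ rate, a Bona--Smith smoothing argument for $\dot H^{1}$, and a $\Gamma$-equation analysis in $B^{0}_{\infty,1}$, with the 3D-specific vortex stretching handled via the divergence-structure estimate $\|\omega\cdot\nabla u\|_{B^{0}_{\infty,1}}\lesssim\|\omega\|_{B^{0}_{\infty,1}}\|u\|_{B^{1}_{\infty,1}}$. The only minor discrepancies are cosmetic: the paper obtains the slightly different exponents $a^{1/4}+a^{2/5}2^{N}$ (rather than your $a^{1/2}2^{N}$) in Step~3 via different interpolation choices, and the ``new'' term you flag in Step~2 is not the cross term $\langle(u^{a}-u^{a}_{N})\cdot\nabla u^{a}_{N},\Delta(u^{a}-u^{a}_{N})\rangle$ (present already in 2D) but rather the diagonal transport $\langle u^{a}\cdot\nabla(u^{a}-u^{a}_{N}),\Delta(u^{a}-u^{a}_{N})\rangle$, which no longer vanishes in 3D and is handled exactly as you indicate.
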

\begin{proof}
    Let $(u^a,\tau^a)$ be a solution of $\eqref{eq0}$ with the initial data $(u_0,\tau_0)$ and $a\in[0,1]$. Then we get 
\begin{align}\label{5eqD1}
\left\{\begin{array}{l}
\partial_t(u^a-u^0)+u^a\cdot\nabla (u^a-u^0)+\nabla (P_{u^a}-P_{u^0}) \\[1ex]
={\rm div}~(\tau^a-\tau^0)-(u^a-u^0)\cdot\nabla u^0,~~~~{\rm div}~(u^a-u^0)=0,\\[1ex]
\partial_t(\tau^a-\tau^0)+u^a\cdot\nabla(\tau^a-\tau^0)-\Delta(\tau^a-\tau^0)+Q(\nabla u^a,\tau^a-\tau^0)\\[1ex]
=-a\tau^a+D(u^a-u^0)-(u^a-u^0)\cdot\nabla\tau^0-Q(\nabla (u^a-u^0),\tau^0),\\[1ex]
(u^a-u^0)|_{t=0}=0,~~(\tau^a-\tau^0)|_{t=0}=0. \\[1ex]
\end{array}\right.
\end{align}
We divide the proof into following three steps. \\
\textbf{Step 1: Vanishing damping limit for low frequency.} \\
Taking the $L^2$ inner product for $\eqref{5eqD1}_1$ with $u^a-u^0$, we obtain
		\begin{align}\label{5da1}
		\frac 1 2\frac{d}{dt}\|u^a-u^0\|^2_{L^2} = \langle{\rm div}~(\tau^a-\tau^0),u^a-u^0\rangle-\langle(u^a-u^0)\cdot\nabla u^0,u^a-u^0\rangle.
		\end{align}
		Taking the $L^2$ inner product for $\eqref{5eqD1}_2$ with $\tau^a-\tau^0$, we get
		\begin{align}\label{5da2}
		\frac 1 2\frac{d}{dt}\|\tau^a-\tau^0\|^2_{L^2}+\|\nabla(\tau^a-\tau^0)\|^2_{L^2} &=-\langle a\tau^a,\tau^a-\tau^0\rangle + \langle D (u^a-u^0),\tau^a-\tau^0\rangle   \\ \notag
		&~~~-\langle(u^a-u^0)\cdot\nabla \tau^0,\tau^a-\tau^0\rangle- \langle Q(\nabla u^a,\tau^a-\tau^0),\tau^a-\tau^0\rangle \\ \notag
		&~~~-\langle Q(\nabla (u^a-u^0),\tau^0),\tau^a-\tau^0\rangle.
		\end{align}
		By virtue of the cancellation $\langle{\rm div}~(\tau^a-\tau^0),u^a-u^0\rangle+\langle D(u^a-u^0),\tau^a-\tau^0\rangle=0$, we infer from \eqref{5da1} and \eqref{5da2} that
		\begin{align}\label{5da2.5}
		&\frac 1 2\frac{d}{dt}\|(u^a-u^0,\tau^a-\tau^0)\|^2_{L^2}+\|\nabla(\tau^a-\tau^0)\|^2_{L^2} \\ \notag
  =&-\langle a\tau^a,\tau^a-\tau^0\rangle-\langle(u^a-u^0)\cdot\nabla u^0,u^a-u^0\rangle  \\ \notag
		&-\langle(u^a-u^0)\cdot\nabla \tau^0,\tau^a-\tau^0\rangle- \langle Q(\nabla u^a,\tau^a-\tau^0),\tau^a-\tau^0\rangle \\ \notag
		&-\langle Q(\nabla (u^a-u^0),\tau^0),\tau^a-\tau^0\rangle \\ \notag
  \lesssim& a\|\tau^a\|_{L^2}\|\tau^a-\tau^0\|_{L^2}+\|\nabla u^0\|_{L^\infty}\|u^a-u^0\|^2_{L^2}+\|\tau^0\|_{L^\infty}\|u^a-u^0\|_{L^2}\|\nabla(\tau^a-\tau^0)\|_{L^2} \\ \notag
  &+\|\nabla u^a\|_{L^\infty}\|\tau^a-\tau^0\|^2_{L^2}+\|\nabla\tau^0\|_{L^3}\|u^a-u^0\|_{L^2}\|\tau^a-\tau^0\|_{L^6} \\ \notag
\le& Ca\|\tau^a\|_{L^2}\|\tau^a-\tau^0\|_{L^2}+\frac 1 2\|\nabla(\tau^a-\tau^0)\|^2_{L^2}\\ \notag
  & +C\|(u^a-u^0,\tau^a-\tau^0)\|^2_{L^2}(\|\nabla (u^0,u^a)\|_{L^\infty}+\|\tau^0\|^2_{L^\infty}+\|\nabla\tau^0\|^2_{L^3})\\
  \le& Ca^2\|\tau^a\|^2_{L^2}+\frac 1 2\|\nabla(\tau^a-\tau^0)\|^2_{L^2} \notag\\ 
  & +C\|(u^a-u^0,\tau^a-\tau^0)\|^2_{L^2}(1+\|\nabla (u^0,u^a)\|_{L^\infty}+\|\tau^0\|^2_{L^\infty}+\|\nabla\tau^0\|^2_{L^3}).	 \notag
  \end{align}
  Using Gronwall's inequality together with Proposition \ref{5prop1} we get that
  \begin{align}\label{5da3}
      \|(u^a-u^0,\tau^a-\tau^0)\|^2_{L^2}+\int_0^T\|\nabla(\tau^a-\tau^0)\|^2_{L^2}dt\lesssim_T a^2.
  \end{align}
  By the interpolation inequality, one can also obtain
  \begin{align}\label{5da3.5}
      \|u^a-u^0\|_{B^0_{\infty,1}}\lesssim \|u^a-u^0\|^{\frac 2 5}_{L^2}\|u^a-u^0\|^{\frac 3 5}_{B^1_{\infty,1}}\lesssim_T a^{\frac 2 5}.
  \end{align}
  
  	From now on, we focus on the estimate of $\|\tau^a-\tau^0\|_{L^\infty_T(B^0_{\infty,1})}$. Applying Duhamel's principle to $\eqref{5eqD1}_2$ and taking $\Delta_j$ with $j\geq 0$, we have
		\begin{align}\label{5da4}
			\Delta_j(\tau^a-\tau^0) &= \int_0^t e^{\Delta(t-s)}\Delta_j\left(u^a\cdot\nabla(\tau^a-\tau^0)-Q(\nabla u^a,\tau^a-\tau^0)-a\tau^a\right)dt' \\ \notag
   &~~~+\int_0^t e^{\Delta(t-s)}\Delta_j\left(D(u^a-u^0)-(u^a-u^0)\cdot\nabla\tau^0-Q(\nabla (u^a-u^0),\tau^0)\right)dt'.
		\end{align}
		Taking $L^{\infty}_T(L^{\infty})$ norm to \eqref{5da4}, for $j\geq 0$, we infer from Lemma \ref{H} that
		\begin{align}\label{5da5}
			\|\Delta_j(\tau^a-\tau^0)\|_{L^{\infty}_T(L^{\infty})} &\lesssim \int_0^T e^{-2^{2j}(T-t)}(a\|\Delta_j\tau^a\|_{L^{\infty}}+\|\Delta_j D(u^a-u^0)\|_{L^{\infty}})dt \\ \notag
			&~~~+\int_0^T e^{-2^{2j}(T-t)}(\|\Delta_j(u^a\cdot\nabla(\tau^a-\tau^0))\|_{L^{\infty}}+\|\Delta_j Q(\nabla u^a,\tau^a-\tau^0)\|_{L^{\infty}})dt\\ \notag
			&~~~+\int_0^T e^{-2^{2j}(T-t)}(\|\Delta_j((u^a-u^0)\cdot\nabla\tau^0)\|_{L^{\infty}}+\|\Delta_j Q(\nabla (u^a-u^0),\tau^0)\|_{L^{\infty}})dt.
		\end{align}
		Applying Bernstein's inequality, we infer from \eqref{5da3} and Proposition \ref{5prop1} that
		\begin{align}\label{5da6}
			\int_0^T e^{-2^{2j}(T-t)}\|\Delta_j (a\|\Delta_j\tau^a\|_{L^{\infty}}+\|\Delta_j D(u^a-u^0)\|_{L^{\infty}})dt &\lesssim a\int_0^T e^{-2^{2j}(T-t)}2^{\frac 3 2 j}\|\Delta_j \tau^a\|_{L^{2}}dt \\ \notag
			&~~~+\int_0^T e^{-2^{2j}(T-t)}2^{\frac 7 4 j}\|\Delta_j (u^a-u^0)\|_{L^{4}}dt  \\ \notag
			&\lesssim_T 2^{-\frac 1 4 j}a^{\frac 7 {10}}.
		\end{align}
   where we use the following inequality
  \begin{align*}
      \|u^a-u^0\|_{L^4}\lesssim \|u^a-u^0\|^{\frac 1 2}_{L^2}\|u^a-u^0\|^{\frac 1 2}_{L^\infty}\lesssim_T a^{\frac 7 {10}}.
  \end{align*}
		 According to Young's inequality, we have
		\begin{align}\label{5da7}
			\int_0^T e^{-2^{2j}(T-t)}\|\Delta_j(u^a\cdot\nabla(\tau^a-\tau^0))\|_{L^{\infty}} dt &\lesssim \int_0^T e^{-2^{2j}(T-t)}2^{\frac 7 4j}\|\Delta_j(u^a(\tau^a-\tau^0))\|_{L^{4}} dt \\ \notag
			&\lesssim  2^{-\frac 1 4 j}\|u^a\|_{L_T^{\infty}(L^{\infty})}\|\tau^a-\tau^0\|_{L_T^{\infty}(L^{4})} \\ \notag
			&\lesssim  2^{-\frac 1 4 j}a^{\frac 1 {2}},
		\end{align}
  where we use the following inequality
  \begin{align*}
      \|\tau^a-\tau^0\|_{L^4}\lesssim \|\tau^a-\tau^0\|^{\frac 1 2}_{L^2}\|\tau^a-\tau^0\|^{\frac 1 2}_{L^\infty}\lesssim_T a^{\frac 1 {2}}.
  \end{align*}
	Similarly, we obtain
		\begin{align}\label{5da8}
			\int_0^T e^{-2^{2j}(T-t)}\|\Delta_j Q(\nabla u^a,\tau^a-\tau^0)\|_{L^{\infty}}dt &\lesssim \int_0^T e^{-2^{2j}(T-t)}2^{\frac 3 4 j}\|\Delta_j Q(\nabla u^a,\tau^a-\tau^0)\|_{L^{4}} dt\\ \notag
			&\lesssim  2^{-\frac 5 4 j}\|\nabla u^a\|_{L_T^{\infty}(L^{\infty})}\|\tau^a-\tau^0\|_{L_T^{\infty}(L^{4})} \\ \notag
			&\lesssim_T  2^{-\frac 5 4 j}a^{\frac 1 {2}}.
		\end{align}
  and
  \begin{align}\label{5da9}
			\int_0^T e^{-2^{2j}(T-t)}\|\Delta_j((u^a-u^0)\cdot\nabla\tau^0)\|_{L^{\infty}}dt &\lesssim \int_0^T e^{-2^{2j}(T-t)}2^{\frac 7 4 j}\|\Delta_j((u^a-u^0)\tau^0)\|_{L^{4}}dt\\ \notag
			&\lesssim  2^{-\frac 1 4j}\|u^a-u^0\|_{L_T^{\infty}(L^{4})}\|\tau^0\|_{L_T^{\infty}(L^{\infty})} \\ \notag
			&\lesssim _T 2^{-\frac 1 4 j}a^{\frac 7 {10}},
		\end{align}
   and
\begin{align}\label{5da10}
			\int_0^T e^{-2^{2j}(T-t)}\|\Delta_j Q(\nabla (u^a-u^0),\tau^0)\|_{L^{\infty}}dt &\lesssim  2^{-\frac 1 4j}\|u^a-u^0\|_{L_T^{\infty}(L^{4})}\|\tau^0\|_{L_T^{\infty}(L^{\infty})} \\ \notag
			&~~~+  2^{-\frac 1 8 j}\|u^a-u^0\|_{L_T^{\infty}(L^{8})}\|\nabla \tau^0\|_{L_T^{\infty}(L^{2})} \\ \notag
   &\lesssim_T  2^{-\frac 1 8 j}a^{\frac 1 {2}}.
		\end{align}
		Combining the estimates \eqref{5da5}-\eqref{5da10}, we conclude that
		\begin{align}\label{5da11}
			\|\tau^a-\tau^0\|_{L^\infty_T(B^0_{\infty,1})} &\lesssim \|\Delta_{-1}(\tau^a-\tau^0)\|_{L^\infty_T(L^{\infty})}+\sum_{j\geq 0}\|\Delta_j(\tau^a-\tau^0)\|_{L^{\infty}_T(L^{\infty})} \\ \notag
			&\lesssim_T a^{\frac 1 {2}}.
		\end{align}
\textbf{Step 2: Vanishing damping limit in $\dot{H}^1$.} \\
Let $(u_N^a,\tau_N^a)$ be a solution of $\eqref{eq0}$ with the initial data $(S_N u_0,S_N\tau_0)$ and $a\in[0,1]$. Then we get 
\begin{align}\label{5eqD2}
\left\{\begin{array}{l}
\partial_t(u^a-u_N^a)+u^a\cdot\nabla (u^a-u_N^a)+\nabla (P_{u^a}-P_{u_N^a}) \\[1ex]
={\rm div}~(\tau^a-\tau_N^a)-(u^a-u_N^a)\cdot\nabla u_N^a,~~~~{\rm div}~(u^a-u_N^a)=0,\\[1ex]
\partial_t(\tau^a-\tau_N^a)+u^a\cdot\nabla(\tau^a-\tau_N^a)-\Delta(\tau^a-\tau_N^a)+a(\tau^a-\tau_N^a)+Q(\nabla u^a,\tau^a-\tau_N^a)\\[1ex]
=D(u^a-u_N^a)-(u^a-u_N^a)\cdot\nabla\tau_N^a-Q(\nabla (u^a-u_N^a),\tau_N^a),\\[1ex]
(u^a-u_N^a)|_{t=0}=(Id-S_N)u_0,~~(\tau^a-\tau_N^a)|_{t=0}=(Id-S_N)\tau_0. \\[1ex]
\end{array}\right.
\end{align}
Taking $L^2$ inner product for $\eqref{5eqD2}_1$ with $u^a-u_N^a$, we obtain
		\begin{align}\label{5da12}
		\frac 1 2\frac{d}{dt}\|u^a-u_N^a\|^2_{L^2} = \langle{\rm div}~(\tau^a-\tau_N^a),u^a-u_N^a\rangle-\langle(u^a-u_N^a)\cdot\nabla u_N^a,u^a-u_N^a\rangle.
		\end{align}
Taking $L^2$ inner product for $\eqref{5eqD2}_2$ with $\tau^a-\tau_N^a$, we get
		\begin{align}\label{5da13}
		\frac 1 2\frac{d}{dt}\|\tau^a-\tau_N^a\|^2_{L^2}+\|\nabla(\tau^a-\tau_N^a)\|^2_{L^2}+a\|\tau^a-\tau_N^a\|^2_{L^2} &=\langle D (u^a-u_N^a),\tau^a-\tau_N^a\rangle   \\ \notag
		&~~~-\langle(u^a-u_N^a)\cdot\nabla \tau_N^a,\tau^a-\tau_N^a\rangle\\ \notag
		&~~~- \langle Q(\nabla u^a,\tau^a-\tau_N^a),\tau^a-\tau_N^a\rangle \\ \notag
		&~~~-\langle Q(\nabla (u^a-u_N^a),\tau_N^a),\tau^a-\tau^0\rangle.
		\end{align}
By virtue of the cancellation $\langle{\rm div}~(\tau^a-\tau_N^a),u^a-u_N^a\rangle+\langle D(u^a-u_N^a),\tau^a-\tau_N^a\rangle=0$, we infer from \eqref{5da12} and \eqref{5da13} that
		\begin{align*}
		&\frac 1 2\frac{d}{dt}\|(u^a-u_N^a,\tau^a-\tau_N^a)\|^2_{L^2}+\|\nabla(\tau^a-\tau_N^a)\|^2_{L^2}+a\|\tau^a-\tau_N^a\|^2_{L^2} \\ \notag
  =&-\langle(u^a-u_N^a)\cdot\nabla u_N^a,u^a-u_N^a\rangle  -\langle(u^a-u_N^a)\cdot\nabla \tau_N^a,\tau^a-\tau_N^a\rangle \\ \notag
		& -\langle Q(\nabla u^a,\tau^a-\tau_N^a),\tau^a-\tau_N^a\rangle -\langle Q(\nabla (u^a-u_N^a),\tau_N^a),\tau^a-\tau_N^a\rangle \\ \notag
  \lesssim& \|\nabla u_N^a\|_{L^\infty}\|u^a-u_N^a\|^2_{L^2}+\|\tau_N^a\|_{L^\infty}\|u^a-u_N^a\|_{L^2}\|\nabla(\tau^a-\tau_N^a)\|_{L^2} \\ \notag
  &+\|\nabla u^a\|_{L^\infty}\|\tau^a-\tau_N^a\|^2_{L^2}+\|\nabla\tau_N^a\|_{L^3}\|u^a-u_N^a\|_{L^2}\|\tau^a-\tau_N^a\|_{L^6} \\ \notag
\leq& \frac 1 2\|\nabla(\tau^a-\tau_N^a)\|^2_{L^2}+C\|(u^a-u_N^a,\tau^a-\tau_N^a)\|^2_{L^2}(\|\nabla (u_N^a,u^a)\|_{L^\infty}+\|\tau_N^a\|^2_{L^\infty}+\|\nabla\tau_N^a\|^2_{L^3}).	
  \end{align*}
  This together with Proposition \ref{5prop1} ensures that
  \begin{align}\label{5da14}
      &\|(u^a-u_N^a,\tau^a-\tau_N^a)\|^2_{L^2}+\int_0^T
    \left(\|\nabla(\tau^a-\tau_N^a)\|^2_{L^2}+a\|\tau^a-\tau_N^a\|^2_{L^2}\right)dt \\ \notag
      \lesssim_T& \|(u^a-u_N^a,\tau^a-\tau_N^a)(0)\|^2_{L^2} \\ \notag
      \lesssim_T& 2^{-2N}\|(Id-S_N)(u_0,\tau_0)\|^2_{H^1}.
  \end{align}
  Taking $L^2$ inner product for $\eqref{5eqD2}_1$ with $-\Delta(u^a-u_N^a)$, we obtain
		\begin{align}\label{5da15}
		\frac 1 2\frac{d}{dt}\|\nabla(u^a-u_N^a)\|^2_{L^2} &= -\langle{\rm div}~(\tau^a-\tau_N^a),\Delta(u^a-u_N^a)\rangle+\langle(u^a-u_N^a)\cdot\nabla u_N^a,\Delta(u^a-u_N^a)\rangle \\ \notag
  &~~~+\langle u^a\cdot\nabla (u^a-u_N^a),\Delta(u^a-u_N^a)\rangle.
		\end{align}
  By virtue of integration by parts, we have
   \begin{align*}
 |\langle u^a\cdot\nabla (u^a-u_N^a),\Delta(u^a-u_N^a)\rangle|&=|\langle \nabla u^a\cdot\nabla (u^a-u_N^a),\nabla(u^a-u_N^a)\rangle| \\ 
 &\lesssim \|\nabla u^a\|_{L^\infty}\|\nabla(u^a-u_N^a)\|^2_{L^2}.
	\end{align*}
  and 
  \begin{align*}
		|\langle(u^a-u_N^a)\cdot\nabla u_N^a,\Delta(u^a-u_N^a)\rangle| &\leq|\langle \nabla (u^a-u_N^a)\cdot\nabla u_N^a, \nabla(u^a-u_N^a)\rangle| \\ 
  &~~~+|\langle  (u^a-u_N^a)\cdot\nabla \nabla u_N^a,\nabla(u^a-u_N^a)\rangle| \\ 
 &\lesssim \|\nabla u_N^a\|_{L^\infty}\|\nabla(u^a-u_N^a)\|^2_{L^2} \\ 
 &~~~+\|\nabla^2 u_N^a\|_{L^\infty}\|\nabla(u^a-u_N^a)\|_{L^2}\|u^a-u_N^a\|_{L^2}.
  \end{align*}  
Taking $L^2$ inner product for $\eqref{5eqD2}_2$ with $-\Delta(\tau^a-\tau_N^a)$, we get
		\begin{align}\label{5da16}
		\frac 1 2\frac{d}{dt}\|\nabla(\tau^a-\tau_N^a)\|^2_{L^2}+\|\nabla^2(\tau^a-\tau_N^a)\|^2_{L^2} 
  &\leq-\langle D (u^a-u_N^a),\Delta(\tau^a-\tau_N^a)\rangle   \\ \notag
  &~~~+\langle(u^a-u_N^a)\cdot\nabla \tau_N^a,\Delta(\tau^a-\tau_N^a)\rangle\\ \notag
		&~~~+\langle u^a\cdot\nabla (\tau^a-\tau_N^a),\Delta(\tau^a-\tau_N^a)\rangle\\ \notag
		&~~~+\langle Q(\nabla u^a,\tau^a-\tau_N^a),\Delta(\tau^a-\tau_N^a)\rangle \\ \notag
		&~~~+\langle Q(\nabla (u^a-u_N^a),\tau_N^a),\Delta(\tau^a-\tau_N^a)\rangle.
		\end{align}
   By virtue of integration by parts, we have
   \begin{align*}
 |\langle u^a\cdot\nabla (\tau^a-\tau_N^a),\Delta(\tau^a-\tau_N^a)\rangle|&=|\langle \nabla u^a\cdot\nabla (\tau^a-\tau_N^a),\nabla(\tau^a-\tau_N^a)\rangle| \\ 
 &\lesssim \|\nabla u^a\|_{L^\infty}\|\nabla(\tau^a-\tau_N^a)\|^2_{L^2}.
	\end{align*}
  By H\"older's inequality, we obtain
  \begin{align*}
		|\langle(u^a-u_N^a)\cdot\nabla \tau_N^a,\Delta(\tau^a-\tau_N^a)\rangle| \lesssim \|\nabla \tau_N^a\|_{L^3}\|u^a-u_N^a\|_{L^6}\|\Delta(\tau^a-\tau_N^a)\|_{L^2}.
  \end{align*} 
  Moreover, one can easily deduce that
  \begin{align*}
		&|\langle Q(\nabla u^a,\tau^a-\tau_N^a),\Delta(\tau^a-\tau_N^a)\rangle +\langle Q(\nabla (u^a-u_N^a),\tau_N^a),\Delta(\tau^a-\tau_N^a)\rangle| \\
  \lesssim &(\|\nabla u^a\|_{L^3}\|\tau^a-\tau_N^a\|_{L^6}+\|\nabla (u^a-u_N^a)\|_{L^2}\|\tau_N^a\|_{L^\infty})\|\Delta(\tau^a-\tau_N^a)\|_{L^2}.
  \end{align*} 
By virtue of cancellation $\langle{\rm div}~(\tau^a-\tau_N^a),\Delta(u^a-u_N^a)\rangle+\langle D(u^a-u_N^a),\Delta(\tau^a-\tau_N^a)\rangle=0$, we infer from \eqref{5da15} and \eqref{5da16} that
		\begin{align*}
		&\frac{d}{dt}\|\nabla(u^a-u_N^a,\tau^a-\tau_N^a)\|^2_{L^2}+\|\nabla^2(\tau^a-\tau_N^a)\|^2_{L^2}  \\ \notag
  \lesssim &\|\nabla^2 u_N^a\|_{L^\infty}^2\|u^a-u_N^a\|_{L^2}^2\\ \notag
 &+\|\nabla(u^a-u_N^a,\tau^a-\tau_N^a)\|^2_{L^2}(1+\|\nabla (u_N^a,u^a)\|_{L^\infty}+\|\tau_N^a\|^2_{L^\infty}+\|\nabla\tau_N^a\|^2_{L^3}+\|\nabla u^a\|^2_{L^3}).	
  \end{align*}
  By Corollary \ref{5cor1}, we have
  \begin{align*}
&\|\nabla^2 u_N^a\|_{L^\infty}^2\|u^a-u_N^a\|_{L^2}^2 \\
\lesssim_T &\left(\|(S_N u_0,S_N\tau_0)\|_{H^2}^2+\|(\nabla S_N u_0,S_N\tau_0)\|_{B^1_{\infty,1}}^2\right)2^{-2N}\|(Id-S_N)(u_0,\tau_0)\|^2_{H^1}\\
\lesssim_T &\|(Id-S_N)(u_0,\tau_0)\|^2_{H^1}.
  \end{align*}
  Using Gronwall's inequality we have
  \begin{align}\label{5da17}
      \|\nabla(u^a-u_N^a,\tau^a-\tau_N^a)\|^2_{L^2}+\int_0^T\|\nabla^2(\tau^a-\tau_N^a)\|^2_{L^2}dt \lesssim_T \|(Id-S_N)(u_0,\tau_0)\|^2_{H^1}.
  \end{align}

According to  \eqref{eq0}, we get
\begin{align}\label{5eqD3}
\left\{\begin{array}{l}
\partial_t(u_N^a-u_N^0)+u_N^a\cdot\nabla (u_N^a-u_N^0)+\nabla (P_{u_N^a}-P_{u_N^0}) \\[1ex]
={\rm div}~(\tau_N^a-\tau_N^0)-(u_N^a-u_N^0)\cdot\nabla u_N^0,~~~~{\rm div}~(u_N^a-u_N^0)=0,\\[1ex]
\partial_t(\tau_N^a-\tau_N^0)+u_N^a\cdot\nabla(\tau_N^a-\tau_N^0)-\Delta(\tau_N^a-\tau_N^0)+Q(\nabla u_N^a,\tau_N^a-\tau_N^0)\\[1ex]
=-a\tau_N^a+D(u_N^a-u_N^0)-(u_N^a-u_N^0)\cdot\nabla\tau_N^0-Q(\nabla (u_N^a-u_N^0),\tau_N^0),\\[1ex]
(u_N^a-u_N^0)|_{t=0}=0,~~(\tau_N^a-\tau_N^0)|_{t=0}=0. \\[1ex]
\end{array}\right.
\end{align}
Similar to \textbf{Step 1}, we have
  \begin{align}\label{5da18}
      \|(u_N^a-u_N^0,\tau_N^a-\tau_N^0)\|^2_{L^2}+\int_0^T\|\nabla(\tau_N^a-\tau_N^0)\|^2_{L^2}dt\lesssim_T a^2.
  \end{align}
   Taking $L^2$ inner product for $\eqref{5eqD3}_1$ with $-\Delta(u_N^a-u_N^0)$, we obtain
		\begin{align}\label{5da19}
		\frac 1 2\frac{d}{dt}\|\nabla(u_N^a-u_N^0)\|^2_{L^2} &= -\langle{\rm div}~(\tau_N^a-\tau_N^0),\Delta(u_N^a-u_N^0)\rangle+\langle(u_N^a-u_N^0)\cdot\nabla u_N^0,\Delta(u_N^a-u_N^0)\rangle \\ \notag
  &~~~+\langle u_N^a\cdot\nabla (u_N^a-u_N^0),\Delta(u_N^a-u_N^0)\rangle.
		\end{align}
  By virtue of integration by parts, we have
   \begin{align*}
 |\langle u_N^a\cdot\nabla (u_N^a-u_N^0),\Delta(u_N^a-u_N^0)\rangle|&=|\langle \nabla u_N^a\cdot\nabla (u_N^a-u_N^0),\nabla(u_N^a-u_N^0)\rangle| \\ 
 &\lesssim \|\nabla u_N^a\|_{L^\infty}\|\nabla(u_N^a-u_N^0)\|^2_{L^2}.
	\end{align*}
  and 
  \begin{align*}
		|\langle(u_N^a-u_N^0)\cdot\nabla u_N^0,\Delta(u_N^a-u_N^0)\rangle| &\leq|\langle \nabla (u_N^a-u_N^0)\cdot\nabla u_N^0, \nabla(u_N^a-u_N^0)\rangle| \\ 
  &~~~+|\langle  (u_N^a-u_N^0)\cdot\nabla \nabla u_N^0,\nabla(u_N^a-u_N^0)\rangle| \\ 
 &\lesssim \|\nabla u_N^0\|_{L^\infty}\|\nabla(u_N^a-u_N^0)\|^2_{L^2} \\ 
 &~~~+\|\nabla^2 u_N^0\|_{L^\infty}\|\nabla(u_N^a-u_N^0)\|_{L^2}\|u_N^a-u_N^0\|_{L^2}.
  \end{align*}  
Taking $L^2$ inner product for $\eqref{5eqD3}_2$ with $-\Delta(\tau_N^a-\tau_N^0)$, we get
		\begin{align}\label{5da20}
		\frac 1 2\frac{d}{dt}\|\nabla(\tau_N^a-\tau_N^0)\|^2_{L^2}+\|\nabla^2(\tau_N^a-\tau_N^0)\|^2_{L^2} 
  &=\langle a\tau_N^a,\Delta(\tau_N^a-\tau_N^0)\rangle   \\ \notag
  &~~~-\langle D (u_N^a-u_N^0),\Delta(\tau_N^a-\tau_N^0)\rangle   \\ \notag
  &~~~+\langle(u_N^a-u_N^0)\cdot\nabla \tau_N^0,\Delta(\tau_N^a-\tau_N^0)\rangle\\ \notag
		&~~~+\langle u_N^a\cdot\nabla (\tau_N^a-\tau_N^0),\Delta(\tau_N^a-\tau_N^0)\rangle\\ \notag
		&~~~+\langle Q(\nabla u_N^a,\tau_N^a-\tau_N^0),\Delta(\tau_N^a-\tau_N^0)\rangle \\ \notag
		&~~~+\langle Q(\nabla (u_N^a-u_N^0),\tau_N^0),\Delta(\tau_N^a-\tau_N^0)\rangle.
		\end{align}
   By virtue of integration by parts, we have
   \begin{align*}
 |\langle u_N^a\cdot\nabla (\tau_N^a-\tau_N^0),\Delta(\tau_N^a-\tau_N^0)\rangle|&=|\langle \nabla u_N^a\cdot\nabla (\tau_N^a-\tau_N^0),\nabla(\tau_N^a-\tau_N^0)\rangle| \\ 
 &\lesssim \|\nabla u_N^a\|_{L^\infty}\|\nabla(\tau_N^a-\tau_N^0)\|^2_{L^2}.
	\end{align*}
  and 
  \begin{align*}
		|\langle(u_N^a-u_N^0)\cdot\nabla \tau_N^0,\Delta(\tau_N^a-\tau_N^0)\rangle| \lesssim \|\nabla \tau_N^0\|_{L^3}\|u_N^a-u_N^0\|_{L^6}\|\Delta(\tau_N^a-\tau_N^0)\|_{L^2}.
  \end{align*} 
  One can easily deduce that
   \begin{align*}
		|\langle a\tau_N^a,\Delta(\tau_N^a-\tau_N^0)\rangle| \lesssim a\|\tau_N^a\|_{L^2}\|\Delta(\tau_N^a-\tau_N^0)\|_{L^2}.
  \end{align*} 
  and
  \begin{align*}
		&|\langle Q(\nabla u_N^a,\tau_N^a-\tau_N^0),\Delta(\tau_N^a-\tau_N^0)\rangle +\langle Q(\nabla (u_N^a-u_N^0),\tau_N^0),\Delta(\tau_N^a-\tau_N^0)\rangle| \\
  \lesssim &(\|\nabla u_N^a\|_{L^3}\|\tau_N^a-\tau_N^0\|_{L^6}+\|\nabla (u_N^a-u_N^0)\|_{L^2}\|\tau_N^0\|_{L^\infty})\|\Delta(\tau_N^a-\tau_N^0)\|_{L^2}.
  \end{align*} 
By virtue of cancellation $\langle{\rm div}~(\tau_N^a-\tau_N^0),\Delta(u_N^a-u_N^0)\rangle+\langle D(u_N^a-u_N^0),\Delta(\tau_N^a-\tau_N^0)\rangle=0$, we infer from \eqref{5da19} and \eqref{5da20} that
		\begin{align*}
		&\frac{d}{dt}\|\nabla(u_N^a-u_N^0,\tau_N^a-\tau_N^0)\|^2_{L^2}+\|\nabla^2(\tau_N^a-\tau_N^0)\|^2_{L^2}  \\ \notag
  \lesssim &\|\nabla^2 u_N^0\|^2_{L^\infty}\|u_N^a-u_N^0\|^2_{L^2}+a^2\|\tau_N^a\|^2_{L^2}\\ \notag
 &+\|\nabla(u_N^a-u_N^0,\tau_N^a-\tau_N^0)\|^2_{L^2}\left(1+\|\nabla (u_N^a,u_N^0)\|_{L^\infty}+\|\tau_N^0\|^2_{L^\infty}+\|\nabla\tau_N^0\|^2_{L^3}+\|\nabla u_N^a\|^2_{L^3}\right).	
  \end{align*}
  This together with Proposition \ref{5prop1} and Corollary \ref{5cor1} ensures that
  \begin{align}\label{5da21}
      \|\nabla(u_N^a-u_N^0,\tau_N^a-\tau_N^0)\|^2_{L^2}+\int_0^T\|\nabla^2(\tau_N^a-\tau_N^0)\|^2_{L^2}dt \lesssim_T a^2 2^{2N}.
  \end{align}
  Combining \eqref{5da17} and \eqref{5da21}, we conclude that
   \begin{align}\label{5da22}
      \|\nabla(u^a-u^0,\tau^a-\tau^0)\|^2_{L^2}+\int_0^T\|\nabla^2(\tau^a-\tau^0)\|^2_{L^2}dt \lesssim_T (a^2 2^{2N}+\|(Id-S_N)(u_0,\tau_0)\|^2_{H^1}).
  \end{align}
  This implies that
  $$\lim_{a\rightarrow0}\|(u^a,\tau^a)-(u^0,\tau^0)\|_{L^{\infty}(0,T;\dot{H}^1)}=0.
  $$
  \textbf{Step 3: Vanishing damping limit of the velocity in $B^1_{\infty,1}$.} \\
  We deduce from \eqref{5eqD2} that
  \begin{align*}
\partial_t \Delta_j(u^a-u_N^a)+u^a\cdot\nabla \Delta_j(u^a-u_N^a)
=& -\nabla \Delta_j(P_{u^a}-P_{u_N^a})
+{\rm div}~\Delta_j(\tau^a-\tau_N^a)\\
&-\Delta_j[(u^a-u_N^a)\cdot\nabla u_N^a] 
+ [u^a\cdot\nabla, \Delta_j](u^a-u_N^a)
  \end{align*}
  Using Lemmas \ref{CI},\ref{PL} and \ref{P}, we get that
\begin{align*}
\|u^a-u_N^a\|_{L^{\infty}_T(B^0_{\infty,1})}&\lesssim \|(Id-S_N)u_0\|_{B^0_{\infty,1}}+\int_0^T
\|\nabla (P_{u^a}-P_{u_N^a})\|_{B^0_{\infty,1}}
+\|\tau^a-\tau_N^a\|_{B^1_{\infty,1}}
dt \\ \notag
&~~~+\int_0^T\|(u^a-u_N^a)\cdot\nabla u_N^a\|_{B^0_{\infty,1}} + \sum_{j} \|[u^a\cdot\nabla, \Delta_j](u^a-u_N^a)\|_{L^\infty}dt\\
&\lesssim \|(Id-S_N)u_0\|_{B^0_{\infty,1}}+\int_0^T \|u^a-u_N^a\|_{B^0_{\infty,1}}(\|u^a\|_{B^1_{\infty,1}}+\|u_N^a\|_{B^1_{\infty,1}}) dt \\ \notag
&~~~+\int_0^T\|\tau^a-\tau_N^a\|_{B^1_{\infty,1}}dt,
\end{align*}
which implies that
\begin{align}\label{5da23}
\|u^a-u_N^a\|_{L^{\infty}_T(B^0_{\infty,1})}&\lesssim_T (\|(Id-S_N)u_0\|_{B^0_{\infty,1}}+\|\tau^a-\tau_N^a\|_{L^{1}_T(B^1_{\infty,1})}) \\ \notag
&\lesssim_T (2^{-N}\|(Id-S_N)u_0\|_{B^1_{\infty,1}}+\|\tau^a-\tau_N^a\|_{L^{1}_T(B^1_{\infty,1})}).
\end{align}
By virtue of Lemmas \ref{H} and \ref{PL}, for any $a\in[0,1]$, we infer from \eqref{5eqD2} and \eqref{5da14} that
\begin{align*}
\|\tau^a-\tau_N^a\|_{L^{\infty}_T(B^{-1}_{\infty,1})\cap L^{1}_T(B^1_{\infty,1})} 
&\lesssim \|\tau^a-\tau_N^a\|_{L^{\infty}_T(B^{-1}_{\infty,1})}+ C_T 2^{-N}\|(Id-S_N)(u_0,\tau_0)\|_{H^{1}} \\ \notag
&~~~+\int_0^T \sum_{j\geq 0}2^{j}\|\Delta_j (\tau^a-\tau_N^a)\|_{L^{\infty}}dt \\ \notag
&\lesssim 2^{-N}\|(Id-S_N)\tau_0\|_{B^{0}_{\infty,1}} + C_T 2^{-N}\|(Id-S_N)(u_0,\tau_0)\|_{H^{1}} \\ \notag
&~~~+ \int_0^T \|Q(\nabla u^a,\tau^a-\tau_N^a)\|_{B^{-1}_{\infty,1}}+\|Q(\nabla (u^a-u_N^a),\tau_N^a)\|_{B^{-1}_{\infty,1}}dt\\ \notag
&~~~+\int_0^T\|u^a\cdot\nabla(\tau^a-\tau_N^a)\|_{B^{-1}_{\infty,1}}+\|(u^a-u_N^a)\cdot\nabla\tau_N^a\|_{B^{-1}_{\infty,1}}dt \\ \notag
&~~~+\int_0^T\|u^a-u_N^a\|_{B^{0}_{\infty,1}}dt.
\end{align*}
By Lemma \ref{PL}, we deduce that
\begin{align*}
  \int_0^T\|u^a\cdot\nabla(\tau^a-\tau_N^a)\|_{B^{-1}_{\infty,1}}dt &\lesssim \int_0^T\|u^a(\tau^a-\tau_N^a)\|_{B^{0}_{\infty,1}}dt \\ 
  &\lesssim \int_0^T\|u^a\|_{B^{1}_{\infty,1}}\|\tau^a-\tau_N^a\|_{B^{0}_{\infty,1}}dt \\
  &\lesssim \ep \int_0^T\|\tau^a-\tau_N^a\|_{B^{1}_{\infty,1}}dt,
\end{align*}
and
\begin{align*}
  \int_0^T \|Q(\nabla u^a,\tau^a-\tau_N^a)\|_{B^{-1}_{\infty,1}}dt &\lesssim \int_0^T\|\nabla u^a (\tau^a-\tau_N^a)\|_{B^{0}_{\infty,1}}dt\\
&\lesssim \int_0^T \|u^a\|_{B^{1}_{\infty,1}}\|\tau^a-\tau_N^a\|_{B^{1}_{\infty,1}}dt \\
  &\lesssim \ep \int_0^T\|\tau^a-\tau_N^a\|_{B^{1}_{\infty,1}}dt,
\end{align*}
and 
\begin{align*}
  \int_0^T\|(u^a-u_N^a)\cdot\nabla\tau_N^a\|_{B^{-1}_{\infty,1}}dt\lesssim  \int_0^T\|u^a-u_N^a\|_{B^{0}_{\infty,1}}\|\tau_N^a\|_{B^{\frac 1 4}_{\infty,1}}dt.
\end{align*}
By Lemma \ref{T} and \ref{R}, we have
\begin{align*}
  \int_0^T \|Q(\nabla (u^a-u_N^a),\tau_N^a)\|_{B^{-1}_{\infty,1}}dt
  &\lesssim  \int_0^T\|u^a-u_N^a\|_{B^{0}_{\infty,1}}\|\tau_N^a\|_{B^{\frac 1 4}_{\infty,1}}+\|Q(u^a-u_N^a,\nabla \tau_N^a)\|_{B^{-1}_{\infty,1}}dt \\
  &\lesssim  \int_0^T\|u^a-u_N^a\|_{B^{0}_{\infty,1}}\|\tau_N^a\|_{B^{\frac 1 4}_{\infty,1}}+\|R(u^a-u_N^a,\nabla \tau_N^a)\|_{B^{0}_{3,1}}dt \\
  &\lesssim  \int_0^T\|u^a-u_N^a\|_{B^{0}_{\infty,1}}\|\tau_N^a\|_{H^{2}}dt.
\end{align*}
Hence we have 
\begin{align}\label{5da24}
\|\tau^a-\tau_N^a\|_{L^{\infty}_T(B^{-1}_{\infty,1})\cap L^{1}_T(B^1_{\infty,1})} 
&\lesssim_{T} 2^{-N}\|(Id-S_N)\tau_0\|_{B^{0}_{\infty,1}}+ 2^{-N}\|(Id-S_N)(u_0,\tau_0)\|_{H^{1}} \notag\\
&~~~ + \int_0^T\|u^a-u_N^a\|_{B^{0}_{\infty,1}}(1+\|\tau_N^a\|_{H^{2}})dt.
\end{align}
Combining \eqref{5da23} and \eqref{5da24}, and using Gronwall's inequality we conclude that
\begin{align}\label{5da25}
&\|u^a-u_N^a\|_{L^{\infty}_T(B^0_{\infty,1})}+\|\tau^a-\tau_N^a\|_{L^{\infty}_T(B^{-1}_{\infty,1})\cap L^{1}_T(B^1_{\infty,1})} \\ \notag
\lesssim_T& 2^{-N}(\|(Id-S_N)(u_0,\tau_0)\|_{B^{1}_{\infty,1}\times B^{0}_{\infty,1}} + \|(Id-S_N)(u_0,\tau_0)\|_{H^{1}}).
\end{align}

According to \eqref{eq0}, we infer that
\begin{align}\label{5eqD4}
\partial_t (\Gamma^a-\Gamma_N^a) + u^a\cdot\nabla(\Gamma^a-\Gamma_N^a) + \frac 1 2(\Gamma^a-\Gamma_N^a) &= -(u^a-u_N^a)\cdot\nabla\Gamma_N^a+(a-\frac{1}{2})\mathcal{R}(\tau^a-\tau_N^a)  \\ \notag
&~~~+ [\mathcal{R}, u^a\cdot\nabla](\tau^a-\tau_N^a)+ [\mathcal{R}, (u^a-u_N^a)\cdot\nabla]\tau_N^a\\ \notag
&~~~+\mathcal{R}Q(\nabla u^a,\tau^a-\tau_N^a)+\mathcal{R}Q(\nabla (u^a-u_N^a),\tau_N^a)\\ \notag
&~~~+ \omega^a\cdot\nabla (u^a-u_N^a)+ (\omega^a-\omega_N^a)\cdot\nabla u_N^a.    
\end{align}
where $\Gamma^a = \omega^a - \mathcal{R}\tau^a$ and $\Gamma_N^a = \omega_N^a - \mathcal{R}\tau_N^a$. 
Then by Proposition \ref{5prop1}, we have
\begin{align}\label{5da26}
\|\Gamma^a-\Gamma_N^a\|_{B^0_{\infty,1}} &\lesssim \|(Id-S_N)\Gamma_0\|_{B^0_{\infty,1}} e^{-\frac {1}{4}T} + \int_0^T e^{-\frac 1 4(T-t)}\|\mathcal{R}(\tau^a-\tau_N^a)\|_{B^0_{\infty,1}} dt\\ \notag
&~~~+ \int_0^T e^{-\frac 1 4(T-t)}\|(u^a-u_N^a)\cdot\nabla\Gamma_N^a\|_{B^0_{\infty,1}}dt\\ \notag
&~~~+ \int_0^T e^{-\frac 1 4(T-t)}(\|[\mathcal{R}, u^a\cdot\nabla](\tau^a-\tau_N^a)\|_{B^0_{\infty,1}}+ \|[\mathcal{R}, (u^a-u_N^a)\cdot\nabla]\tau_N^a\|_{B^0_{\infty,1}})dt\\ \notag
&~~~+ \int_0^T e^{-\frac 1 4(T-t)}(\|\mathcal{R}Q(\nabla u^a,\tau^a-\tau_N^a)\|_{B^0_{\infty,1}}+ \|\mathcal{R}Q(\nabla (u^a-u_N^a),\tau_N^a)\|_{B^0_{\infty,1}})dt\\ \notag
&~~~+ \int_0^T e^{-\frac 1 4(T-t)}(\|\omega^a\cdot\nabla (u^a-u_N^a)\|_{B^0_{\infty,1}}+ \|(\omega^a-\omega_N^a)\cdot\nabla u_N^a\|_{B^0_{\infty,1}})dt.
\end{align}
By virtue of Corollary \ref{5cor1}, Lemmas \ref{CZ} and \ref{PL}, we deduce from \eqref{5da25} and \eqref{5da14} that
\begin{align*}
    &\int_0^T e^{-\frac 1 4(T-t)}\|\mathcal{R}(\tau^a-\tau_N^a)\|_{B^0_{\infty,1}} dt \\
    \lesssim &\int_0^T e^{-\frac 1 4(T-t)}\|\tau^a-\tau_N^a\|_{L^2\cap B^0_{\infty,1}} dt  \\
    \lesssim_T& 2^{-N}(\|(Id-S_N)(u_0,\tau_0)\|_{B^{1}_{\infty,1}\times B^{0}_{\infty,1}} + \|(Id-S_N)(u_0,\tau_0)\|_{H^{1}})
\end{align*}
and 
\begin{align*}
    &\int_0^T e^{-\frac 1 4(T-t)}\|(u^a-u_N^a)\cdot\nabla\Gamma_N^a\|_{B^0_{\infty,1}} dt \\
    \lesssim &\int_0^T e^{-\frac 1 4(T-t)}\|u^a-u_N^a\|_{B^0_{\infty,1}}\|\Gamma_N^a\|_{B^1_{\infty,1}} dt  \\
    \lesssim_T & \|(Id-S_N)(u_0,\tau_0)\|_{B^{1}_{\infty,1}\times B^{0}_{\infty,1}} + \|(Id-S_N)(u_0,\tau_0)\|_{H^{1}}.
\end{align*}
By virtue of Proposition \ref{5prop1} and Lemma \ref{CR}, we deduce from \eqref{5da25}, \eqref{5da14} and \eqref{5da17} that
\begin{align*}
    &\int_0^T e^{-\frac 1 4(T-t)}(\|[\mathcal{R}, u^a\cdot\nabla](\tau^a-\tau_N^a)\|_{B^0_{\infty,1}}+ \|[\mathcal{R}, (u^a-u_N^a)\cdot\nabla]\tau_N^a\|_{B^0_{\infty,1}})dt\\
    \lesssim &\int_0^T e^{-\frac 1 4(T-t)}\|u^a\|_{H^1\cap B^1_{\infty,1}}\|\tau^a-\tau_N^a\|_{L^2\cap B^{\frac{1}{2}}_{\infty,1}} dt \\ \notag
    &+\int_0^T e^{-\frac 1 4(T-t)}\left(\|u^a-u_N^a\|_{H^1}+\|\Gamma^a-\Gamma_N^a\|_{B^0_{\infty,1}} +\|\tau^a-\tau_N^a\|_{L^2\cap B^0_{\infty,1}}\right)\|\tau_N^a\|_{L^2\cap B^0_{\infty,1}}dt  \\
    \lesssim & C_T\left(\|(Id-S_N)(u_0,\tau_0)\|_{B^{1}_{\infty,1}\times B^{0}_{\infty,1}} + \|(Id-S_N)(u_0,\tau_0)\|_{H^{1}}\right)+\ep\|\Gamma^a-\Gamma_N^a\|_{L_T^{\infty}(B^0_{\infty,1})}.
\end{align*}
and
\begin{align*}
    &\int_0^T e^{-\frac 1 4(T-t)}(\|\mathcal{R}Q(\nabla u^a,\tau^a-\tau_N^a)\|_{B^0_{\infty,1}}+ \|\mathcal{R}Q(\nabla (u^a-u_N^a),\tau_N^a)\|_{B^0_{\infty,1}})dt \\
   \lesssim &\int_0^T e^{-\frac 1 4(T-t)}\|u^a\|_{H^1\cap B^1_{\infty,1}}\|\tau^a-\tau_N^a\|_{L^2\cap B^{\frac 1 2}_{\infty,1}} dt \\ \notag
    &+\int_0^T e^{-\frac 1 4(T-t)}(\|u^a-u_N^a\|_{H^1}+\|\Gamma^a-\Gamma_N^a\|_{B^0_{\infty,1}} +\|\tau^a-\tau_N^a\|_{L^2\cap B^0_{\infty,1}})\|\tau_N^a\|_{L^2\cap B^{\frac 1 4}_{\infty,1}}dt   \\
    \lesssim & C_T(\|(Id-S_N)(u_0,\tau_0)\|_{B^{1}_{\infty,1}\times B^{0}_{\infty,1}} + \|(Id-S_N)(u_0,\tau_0)\|_{H^{1}})+\ep\|\Gamma^a-\Gamma_N^a\|_{L_T^{\infty}(B^0_{\infty,1})}.
\end{align*}
By virtue of Proposition \ref{5prop1}, Lemmas \ref{CZ} and \ref{PL}, we deduce from \eqref{5da14} and \eqref{5da25} that
\begin{align*}
    &\int_0^T e^{-\frac 1 4(T-t)}(\|\omega^a\cdot\nabla (u^a-u_N^a)\|_{B^0_{\infty,1}}+ \|(\omega^a-\omega_N^a)\cdot\nabla u_N^a\|_{B^0_{\infty,1}})dt \\
    \lesssim &\int_0^T e^{-\frac 1 4(T-t)}\|u^a-u_N^a\|_{B^1_{\infty,1}}(\|\omega^a\|_{B^0_{\infty,1}}+\|u_N^a\|_{B^1_{\infty,1}}) dt  \\
     \lesssim&\int_0^T e^{-\frac 1 4(T-t)}(\|u^a-u_N^a\|_{L^2}+\|\Gamma^a-\Gamma_N^a\|_{B^0_{\infty,1}} +\|\tau^a-\tau_N^a\|_{L^2\cap B^0_{\infty,1}})\|(u^a,u_N^a)\|_{B^1_{\infty,1}}dt   \\
    \lesssim & C_T2^{-N}(\|(Id-S_N)(u_0,\tau_0)\|_{B^{1}_{\infty,1}\times B^{0}_{\infty,1}} + \|(Id-S_N)(u_0,\tau_0)\|_{H^{1}})+\ep\|\Gamma^a-\Gamma_N^a\|_{L_T^{\infty}(B^0_{\infty,1})}.
\end{align*}
The above inequalities ensure that
\begin{align}\label{5da27}
\|\Gamma^a-\Gamma_N^a\|_{L_T^{\infty}(B^0_{\infty,1})} \lesssim_T \|(Id-S_N)(u_0,\tau_0)\|_{H^{1}\cap B^{1}_{\infty,1} \times H^1\cap B^{0}_{\infty,1}}.
\end{align}

According to \eqref{eq0}, we infer that
\begin{align}\label{5eqD5}
&\partial_t (\Gamma_N^a-\Gamma_N^0) + u_N^a\cdot\nabla(\Gamma_N^a-\Gamma_N^0) + \frac 1 2(\Gamma_N^a-\Gamma_N^0) \\ \notag
=&-(u_N^a-u_N^0)\cdot\nabla\Gamma_N^0-\frac{1}{2}\mathcal{R}(\tau_N^a-\tau_N^0)+a\mathcal{R}\tau_N^a  \\ \notag
&+ [\mathcal{R}, u_N^a\cdot\nabla](\tau_N^a-\tau_N^0)+ [\mathcal{R}, (u_N^a-u_N^0)\cdot\nabla]\tau_N^0\\ \notag
&+\mathcal{R}Q(\nabla u_N^a,\tau_N^a-\tau_N^0)+\mathcal{R}Q(\nabla (u_N^a-u_N^0),\tau_N^0)\\ \notag
&+ \omega_N^a\cdot\nabla (u_N^a-u_N^0)+ (\omega_N^a-\omega_N^0)\cdot\nabla u_N^0.    
\end{align}
where $\Gamma_N^a = \omega_N^a - \mathcal{R}\tau_N^a$. 
Then by Proposition \ref{5prop1}, we have
\begin{align}\label{5da28}
\|\Gamma_N^a-\Gamma_N^0\|_{B^0_{\infty,1}} &\lesssim \int_0^T e^{-\frac 1 4(T-t)}\|\mathcal{R}(\tau_N^a-\tau_N^0)\|_{B^0_{\infty,1}} dt+a\int_0^T e^{-\frac 1 4(T-t)}\|\mathcal{R}\tau_N^a\|_{B^0_{\infty,1}} dt\\ \notag
&~~~+ \int_0^T e^{-\frac 1 4(T-t)}\|(u_N^a-u_N^0)\cdot\nabla\Gamma_N^0\|_{B^0_{\infty,1}}dt\\ \notag
&~~~+ \int_0^T e^{-\frac 1 4(T-t)}(\|[\mathcal{R}, u_N^a\cdot\nabla](\tau_N^a-\tau_N^0)\|_{B^0_{\infty,1}}+ \|[\mathcal{R}, (u_N^a-u_N^0)\cdot\nabla]\tau_N^0\|_{B^0_{\infty,1}})dt\\ \notag
&~~~+ \int_0^T e^{-\frac 1 4(T-t)}(\|\mathcal{R}Q(\nabla u_N^a,\tau_N^a-\tau_N^0)\|_{B^0_{\infty,1}}+ \|\mathcal{R}Q(\nabla (u_N^a-u_N^0),\tau_N^0)\|_{B^0_{\infty,1}})dt\\ \notag
&~~~+ \int_0^T e^{-\frac 1 4(T-t)}(\|\omega_N^a\cdot\nabla (u_N^a-u_N^0)\|_{B^0_{\infty,1}}+ \|(\omega_N^a-\omega_N^0)\cdot\nabla u_N^0\|_{B^0_{\infty,1}})dt.
\end{align}
By virtue of Corollary \ref{5cor1}, Lemmas \ref{CZ} and \ref{PL}, we deduce from \eqref{5da3}, \eqref{5da3.5} and \eqref{5da11} that
\begin{align*}
    &\int_0^T e^{-\frac 1 4(T-t)}\|\mathcal{R}(\tau_N^a-\tau_N^0)\|_{B^0_{\infty,1}} dt +a\int_0^T e^{-\frac 1 4(T-t)}\|\mathcal{R}\tau_N^a\|_{B^0_{\infty,1}} dt\\
    \lesssim &\int_0^T e^{-\frac 1 4(T-t)}\|\tau_N^a-\tau_N^0\|_{L^2\cap B^0_{\infty,1}} dt+a  \lesssim_T a^{\frac 1 2}
\end{align*}
and 
\begin{align*}
    &\int_0^T e^{-\frac 1 4(T-t)}\|(u_N^a-u_N^0)\cdot\nabla\Gamma_N^0\|_{B^0_{\infty,1}} dt \\
    \lesssim &\int_0^T e^{-\frac 1 4(T-t)}\|u_N^a-u_N^0\|_{B^0_{\infty,1}}\|\Gamma_N^0\|_{B^1_{\infty,1}} dt \lesssim _T a^{\frac 2 5}2^{N}.
\end{align*}
By virtue of Proposition \ref{5prop1} and Lemma \ref{CR}, we deduce from \eqref{5da3}, \eqref{5da11} and \eqref{5da21} that
\begin{align*}
    &\int_0^T e^{-\frac 1 4(T-t)}(\|[\mathcal{R}, u_N^a\cdot\nabla](\tau_N^a-\tau_N^0)\|_{B^0_{\infty,1}}+ \|[\mathcal{R}, (u_N^a-u_N^0)\cdot\nabla]\tau_N^0\|_{B^0_{\infty,1}})dt\\
    \lesssim &\int_0^T e^{-\frac 1 4(T-t)}\|u_N^a\|_{H^1\cap B^1_{\infty,1}}\|\tau_N^a-\tau_N^0\|_{L^2\cap B^0_{\infty,1}} dt \\ \notag
    &+\int_0^T e^{-\frac 1 4(T-t)}(\|u_N^a-u_N^0\|_{H^1}+\|\Gamma_N^a-\Gamma_N^0\|_{B^0_{\infty,1}} +\|\tau_N^a-\tau_N^0\|_{L^2\cap B^0_{\infty,1}})\|\tau_N^0\|_{L^2\cap B^0_{\infty,1}}dt  \\
    \lesssim & C_T(a^{\frac 1 2}+a2^N)+\ep\|\Gamma^a-\Gamma_N^a\|_{L_T^{\infty}(B^0_{\infty,1})}.
\end{align*}
and
\begin{align*}
    &\int_0^T e^{-\frac 1 4(T-t)}(\|\mathcal{R}Q(\nabla u_N^a,\tau_N^a-\tau_N^0)\|_{B^0_{\infty,1}}+ \|\mathcal{R}Q(\nabla (u_N^a-u_N^0),\tau_N^0)\|_{B^0_{\infty,1}})dt \\
   \lesssim &\int_0^T e^{-\frac 1 4(T-t)}\|u_N^a\|_{H^1\cap B^1_{\infty,1}}\|\tau_N^a-\tau_N^0\|_{L^2\cap B^{0}_{\infty,1}\cap B^{1}_{3,1}} dt \\ \notag
    &+\int_0^T e^{-\frac 1 4(T-t)}(\|u_N^a-u_N^0\|_{H^1}+\|\Gamma_N^a-\Gamma_N^0\|_{B^0_{\infty,1}} +\|(\tau_N^a-\tau_N^0)\|_{L^2\cap B^0_{\infty,1}})\|\tau_N^0\|_{L^2\cap B^{\frac 1 4}_{\infty,1}}dt   \\
    \lesssim & C_T(a^{\frac 1 2}+a2^N)+\ep\|\Gamma_N^a-\Gamma_N^0\|_{L_T^{\infty}(B^0_{\infty,1})}+\int_0^T e^{-\frac 1 4(T-t)}\|u_N^a\|_{H^1\cap B^1_{\infty,1}}\|\tau_N^a-\tau_N^0\|_{B^{1}_{3,1}} dt \\
    \lesssim & C_T(a^{\frac 1 2}+a2^N)+\ep\|\Gamma_N^a-\Gamma_N^0\|_{L_T^{\infty}(B^0_{\infty,1})}+\int_0^T e^{-\frac 1 4(T-t)}\|u_N^a\|_{H^1\cap B^1_{\infty,1}}\|\tau_N^a-\tau_N^0\|^{\frac 14}_{L^2}\|(\tau_N^a,\tau_N^0)\|^{\frac 3 4}_{H^2} dt\\
    \lesssim & C_T(a^{\frac 1 4}+a2^N)+\ep\|\Gamma_N^a-\Gamma_N^0\|_{L_T^{\infty}(B^0_{\infty,1})}.
\end{align*}
By virtue of Proposition \ref{5prop1}, Lemmas \ref{CZ} and \ref{PL}, we deduce from \eqref{5da3} and \eqref{5da11} that
\begin{align*}
    &\int_0^T e^{-\frac 1 4(T-t)}(\|\omega_N^a\cdot\nabla (u_N^a-u_N^0)\|_{B^0_{\infty,1}}+ \|(\omega_N^a-\omega_N^0)\cdot\nabla u_N^0\|_{B^0_{\infty,1}})dt \\
    \lesssim &\int_0^T e^{-\frac 1 4(T-t)}\|u_N^a-u_N^0\|_{B^1_{\infty,1}}(\|\omega_N^a\|_{B^0_{\infty,1}}+\|u_N^0\|_{B^1_{\infty,1}}) dt  \\
     \lesssim&\int_0^T e^{-\frac 1 4(T-t)}(\|u_N^a-u_N^0\|_{L^2}+\|\Gamma_N^a-\Gamma_N^0\|_{B^0_{\infty,1}} +\|\tau_N^a-\tau_N^0\|_{L^2\cap B^0_{\infty,1}})\|(u_N^a,u_N^0)\|_{B^1_{\infty,1}}dt   \\
    \lesssim  & C_Ta^{\frac 1 2}+\ep\|\Gamma_N^a-\Gamma_N^0\|_{L_T^{\infty}(B^0_{\infty,1})}.
\end{align*}
The above inequalities ensure that
\begin{align}\label{5da29}
\|\Gamma_N^a-\Gamma_N^0\|_{L_T^{\infty}(B^0_{\infty,1})} \lesssim_T  a^{\frac 1 4}+a^{\frac 2 5}2^N.
\end{align}
Combining \eqref{5da27} and \eqref{5da29}, we deduce that
The above inequalities ensure that
\begin{align}\label{5da30}
\|\Gamma^a-\Gamma^0\|_{L_T^{\infty}(B^0_{\infty,1})} \lesssim_T \|(Id-S_N)(u_0,\tau_0)\|_{H^{1}\cap B^{1}_{\infty,1} \times H^1\cap B^{0}_{\infty,1}}+a^{\frac 1 4}+a^{\frac 2 5}2^N.
\end{align}
We thus complete the proof Proposition \ref{5prop2}.
\end{proof}

{\bf The proof of Theorem \ref{theo3} :}  \\
Combining Propositions \ref{5prop1} and \ref{5prop2}, we complete the proof of Theorem \ref{theo3}. 
\hfill$\Box$

\subsection{Optimal decay rates and uniform vanishing damping limit} 
From now on, we investigate optimal decay rate of global solutions for the inviscid Oldroyd-B equation \eqref{eq0} with critical regularity. Due to low regularity of the solutions, the case $a\in[0,1]$ is extremely challenging. We will solve the difficulties of consistent damping and low regularity by virtue of the improved Fourier splitting method. Then we obtain optimal decay rate for the solutions in $L^2$. Since 
$$\int_{\mathbb{R}^{3}}(u\cdot\nabla)u\cdot \Delta u dx\neq 0,$$
we cannot obtain directly time decay rate of the $\dot{H}^{1}$ norm for lack of external higher order regularity for the solutions. However, we find that the time integrability of global solutions for $d=3$ can be obtained and helps to overcome this difficulty. 	
  \begin{prop}\label{5prop3}
  Under the same assumptions as in Proposition \ref{5prop1}, if additionally $(u_0,\tau_0) \in \dot{B}^{-\frac 3 2}_{2,\infty},$
		then there holds
		$$
		\|(u,\tau)\|_{L^2} +(1+t)^{\frac{1}{2}}\|\nabla(u,\tau)\|_{L^2}\leq C_0(1+t)^{-\frac{3}{4}}
		$$
  and 
  $$\int_{0}^{\infty}\|(\tau, \nabla u)\|_{B^0_{\infty,1}}dt'\leq C_0,$$
  where $C_0$ depends on $\|(u_0,\tau_0)\|_{H^1\cap \dot{B}^{-\frac 3 2}_{2,\infty}}$ and $\|(\nabla u_0,\tau_0)\|_{B^0_{\infty,1}}.$
  \end{prop}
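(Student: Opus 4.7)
The strategy follows the four-step iteration scheme sketched in the chart of the introduction, exploiting the uniform energy inequality \eqref{3d-H1-energyinequality} established in Proposition \ref{5prop1} together with the improved Fourier splitting method. I would first read off from \eqref{3d-H1-energyinequality} the dissipation structure
\[
\frac{d}{dt}\mathcal{E}(t)+\frac{1}{2}\|\nabla\tau\|_{H^{1}}^{2}+\frac{\eta}{8}\|\nabla u\|_{L^{2}}^{2}+\frac{a}{2}\|\tau\|_{H^{1}}^{2}\le 0,
\]
where $\mathcal{E}(t)\sim\|(u,\tau)\|_{H^{1}}^{2}$. On $S(t)=\{|\xi|^{2}\le C_{1}(1+t)^{-1}\}$ the dissipation is deficient, so I would split
\[
\|\nabla(u,\tau)\|_{L^{2}}^{2}\gtrsim (1+t)^{-1}\Bigl(\|(u,\tau)\|_{L^{2}}^{2}-\int_{S(t)}|\widehat{(u,\tau)}|^{2}\,d\xi\Bigr),
\]
estimate the low-frequency Fourier integral from the mild formulation (the cubic term $\mathcal{N}(u,\tau)$ collecting $Q$, $u\cdot\nabla\tau$ and pressure contributions), and then multiply by a suitable weight $(1+t)^{\alpha}$ to obtain an initial decay. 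Crucially, all constants are independent of $a\in[0,1]$ because the damping $a\|\tau\|_{H^{1}}^{2}$ only helps.

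Next, I would run the four iterations in order. The zeroth step produces $\mathcal{N}(u,\tau)\lesssim (1+t)^{-1/4}$ and thereby $\|(u,\tau)\|_{H^{1}}\lesssim (1+t)^{-1/8}$. Feeding this back bounds $\mathcal{N}(u,\tau)$ by $(1+t)^{-3/4}$, which upgrades the decay to $(1+t)^{-3/8}$. With the $(1+t)^{-3/8}$ rate in hand, the mild formulation together with $\|e^{t\Delta}f\|_{\dot{B}^{-3/2}_{2,\infty}}\lesssim \|f\|_{\dot{B}^{-3/2}_{2,\infty}}$ and the smoothing effect allow me to propagate the critical low-frequency norm
\[
\|(u,\tau)(t)\|_{\dot{B}^{-3/2}_{2,\infty}}\le C_{0},\qquad t\ge 0,
\]
the key point being that each nonlinear contribution ($Q$, the transport, the pressure and the damping) is controlled in $\dot{B}^{-3/2}_{2,\infty}$ by products whose $\dot{B}^{-3/2}_{2,\infty}$-norm is integrable in time under the $(1+t)^{-3/8}$ rate. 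Reinserting this bound into the Fourier splitting argument, now without any low-frequency smallness constraint, yields the optimal $\|(u,\tau)\|_{L^{2}}\le C_{0}(1+t)^{-3/4}$.

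For the second assertion, the uniform bound $\|(u,\tau)\|_{L^{2}}\lesssim (1+t)^{-3/4}$ combined with the time-weighted energy estimate (multiply \eqref{3d-H1-energyinequality} by $(1+t)^{5/2}$) gives $\int_{0}^{\infty}(1+t)^{5/4}\|\nabla\tau\|_{H^{1}}^{2}\,dt\le C_{0}$, so Cauchy--Schwarz and Bernstein yield
\[
\int_{0}^{\infty}\|\tau\|_{B^{0}_{\infty,1}}\,dt\lesssim \int_{0}^{\infty}\|\nabla\tau\|_{H^{1}}\,dt\lesssim \Bigl(\int_{0}^{\infty}(1+t)^{5/4}\|\nabla\tau\|_{H^{1}}^{2}\,dt\Bigr)^{1/2}\lesssim C_{0}.
\]
For $\nabla u$, I would return to the structural unknown $\Gamma=\omega-\mathcal{R}\tau$, which in $d=3$ satisfies \eqref{1eq3} and enjoys the damping $\tfrac{1}{2}\Gamma$. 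Applying Lemma \ref{TP} together with Proposition \ref{5prop1} and the integrability just obtained, the commutator Lemma \ref{CR}, the product law Lemma \ref{PL}, and the observation \eqref{1in7} on $\omega\cdot\nabla u$, I bound every forcing term of \eqref{1eq3} in $B^{0}_{\infty,1}$ by integrable-in-time quantities, giving $\int_{0}^{\infty}\|\Gamma\|_{B^{0}_{\infty,1}}\,dt\le C_{0}$. Since $\omega=\Gamma+\mathcal{R}\tau$ and $\|\nabla u\|_{B^{0}_{\infty,1}}\lesssim \|u\|_{L^{2}}+\|\omega\|_{B^{0}_{\infty,1}}$ by Lemma \ref{CZ}, this closes the integrability statement.

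The main obstacle, already flagged in the introduction, is the loss of the cancellation $\langle u\cdot\nabla u,\Delta u\rangle=0$ in dimension three, which prevents a direct closed $\dot{H}^{1}$ decay estimate: the cubic term $\|\nabla u\|_{L^{\infty}}\|\nabla u\|_{L^{2}}^{2}$ must be absorbed. The resolution is precisely the integrability $\int_{0}^{\infty}\|\nabla u\|_{B^{0}_{\infty,1}}\,dt\le C_{0}$ just derived: after a fourth iteration (multiply the $\dot{H}^{1}$ energy inequality by $(1+t)^{5/2}$ and apply Gronwall) the factor $e^{C\int \|\nabla u\|_{L^{\infty}}\,dt}\le e^{C_{0}}$ is uniformly bounded, yielding $\|\nabla(u,\tau)\|_{L^{2}}\le C_{0}(1+t)^{-5/4}$. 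A secondary difficulty is the propagation of $\dot{B}^{-3/2}_{2,\infty}$ in the third iteration, where the borderline regularity forces careful Bony decomposition of $Q(\nabla u,\tau)$ and of the transport terms; this is where the preliminary $(1+t)^{-3/8}$ rate is essential.
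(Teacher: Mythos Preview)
Your proposal is correct and follows essentially the same five-step approach as the paper: Fourier splitting iterations to reach the $(1+t)^{-3/8}$ rate, propagation of the $\dot{B}^{-3/2}_{2,\infty}$ norm, the optimal $L^2$ decay, the key integrability via the $\Gamma$-equation, and finally the $\dot{H}^1$ decay by Gronwall using that integrability. One small caveat: for the propagation of $\|(u,\tau)\|_{\dot{B}^{-3/2}_{2,\infty}}$ the paper does not use a mild formulation with $e^{t\Delta}$ (the $u$-equation has no diffusion) but rather applies $\dot{\Delta}_j$ directly to the system and runs an $L^2$ energy estimate, bounding $\int_0^t\|(F,G)\|_{\dot{B}^{-3/2}_{2,\infty}}\,dt'\lesssim\int_0^t\|(F,G)\|_{L^1}\,dt'$ via the embedding $L^1\hookrightarrow\dot{B}^{-3/2}_{2,\infty}$; you should adjust your description of that step accordingly.
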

\begin{proof} The proof of Proposition \ref{5prop3} relies on a series of \emph{iteration technique}. For sake of clarity, we first present the main ideas in the following chart. \begin{equation*}
\begin{aligned}
&{\boxed {\text{Energy Inequality}\ \eqref{5de1}} }\longrightarrow {\boxed{\mathcal{N}(u,\tau)\lesssim (t+1)^{-\frac{1}{4}}}}\xrightarrow{\text{First Iteration}}{\boxed {\|(u,\tau)\|_{H^{1}} \lesssim (t+1)^{-\frac{1}{8}}}}\rightarrow\cdots\\
&{\boxed {\|(u,\tau)\|_{H^{1}} \lesssim (t+1)^{-\frac{1}{8}}}}\longrightarrow {\boxed{ \mathcal{N}(u,\tau)\lesssim (t+1)^{-\frac{3}{4}}}}\xrightarrow{\text{Second Iteration}}{\boxed {\|(u,\tau)\|_{H^{1}} \lesssim (t+1)^{-\frac{3}{8}}}}\rightarrow\cdots\\
&{\boxed {\|(u,\tau)\|_{H^{1}} \lesssim (t+1)^{-\frac{3}{8}}}}\longrightarrow {\boxed{ \|u\|^2_{\dot{B}^{-\frac 3 2}_{2,\infty}}+\|\tau\|^2_{\dot{B}^{-\frac 3 2}_{2,\infty}}\lesssim 1}}\xrightarrow{\text{Third Iteration}}{\boxed {\|(u,\tau)\|_{H^{1}} \lesssim (t+1)^{-\frac{3}{4}}}}\rightarrow\cdots\\
&{\boxed {\|(u,\tau)\|_{H^{1}} \lesssim (t+1)^{-\frac{3}{4}}}}\longrightarrow {\boxed{\text{Key Integrability}}}\xrightarrow{\text{Fourth Iteration}}{\boxed {\|\nabla(u,\tau)\|_{L^{2}} \lesssim (t+1)^{-\frac{5}{4}}}}.
\end{aligned}
\end{equation*}

We divide the proof into the following five steps. \\
\textbf{Step 1: Initial decay rate.} \\
According to Proposition \ref{5prop1}, for any $t>0$, we have
	 \begin{equation}\label{5de1}
\frac{d}{dt}\left(\left\|(u,\tau)\right\|_{H^{1}}^{2}+2\eta\langle \tau,-\nabla u \rangle\right)
+\|\nabla \tau\|_{H^{1}}^2
+ \frac{\eta}{4}\|\nabla u\|_{L^{2}}^2
+a\|\tau\|_{H^{1}}^{2}\le 0.
\end{equation}
  Define $S(t)=\{\xi:|\xi|^2\leq C_1(1+t)^{-1}\}$ with $C_1$ large enough. Considering the issue of time decay rates, we can assume that $t$ is sufficiently large relative to $C_1$. Applying Schonbek's \cite{S85} strategy to \eqref{5de1}, we have
$$\|\nabla \tau\|^2_{H^1}=\int_{S(t)}(1+|\xi|^{2})|\xi|^2|\hat{\tau}(\xi)|^2 d\xi+\int_{S(t)^c}(1+|\xi|^{2})|\xi|^2|\hat{\tau}(\xi)|^2 d\xi.$$
and
$$\|\nabla u\|^2_{L^2}=\int_{S(t)}|\xi|^2|\hat{u}(\xi)|^2 d\xi+\int_{S(t)^c}|\xi|^2|\hat{u}(\xi)|^2 d\xi.$$
One can easily deduce that
$$\frac {C_1} {1+t}\int_{S(t)^c}(1+|\xi|^{2})|\hat{\tau}(\xi)|^2 d\xi\leq\|\nabla \tau\|^2_{H^1},$$
and
$$\frac {C_1} {1+t}\int_{S(t)^c}|\hat{u}(\xi)|^2 d\xi\leq\|\nabla u\|^2_{L^{2}}.$$
	By \eqref{5de1}, we obtain
	\begin{align}\label{5de2}
	\frac d {dt} \left(\left\|(u,\tau)\right\|_{H^{1}}^{2}+2\eta\langle \tau,-\nabla u \rangle\right)+\frac {\eta C_1} {8(1+t)}\|(u,\tau)\|^2_{H^1}
	\leq \frac {C} {1+t}\int_{S(t)}|\hat{u}(\xi)|^2+|\hat{\tau}(\xi)|^2 d\xi.
	\end{align}
 The $L^2$ estimate to the low frequency of $(u,\tau)$ is the key to studying time decay rates. Applying Fourier transform to \eqref{eq0}, we deduce that
	\begin{align}\label{eqF}
	\left\{
	\begin{array}{ll}
	\hat{u}^{j}_t+i\xi_{j} \hat{P}-i\xi_{k} \hat{\tau}^{jk}=\hat{F}^j,  \\[1ex]
	\hat{\tau}^{jk}_t+|\xi|^2\hat{\tau}^{jk}+a\hat{\tau}^{jk}-\frac i 2(\xi_{k} \hat{u}^j+\xi_{j} \hat{u}^k)=\hat{G}^{jk},\\[1ex]
	\end{array}
	\right.
	\end{align}
	where $F=-u\cdot\nabla u$ and $G=-u\cdot\nabla \tau-Q(\nabla u, \tau)$.
	According to \eqref{eqF}, we get
	\begin{align}\label{5de3}
	\int_{S(t)}|\hat{u}|^2+|\hat{\tau}|^2d\xi
	\lesssim\int_{S(t)} |\hat{u}_0|^2+|\hat{\tau}_0|^2d\xi  +{\underbrace{\int_{S(t)}\int_{0}^{t}|\hat{F}\cdot\bar{\hat{u}}|+|\hat{G}\cdot\bar{\hat{\tau}}|dt'd\xi}_{\coloneqq \mathcal{N}(u,\tau)}}.
	\end{align}
	According to $(u_0,\tau_0)\in L^2\cap\dot{B}^{-\frac 3 2}_{2,\infty}$ and applying Lemma \ref{LPD}, we have
	\begin{align}\label{5de4}
	\int_{S(t)}|\hat{u}_0|^2+|\hat{\tau}_0|^2 d\xi
	&\lesssim\sum_{j\leq \log_2[\frac {4} {3}C_1^{\frac 1 2 }(1+t)^{-\frac 1 2}]}\int_{\mathbb{R}^{2}} 2\varphi^2(2^{-j}\xi)(|\hat{u}_0|^2+|\hat{\tau}_0|^2)d\xi \\ \notag
	&\lesssim\sum_{j\leq \log_2[\frac {4} {3}C_1^{\frac 1 2 }(1+t)^{-\frac 1 2}]}2(\|\dot{\Delta}_j u_0\|^2_{L^2}+\|\dot{\Delta}_j \tau_0\|^2_{L^2}) \\ \notag
	&\lesssim(1+t)^{-\frac 3 2}\|(u_0,\tau_0)\|^2_{\dot{B}^{-\frac 3 2}_{2,\infty}}.
	\end{align}
Moreover, we infer from \eqref{5de1} that
\begin{align}\label{5de5}	\int_{S(t)}\int_{0}^{t}|\hat{F}\cdot\bar{\hat{u}}|+|\hat{G}\cdot\bar{\hat{\tau}}|dt'd\xi
 &\lesssim(1+t)^{-\frac 3 4} \int_{0}^{t} \|\hat{F}\cdot\bar{\hat{u}}\|_{L^2_\xi} + \|\hat{G}\cdot\bar{\hat{\tau}}\|_{L^2_\xi} dt\\
 &\lesssim(1+t)^{-\frac 3 4} \int_{0}^{t}(\|u\|^2_{L^{2}}+\|\tau\|^2_{L^{2}})(\|\nabla u\|_{L^{2}}+\|\nabla \tau\|_{L^{2}})dt' \\ \notag
 &\lesssim C_0(1+t)^{-\frac 1 4}.
	\end{align}
	Combining \eqref{5de2}-\eqref{5de5}, we have
	\begin{align}\label{5de6}
	\frac d {dt} \left(\left\|(u,\tau)\right\|_{H^{1}}^{2}+2\eta\langle \tau,-\nabla u \rangle\right)+\frac {\eta C_1} {8(1+t)}\|(u,\tau)\|^2_{H^1}
	 \lesssim C_0(1+t)^{-\frac 5 4}
	\end{align}
	which implies that
	\begin{align}\label{5de7}
	\left\|(u,\tau)\right\|_{H^{1}}^{2}\lesssim C_0(1+t)^{-\frac 1 4}.
	\end{align}
 Plugging \eqref{5de7} into \eqref{5de5}, we obtain
	\begin{align}\label{5de8}	\int_{S(t)}\int_{0}^{t}|\hat{F}\cdot\bar{\hat{u}}|+|\hat{G}\cdot\bar{\hat{\tau}}|dt'd\xi
 &\lesssim C_0(1+t)^{-\frac 3 4} \int_{0}^{t}(1+t')^{-\frac 1 4}(\|\nabla u\|_{L^{2}}+\|\nabla \tau\|_{L^{2}})dt' \\ \notag
 &\lesssim C_0(1+t)^{-\frac 1 2}.
	\end{align}
	Combining \eqref{5de2}-\eqref{5de4} and \eqref{5de8}, we have
	\begin{align}\label{5de9}
	\frac d {dt} \left(\left\|(u,\tau)\right\|_{H^{1}}^{2}+2\eta\langle \tau,-\nabla u \rangle\right)+\frac {\eta C_1} {8(1+t)}\|(u,\tau)\|^2_{H^1}
	 \lesssim C_0(1+t)^{-\frac 3 2}
	\end{align}
	which implies that
	\begin{align}\label{5de10}
	\left\|(u,\tau)\right\|_{H^{1}}^{2}\lesssim C_0(1+t)^{-\frac 1 2}.
	\end{align}
 This together with \eqref{5de1} ensure that
 \begin{align}\label{5de11}
	\int_{0}^{t}(1+t')^{\frac 1 2-\delta}(\|\nabla u\|^2_{L^{2}}+\|\nabla \tau\|^2_{L^{2}})dt'\lesssim C_0\frac {1}{\delta},
	\end{align}
 where $\delta\in(0,\frac 1 2)$.
	Plugging \eqref{5de10} into \eqref{5de5}, we infer from \eqref{5de11} that 
	\begin{align}\label{5de12}	\int_{S(t)}\int_{0}^{t}|\hat{F}\cdot\bar{\hat{u}}|+|\hat{G}\cdot\bar{\hat{\tau}}|dt'd\xi
 &\lesssim C_0(1+t)^{-\frac 3 4} \int_{0}^{t}(1+t')^{-\frac 1 2}(\|\nabla u\|_{L^{2}}+\|\nabla \tau\|_{L^{2}})dt' \\ \notag
 &\lesssim C_0(1+t)^{-\frac 3 4}.
	\end{align}
	Combining \eqref{5de2}-\eqref{5de4} and \eqref{5de12}, we have
	\begin{align}\label{5de13}
	\frac d {dt} \left(\left\|(u,\tau)\right\|_{H^{1}}^{2}+2\eta\langle \tau,-\nabla u \rangle\right)+\frac {\eta C_1} {8(1+t)}\|(u,\tau)\|^2_{H^1}
	 \lesssim C_0(1+t)^{-\frac 7 4}
	\end{align}
	which implies that
	\begin{align}\label{5de14}
	\left\|(u,\tau)\right\|_{H^{1}}^{2}\lesssim C_0(1+t)^{-\frac 3 4}.
	\end{align}
	This together with \eqref{5de1} ensure that
 \begin{align}\label{5de15}
	\int_{0}^{t}(1+t')^{\frac 3 4-\delta}(\|\nabla u\|^2_{L^{2}}+\|\nabla \tau\|^2_{L^{2}})dt'\lesssim C_0\frac {1}{\delta},
	\end{align} 
 where $\delta\in(0,\frac 3 4)$. \\
\textbf{Step 2: Uniform bounds in negative Besov space.} \\
Applying $\dot{\Delta}_j$ to \eqref{eq0}, we have
	\begin{align}\label{eqB}
	\left\{
	\begin{array}{ll}
	\dot{\Delta}_j u_t+\nabla\dot{\Delta}_j P-{\rm div}~\dot{\Delta}_j\tau=\dot{\Delta}_j F,  \\[1ex]
	\dot{\Delta}_j \tau_t-\Delta\dot{\Delta}_j \tau+a\dot{\Delta}_j \tau-\dot{\Delta}_j D(u)=\dot{\Delta}_j G. \\[1ex]
	\end{array}
	\right.
	\end{align}
	We firstly infer from \eqref{eqB} that
	\begin{align}\label{5de16}
	\frac d {dt}(\|\dot{\Delta}_j u\|^2_{L^2}+\|\dot{\Delta}_j \tau\|^2_{L^2})+2\|\nabla\dot{\Delta}_j \tau\|^2_{L^2}
	\lesssim\|\dot{\Delta}_j F\|_{L^2}\|\dot{\Delta}_j u\|_{L^2}+\|\dot{\Delta}_j G\|_{L^2}\|\dot{\Delta}_j \tau\|_{L^2}.
	\end{align}
	Applying $2^{-3j}$ to \eqref{5de16} and taking $l^\infty$ norm, we obtain
	\begin{align}\label{5de17}
	\frac d {dt}(\|u\|^2_{\dot{B}^{-\frac 3 2}_{2,\infty}}+\|\tau\|^2_{\dot{B}^{-\frac 3 2}_{2,\infty}}) \lesssim\|F\|_{\dot{B}^{-\frac 3 2}_{2,\infty}}\|u\|_{\dot{B}^{-\frac 3 2}_{2,\infty}}
	+\|G\|_{\dot{B}^{-\frac 3 2}_{2,\infty}}\|\tau\|_{\dot{B}^{-\frac 3 2}_{2,\infty}}.
	\end{align}
	Let $H(t)=\sup_{0\leq t'\leq t} (\|u\|_{\dot{B}^{-\frac 3 2}_{2,\infty}}+\|\tau\|_{\dot{B}^{-\frac 3 2}_{2,\infty}})$. According to \eqref{5de17}, we have
	\begin{align}\label{5de18}
	H^2(t)&\lesssim H^2(0)+H(t)\int_0^{t}\|F\|_{\dot{B}^{-\frac 3 2}_{2,\infty}}+\|G\|_{\dot{B}^{-\frac 3 2}_{2,\infty}}dt'.
	\end{align}
	Using \eqref{5de14}, \eqref{5de15} and inclusion between Lesbesgue and Besov space, we infer that
	\begin{align}\label{5de19}
	\int_0^{t}\|(F,G)\|_{\dot{B}^{-\frac 3 2}_{2,\infty}}ds'
	&\lesssim\int_0^{t}\|(F,G)\|_{L^{1}}dt'  \\ \notag
	&\lesssim\int_0^{t}(\|u\|_{L^{2}}+\|\tau\|_{L^{2}})(\|\nabla u\|_{L^2}+\|\nabla\tau\|_{L^2})dt'   \\ \notag
	&\lesssim C_0\int_0^{t}(1+t')^{-\frac 3 8}(\|\nabla u\|_{L^2}+\|\nabla\tau\|_{L^2})dt'   \\ \notag
	&\lesssim C_0.
	\end{align}
	Combining \eqref{5de18} and \eqref{5de19}, for any $t>0$, we obtain 
 \begin{align}\label{5de20}
 H(t)\lesssim C_0.
 \end{align}
\textbf{Step 3: Optimal decay rate in $L^2$.} \\
	According to \eqref{5de2}-\eqref{5de4}, we infer that
	\begin{align}\label{5de21}
	&\frac d {dt} \left(\left\|(u,\tau)\right\|_{H^{1}}^{2}+2\eta\langle \tau,-\nabla u \rangle\right)+\frac {\eta C_1} {8(1+t)}\|(u,\tau)\|^2_{H^1} \\ \notag
   \lesssim&  \frac {1} {1+t}\int_{S(t)}|\hat{u}(\xi)|^2+|\hat{\tau}(\xi)|^2 d\xi \notag\\
   \lesssim& H^2(t)(1+t)^{-\frac 5 2}\lesssim C_0^2(1+t)^{-\frac 5 2}. \notag
	\end{align}
	As a result, we conclude that
 \begin{align}\label{5de23}
	\left\|(u,\tau)\right\|_{H^{1}}^{2}\lesssim C_0(1+t)^{-\frac 3 2}.
	\end{align}
	This together with \eqref{5de1} ensure that
      \begin{align}\label{5de24}
	\int_{0}^{t}(1+t')^{\frac 5 2}(\|\nabla u\|^2_{L^{2}}+\|\nabla \tau\|^2_{H^{1}}) dt'\lesssim C_0(1+t),
	\end{align}
 and
      \begin{align}\label{5de25}
	\int_{0}^{t}(1+t')^{\frac 3 2-\delta}(\|\nabla u\|^2_{L^{2}}+\|\nabla \tau\|^2_{H^{1}}) dt'\lesssim C_0\frac {1}{\delta},
	\end{align}
 where $\delta\in(0,\frac 3 2)$. \\
\textbf{Step 4: The key integrability.} \\
	However, considering the decay rate for the first derivative of the solution to \eqref{eq0}, the main difficulty is the lack of control of the higher order energy. To overcome this difficulty, we introduce a new method which flexibly combines the time weighted energy estimate and the following key integrability:
 $$\int_{0}^{\infty}\|(\tau,\nabla u)\|_{B^0_{\infty,1}}dt'\leq C_0.$$
 According to \eqref{5de25}, for any $t>0$, we infer that
 \begin{align}\label{5de26}
	\int_{0}^{t}\|\mathcal{R} \tau\|_{B^0_{\infty,1}}dt' &\lesssim\int_{0}^{t}\|\Delta_{-1}\mathcal{R} \tau\|_{B^0_{\infty,1}} dt'+\int_{0}^{t}\|(Id-\Delta_{-1})\mathcal{R} \tau\|_{B^0_{\infty,1}} dt' \\ \notag
 &\lesssim\int_{0}^{t}\|\Delta_{-1}\mathcal{R} \tau\|_{L^6} dt'+\int_{0}^{t}\|(Id-\Delta_{-1})\tau\|_{B^0_{\infty,1}}dt' \\ \notag
 &\lesssim\int_{0}^{t}\|\nabla\tau\|_{H^1} dt'\lesssim\left(\int_{0}^{t}(1+t')^{\frac 5 4}\|\nabla \tau\|^2_{H^{1}} dt'\right)^{\frac 1 2}\lesssim C_0,
	\end{align}
Notice that $\int_{0}^{t}\|\tau\|_{B^0_{\infty,1}}dt'\lesssim C_0$ can be derived by the same method. Recall that
\begin{equation}\label{eqG}
\partial_t \Gamma + (u\cdot\nabla)\Gamma + \frac 1 2\Gamma = (a-\frac{1}{2})\mathcal{R}\tau + [\mathcal{R}, u\cdot\nabla]\tau + \mathcal{R}Q + \omega\cdot\nabla u.
\end{equation} 
where $\Gamma = \omega - \mathcal{R}\tau$. 
Then by Proposition \ref{5prop1}, we have
\begin{align}\label{5de27}
\|\Gamma\|_{B^0_{\infty,1}} &\lesssim \|\Gamma_0\|_{B^0_{\infty,1}} e^{-\frac {1}{4}t} + \int_0^t e^{-\frac 1 4(t-t')} \|(a-\frac{1}{2})\mathcal{R}\tau + [\mathcal{R}, u\cdot\nabla]\tau + \mathcal{R}Q + \omega\cdot\nabla u\|   
_{B^0_{\infty,1}} dt'.
\end{align}
This together with \eqref{5de26}, Lemmas \ref{CZ}, \ref{PL} and \ref{CR} ensures that
\begin{align}\label{5de28}
\int_0^t\|\Gamma\|_{B^0_{\infty,1}}dt' &\lesssim \|\Gamma_0\|_{B^0_{\infty,1}}+ \int_0^t \|(a-1)\mathcal{R}\tau + [\mathcal{R}, u\cdot\nabla]\tau + \mathcal{R}Q + \omega\cdot\nabla u\|   
_{B^0_{\infty,1}} dt' \\ \notag
&\lesssim \|\Gamma_0\|_{B^0_{\infty,1}}+ \int_0^t \|\mathcal{R}\tau\|   
_{B^0_{\infty,1}} +(\|\nabla u\|_{B^0_{\infty,1}}+\|\omega\|_{L^2})(\|\tau\|_{B^{\frac 1 4}_{\infty,1}}+\|\tau\|_{L^2})dt' \\ \notag
&~~~+\int_0^t \|\omega\|   
_{B^0_{\infty,1}}\|u\|   
_{B^1_{\infty,1}} dt' \\ \notag
&\lesssim C_0+ \int_0^t \|\nabla u\|_{B^0_{\infty,1}}\|\tau\|_{L^2}dt'+\int_0^t \|\omega\|   
_{B^0_{\infty,1}}\|u\|   
_{B^1_{\infty,1}} dt' \\ \notag
&\lesssim C_0+\ep\int_0^t\|\Gamma\|_{B^0_{\infty,1}}dt'.
\end{align}
Then we have 
\begin{align}\label{5de29}
\int_0^t\|\nabla u\|_{B^0_{\infty,1}}dt'\lesssim \int_0^t\|\Gamma\|_{B^0_{\infty,1}}+\|\nabla u\|_{L^2}+\|\mathcal{R} \tau\|_{B^0_{\infty,1}}dt'\lesssim C_0.
\end{align}
\textbf{Step 5: Optimal decay rate in $\dot{H}^1$.} \\
According to \eqref{eq0}, we have
\begin{align}\label{5de30}
\frac{1}{2}\frac{d}{dt}\left\|\nabla(u,\tau)\right\|_{L^{2}}^{2}+\|\nabla^2 \tau\|_{L^{2}}^2+a\|\nabla\tau\|_{L^{2}}^{2}=\langle u\cdot\nabla u, \Delta u \rangle + \langle u\cdot\nabla\tau, \Delta\tau \rangle + \langle Q(\nabla u,\tau),\Delta\tau \rangle.
\end{align}
Then we have
\begin{align*}
&|\langle u\cdot\nabla u, \Delta u \rangle| + |\langle u\cdot\nabla\tau, \Delta\tau \rangle| + |\langle Q(\nabla u,\tau),\Delta\tau \rangle| \\
\lesssim&\|\nabla u\|_{L^{2}}^2\|\nabla u\|_{L^{\infty}} 
+ \|\nabla \tau\|_{L^{6}}\|\nabla^2 \tau\|_{L^{2}}\|u\|_{L^{3}} 
+ \|\nabla u\|_{L^{2}}\|\nabla^2 \tau\|_{L^{2}}\|\tau\|_{L^{\infty}} \\
\leq& C\|\nabla u\|_{L^{2}}^2\|\nabla u\|_{L^{\infty}} 
+\frac 1 2\|\nabla^2 \tau\|^2_{L^{2}}
+ C\|\nabla u\|^2_{L^{2}}\|\tau\|^2_{L^{\infty}}.
\end{align*}
This together with \eqref{5de30} implies that
\begin{align}\label{5de31}
\frac{d}{dt}\left\|\nabla(u,\tau)\right\|_{L^{2}}^{2}\lesssim \|\nabla u\|_{L^{2}}^2\|\nabla u\|_{L^{\infty}} 
+ \|\nabla u\|^2_{L^{2}}\|\tau\|^2_{L^{\infty}}.
\end{align}
Multiplying $(1+t)^{\frac 7 2}$ to \eqref{5de31}, then we get
\begin{align}\label{5de32}
\frac{d}{dt}\left[(1+t)^{\frac 7 2}\left\|\nabla(u,\tau)\right\|_{L^{2}}^{2}\right]\lesssim (\|\nabla u\|_{L^{\infty}} 
+\|\tau\|^2_{L^{\infty}})(1+t)^{\frac 7 2}\left\|\nabla(u,\tau)\right\|_{L^{2}}^{2}+(1+t)^{\frac 5 2}\left\|\nabla(u,\tau)\right\|_{L^{2}}^{2}.
\end{align}
Integrating $t$ at both sides of \eqref{5de32}, applying Gronwall's inequality and using \eqref{5de24}, \eqref{5de29}, we obtain
\begin{align}\label{5de33}
(1+t)^{\frac 7 2}\left\|\nabla(u,\tau)\right\|_{L^{2}}^{2}&\lesssim e^{\int_0^t(\|\nabla u\|_{L^{\infty}} 
+\|\tau\|^2_{L^{\infty}}dt'}\left(\left\|\nabla(u_0,\tau_0)\right\|_{L^{2}}^{2}+\int_0^t (1+t')^{\frac 5 2}\left\|\nabla(u,\tau)\right\|_{L^{2}}^{2}dt'\right) \\ \notag
&\lesssim C_0(1+t), 
\end{align}
which implies that
\begin{align}\label{5de34}
\left\|\nabla(u,\tau)\right\|_{L^{2}}^{2}\lesssim C_0(1+t)^{-\frac 5 2}. 
\end{align}
Thus we complete the proof of Proposition \ref{5prop3}.
  \end{proof}
 By virtue of the optimal decay rates and key integrability in the Proposition \ref{4prop4}, we finally obtain the uniform vanishing damping limit and prove that the rate of uniform vanishing damping limit in $L^2$ is related to the time decay rate in $L^2$. 
  \begin{prop}\label{5prop4}
  Under the conditions in Proposition \ref{5prop3}, there holds 
  $$\|(u^a,\tau^a)-(u^0,\tau^0)\|_{L^{\infty}(0,\infty;L^2)}\leq Ca^{\frac 3 4},
  $$ 
  and 
   $$\lim_{a\rightarrow0}\|(u^a,\tau^a)-(u^0,\tau^0)\|_{L^{\infty}(0,\infty;\dot{H}^1\cap B^1_{\infty,1})\times L^{\infty}(0,\infty;\dot{H}^1\cap B^0_{\infty,1})}=0.
  $$
	\end{prop}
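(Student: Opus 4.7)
My plan is to mimic the two-step architecture used for Proposition~\ref{4prop5} in the 2D case, adapted to the sharper 3D decay rate $(1+t)^{-3/4}$ supplied by Proposition~\ref{5prop3}, and to exploit the key time integrability $\int_0^\infty(\|\tau^a\|_{B^0_{\infty,1}}+\|\nabla u^a\|_{B^0_{\infty,1}})\,dt\le C_0$ that was established there uniformly in $a\in[0,1]$.

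\textbf{Step 1 (uniform $L^2$ rate).} Writing the difference system \eqref{5eqD1} for $(u^a-u^0,\tau^a-\tau^0)$ and testing $\eqref{5eqD1}_1$ by $u^a-u^0$, $\eqref{5eqD1}_2$ by $\tau^a-\tau^0$, the skew-symmetric pairing $\langle\operatorname{div}(\tau^a-\tau^0),u^a-u^0\rangle+\langle D(u^a-u^0),\tau^a-\tau^0\rangle$ cancels, leaving the only linear-in-$a$ forcing term $-\langle a\tau^a,\tau^a-\tau^0\rangle$. Setting $\phi(t):=\|(u^a-u^0,\tau^a-\tau^0)\|_{L^2}$, the quadratic terms $\langle(u^a-u^0)\cdot\nabla u^0,u^a-u^0\rangle$, $\langle Q(\nabla u^a,\tau^a-\tau^0),\tau^a-\tau^0\rangle$, $\langle(u^a-u^0)\cdot\nabla\tau^0,\tau^a-\tau^0\rangle$ and $\langle Q(\nabla(u^a-u^0),\tau^0),\tau^a-\tau^0\rangle$ are all absorbed by $\psi(t)\phi^2(t)+\tfrac14\|\nabla(\tau^a-\tau^0)\|_{L^2}^2$ where
\[
\psi(t):=\|\nabla u^0\|_{L^\infty}+\|\nabla u^a\|_{L^\infty}+\|\nabla\tau^0\|_{L^3}+\|\nabla\tau^0\|_{L^3}^{2}+\|\tau^0\|_{L^\infty}^{2}.
\]
Using Proposition~\ref{5prop3} (the decay $\|\tau^a\|_{L^2}\lesssim C_0(1+t)^{-3/4}$) this yields
\[
\phi'(t)\lesssim \psi(t)\phi(t)+aC_0(1+t)^{-3/4}.
\]
The Gagliardo--Nirenberg inequalities in $\R^3$ combined with the bounds in Propositions~\ref{5prop1} and \ref{5prop3} give $\int_0^\infty\psi(t)\,dt\lesssim C_0$; indeed $\|\nabla\tau^0\|_{L^3}\lesssim\|\nabla\tau^0\|_{L^2}^{1/2}\|\nabla^2\tau^0\|_{L^2}^{1/2}$, $\|\tau^0\|_{L^\infty}^2\lesssim\|\nabla\tau^0\|_{L^2}\|\nabla^2\tau^0\|_{L^2}$, and $\|\nabla u^a\|_{L^\infty}\lesssim \|\nabla u^a\|_{B^0_{\infty,1}}$ whose integrability is precisely the content of Proposition~\ref{5prop3}. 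Gronwall's inequality then produces $\phi(t)\lesssim aC_0(1+t)^{1/4}$. A dichotomy in $at$ closes the $L^2$ bound: if $at\le 1$ then $\phi(t)\lesssim C_0 a^{3/4}$ directly; if $at\ge 1$, the triangle inequality $\phi(t)\le\|(u^a,\tau^a)\|_{L^2}+\|(u^0,\tau^0)\|_{L^2}$ and the decay $(1+t)^{-3/4}\le a^{3/4}$ from Proposition~\ref{5prop3} give the same bound.

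\textbf{Step 2 (uniform high-order convergence via time-splitting).} Given $\delta>0$, by the key integrability $\int_0^\infty(\|\nabla u^a\|_{B^0_{\infty,1}}+\|\tau^a\|_{B^0_{\infty,1}}^{2})\,dt\le M$ (uniform in $a$), together with the $H^1$ decay rate $(1+t)^{-3/4}$, the mean-value theorem produces, for each $a$, some $T_a\in[T,T+8M\delta^{-2}]$ (with $T$ chosen so $\|(u^a,\tau^a)\|_{H^1}\le\delta/2$ for $t\ge T$ uniformly in $a\in[0,1]$) at which
\[
\|\nabla u^a(T_a)\|_{B^0_{\infty,1}}+\|\tau^a(T_a)\|_{B^0_{\infty,1}}^{2}\le \tfrac{\delta^{2}}{8}.
\]
This furnishes $\|(u^a,\tau^a)(T_a)\|_{H^1\cap B^1_{\infty,1}\times H^1\cap B^0_{\infty,1}}\le\delta$, and re-running Proposition~\ref{5prop1} from time $T_a$ propagates this smallness to all later times. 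Setting $T^*:=T+8M\delta^{-2}$ (independent of $a$), we obtain
\[
\|(u^a,\tau^a)(t)\|_{H^1\cap B^1_{\infty,1}\times H^1\cap B^0_{\infty,1}}\lesssim \delta\quad\text{for all }a\in[0,1],\ t\ge T^*.
\]

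\textbf{Step 3 (synthesis).} On the finite interval $[0,T^*]$, Proposition~\ref{5prop2} supplies some $a_0>0$ such that the difference in the required high-order norm is $\le\delta$ whenever $a\le a_0$. On $[T^*,\infty)$, Step~2 applied to both $(u^a,\tau^a)$ and $(u^0,\tau^0)$ together with the triangle inequality gives the same bound. Since $\delta$ was arbitrary, the desired limit follows.

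\textbf{Anticipated main obstacle.} The delicate point is Step~2: verifying that $T^*$ can be chosen independently of $a\in[0,1]$. This relies crucially on the uniform-in-$a$ integrability $\int_0^\infty\|\nabla u^a\|_{B^0_{\infty,1}}\,dt\le C_0$ established in Proposition~\ref{5prop3}, which in turn rests on the divergence-structured estimate $\|\omega^a\cdot\nabla u^a\|_{B^0_{\infty,1}}\lesssim\|\omega^a\|_{B^0_{\infty,1}}\|u^a\|_{B^1_{\infty,1}}$ that is special to the 3D vorticity equation. A secondary subtlety is ensuring that the auxiliary function $\psi(t)$ in Step~1 has $L^1$ norm independent of $a$; this is where the fourth-iteration decay $\|\nabla(u^a,\tau^a)\|_{L^2}\lesssim C_0(1+t)^{-5/4}$ from Proposition~\ref{5prop3} is essential, since it controls $\|\nabla\tau^0\|_{L^3}^2$ and $\|\tau^0\|_{L^\infty}^2$ at an integrable rate.
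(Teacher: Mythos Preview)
Your proposal is correct and follows essentially the same approach as the paper: the paper derives the differential inequality $\frac{d}{dt}\phi\lesssim a(1+t)^{-3/4}+\phi(t)(\|\nabla(u^0,u^a)\|_{L^\infty}+\|\tau^0\|_{L^\infty}^2+\|\nabla\tau^0\|_{L^3}^2)$ from \eqref{5da2.5}, applies Gronwall using the integrability from Proposition~\ref{5prop3}, and closes with the same $at\lessgtr 1$ dichotomy; for the high-order limit it simply states ``it is the same as the proof of Proposition~\ref{4prop5},'' which is exactly your Steps~2--3. The only cosmetic difference is that you carry an extra (harmless, integrable) linear term $\|\nabla\tau^0\|_{L^3}$ in $\psi$, whereas the paper keeps only the squared version.
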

 \begin{proof}
Recall that by \eqref{5da2.5}, we have
\begin{align*}
    \frac{d}{dt}\|(u^a-u^0,\tau^a-\tau^0)\|^2_{L^2}
\lesssim&\|(u^a-u^0,\tau^a-\tau^0)\|^2_{L^2}(\|\nabla (u^0,u^a)\|_{L^\infty}+\|\tau^0\|^2_{L^\infty}+\|\nabla\tau^0\|^2_{L^3}) \\
   &+a\|\tau^a\|_{L^2}\|\tau^a-\tau^0\|_{L^2}.
\end{align*}
Then, we deduce that
\begin{align*}
    &\frac{d}{dt}\|(u^a-u^0,\tau^a-\tau^0)\|_{L^2}  \\
    \lesssim & a(1+t)^{-\frac{3}{4}} +  \|(u^a-u^0,\tau^a-\tau^0)\|_{L^2}\left(\|\nabla (u^0,u^a)\|_{L^\infty}+\|\tau^0\|^2_{L^\infty}+\|\nabla\tau^0\|^2_{L^3}\right).
\end{align*}
We infer from Propositions \ref{5prop1} and \ref{5prop3} that
\begin{align*}
    \int_0^\infty \|\nabla (u^0,u^a)\|_{L^\infty}+\|\tau^0\|^2_{L^\infty}+\|\nabla\tau^0\|^2_{L^3} dt \lesssim 1.
\end{align*}
Using Gronwall's inequality, we have
\begin{align*}
    \|(u^a-u^0,\tau^a-\tau^0)\|_{L^2} \lesssim a(1+t)^{\frac{1}{4}}.
\end{align*}
If $at\leq 1$, we get
\begin{equation}
\|(u^a-u^0,\tau^a-\tau^0)\|_{L^2}\lesssim a^{\frac{3}{4}}.
\end{equation}
If $at>1$, by Proposition \ref{5prop3}, we obtain
\begin{equation}
\|(u^a-u^0,\tau^a-\tau^0)\|_{L^2}\leq \|(u^{a},\tau^{a})\|_{L^{2}}+\|(u^{0},\tau^{0})\|_{L^{2}}\lesssim (1+t)^{-\frac{3}{4}}\leq a^{\frac{3}{4}}.
\end{equation}
For the global high-order convergence, thanks to Propositions \ref{5prop2} and \ref{5prop3}, it is the same as the proof of Proposition \ref{4prop5}. Hence we finish the proof of Proposition \ref{5prop4}.
\end{proof}
In the following we show that the vanishing damping limit in Proposition \ref{5prop4} is sharp in $a$ in the following sense. 
\begin{lemm}\label{3dsharp}
We can actually find initial data $(u_0, \tau_0)$ satisfies the conditions of Proposition \ref{5prop4} such that the following estimates hold:
$$\|(u^a,\tau^a)-(u^0,\tau^0)\|_{L^{\infty}(0,\infty;L^2)}\gtrsim a^{\frac 3 4}.
  $$ 
\end{lemm}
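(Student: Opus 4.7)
The strategy mirrors the 2D argument of Lemma \ref{2dsharp}, with the exponent $a^{1/2}$ replaced throughout by $a^{3/4}$, reflecting the $L^{2}$ decay rate $t^{-3/4}$ of the heat semigroup in $\mathbb{R}^{3}$. Setting $\tilde{\tau}^{a}:=\text{trace}\,\tau^{a}$ and using $\text{trace}\,D(u^{a})=\text{div}\,u^{a}=0$ reduces the $\tau^{a}$ equation to the damped heat equation with quadratic source
$$\partial_{t}\tilde{\tau}^{a}-\Delta\tilde{\tau}^{a}+a\tilde{\tau}^{a}=Q_{1}(u^{a},\nabla\tau^{a})+Q_{2}(\nabla u^{a},\tau^{a}),$$
where $Q_{1}=-\text{trace}(u^{a}\cdot\nabla\tau^{a})$ and $Q_{2}=-\text{trace}\,Q(\nabla u^{a},\tau^{a})$. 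Duhamel's formula then gives the decomposition
$$\tilde{\tau}^{0}-\tilde{\tau}^{a}=(1-e^{-at})e^{\Delta t}\tilde{\tau}_{0}+\mathcal{E}_{1}+\mathcal{E}_{2},$$
where $\mathcal{E}_{1}$ carries the $(1-e^{-a(t-s)})$ weight on the common nonlinearity evaluated at $(u^{a},\tau^{a})$, and $\mathcal{E}_{2}$ gathers the differences of the $Q_{j}$'s evaluated at $(u^{0},\tau^{0})$ versus $(u^{a},\tau^{a})$.

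Next I pick initial data $u_{0}=\epsilon\varphi$ with $\varphi\in C^{\infty}_{c}(\mathbb{R}^{3})$ divergence-free, and symmetric $\tau_{0}=\epsilon\psi\in C^{\infty}_{c}(\mathbb{R}^{3})$ arranged so that $\widehat{\tilde{\tau}_{0}}(\xi)=\epsilon$ for $|\xi|\leq 1$. By Plancherel,
$$\|(1-e^{-at})e^{\Delta t}\tilde{\tau}_{0}\|_{L^{2}}^{2}\gtrsim \epsilon^{2}(1-e^{-at})^{2}\int_{|\xi|\leq 1}e^{-2t|\xi|^{2}}\,d\xi\gtrsim \epsilon^{2}(1-e^{-at})^{2}t^{-3/2}.$$
Evaluating at $t\sim a^{-1}$ yields the desired lower bound $\epsilon a^{3/4}$; the shift from the 2D rate $\epsilon a^{1/2}$ is precisely the $d/4=3/4$ decay of the 3D heat semigroup restricted to low frequencies.

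To close, I show that $\|\mathcal{E}_{1}\|_{L^{2}}+\|\mathcal{E}_{2}\|_{L^{2}}\lesssim \epsilon^{2}a^{3/4}$, so that the linear contribution dominates once $\epsilon$ is small. Using $\|e^{\Delta(t-s)}f\|_{L^{2}}\lesssim (t-s)^{-3/4}\|f\|_{L^{1}}$ together with Proposition \ref{5prop3}, which provides $\|u^{a}\|_{L^{2}}+(1+s)^{1/2}\|\nabla u^{a}\|_{L^{2}}+\|\tau^{a}\|_{L^{2}}+(1+s)^{1/2}\|\nabla\tau^{a}\|_{L^{2}}\lesssim \epsilon(1+s)^{-3/4}$, one obtains $\|Q_{1}+Q_{2}\|_{L^{1}}\lesssim \epsilon^{2}(1+s)^{-2}$. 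Splitting the convolution $\int_{0}^{t}\min(1,a(t-s))(t-s)^{-3/4}(1+s)^{-2}\,ds$ on $[0,t/2]\cup[t/2,t]$ and then on $\{t-s\leq a^{-1}\}\cup\{t-s\geq a^{-1}\}$ yields $\|\mathcal{E}_{1}\|_{L^{2}}\lesssim \epsilon^{2}a^{3/4}$. For $\mathcal{E}_{2}$, I combine Proposition \ref{5prop4} (which furnishes $\|(u^{a}-u^{0},\tau^{a}-\tau^{0})\|_{L^{2}}\lesssim \epsilon a^{3/4}$) with the decay rates of Proposition \ref{5prop3} and an auxiliary first-order stability estimate $\|\nabla(u^{a}-u^{0},\tau^{a}-\tau^{0})\|_{L^{2}}\lesssim \epsilon a^{\beta}$ for some $\beta>0$.

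The main obstacle is this first-order stability: naive interpolation between the $L^{2}$ rate $a^{3/4}$ and the uniform $H^{2}$ bound from Corollary \ref{5cor1} only delivers $a^{3/8}$, which is borderline insufficient when paired with the slowly-decaying factor $\|\tau^{0}\|_{L^{\infty}}$ inside $\mathcal{E}_{2}$. The remedy is to run the $\dot{H}^{1}$ energy identity for $(u^{a}-u^{0},\tau^{a}-\tau^{0})$ directly, in the spirit of Step 2 of Proposition \ref{5prop2}, while exploiting the decay of $\|\nabla\tau^{0}\|_{L^{3}}$ and $\|\tau^{0}\|_{L^{\infty}}$ from Proposition \ref{5prop3} inside a time-weighted Gronwall argument; this should produce a $\beta>0$ large enough that every convolution appearing in $\mathcal{E}_{2}$ is bounded by $\epsilon^{2}a^{3/4}$. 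Assembling the three estimates then gives $\|\tilde{\tau}^{0}-\tilde{\tau}^{a}\|_{L^{\infty}_{t}L^{2}_{x}}\gtrsim \epsilon a^{3/4}$, which implies the claim.
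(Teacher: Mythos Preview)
Your proposal is correct and follows the same approach as the paper, whose own proof of this lemma is literally the single sentence ``It is the same as the proof of Lemma~\ref{2dsharp}, hence we omit the proof.'' Your explicit identification of the $\dot H^{1}$-stability subtlety (and its remedy via Corollary~\ref{5cor1} together with a direct gradient energy estimate, paralleling \eqref{three} in the 2D argument) is more careful than the paper's one-line reference, but is in the same spirit and fills in exactly the details the paper leaves implicit.
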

\begin{proof}It is the same as the proof of Lemma \ref{2dsharp}, hence we omit the proof.
\end{proof}
{\bf The proof of Theorem \ref{theo4} :}  \\
Combining Propositions \ref{5prop3}, \ref{5prop4} and Lemma\ref{3dsharp}, we finish the proof of Theorem \ref{theo4}. 
\hfill$\Box$

\newpage

\section{Dynamics in the periodic domains $\T^d$}
In this chapter we consider Oldroyd-B models in the periodic domain $\T^d$ for $d=2,3$:
\begin{align}\label{7.1}
\left\{\begin{array}{l}
\partial_tu+u\cdot\nabla u+\nabla P ={\rm div}~\tau,~~~~{\rm div}~u=0,\\[1ex]
\partial_t\tau+u\cdot\nabla\tau+a\tau+Q(\nabla u,\tau)=2D(u)+\Delta\tau,\\[1ex]
u|_{t=0}=u_0,~~\tau|_{t=0}=\tau_0. \\[1ex]
\end{array}\right.
\end{align}
 Firstly, recall that inhomogeneous Besov spaces and Sobolev spaces are adopted in previous chapters, all estimates are applicable in the periodic case.
 
\subsection{Global well-posedness and vanishing damping limit results in the periodic case}
Our main results for $\T^d$ with $d=2,3$ can be stated as follows.
	\begin{theo}\label{thm1t}
		Let $d=2,3$ and $a\in[0,1]$. Assume a divergence-free field $u_0\in H^1\cap B^1_{\infty,1}$ and a symmetric matrix $\tau_0\in H^1\cap B^0_{\infty,1}$. There exists some positive constant $\ep_0$ sufficiently small such that
	    \begin{align}
		\|(u_0,\tau_0)\|_{H^1}+\|(\nabla u_0,\tau_0)\|_{B^0_{\infty,1}} \leq \ep
		\end{align}
		holds for any $0<\ep<\ep_0$, then \eqref{7.1} admits a global solution $(u^a,\tau^a)$ with
		$$
		(u^a,\tau^a) \in L^{\infty}(0,\infty;H^1\cap B^1_{\infty,1})\times L^{\infty}(0,\infty;H^1\cap B^0_{\infty,1}).
		$$
		Moreover, for any $T>0$, there holds 
  $$\|(u^a,\tau^a)-(u^0,\tau^0)\|_{L^{\infty}(0,T;L^2)}\leq C_T a
  $$ 
  and 
  $$\lim_{a\rightarrow0}\|(u^a,\tau^a)-(u^0,\tau^0)\|_{L^{\infty}(0,T;\dot{H}^1\cap B^1_{\infty,1})\times L^{\infty}(0,T;\dot{H}^1\cap B^0_{\infty,1})}=0.
  $$
  \end{theo}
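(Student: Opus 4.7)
The plan is to adapt the $\mathbb{R}^{d}$ arguments developed in Sections 5 and 6 (specifically, the proofs of Propositions \ref{5prop1}, \ref{5prop2}, \ref{4prop2}, and \ref{4prop3}) to the periodic setting. The key observation, already noted by the authors, is that all tools we rely on -- Littlewood–Paley decomposition, Bony's paraproduct, the commutator estimates of Lemmas \ref{CI} and \ref{CR}, the heat semigroup smoothing of Lemma \ref{H}, and the structural trick $\Gamma=\omega-\mathcal{R}\tau$ -- are built on the inhomogeneous framework, which is fully available on $\mathbb{T}^{d}$. In particular, the Riesz-type multiplier $\mathcal{R}=-(-\Delta)^{-1}\mathrm{curl}\,\mathrm{div}$ and the Leray projector $\mathbb{P}$ act on zero-mean vector fields, and mean-preservation of \eqref{7.1} is automatic since $\partial_{t}\int_{\mathbb{T}^{d}}u\,dx=0$ and $\partial_{t}\int_{\mathbb{T}^{d}}\tau\,dx+a\int_{\mathbb{T}^{d}}\tau\,dx=0$, so we may restrict to mean-zero data and solutions without loss of generality.

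For the global existence piece, I would run a bootstrap in $\|u^{a}\|_{H^{1}\cap B^{1}_{\infty,1}}+\|\tau^{a}\|_{H^{1}\cap B^{0}_{\infty,1}}\leq C\varepsilon$. First, the $H^{1}$ estimate is closed via the energy identity together with the mixed inner product $-\eta\langle\tau,\nabla u\rangle$ (the same computation as in \eqref{3d-H1-energyinequality}), which transfers dissipation from $\tau$ to $u$ and absorbs the quadratic term $Q(\nabla u,\tau)$; in the 3D case the extra term $\langle u\cdot\nabla u,\Delta u\rangle$ is controlled by $\|\nabla u\|_{L^{\infty}}\|\nabla u\|_{L^{2}}^{2}$, which is later absorbed using the $B^{1}_{\infty,1}$ smallness. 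Next, $\|\tau^{a}\|_{B^{0}_{\infty,1}}$ is controlled by Duhamel/heat smoothing exactly as in \eqref{4eq11}, and then $\|\nabla u^{a}\|_{B^{0}_{\infty,1}}$ is estimated via the damped transport equation \eqref{eqG} for $\Gamma^{a}$, using Corollary \ref{CR3} on $\mathbb{T}^{d}$ in dimension two and additionally the divergence-form estimate $\|\omega^{a}\cdot\nabla u^{a}\|_{B^{0}_{\infty,1}}\lesssim\|\omega^{a}\|_{B^{0}_{\infty,1}}\|u^{a}\|_{B^{1}_{\infty,1}}$ (Lemma \ref{PL}) in dimension three. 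All constants are uniform in $a\in[0,1]$.

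For the vanishing damping limit on $[0,T]$, I would proceed in two stages. Stage one: take the $L^{2}$ inner product of the difference system (the periodic analog of \eqref{4lo1}/\eqref{5eqD1}) with $(u^{a}-u^{0},\tau^{a}-\tau^{0})$, use the cancellation between $\langle\mathrm{div}\,\tau,u\rangle$ and $\langle Du,\tau\rangle$, and treat the forcing term $-a\tau^{a}$ by Cauchy–Schwarz. Since $\|\tau^{a}\|_{L^{2}}\lesssim 1$ uniformly in time (from the global $H^{1}$ bound), Grönwall on $[0,T]$ gives $\|(u^{a}-u^{0},\tau^{a}-\tau^{0})\|_{L^{\infty}_{T}(L^{2})}\leq C_{T}a$. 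Stage two is the Bona–Smith-type argument: introduce the smoothed solutions $(u^{a}_{N},\tau^{a}_{N})$ with data $(S_{N}u_{0},S_{N}\tau_{0})$, prove
\begin{equation*}
\|\nabla(u^{a}-u^{a}_{N},\tau^{a}-\tau^{a}_{N})\|_{L^{\infty}_{T}(L^{2})}\lesssim_{T}\|(I-S_{N})(u_{0},\tau_{0})\|_{H^{1}},
\end{equation*}
together with the companion estimate
\begin{equation*}
\|\nabla(u^{a}_{N}-u^{0}_{N},\tau^{a}_{N}-\tau^{0}_{N})\|_{L^{\infty}_{T}(L^{2})}\lesssim_{T}a\,2^{N},
\end{equation*}
and then optimize $N$ to obtain $\|\cdot\|_{L^{\infty}_{T}(\dot{H}^{1})}\to 0$ as $a\to 0$. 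The analogous argument on the equation \eqref{4eq12} for $\Gamma^{a}$ (paired with stage one's $L^{2}$ rate and the product estimate $\|Q(\nabla(u^{a}-u^{0}),\tau^{0})\|_{B^{-1}_{\infty,1}}\lesssim\|u^{a}-u^{0}\|_{B^{0}_{\infty,1}}\|\tau^{0}\|_{H^{2}}$) then upgrades convergence to the $B^{1}_{\infty,1}\times B^{0}_{\infty,1}$ topology.

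The main obstacle I anticipate is purely bookkeeping rather than conceptual: one has to verify that the commutator Lemmas \ref{CI}, \ref{CR}, Corollary \ref{CR3}, the pressure estimate Lemma \ref{P}, and the heat-kernel Lemma \ref{H} all transport verbatim to $\mathbb{T}^{d}$ -- which they do once attention is restricted to the mean-zero projection -- and in the 3D case one must again control $\omega^{a}\cdot\nabla u^{a}$ appearing both in the $\Gamma^{a}$ equation during global existence and in its difference version during the vanishing damping step, which is precisely the point where the divergence-form product estimate \eqref{1in7} is essential and where the smallness of $\varepsilon$ is consumed. Once these ingredients are in place, both conclusions of Theorem \ref{thm1t} follow by assembling the 2D proof (Propositions \ref{4prop2}–\ref{4prop3}) and the 3D proof (Propositions \ref{5prop1}–\ref{5prop2}) in parallel.
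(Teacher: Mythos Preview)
Your proposal is correct and aligns exactly with the paper's own treatment: the paper's proof of Theorem~\ref{thm1t} consists solely of the remark that it ``is very similar to the $\R^2$ and $\R^3$ cases'' with details deferred to a forthcoming work. Your outline is in fact considerably more detailed than what the paper provides, and correctly identifies the relevant propositions (\ref{4prop2}--\ref{4prop3} and \ref{5prop1}--\ref{5prop2}) whose periodic analogues constitute the argument.
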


\begin{proof}
    Proof to Theorem~\ref{thm1t} is very similar to the $\R^2$ and $\R^3$ cases and we will leave the discussion in a forthcoming paper.
\end{proof}

\subsection{Toy models in $\mathbb{T}^2$ and corresponding semi-implicit numerical schemes}
In this subsection we present the dynamics including the vanishing damping phenomenon of several toy models by providing numerical evidence through semi-implicit Fourier spectral method. We start with deriving our toy model. Recall \eqref{7.1} in $\T^2$, where the bilinear term was described in \eqref{eqQ}:
\begin{align*}
    Q(\nabla u, \tau)=\tau \Omega-\Omega\tau-b(D(u)\tau+\tau D(u)).
\end{align*}
Assume $u=\begin{pmatrix}u^1\\u^2 \end{pmatrix}$ and 
$\tau=\begin{pmatrix}
    \tau^{11} & \tau^{12}\\
\tau^{12} &\tau^{22}
\end{pmatrix}$, where $\tau$ is symmetric.
We further assume that $b=0$ and therefore 
$$Q(\na u,\tau) = \tau\Omega-\Omega\tau.$$
Moreover by writing $\omega = -\na^\perp u = \pa_1 u^2-\pa_2u^1$, we have
\begin{equation}
Q(\na u,\tau) = \tau\Omega-\Omega\tau=\omega\begin{pmatrix}
    -2\tau^{12}& \tau^{11}-\tau^{12}\\ \tau^{11}-\tau^{12} &2\tau^{12}
\end{pmatrix}.    
\end{equation}
It is clear that $\operatorname{tr} Q =0$. Furthermore we have 
\begin{equation}
    2Du = (\na u) + (\na u)^\intercal = \begin{pmatrix}
        2\pa_1 u^1 & \pa_1 u^2 +\pa_2 u^1\\
        \pa_1 u^2 +\pa_2 u^1 & 2\pa_2 u^2
    \end{pmatrix},
\end{equation}
and 
 \begin{equation}
    \div \tau = \begin{pmatrix}
        \pa_1 \tau^{11} + \pa_2 \tau^{12}\\
        \pa_1 \tau^{12} + \pa_2 \tau^{22}
    \end{pmatrix}.
\end{equation}
In fact we require in addition that $\operatorname{tr} \tau =0$, namely, $\tau^{11}=-\tau^{22}$ then we can simplify further and treat $\tau$ as a vector rather than a matrix. Denoting $\tau^1 = \tau^{11}$, $\tau^{2}=\tau^{12}$ and $\tau =(\tau^1,\tau^2)^\intercal$, we derive our (nonlinear) toy model:
\begin{equation}\label{eq:toy1}
    \begin{cases}
        \pa_t u + (u\cdot \na)u+\na p = \begin{pmatrix}
            \pa_1 \tau^1+\pa_2 \tau^2\\
            \pa_1 \tau^2 -\pa_2 \tau^1
        \end{pmatrix},\qquad \div u =0,\\
        \pa_t \tau +(u\cdot \na)\tau -\Delta \tau +a\tau +\omega \begin{pmatrix}
            -2\tau^2\\
            \tau^1-\tau^2
        \end{pmatrix}
        = \begin{pmatrix}
            2\pa_1u^1\\
            \pa_1 u^2+\pa_2 u^1
        \end{pmatrix},
    \end{cases}
\end{equation}
where $\omega =\pa_1u^2-\pa_2 u^1$ as above. By ignoring the nonlinear term we arrive at our (linear) toy model:
\begin{equation}\label{eq:toy2}
    \begin{cases}
        \pa_t u +\na p = \begin{pmatrix}
            \pa_1 \tau^1+\pa_2 \tau^2\\
            \pa_1 \tau^2 -\pa_2 \tau^1
        \end{pmatrix},\qquad \div u =0,\\
        \pa_t \tau -\Delta \tau +a\tau = \begin{pmatrix}
            2\pa_1u^1\\
            \pa_1 u^2+\pa_2 u^1
        \end{pmatrix}.
    \end{cases}
\end{equation}
We will compute \eqref{eq:toy1} and \eqref{eq:toy2} models above by the well-known semi-implicit Fourier spectral method. More precisely speaking, \eqref{eq:toy1} and \eqref{eq:toy2} will be computed by the following schemes respectively:
\begin{equation}\label{scheme:toy1}
    \begin{cases}
        \displaystyle\frac{u_{n+1}-u_n}{\Delta t} + \Pi_N((u_n\cdot \na)u_n)+\na p_n = \begin{pmatrix}
            \pa_1 \tau_{n+1}^1+\pa_2 \tau_{n+1}^2\\
            \pa_1 \tau_{n+1}^2 -\pa_2 \tau_{n+1}^1
        \end{pmatrix},\qquad \div u_{n+1} =0,\\
        \displaystyle\frac{\tau_{n+1}-\tau_n}{\Delta t} +\Pi_N((u_n\cdot \na)\tau_n) -\Delta \tau_{n+1} +a\tau_{n+1} +\Pi_N\left(\omega_n \begin{pmatrix}
            -2\tau_n^2\\
            \tau_n^1-\tau_n^2
        \end{pmatrix}\right)
        = \begin{pmatrix}
            2\pa_1u^1_n\\
            \pa_1 u^2_n+\pa_2 u^1_n
        \end{pmatrix},
    \end{cases}
\end{equation}
and 
\begin{equation}\label{scheme:toy2}
    \begin{cases}
        \displaystyle\frac{u_{n+1}-u_n}{\Delta t} +\na p_n = \begin{pmatrix}
            \pa_1 \tau_{n+1}^1+\pa_2 \tau_{n+1}^2\\
            \pa_1 \tau_{n+1}^2 -\pa_2 \tau_{n+1}^1
        \end{pmatrix},\qquad \div u_{n+1} =0,\\
        \displaystyle\frac{\tau_{n+1}-\tau_n}{\Delta t} -\Delta \tau_{n+1} +a\tau_{n+1} = \begin{pmatrix}
            2\pa_1u^1_n\\
            \pa_1 u^2_n+\pa_2 u^1_n
        \end{pmatrix},
    \end{cases}
\end{equation}
subject to the initial data $(\Pi_N u_0,\Pi_N \tau_0)$. Here $(u_{n},\tau_n)$ is the discrete solution at $n$-th time step and $\Delta$ is the time step.  For $N\geq 2$, we introduce the space
$$X_N=\text{span}\left\{\cos(k\cdot x)\ ,\ \sin(k\cdot x):\ \ k=(k_1,k_2)\in\Z^2\ ,\ |k|_\infty=\max\{|k_1|,|k_2|\}\leq N \right\} .$$
Then we define $\Pi_N$ to be the truncation operator of Fourier modes $|k|_\infty\leq N$. $\Pi_Nu_0,\Pi_N \tau_0\in X_N$ and by induction, we have $u_n,\tau_n\in X_N\ ,\forall n\geq 0$. Indeed it 
 is clear that in order to solve $\tau_{n+1}$ in \eqref{scheme:toy1} and \eqref{scheme:toy2} the viscosity terms $ \Delta \tau_{n+1}$ is treated unknown and the nonlinear terms are known from the previous time step. To solve $u_{n+1}$ we put $\tau_{n+1}$ instead of $\tau_n$ in the sourcing term; such choice guarantees the regularity balance.

In addition we can further derive that $\div u_n=0$ for all $n$ by induction. To solve $u_{n+1}$, we recall the Leray projection $\LP$, namely the $L^2$-orthogonal projection onto the divergence-free subspace: for any $v\in L^2$ we have $\LP v=v-\na q$, where $q\in H^1$ solves the following Poisson equation under periodic boundary conditions:
\begin{equation*}
        \De q=\na \cdot v.
\end{equation*}
Then we apply the Leray projection $\LP$ to derive that 
\begin{equation}\label{scheme:toy1b}
\frac{u_{n+1}-u_n}{\De t} +\LP\Pi_N(u_n \cdot \na u_{n})=\LP\begin{pmatrix}
            \pa_1 \tau_{n+1}^1+\pa_2 \tau_{n+1}^2\\
            \pa_1 \tau_{n+1}^2 -\pa_2 \tau_{n+1}^1
        \end{pmatrix},
\end{equation}
or 
\begin{equation}\label{scheme:toy2b}
\frac{u_{n+1}-u_n}{\De t} =\LP\begin{pmatrix}
            \pa_1 \tau_{n+1}^1+\pa_2 \tau_{n+1}^2\\
            \pa_1 \tau_{n+1}^2 -\pa_2 \tau_{n+1}^1
        \end{pmatrix},
\end{equation}
for \eqref{scheme:toy1} and \eqref{scheme:toy2} respectively. Indeed the Leray projection can be understood from the Fourier side and therefore can be computed by the Fourier spectral method. Following similar ideas in \cite{CLW24} we can show our schemes are first order accurate in time and the error estimates are uniform in the damping parameter $a\to 0$. As a result, we are able to capture the rate of the vanishing damping numerically. It is worth emphasizing here that due to the nature of numerical simulation, we can only capture the rate of vanishing damping in a bounded time interval, namely, the local behavior in Theorem~\ref{thm1t}.

\begin{rema}
    Indeed we will postpone the discussion of the error analysis in a forthcoming work and only present the dynamics here.
\end{rema}

 \subsection{Rate of vanishing damping}

In this section we present the rate of vanishing damping of the schemes \eqref{scheme:toy1} and \eqref{scheme:toy2}. Throughout this subsection we choose the following initial data suggested by \cite{CLW24}:
$$u_0= \left(-\frac{m}{2}(\cos(x))^m(\cos(y))^{m-1}\sin(y),
\frac{m}{2}(\cos(x))^{m-1}(\cos(y))^{m-1}\sin(x)\right),$$
and
  $$\tau_0 = \left(-\frac{m}{2}(\cos(x))^m(\cos(y))^{m-1}\sin(y), \frac{m}{2}(\cos(x))^{m-1}(\cos(y))^{m}\sin(x)\right),$$
where we pick $m=3.8$.

We shall first generate the ``accurate'' reference solution $(u_R,v_R,\tau^1_R,\tau^2_R)$ by computing using the same schemes but with $\Delta t=10^{-6}$ and $a=0$. We then compare the numerical solutions of \eqref{scheme:toy1} and \eqref{scheme:toy2} to the 
reference solution with different $a$-values.

More clearly in the first numerical experiment we compute the linear toy model \eqref{eq:toy2} using \eqref{scheme:toy2} together with the projected \eqref{scheme:toy2b}. We fix $\De t= 0.0001, N_x=N_y=128$ and we vary $a=\frac{1}{2^k}$. The $L^\infty$-difference at $T= 1$ is presented in Table~\ref{fig1:linear}. It is clear that the $L^\infty$-difference is linear in $a$ as $a\to 0$.

\begin{table}[htb]
\begin{minipage}[]{0.4\textwidth}
    \centering
\begin{tabular}{||c || c||} 
 \hline
 $a=1$  & $L^\infty$-difference  \\ [0.5ex]
 \hline
 $a$ & 0.0946 \\[0.5ex]
 $a/2$ & 0.0513
 \\[0.5ex]
$a/4$ &  0.0268
 \\[0.5ex]
 $a/8$ & 0.0137
 \\[0.5ex]
 $a/16$ & 0.0069
 \\[0.5ex]
 $a/32$ & 0.0035 \\
 \hline 
\end{tabular}
\end{minipage}
\hfill
\begin{minipage}[]{0.5\textwidth}
\centering
\includegraphics[width=0.85\textwidth]{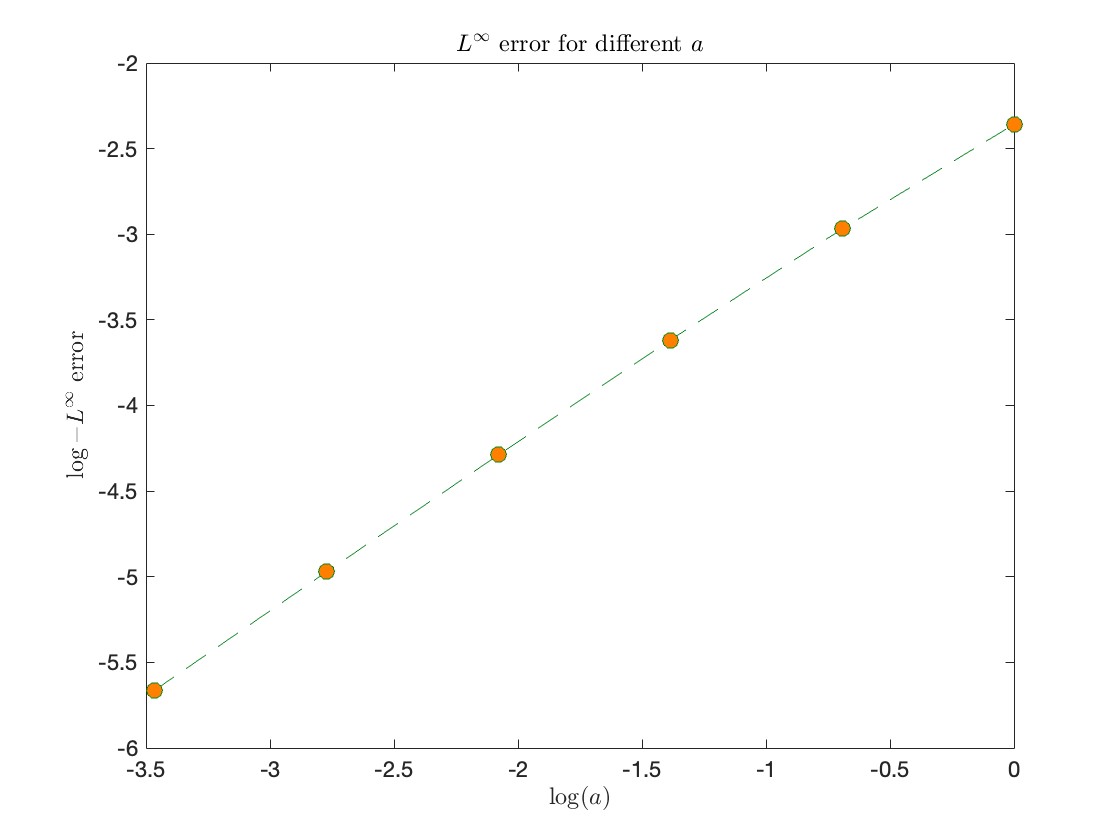}
\end{minipage}
\caption{$L^\infty$-differences with fixed $\De t$ and varying damping $a$ for the linear toy model \eqref{eq:toy2}.}\label{fig1:linear}
\end{table}

In the second experiment, we compute the nonlinear toy model \eqref{eq:toy1} using \eqref{scheme:toy1} together with the projected \eqref{scheme:toy1b}. We fix $\tau= 0.0001, N_x=N_y=128$ and we vary $a=\frac{1}{2^k}$. The $L^\infty$-differences at $T=1$ can be found below in Table~\ref{fig2:nonlinear}. We see that the errors behave as $O(a)$ as $a\to 0$.

\begin{table}[htb]
\begin{minipage}[]{0.4\textwidth}
    \centering
\begin{tabular}{||c || c||} 
 \hline
 $a=1$  & $L^\infty$-difference  \\ [0.5ex]
 \hline
 $a$ & 0.0961 \\[0.5ex]
 $a/2$ & 0.0523
 \\[0.5ex]
$a/4$ &  0.0274
 \\[0.5ex]
 $a/8$ & 0.0140
 \\[0.5ex]
 $a/16$ & 0.0071
 \\[0.5ex]
 $a/32$ & 0.0035 \\
 \hline 
\end{tabular}
\end{minipage}
\hfill
\begin{minipage}[]{0.5\textwidth}
\centering
\includegraphics[width=0.85\textwidth]{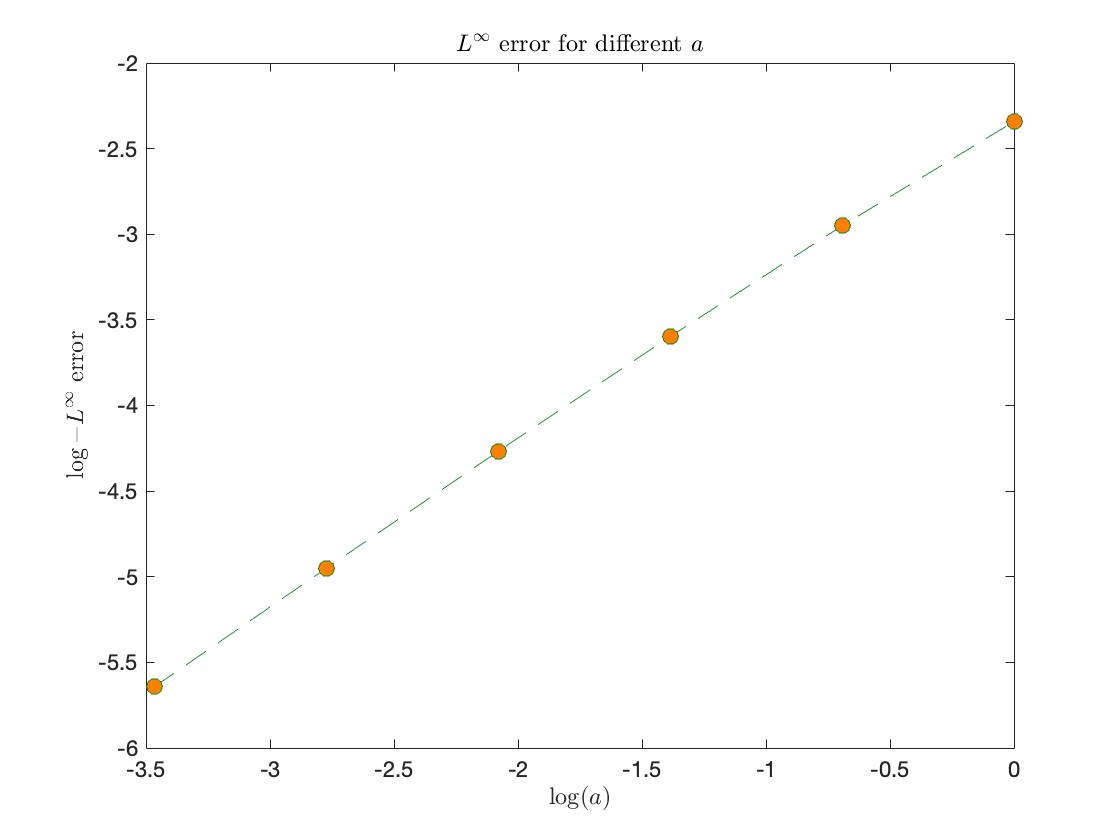}
\end{minipage}
\caption{$L^\infty$-differences with fixed $\De t$ and varying damping $a$ for the nonlinear toy model \eqref{eq:toy1}.}\label{fig2:nonlinear}
\end{table}

\subsection{Dynamics of the two toy models: Two Guassian vortices}

In this subsection we consider the following initial data of two Gaussian vortices. The initial vorticity is given as 
\begin{equation}\label{7.13}
    w_0=\na^\perp\cdot \vec{u_0}=\exp\left(-5((x+\frac{\pi}{4})^2+y^2)\right)+\exp\left(-5((x-\frac{\pi}{4})^2+y^2)\right).
\end{equation}
Moreover we choose the initial $\tau_0$ as follows:
  $$\tau_0 = \left(1-\frac{m}{2}(\cos(x))^m(\cos(y))^{m-1}\sin(y), 1+\frac{m}{2}(\cos(x))^{m-1}(\cos(y))^{m}\sin(x)\right),$$
where $m=4$ and we add $1$ to both $\tau_0^1$ and $\tau_0^2$ to make it non-negative.

We present the dynamics of the vorticity of both linear and nonlinear toy models by fixing $\De t=0.001, N_x=N_y=128$ with the initial data given in \eqref{7.13}. We present the vorticity with different $a=0.001,0.1$ at $T=15$ in Figure~\ref{fig1} for the linear toy model and Figure~\ref{fig2} for the nonlinear toy model. These two benchmark examples are motivated by \cite{CLW24}. {From the dynamics we can observe that the Gaussian vortices tend to rotate in the nonlinear model due to the transport structure, while the linear model tends to be more stable. The choices of damping $a$ also contribute to the stability; indeed we see that large damping $a$ behaves similar to the spatial diffusion.

}

\begin{figure}[htb]
\centering
\includegraphics[width=0.32\textwidth]{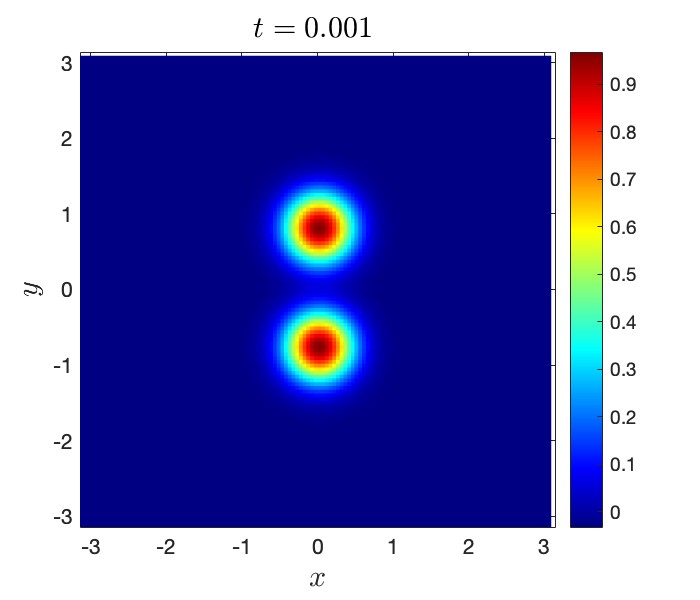}
\includegraphics[width=0.32\textwidth]{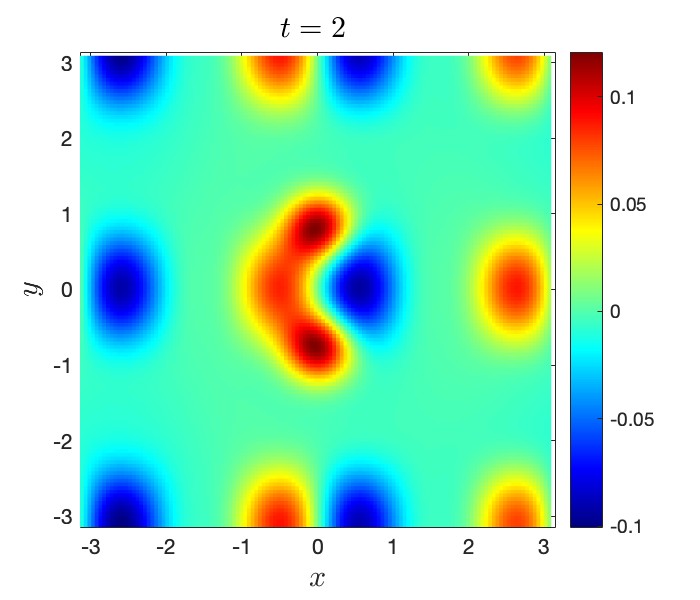}
\includegraphics[width=0.32\textwidth]{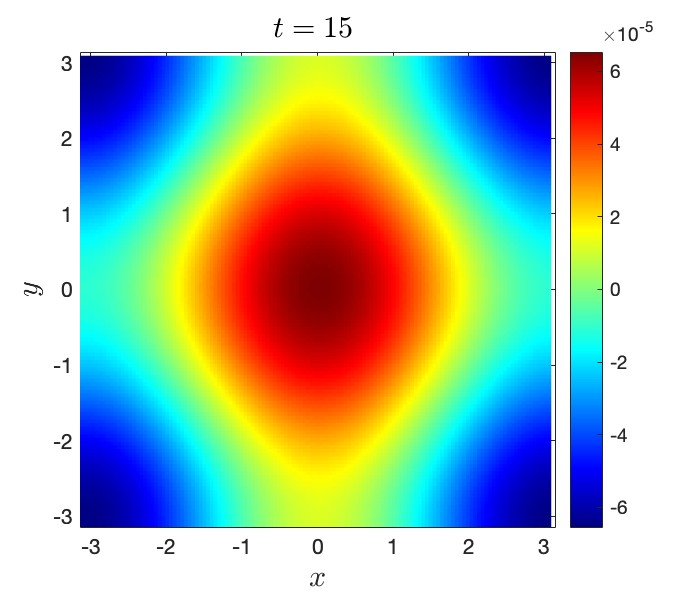}\\
\includegraphics[width=0.32\textwidth]{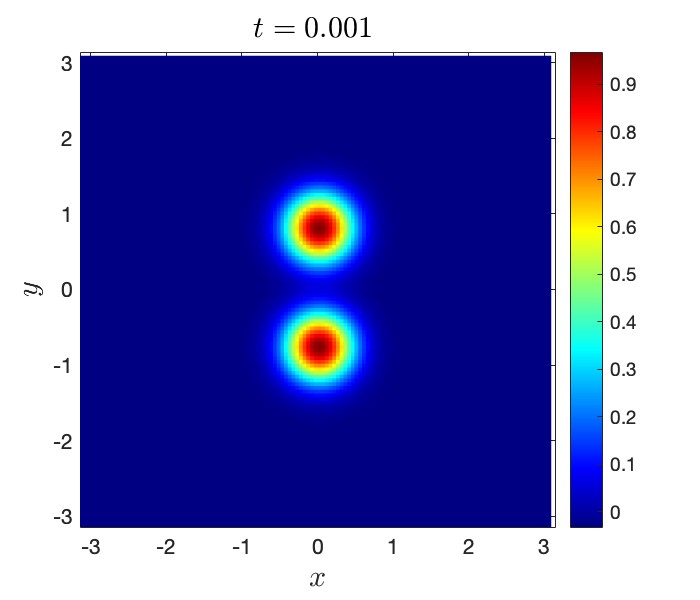}
\includegraphics[width=0.32\textwidth]{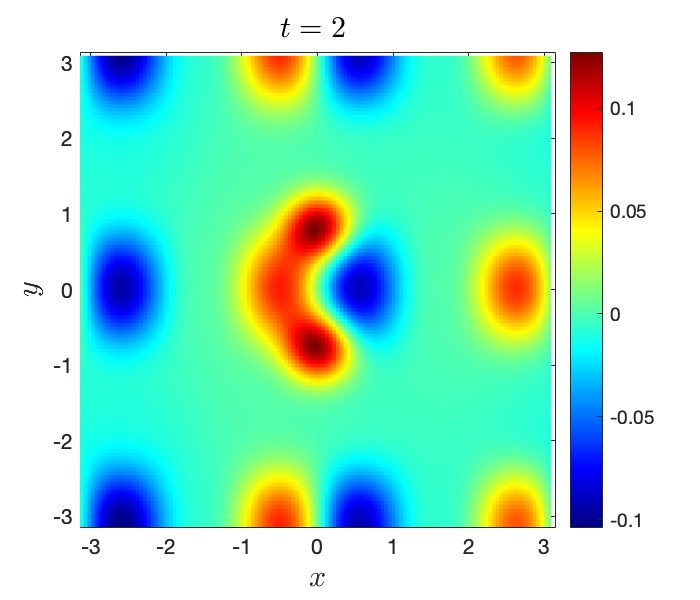}
\includegraphics[width=0.32\textwidth]{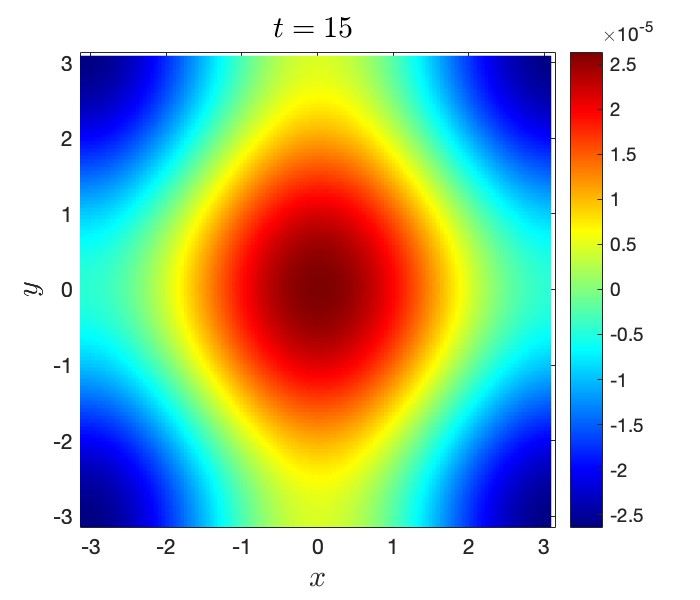}

\caption{\small Dynamics of the 2D linear toy model \eqref{eq:toy2} by scheme \eqref{scheme:toy2} and \eqref{scheme:toy2b}. % where $\De t= 0.001,~N_x=N_y = 128$ and the initial data $u_0$ is a two Gaussian vortices given in \eqref{7.13}. 
We choose damping $a=0.001$ in the first line and $a=0.1$ in the second line. }\label{fig1}
\end{figure}

\begin{figure}[hb]
\centering
\includegraphics[width=0.32\textwidth]{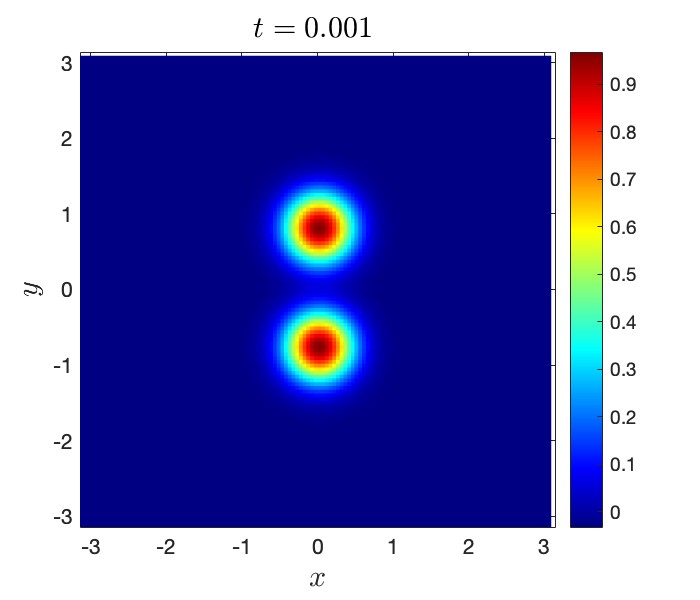}
\includegraphics[width=0.32\textwidth]{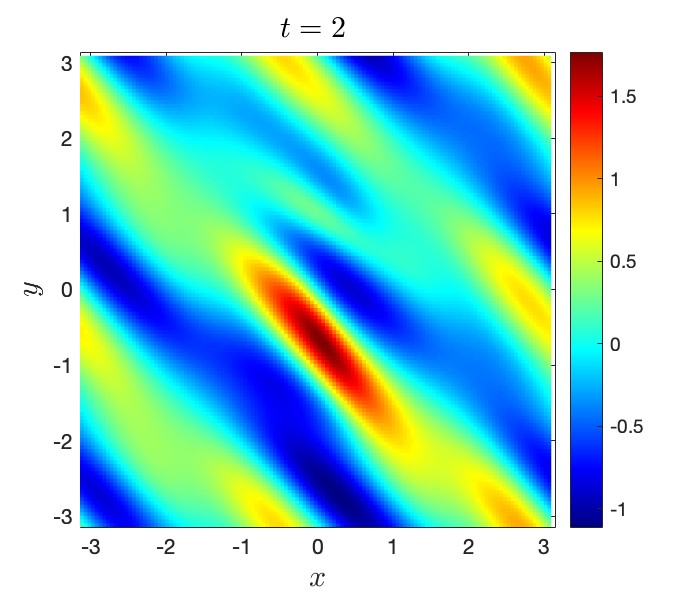}
\includegraphics[width=0.32\textwidth]{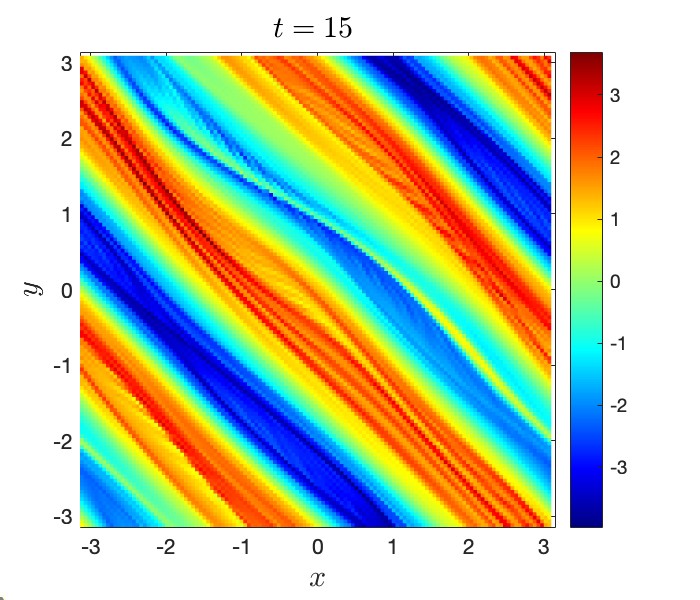}\\
\includegraphics[width=0.32\textwidth]{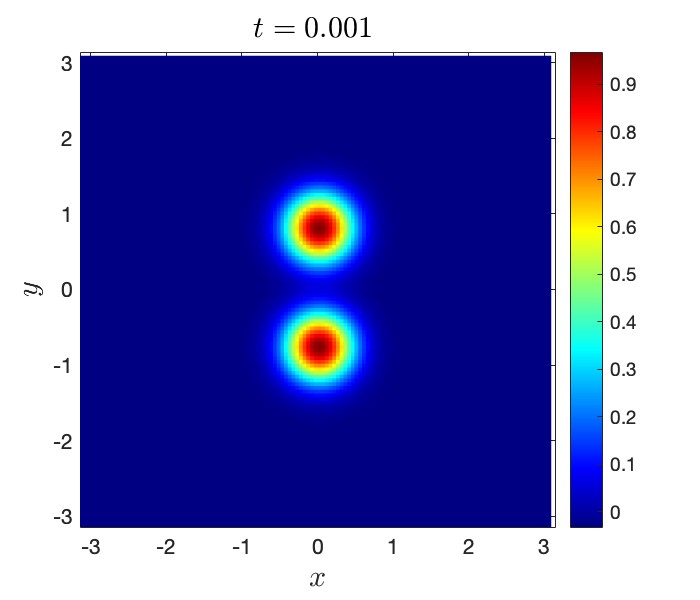}
\includegraphics[width=0.32\textwidth]{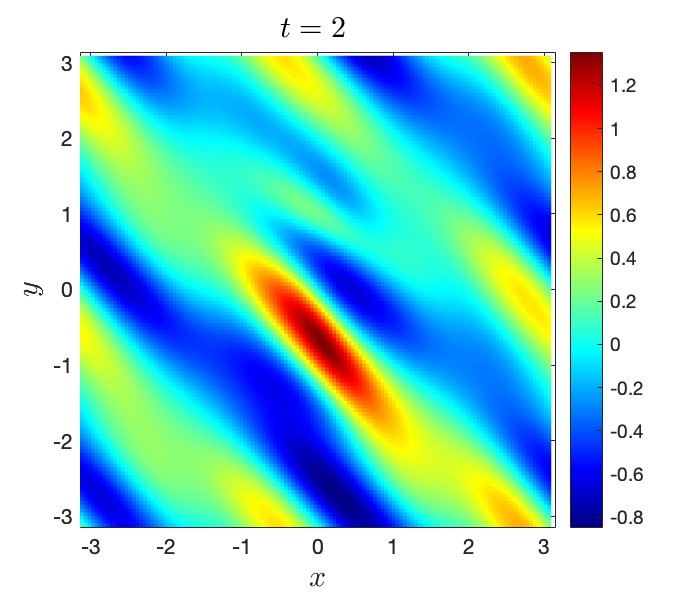}
\includegraphics[width=0.32\textwidth]{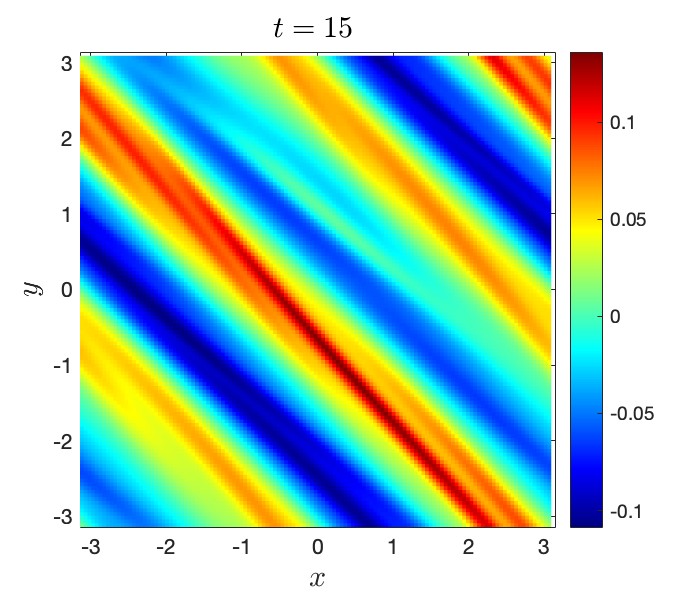}

\caption{\small Dynamics of the 2D nonlinear toy model \eqref{eq:toy1} by scheme \eqref{scheme:toy1} and \eqref{scheme:toy1b}. % where $\De t= 0.001,~N_x=N_y = 128$ and the initial data $u_0$ is a two Gaussian vortices given in \eqref{7.13}.
We choose damping $a=0.001$ in the first line and $a=0.1$ in the second line.  }\label{fig2}
\end{figure}

\clearpage

\section{Conclusion}
To conclude, we have established the local well-posedness of the inviscid Oldroyd-B model in the sense of Hadamard with critical regularity. Then we have established the global existence of solutions for $d=2$ with damping in the low regularity class $(L^2\cap B^1_{\infty,1})\times (L^2\cap B^0_{\infty,1})$, which is novel in the literature. Furthermore, in both 2D and 3D cases, we have proved the global existence of the solutions to the inviscid Oldroyd-B model, which dose not depend on damping parameters. In addition, we have investigated the optimal temporal decay rates and time integrability by improving the existing Fourier splitting method and developing a novel decomposition strategy. One of the major contributions of the presenting paper is to prove the uniform-in-time vanishing damping limit for the inviscid Oldroyd-B model and discover the correlation between sharp vanishing damping rate and the temporal decay rate.  Finally, we have supported our findings by providing numerical evidence regarding the vanishing damping limit in the periodic domain.

Several related questions are remaining open after this work. 
Our work in the article relies on the smallness of solutions in the critical space. The problem of the global existence of smooth solutions for the 2D inviscid Oldroyd-B model without any smallness assumption is still open. It is also unknown whether there exists a global solution with the critical regularity of the inviscid Oldroyd-B model equation with fractional dissipation $(-\Delta)^{\frac{\beta}{2}}\tau$, where $\beta\in(0,1)$. We also would like to point out that the global well-posedness problem of the 2D classical Oldroyd-B model for the case $b\neq0$ remains open, as discussed in \cite{M13}.

\clearpage

\section*{Acknowledgement}
X. Cheng is supported in part by NSFC (No. 12401270) and the startup funding provided by Fudan University. Z. Luo is partially supported by the China Postdoctoral Science Foundation (No. 2022TQ0077 and No. 2023M730699) and Shanghai Post-doctoral Excellence Program (No. 2022062). C. Yuan is supported by NSFC (No. 123B2008).

\bibliographystyle{abbrv}

\end{document}